\documentclass[a4paper, reqno]{amsart}

\textheight 220mm
\textwidth 156mm
\hoffset -16mm

\usepackage{tikz} %Used for diagrams
\usetikzlibrary{cd}
\usetikzlibrary{patterns,arrows,positioning,decorations.pathmorphing}
\usepackage{xparse} %Used for optional arguments in commands

\usepackage{amsthm} %Used for theorems, definitions, corollaries, etc.
\usepackage{amsmath} %Used for introducing notation
\usepackage{hyperref} %Used for making references hyperlink
\usepackage{amssymb} %Used for various predefined commands
\usepackage{longtable} %For tables longer than one page
\usepackage{array} %For controlling alignment in table cells
\usepackage{graphicx} %For fixing the size of bullet
\usepackage{mathtools} %For absolute value
\usepackage{enumitem} %For choosing how to enumerate things

%Notation
\newcommand{\La}{\Lambda}
\newcommand{\tn}{\tau_n^{\phantom{0}}}
\newcommand{\tno}{\tau_n^-}

\newcommand{\cA}{\mathcal{A}}
\newcommand{\cC}{\mathcal{C}}
\newcommand{\cD}{\mathcal{D}}
\newcommand{\cF}{\mathcal{F}}
\newcommand{\cP}{\mathcal{P}}
\newcommand{\cT}{\mathcal{T}}
\newcommand{\cX}{\mathcal{X}}

\newcommand{\ZZ}{\mathbb{Z}}

\newcommand{\K}{\mathbf{k}}
\newcommand{\om}{\Omega}
\newcommand{\isom}{\cong}
\newcommand{\divides}{\mid}
\newcommand{\modulo}[1]{\ (\mathrm{mod}\ #1)}

%Commands for gluing
\NewDocumentCommand\glue{ O{} O{}}{\mathbin{\raisebox{0.2ex}{${}^{\mbox{\tiny$#1$}}$\rotatebox[origin=c]{90}{$\triangleright$}$^{\mbox{\tiny$#2$}}$}}}

\newcommand{\nospacepunct}[1]{\makebox[0pt][l]{\,#1}}
\newcommand{\mo}[1]{#1\mathrm{-mod}}
\newcommand{\smo}[1]{#1\mathrm{-\underline{mod}}}
\newcommand{\cmo}[1]{#1\mathrm{-\overline{mod}}}
\newcommand{\rmax}[1]{r_{#1}}
\newcommand{\lmax}[1]{\ell_{#1}}

\DeclareMathOperator{\Hom}{Hom}
\DeclareMathOperator{\Ext}{Ext}
\DeclareMathOperator{\add}{add}
\DeclareMathOperator{\ind}{ind}
\DeclareMathOperator{\soc}{soc}
\DeclareMathOperator{\topp}{top}
\DeclareMathOperator{\gldim}{gl.dim}
\DeclareMathOperator{\pdim}{pr.dim}

%Theorems,Definitions,Remarks,Corollarys,Lemmas, etc.
\theoremstyle{plain}
\newtheorem*{theorem*}{Theorem}
\newtheorem{theorem}{Theorem}[section] % reset theorem numbering for each chapter

\theoremstyle{definition}
\newtheorem{definition}[theorem]{Definition} % definition numbers are dependent on theorem numbers
\newtheorem{definition-proposition}[theorem]{Definition-Proposition} % definition numbers are dependent on theorem numbers
\newtheorem{example}[theorem]{Example} % same for example numbers 
\newtheorem{corollary}[theorem]{Corollary} % same for corollary numbers 
\newtheorem{lemma}[theorem]{Lemma} % same for lemma numbers 
\newtheorem{proposition}[theorem]{Proposition} % same for proposition numbers
\newtheorem{remark}[theorem]{Remark} % remarks are the same
 % remarks are the same
 % remarks are the same
\newtheorem{question}[theorem]{Question} %questions are the same
\numberwithin{equation}{section} % equations are numbered separately, only with respect to their section

\title{$n\ZZ$-cluster tilting subcategories for Nakayama algebras}

\author{Martin Herschend}
\address{Martin Herschend, Department of Mathematics, Uppsala University, Box 480, 751 06 Uppsala, Sweden}
\email{martin.herschend@math.uu.se}

\author{Sondre Kvamme}
\address{Sondre Kvamme, Department of Mathematical Sciences, Norwegian University of Science and Technology, 7491 Trondheim, Norway}
\email{sondre.kvamme@ntnu.no}

\author{Laertis Vaso}
\address{Laertis Vaso, Department of Mathematical Sciences, Norwegian University of Science and Technology, 7491 Trondheim, Norway}
\email{laertis.vaso@ntnu.no}

\keywords{Higher Auslander--Reiten theory, $n\ZZ$-cluster tilting subcategory, Nakayama algebra, singularity category.} 

\begin{document}

\begin{abstract}
$n\ZZ$-cluster tilting subcategories are an ideal setting for higher dimensional Auslander--Reiten theory. We give a complete classification of $n\ZZ$-cluster tilting subcategories of module categories of Nakayama algebras. In particular, we show that there are three kinds of Nakayama algebras that admit $n\ZZ$-cluster tilting subcategories: finite global dimension, selfinjective and non-Iwanaga--Gorenstein. Only the selfinjective ones can admit more than one $n\ZZ$-cluster tilting subcategory. It has been shown by the second author, that each such $n\ZZ$-cluster tilting subcategory induces an $n\ZZ$-cluster tilting subcategory of the corresponding singularity category. For each Nakayama algebra in our classification, we describe its singularity category, the canonical functor from its module category to its singularity category, and provide a complete comparison of $n\ZZ$-cluster tilting subcategories in the module category and the singularity category. This relies heavily of results by Shen, who described the singularity categories of all Nakayama algebras.
\end{abstract}

\maketitle
\tableofcontents

\section{Introduction}
Auslander--Reiten theory is a fundamental tool to study representation theory from a homological point of view. A higher version of Auslander--Reiten theory was introduced by Iyama \cite{Iya1} for each $n\ge 1$, with $n = 1$ corresponding to the classical version. This theory has connections to algebraic geometry \cite{IW, IW2, IW3, HIMO},  combinatorics \cite{OT,HJ, Wil}, higher category theory and algebraic K-theory \cite{DJW}, representation theory, \cite{HI1,IO1,Miz}, and to wrapped Floer theory in symplectic geometry \cite{DJL}. It is also a crucial ingredient in the recent proof of the Donovan--Wemyss conjecture as announced by Keller \cite{JM} (see Conjecture 1.4 in \cite{DW} for the statement of the conjecture). In higher Auslander--Reiten theory one changes perspective from studying some category $\cA$, say finitely generated modules over a finite dimensional algebra or its bounded derived category, to studying some suitable subcategory $\cC \subseteq \cA$. Commonly $\cC$ is an $n$-cluster tilting subcategory, possibly with additional properties. 

In Definition~\ref{def:nZ-cluster tilting} we recall the definition of $n$-cluster tilting subcategories of module categories, but they can be defined in many different settings (See \cite{Iya1, GKO, Jas, HLN}). Depending on the setting, $n$-cluster tilting subcategories give rise to higher notions of various types of categories of interest in homological algebra. For instance if $\cA$ is abelian and $\cC \subseteq \cA$ is $n$-cluster tilting, then $\cC$ is $n$-abelian in the sense of \cite{Jas}. If $\cA$ is triangulated and $\cC \subseteq \cA$ is $n$-cluster tilting with the additional property that $\cC$ is closed under $n$-fold suspension then $\cC$ is $(n+2)$-angulated in the sense of \cite{GKO}. 

Let $\La$ be a finite dimensional algebra and $\mo \La$ the category of finitely generated left $\La$-modules. In this paper we are concerned with $n$-cluster tilting subcategories $\cC \subseteq \mo \La$. The existence of such a $\cC$ imposes a strong restriction on $\La$ and in the vast majority of known cases $\La$ exhibits very regular homological behaviour. For instance the case when $\gldim \La = n$ has been extensively studied (See for instance \cite{Iya2, IO1, IO2, HI1, HI2}). In this case $\cC$ is unique if it exists. Moreover, $\cC$ gives rise to an $n$-cluster tilting subcategory
\[
\add\{X[in] \mid X \in \cC,\; i\in \ZZ \} \subseteq D^b(\mo \La)
\]
of the bounded derived category $D^b(\mo \La)$, which is $(n+2)$-angulated by \cite{GKO}.

For $\La$ with $\gldim \La > n$ much fewer results are known. One of the cases that has received some attention is when $\La$ is selfinjective (see \cite{EH, IO2, DI}). In this case the stable category $\smo{\La}$ is triangulated and the image of $\cC$ in $\smo{\La}$ is $n$-cluster tilting but not necessarily $(n+2)$-angulated. Some algebras $\La$ with $\gldim \La > n$, which are not selfinjective but admit $n$-cluster tilting subcategories have been found by the third author \cite{Vas1, Vas2, Vas3, Vas4}. Another notable family of algebras admitting $n$-cluster tilting subcategories and exhibiting a variety of homological properties are the higher Nakayama algebras introduced in \cite{JK}.

Iyama and Jasso introduced the notion of $n\ZZ$-cluster tilting in \cite{IJ}, which resolves certain issues that appear in particular when $\gldim \La > n$. This property is significantly stronger than $n$-cluster tilting (see Definition~\ref{def:nZ-cluster tilting} for the definition). In particular, if $\gldim \La < \infty$ and there is $\cC \subseteq \mo \La$, $n\ZZ$-cluster tilting, then $\gldim \La \in n\ZZ$. On the other hand if  $\gldim \La = n$, then any $n$-cluster tilting subcategory is $n\ZZ$-cluster tilting. One of the benefits of this notion was found by the second author, who showed in \cite{Kva} that every $n\ZZ$-cluster tilting subcategory of $\mo \La$ gives rise to an $n\ZZ$-cluster tilting subcategory of the singularity category $D_{\operatorname{sing}}(\La)$ (see Section~\ref{sec:singularity categories} for relevant definitions). In the triangulated setting $n\ZZ$-cluster tilting means precisely closure under $n$-fold suspension and so $(n+2)$-angulated categories are obtained in this way. Note that if $\gldim \La < \infty$, then $D_{\operatorname{sing}}(\La) = 0$ and if $\La$ is selfinjective, then $D_{\operatorname{sing}}(\La) = \smo{\La}$. 

For these reasons given above we are motivated to search for algebras $\La$ admitting $n\ZZ$-cluster tilting subcategories $\cC \subseteq \mo \La$. In particular, we are interested in cases when $\gldim \La = \infty$ and $\La$ is not selfinjective. 

We limit our search to Nakayama algebras (see Section~\ref{sec:nakayama algebras} for relevant definitions) as it is a class of algebras that is not too large and for which computations can be done easily. Still it allows for algebras of the complex homological behaviour we are looking for. Our main result is a classification of all Nakayama algebras that admit an $n\ZZ$-cluster tilting subcategory. Moreover, for each algebra in our classifying list we describe all its $n\ZZ$-cluster tilting subcategories explicitly. We note that although there are many results giving examples of algebras with $n$-cluster tilting subcategories as mentioned above, there are few that give a complete classification for a given family of algebras. Notable exceptions for which classification results have been obtained are selfinjective Nakayama algebras \cite{DI}, radical square zero algebras \cite{Vas4} and gentle algebras \cite{HJS}.

Our classification is subdivided into four cases depending on the quiver with relation describing the Nakayama algebra $\Lambda$. With respect to the quiver we subdivide depending on if it acyclic or not. With respect to the relations we subdivide depending on if the relations are homogeneous (meaning that the relations are given by all paths of some fixed length) or not. The classification for homogeneous relations is given in Theorem~\ref{thrm:homogeneous Nakayama with nZ-ct}. The classification for non-homogeneous relations  is given in Theorem~\ref{thrm:acyclic non-homogeneous case} (acyclic) and Theorem~\ref{thrm:cyclic non-homogeneous case} (cyclic)

The homogeneous acyclic Nakayama algebras contain all the ones of global dimension $n$ that admit an $n\ZZ$-cluster tilting subcategory. Since $n$-cluster tilting is equivalent to $n\ZZ$-cluster tilting for algebras of global dimension n, we can rely on a previous classification result due to the third author \cite[Theorem 3]{Vas1} to determine these algebras. For the non-homogeneous acyclic Nakayama algebras it turns out that we can cover all cases using the method of gluing developed in \cite{Vas3}.

The homogeneous cyclic Nakayama algebras are the same as the selfinjective ones. For these Darp{\"o} and Iyama \cite{DI} have classified which of them admit an $n$-cluster tilting subcategory. Thus our problem is reduced to finding out which among these are $n\ZZ$-cluster tilting. For the non-homogeneous cyclic Nakayama algebras it turns out that we can cover all cases using the self-gluing as developed in \cite{Vas2}. The algebras that appear in this final case turn out to be homologically the least regular, in that they are not Iwanaga--Gorenstein. We recall that an algebra is called \emph{Iwanaga--Gorenstein} if it has finite left and right injective dimensions as a module over itself.

We may also subdivide our classification from a homological point of view. In summary it goes as follows. To obtain a Nakayama algebra $\La$ with an $n\ZZ$-cluster tilting subcategory, decide if you prefer $\gldim \La = rn$ for some $r \ge 1$, for $\La$ to be non-Iwanaga--Gorenstein or for $\La$ to be selfinjective. In the first case pick any $r$ Nakayama algebras of global dimension $n$, each admitting an $n\ZZ$-cluster tilting module, and glue them together in any order. For the second option do the same and then self-glue the result. In these cases the $n\ZZ$-cluster tilting subcategory is unique. For $\La$ selfinjective, make sure that $n$ divides the number of simples and $\La$ has Loewy length $2$ or $n+2$. In that case $\La$ admits precisely $n$ distinct $n\ZZ$-cluster tilting subcategories. These are the only options as every other Nakayama algebra does not admit an $n\ZZ$-cluster tilting subcategory.

In Section~\ref{sec:singularity categories} we let $\La$ be a Nakayama algebra with an $n\ZZ$-cluster tilting subcategory and study its singularity category $D_{\operatorname{sing}}(\La)$. We describe the canonical functor from the module category to the singularity category and the image of its $n\ZZ$-cluster tilting subcategory. If $\La$ is acyclic, then $\gldim \La < \infty$ and its singularity category is zero, so there is nothing to discuss. If $\La$ is selfinjective then the singularity category coincides with the stable module category and there is a bijection between $n\ZZ$-cluster tilting subcategories in the module category and in the singularity category. The final case to consider is when $\La$ is cyclic and non-homogeneous. We use a description of singularity categories of Nakayama algebras due to Shen \cite{Shen}, to show that $D_{\operatorname{sing}}(\La)$ is equivalent to the stable category of a radical square zero cyclic Nakayama algebra $\Gamma$, see Corollary~\ref{cor:SingCatNonHomog}. By our previous results we know that $\Gamma$ admits precisely $n$ distinct $n\ZZ$-cluster tilting subcategories and we determine which of them corresponds to the unique $n\ZZ$-cluster tilting subcategory coming from $\La$.

By our results we know exactly which $n\ZZ$-cluster tilting subcategories appear in singularity categories of Nakayama algebras. Since they all have finitely many indecomposable objects they can equally be described as $n\ZZ$-cluster tilting objects by taking the direct sum of all indecomposables. It has recently been shown by Jasso and Muro \cite{JM} that any algebraic triangulated category with an $n\ZZ$-cluster tilting objects is determined by the endomorphism algebra of that object (under the assumption that this endomorphism algebra is finite dimensional over a perfect ground field). As a consequence any algebraic triangulated category admitting an $n\ZZ$-cluster tilting object with the same endomorphism algebra as one appearing in this paper must be equivalent to a singularity category of a Nakayama algebra.

\section{Preliminaries}

\subsection{Conventions and notation} 
Let us start by setting conventions and introducing notation. Throughout this paper we fix a positive integer $n \ge 2$ and a ground field $\K$. All subcategories are assumed to be full. By algebra we mean associative $\K$-algebra and by module we mean left module. We denote by $D$ the duality $\Hom_\K(-,\K)$. 

Let $\La$ be a finite dimensional algebra. We consider the category $\mo{\La}$ of finitely generated left $\La$-modules. We denote by $\smo{\La}$ the \emph{projectively stable module category} of $\La$, that is the category with objects the same as $\mo{\La}$ and morphisms given by $\underline{\Hom}(M,N)=\Hom(M,N)/\mathcal{P}(M,N)$ where $\mathcal{P}(M,N)$ denotes the subspace of morphisms factoring through projective modules. We denote by $\om:\smo{\La} \to \smo{\La} $ the \emph{syzygy functor} defined by $\om(M)$ being the kernel of a projective cover $P\twoheadrightarrow M$. The \emph{injectively stable module category} $\cmo{\La}$ and the \emph{cosyzygy functor} $\om^{-}:\cmo{\La} \to \cmo{\La}$ are defined dually. We consider the \emph{$n$-Auslander--Reiten translations} $\tn:\smo{\La}\to\cmo{\La}$ and $\tno:\cmo{\La}\to\smo{\La}$ defined by $\tn=\tau\om^{n-1}$ and $\tno=\tau^{-}\om^{-(n-1)}$, where $\tau$ and $\tau^{-}$ denote the usual Auslander--Reiten translations.

For a module $M\in\mo{\La}$ we denote by $\add(M)$ the subcategory of $\mo{\La}$ containing all direct summands of finite direct sums of $M$. For $i=1,\ldots,s$ let $M_i\in \mo{\La}$ be a module and let $\cC_i=\add(M_i)$. We define \[\add(\cC_1,\ldots,\cC_s)\coloneqq\add(M_1,\ldots, M_s)\coloneqq \add\left(\bigoplus_{i=1}^s M_i\right).\]

For a finite quiver $Q$ we denote its path algebra over $\K$ by $\K Q$. If $\alpha$ is an arrow in $Q$ from $i$ to $j$ we write $i \overset{\alpha}\to j$. For each vertex $i$ in $Q$ we denote the path of length $0$ at $i$ by $\epsilon_i$. Multiplication in $\K Q$ is defined by concatenation of paths in such a way that if $i \overset{\alpha}\to j$ is an arrow, then $\epsilon_j\alpha\epsilon_i = \alpha$. We denote the two sided $\K Q$-ideal generated by all arrows by $R_Q = R$. A two sided $\K Q$-ideal $I$ is called \emph{admissible} if $R^2 \supseteq I \supseteq R^N$ for some $N \ge 2$.

\subsection{\texorpdfstring{$n\ZZ$}{nZ}-cluster tilting subcategories}
Let $\cD$ be a subcategory of a category $\cC$ and let $x\in\cC$. A \emph{right $\cD$-approximation} of $x$ is a morphism $f\colon a\to x$ with $a\in \cD$ such that all morphisms $g\colon b\to x$ with $b\in \cD$ factor through $f$. If every $x\in\cC$ admits a right $\cD$-approximation, then we say that $\cD$ is \emph{contravariantly finite (in $\cC$)}. The notions \emph{left $\cD$-approximation} and \emph{covariantly finite} are defined dually. We say that $\cD$ is \emph{functorially finite (in $\cC$)} if $\cD$ is both contravariantly and covariantly finite. Notice that if $M\in\mo{\La}$, then $\add(M)$ is functorially finite.

We recall the following definition from \cite{IJ}.

\begin{definition}\label{def:nZ-cluster tilting}
\begin{enumerate}[label=(\alph*)]
\item We call a subcategory $\cC$ of $\mo \La$ an \emph{$n$-cluster tilting subcategory} if it is functorially finite and
\begin{align*}
\cC&=\{X\in\mo \La \mid \Ext^i_{\La}(\cC,X)=0 \text{ for all $0<i<n$}\}\\
&=\{X\in\mo \La \mid \Ext^i_{\La}(X,\cC)=0 \text{ for all $0<i<n$}\}.
\end{align*}
If moreover $\Ext^i_{\La}(\cC,\cC)\neq 0$ implies that $i\in n\ZZ$, then we call $\cC$ an \emph{$n\ZZ$-cluster tilting subcategory}.
\item Let $M\in\mo \La$. If $\add(M)$ is an $n$-cluster tilting subcategory (respectively $n\ZZ$-cluster tilting subcategory) of $\mo \La$, then we call $M$ an \emph{$n$-cluster tilting module} (respectively \emph{$n\ZZ$-cluster tilting module}).
\end{enumerate}
\end{definition}

We collect some basic properties of $n\ZZ$-cluster tilting subcategories in the following proposition. 

\begin{proposition}\label{prop:basic nZ-cluster tilting results}
Let $\La$ be a finite dimensional algebra and let $\cC\subseteq \mo \La$ be an $n$-cluster tilting subcategory. Then the following statements hold.
\begin{enumerate}
    \item[(a)] $\La\in\cC$ and $D(\La)\in\cC$.
    \item[(b)] Denote by $\cC_P$ and $\cC_I$ the sets of isomorphism classes of indecomposable non-projective respectively non-injective modules in $\cC$. Then $\tn$ and $\tno$ induce mutually inverse bijections
    \[\begin{tikzpicture}
        \node (0) at (0,0) {$\cC_P$};
        \node (1) at (2,0) {$\cC_I$.};

        \draw[-latex] (0) to [bend left=20] node [above] {$\tn$} (1);
        \draw[-latex] (1) to [bend left=20] node [below] {$\tno$} (0);
    \end{tikzpicture}\] 
    \item[(c)] $\om^i M$ is indecomposable for all $M\in \cC_P$ and $0<i<n$.
    \item[(d)] $\om^{-i}N$ is indecomposable for all $N\in \cC_I$ and $0<i<n$.
\end{enumerate}
Moreover, if $\La$ is representation-directed, then any subcategory of $\mo \La$ which is closed under finite direct sums and summands and satisfies (a)-(d) is an $n$-cluster tilting subcategory.
\end{proposition}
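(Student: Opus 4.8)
The plan is the following. Since a representation-directed algebra is representation-finite, $\mo{\La}$ has only finitely many indecomposables up to isomorphism; letting $M$ be the direct sum of one copy of each indecomposable module in $\cC$, closure under finite direct sums and summands gives $\cC=\add(M)$, so $\cC$ is functorially finite and it only remains to verify the two equalities in Definition~\ref{def:nZ-cluster tilting}(a). Write $\cC^{\perp}=\{X\in\mo{\La}\mid\Ext^i_\La(\cC,X)=0\text{ for }0<i<n\}$ and ${}^{\perp}\cC=\{X\in\mo{\La}\mid\Ext^i_\La(X,\cC)=0\text{ for }0<i<n\}$. I would first note that conditions (a)--(d) are self-dual under the duality $D\colon\mo{\La}\to\mo{\La^{\mathrm{op}}}$: it interchanges projectives with injectives, $\om$ with $\om^{-}$, and the classes $\cC_P$ and $\cC_I$, it intertwines $\tn$ with $\tno$, it satisfies $\Ext^i_\La(X,Y)\cong\Ext^i_{\La^{\mathrm{op}}}(DY,DX)$, and $\La^{\mathrm{op}}$ is again representation-directed. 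Therefore it suffices to prove (I) that $\Ext^i_\La(\cC,\cC)=0$ for $0<i<n$, which yields both $\cC\subseteq\cC^{\perp}$ and $\cC\subseteq{}^{\perp}\cC$, and (II) that $\cC^{\perp}\subseteq\cC$; applying (II) to the pair $(\La^{\mathrm{op}},D\cC)$ then gives ${}^{\perp}\cC\subseteq\cC$, completing the proof that $\cC$ is $n$-cluster tilting.

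For (I), let $X,Y\in\cC$ be indecomposable and $0<i<n$. If $X$ is projective or $Y$ is injective the group vanishes trivially, so assume $X\in\cC_P$ and $Y\in\cC_I$. Dimension shifting together with the Auslander--Reiten formula gives $\Ext^i_\La(X,Y)\cong\Ext^1_\La(\om^{i-1}X,Y)\cong D\,\overline{\Hom}(Y,\tau\om^{i-1}X)$ and, dually, $\Ext^i_\La(X,Y)\cong\Ext^1_\La(X,\om^{-(i-1)}Y)\cong D\,\underline{\Hom}(\tau^{-}\om^{-(i-1)}Y,X)$; by (c) and (d) the modules $\om^{i-1}X$ and $\om^{-(i-1)}Y$ are indecomposable, and if either of them is projective, respectively injective, the group already vanishes, so we may assume $\tau\om^{i-1}X$ and $\tau^{-}\om^{-(i-1)}Y$ are nonzero indecomposables. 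The crux is to combine these identities with representation-directedness: the Auslander--Reiten quiver of $\La$ is finite and has no oriented cycles, so nonzero morphisms between indecomposables --- hence nonzero elements of $\overline{\Hom}$ and $\underline{\Hom}$ and nonzero $\Ext^1$-groups --- only occur ``locally'' and compatibly with the partial order on indecomposables induced by $\Hom$, in which $\om$, $\om^{-}$, $\tau$, $\tau^{-}$, and so also $\tn$ and $\tno$, move strictly. Locating the indecomposables $\om^{j}X$, $\om^{-j}Y$ ($0\le j\le n-1$), the translates $\tau\om^{i-1}X$, $\tau^{-}\om^{-(i-1)}Y$, and, via (b), $\tn X\in\cC_I$ and $\tno Y\in\cC_P$ relative to the indecomposables of $\cC$, and feeding the two families of isomorphisms back into each other as $i$ varies, one checks that all relevant (co)stable morphisms factor through a projective, respectively injective, module; hence $\Ext^i_\La(X,Y)=0$.

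For (II), let $X$ be an indecomposable module with $\Ext^i_\La(\cC,X)=0$ for all $0<i<n$, and suppose for contradiction that $X\notin\cC$; by (a) the module $X$ is then neither projective nor injective. For indecomposable $Z\in\cC$ and $0<i<n$ one has, again by dimension shifting and the Auslander--Reiten formula, $\Ext^i_\La(Z,X)\cong D\,\overline{\Hom}(X,\tau\om^{i-1}Z)\cong D\,\underline{\Hom}(\tau^{-}\om^{-(i-1)}X,Z)$, where $\om^{i-1}Z$ is indecomposable when $Z\in\cC_P$ by (c). Using that (a) puts all injectives in $\cC$ and that (b) makes $\cC$ closed under $\tn$ and $\tno$ on the relevant indecomposables --- so that the indecomposables of $\cC$ are evenly spread along $\tn$-orbits and $\om$-chains in the Auslander--Reiten quiver --- one places $X$ in the partial order. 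Since $X$ is not a summand of $M$, it must sit strictly between two consecutive indecomposables of $\cC$ along such an orbit or chain, and then representation-directedness produces an indecomposable $Z\in\cC$ and an index $0<i<n$ together with a nonzero morphism, not factoring through a projective, that witnesses $\Ext^i_\La(Z,X)\neq0$ --- a contradiction. Hence $X\in\cC$, proving (II).

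The step I expect to be the main obstacle is the combinatorial core of (I) and (II): translating conditions (b)--(d) into exact statements about relative positions in the finite, acyclic Auslander--Reiten quiver of $\La$, and deducing from these the required (non-)vanishing of $\Hom$-, $\overline{\Hom}$-, $\underline{\Hom}$- and $\Ext^1$-groups between indecomposables. This is exactly where the hypothesis that $\La$ is representation-directed is used in an essential way --- it makes all these computations ``short-range'' and cycle-free --- and one also uses here that representation-directedness forces $\gldim\La<\infty$, needed to dispose of the degenerate cases in which an iterated syzygy or cosyzygy becomes zero or (co)projective.
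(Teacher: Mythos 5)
There is a genuine gap, on two levels. First, your proposal only addresses the final ``moreover'' statement: you never prove (or even mention how to obtain) parts (b), (c), (d) of the forward direction, i.e.\ that for an arbitrary finite dimensional algebra the $n$-Auslander--Reiten translations $\tn$, $\tno$ give mutually inverse bijections $\cC_P\leftrightarrow\cC_I$ and that $\om^iM$, $\om^{-i}N$ stay indecomposable for $0<i<n$. These are not formal consequences of the definition (the paper obtains them by citing Iyama and an earlier result of Vaso), so a proof of the proposition cannot simply omit them; only part (a) is the easy consequence of $\Ext^i_\La(\La,-)=0=\Ext^i_\La(-,D(\La))$.

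Second, even for the converse direction your argument is not actually carried out at its crucial points. The reduction steps are fine: $\cC=\add(M)$ is functorially finite, the duality to $\La^{\mathrm{op}}$ legitimately halves the work, and the dimension-shifting/Auslander--Reiten-formula identities you write down are correct. But both (I), the vanishing $\Ext^i_\La(\cC,\cC)=0$ for $0<i<n$, and (II), the maximality $\{X\mid \Ext^i_\La(\cC,X)=0 \text{ for } 0<i<n\}\subseteq\cC$, are settled only by phrases such as ``one checks that all relevant (co)stable morphisms factor through a projective'' and ``representation-directedness produces \ldots a nonzero morphism \ldots that witnesses $\Ext^i_\La(Z,X)\neq 0$''. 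For a general representation-directed algebra the Auslander--Reiten quiver is not a union of linear chains, so the picture of $X$ sitting ``between two consecutive indecomposables of $\cC$ along an orbit or chain'' is a heuristic, not an argument; extracting the required non-vanishing from conditions (a)--(d) is precisely the nontrivial content of the result the paper invokes (Vaso's Theorem 1 in \cite{Vas1}), whose proof needs a careful analysis of the positions of $\om$-orbits and $\tn$-orbits in the quiver. You flag this yourself as ``the main obstacle'', which is an accurate self-assessment: the combinatorial core is missing, so the proposal does not constitute a proof. By contrast, the paper's own proof consists of the easy observation for (a) together with citations for (b)--(d) and for the converse.
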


\begin{proof}
Part (a) follows immediately by the definition of an $n$-cluster tilting subcategory and the facts that $\Ext_{\La}^i(\La,M)=0$ and $\Ext_{\La}^i(M,D(\La))=0$ for all $M\in \mo \La$. For part (b) we refer to \cite[Section 1.4.1]{Iya1}. For parts (c) and (d) we refer to \cite[Corollary 3.3]{Vas1}. For the reverse implication if $\La$ is representation-directed, we refer to \cite[Theorem 1]{Vas1}. 
\end{proof}

\begin{corollary}\label{cor:unique n-ct for representation-directed}
If $\La$ is representation-directed, then any $n$-cluster tilting subcategory of $\mo \La$ is unique and given by $\cC=\add\left\{\tau_n^{-r}(\La)\mid r\geq 0\right\}$.
\end{corollary}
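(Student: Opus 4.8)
The plan is to determine any $n$-cluster tilting subcategory $\cC\subseteq\mo\Lambda$ completely by sandwiching it between two copies of the explicit candidate $\cD\coloneqq\add\{\tau_n^{-r}(\Lambda)\mid r\ge 0\}$: first show $\cD\subseteq\cC$ using only Proposition~\ref{prop:basic nZ-cluster tilting results}(a)--(b), then show $\cC\subseteq\cD$ using representation-directedness. Since the resulting description of $\cC$ makes no reference to $\cC$ beyond its being $n$-cluster tilting, uniqueness is an immediate byproduct.

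For the inclusion $\cD\subseteq\cC$ I would induct on $r$. The case $r=0$ is Proposition~\ref{prop:basic nZ-cluster tilting results}(a). For the inductive step, assume every indecomposable summand of $\tau_n^{-r}(\Lambda)$ lies in $\cC$. Such a summand $Y$ is either injective, in which case $\tau_n^-(Y)=0$ and contributes nothing, or non-injective, in which case $Y\in\cC_I$ and Proposition~\ref{prop:basic nZ-cluster tilting results}(b) gives $\tau_n^-(Y)\in\cC_P\subseteq\cC$. As $\cC$ is closed under finite direct sums, $\tau_n^{-(r+1)}(\Lambda)=\bigoplus_Y\tau_n^-(Y)\in\cC$.

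The reverse inclusion $\cC\subseteq\cD$ is the heart of the matter. Let $X$ be an indecomposable object of $\cC$. If $X$ is non-projective then $X\in\cC_P$, so $\tau_n(X)\in\cC_I\subseteq\cC$ and $X=\tau_n^-(\tau_n(X))$ by Proposition~\ref{prop:basic nZ-cluster tilting results}(b); iterating, $X=\tau_n^{-r}\bigl(\tau_n^r(X)\bigr)$ for as long as $\tau_n^r(X)$ remains non-projective. It suffices to show this iteration terminates, as it can only stop at an indecomposable projective summand $P$ of $\Lambda$, whence $X$ is a summand of $\tau_n^{-r}(\Lambda)$. To prove termination I would use that a representation-directed algebra is representation-finite and that the relation ``$M\preceq N$ if there is a chain of nonzero non-isomorphisms between indecomposables from $M$ to $N$'' is a (finite) partial order on indecomposables. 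I claim $\tau_n$ is strictly $\prec$-decreasing on $\cC_P$. Indeed, for any non-projective indecomposable $M$ the projective cover $P\twoheadrightarrow M$ together with the inclusion $\Omega M\hookrightarrow P$ yields nonzero non-isomorphisms $\Omega M\to P_j\to M$ for a suitable indecomposable summand $P_j$ of $P$, so $\Omega M\prec M$; likewise the almost split sequence $0\to\tau M\to E\to M\to 0$ has $E\neq 0$, giving $\tau M\prec M$ (here representation-directedness also rules out $\tau M\cong M$). For $M\in\cC_P$, Proposition~\ref{prop:basic nZ-cluster tilting results}(c) shows $\Omega^i M$ is indecomposable for $0<i<n$, and non-projectivity of all these syzygies follows since otherwise $\tau_n(M)=\tau\Omega^{n-1}M$ would vanish, contradicting Proposition~\ref{prop:basic nZ-cluster tilting results}(b); hence $M\succ\Omega M\succ\cdots\succ\Omega^{n-1}M\succ\tau\Omega^{n-1}M=\tau_n(M)$. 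Thus $\tau_n$ has no periodic points in $\cC$, and as $\cC$ has only finitely many indecomposables the chain $X,\tau_n(X),\tau_n^2(X),\dots$ must reach a projective.

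Putting the two inclusions together gives $\cC=\add\{\tau_n^{-r}(\Lambda)\mid r\ge 0\}$, a description independent of $\cC$, so $n$-cluster tilting subcategories are unique when they exist. The step I expect to be the main obstacle is the termination argument: one must be careful that the two height-decreasing facts for $\Omega$ and for $\tau$ are genuine (the relevant syzygies really are non-projective, so that the almost split sequence at $\Omega^{n-1}M$ is available) and that they compose to a strict decrease under $\tau_n$; the rest is formal bookkeeping with Proposition~\ref{prop:basic nZ-cluster tilting results}.
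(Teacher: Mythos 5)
Your proof is correct, but it takes a genuinely different route from the paper's in the decisive second half. Both arguments begin the same way: the candidate $\add\{\tau_n^{-r}(\La)\mid r\geq 0\}$ is contained in any $n$-cluster tilting subcategory by Proposition~\ref{prop:basic nZ-cluster tilting results}(a) and (b), exactly as in your induction on $r$. The paper then finishes by showing that the candidate is itself $n$-cluster tilting, verifying conditions (b)--(d) of Proposition~\ref{prop:basic nZ-cluster tilting results} and invoking the recognition theorem for representation-directed algebras from \cite{Vas1}, after which equality follows because two nested $n$-cluster tilting subcategories coincide. You instead prove the reverse inclusion directly: for an indecomposable $X\in\cC$ you iterate $\tn$, and you guarantee termination by observing that for $M\in\cC_P$ the modules $\om^iM$, $0<i<n$, are indecomposable (Proposition~\ref{prop:basic nZ-cluster tilting results}(c)) and non-projective (else $\tn(M)=0$, contradicting Proposition~\ref{prop:basic nZ-cluster tilting results}(b)), so that $\tn(M)$ strictly precedes $M$ in the path order on indecomposables which representation-directedness makes acyclic; since a representation-directed algebra is representation-finite, the $\tn$-orbit of $X$ must reach an indecomposable projective, exhibiting $X$ as a summand of some $\tau_n^{-r}(\La)$. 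The trade-off: the paper's proof is shorter but leans on the external criterion from \cite{Vas1}, whereas yours is more self-contained, using only the standard facts that representation-directed implies representation-finite and that the absence of cycles of nonzero non-isomorphisms yields a strict partial order, and it gives a concrete reason why every object of $\cC$ lies in the $\tno$-orbit of $\La$ rather than deducing this from maximality. Two small remarks: your non-projectivity argument for the intermediate syzygies is essentially the one the paper records later (for Nakayama algebras) in Lemma~\ref{lem:n-th syzygy and n-th cosyzygy are indecomposable}; and the parenthetical about excluding $\tau M\cong M$ is superfluous, since once acyclicity is assumed the chain of nonzero non-isomorphisms already gives strict descent.
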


\begin{proof}
If $\cD\subseteq \mo\La$ is an $n$-cluster tilting subcategory, then $\cC\subseteq \cD$ by Proposition~\ref{prop:basic nZ-cluster tilting results}(a) and (b). Since $\cD$ is an $n$-cluster tilting subcategory, it follows that $\cC$ satisfies Proposition~\ref{prop:basic nZ-cluster tilting results}(b)--(d). Since we also have $\La\in\cC$, it follows that $\cC$ is $n$-cluster tilting by \cite[Theorem 1]{Vas1}. Since $\cC\subseteq \cD$ are both $n$-cluster tilting subcategories, we conclude that $\cC=\cD$.
\end{proof}

\begin{proposition}\label{prop:basic nZ-cluster tilting results2}
	Let $\La$ be a finite dimensional algebra and let $\cC\subseteq \mo \La$ be an $n$-cluster tilting subcategory. Then the following statements are equivalent.
	\begin{enumerate}
		\item[(a)] $\cC$ is $n\ZZ$-cluster tilting.
		\item[(b)] $\om^{n}(\cC)\subseteq \cC$.
		\item[(c)] $\om^{-n}(\cC)\subseteq \cC$.
	\end{enumerate}
	In this case, the following statements hold.
	\begin{enumerate}
		\item[(d)] If $M\in \cC_P$, then $\om^-\tau(M)\in \cC$.
		\item[(e)] If $N\in \cC_I$, then $\om\tau^-(N)\in\cC$.  
	\end{enumerate}
\end{proposition}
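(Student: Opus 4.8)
The plan is to establish the equivalence of (a), (b) and (c) by dimension shifting, playing the two Ext-characterisations of an $n$-cluster tilting subcategory from Definition~\ref{def:nZ-cluster tilting} against each other, and then to deduce (d) and (e) from the description of $\tn$ and $\tno$ in Proposition~\ref{prop:basic nZ-cluster tilting results}(b) together with the standard fact that $\tau\colon\smo\La\to\cmo\La$ and $\tau^-\colon\cmo\La\to\smo\La$ are mutually inverse equivalences.

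For (a)$\Rightarrow$(b) I would take $M\in\cC$ and test $\om^n M$ against the description $\cC=\{X\mid\Ext^i_\La(X,\cC)=0\ \text{for}\ 0<i<n\}$: for every $N\in\cC$ and $i\ge1$ the syzygy shift gives $\Ext^i_\La(\om^n M,N)\cong\Ext^{i+n}_\La(M,N)$, which vanishes for $0<i<n$ since then $i+n\notin n\ZZ$ and $\cC$ is $n\ZZ$-cluster tilting; any projective summands of $\om^n M$ cause no trouble because $\La\in\cC$ by Proposition~\ref{prop:basic nZ-cluster tilting results}(a) and $\cC$ is closed under summands. For (b)$\Rightarrow$(a) I would take $M,N\in\cC$ and $j\ge1$ with $\Ext^j_\La(M,N)\ne0$ and write $j=qn+r$ with $0\le r<n$; it suffices to exclude $r\ge1$. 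In that case $j\ge n$ (as $\Ext^j_\La(M,N)$ vanishes for $0<j<n$), so $q\ge1$, and the syzygy shift gives $\Ext^j_\La(M,N)\cong\Ext^r_\La(\om^{qn}M,N)$; but $\om^{qn}M\in\cC$ by iterating (b), so this group vanishes, a contradiction. Hence $r=0$ and $j\in n\ZZ$. The equivalence (a)$\Leftrightarrow$(c) is proved in exactly the same way, using instead the description $\cC=\{X\mid\Ext^i_\La(\cC,X)=0\ \text{for}\ 0<i<n\}$ and the cosyzygy shift $\Ext^i_\La(N,\om^{-n}M)\cong\Ext^{i+n}_\La(N,M)$ for $i\ge1$.

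For (d), assume the equivalent conditions hold and let $M\in\cC_P$. By Proposition~\ref{prop:basic nZ-cluster tilting results}(b) we have $\tno\tn M\cong M$, that is $\tau^-\bigl(\om^{-(n-1)}(\tn M)\bigr)\cong M$ in $\smo\La$. Applying the equivalence $\tau$ and using $\tau\tau^-\cong\operatorname{id}_{\cmo\La}$ yields $\om^{-(n-1)}(\tn M)\cong\tau M$ in $\cmo\La$, and applying $\om^-$ once more gives $\om^{-n}(\tn M)\cong\om^-\tau M$ in $\cmo\La$. Now $\tn M\in\cC_I\subseteq\cC$, so $\om^{-n}(\tn M)\in\cC$ by condition (c); hence $\om^-\tau M$ coincides with an object of $\cC$ up to injective summands, and since every injective lies in $\cC$ (being a summand of a power of $D(\La)\in\cC$) and $\cC$ is closed under direct summands, we conclude $\om^-\tau M\in\cC$. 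Part (e) is entirely dual: for $N\in\cC_I$ one has $\tn\tno N\cong N$, so applying $\tau^-$ gives $\om^{n-1}(\tno N)\cong\tau^- N$ in $\smo\La$ and applying $\om$ gives $\om^n(\tno N)\cong\om\tau^- N$; since $\tno N\in\cC_P\subseteq\cC$ and $\om^n(\cC)\subseteq\cC$ by condition (b), this forces $\om\tau^- N\in\cC$.

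I do not expect a genuine obstacle: everything reduces to homological dimension shifting, the division-with-remainder trick, and Proposition~\ref{prop:basic nZ-cluster tilting results}(a),(b). The only points that need a little care are that the iterated syzygies and cosyzygies are defined only up to projective, respectively injective, summands — which is harmless precisely because all projectives and injectives belong to $\cC$ — and that in the proofs of (d) and (e) one uses that (a), (b) and (c) hold simultaneously, so that both $\om^n(\cC)\subseteq\cC$ and $\om^{-n}(\cC)\subseteq\cC$ are available.
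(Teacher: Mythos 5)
Your proposal is correct. For parts (d) and (e) it is essentially the paper's own argument: the paper writes $M\isom\tno(N)$ with $N\in\cC_I$, uses indecomposability of $\om^{-(n-1)}(N)$ (Proposition~\ref{prop:basic nZ-cluster tilting results}(c),(d)) to get $\tau(M)\isom\om^{-(n-1)}(N)$, and then $\om^-\tau(M)\isom\om^{-n}(N)\in\cC$ by (c); your version with $N=\tn M$, phrased in the stable categories and cleaned up at the end by absorbing injective (resp.\ projective) summands using $D(\La)\in\cC$ (resp.\ $\La\in\cC$), is the same computation. The only genuine difference is in the equivalence of (a), (b), (c): the paper simply cites Iyama--Jasso \cite[Section 2.2]{IJ}, whereas you give the direct dimension-shifting proof, namely $\Ext^i_\La(\om^nM,N)\isom\Ext^{i+n}_\La(M,N)$ for $i\geq 1$ to get (a)$\Rightarrow$(b), and division with remainder $j=qn+r$ together with $\Ext^j_\La(M,N)\isom\Ext^r_\La(\om^{qn}M,N)$ (valid since $r\geq 1$ keeps all degrees positive, and $\om^{qn}M\in\cC$ by iterating (b), the zero module and projective summands causing no harm) for (b)$\Rightarrow$(a); dually for (c). This makes the statement self-contained at the cost of a few lines, and your attention to the two points of care — syzygies being defined only up to projective/injective summands, and the simultaneous availability of (b) and (c) when proving (d) and (e) — is exactly right.
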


\begin{proof}
For the equivalence between (a), (b) and (c) we refer to \cite[Section 2.2]{IJ}. For part (d) notice that if $M\in\cC_P$, then $M\isom \tno(N)$ for some $N\in\cC_I$ by Proposition \ref{prop:basic nZ-cluster tilting results} (b). By  Proposition \ref{prop:basic nZ-cluster tilting results} (d) it follows that $\om^{-(n-1)}(N)$ is indecomposable and not injective so
\[\tau(M)\isom \tau\tno(N) \isom \tau\tau^-\om^{-(n-1)}(N) \isom \om^{-(n-1)}(N).\]
Applying $\om^{-}$ in the above we have
\[\om^-\tau(M) \isom \om^{-n}(N),\]
where $\om^{-n}(N)\in\cC$ by (c). Hence $\om^{-}\tau(M)\in\cC$. Part (e) follows similarly.	
\end{proof}

\subsection{Nakayama algebras}\label{sec:nakayama algebras}
In this section we discuss (connected) Nakayama algebras in terms of quivers with relations as well as the shape of their Auslander--Reiten quivers. Note that everything stated is essentially known, but we still provide some proofs for the reader's convenience.

Let $m \ge 1$ be a positive integer and $Q_m \in \{A_m,\tilde{A}_m\}$, where $A_m$ and $\tilde{A}_m$ are the following quivers:
\[\begin{tikzpicture}
    \node (Q) at (-1,0) {$A_m:$};
    \node (1) at (0,0) {$m$}; 
    \node (2) at (2,0) {$m-1$};
    \node (3) at (4,0) {$m-2$};
    \node (4) at (6,0) {$\cdots$};
    \node (m-1) at (7.5,0) {$2$};
    \node (m) at (9,0) {$1$};
    
    \draw[-{Stealth[scale=0.5]}] (1) -- (2) node[draw=none,midway,above] {$\alpha_{m}$};
    \draw[-{Stealth[scale=0.5]}] (2) -- (3) node[draw=none,midway,above] {$\alpha_{m-1}$};
    \draw[-{Stealth[scale=0.5]}] (3) -- (4) node[draw=none,midway,above] {$\alpha_{m-2}$};
    \draw[-{Stealth[scale=0.5]}] (4) -- (m-1) node[draw=none,midway,above] {$\alpha_{3}$};
    \draw[-{Stealth[scale=0.5]}] (m-1) -- (m) node[draw=none,midway,above] {$\alpha_{2}$};
\end{tikzpicture}\]
\[\begin{tikzpicture}
    \node (Q) at (-4,0) {$\tilde{A}_m:$};
    \node (1) at (1,1.732050808) {$m$};
    \node (0) at (-1,1.732050808) {$1$};
    \node (m-1) at (-2,0) {$2$};
    \node (m-2) at (-1,-1.732050808) {$ $};
    \node (dots) at (1,-1.732050808) {$ $};
    \node (2) at (2,0) {$m-1$};
    
    \draw[-{Stealth[scale=0.5]}] (0) to [out=30, in=150]  node[draw=none, above] {$\alpha_{1}$} (1);
    \draw[-{Stealth[scale=0.5]}] (1) to [out=-30, in=90] node[draw=none, midway, right] {$\alpha_{m}$} (2);
    \draw[-{Stealth[scale=0.5]}] (m-1) to [out=90, in=-150] node[draw=none, midway, left] {$\alpha_{2}$} (0);
    \draw[-{Stealth[scale=0.5]}] (m-2) to [out=150, in=-90] node[draw=none, midway, left] {$\alpha_{3}$} (m-1);
    \draw[-{Stealth[scale=0.5]}] (2) to [out=-90, in=30] node[draw=none, midway, right] {$\alpha_{m-1}$} (dots);
    \draw[dotted] (dots) to [out=-150,in=-30] (m-2);
\end{tikzpicture}\]
We say that an algebra $\La$ is a \emph{Nakayama algebra} if $\La = KQ_m/I$ for some admissible ideal $I$. We call $\La$ \emph{cyclic} if $Q_m = \tilde{A}_m$ and \emph{acyclic} if $Q_m = A_m$. Moreover, we say that $\La$ is a \emph{Nakayama algebra with homogeneous relations} or simply a \emph{homogeneous Nakayama algebra} if $I = R_{Q_m}^l$ for some $l\ge 2$. Note that in both cases $Q_m$ has $m$ vertices, which is perhaps non-standard but means that $A_m$ is obtained from $\tilde{A}_m$ by removing the arrow $\alpha_1$, which is useful for our purposes. 

Our aim is to study $n\ZZ$-cluster tilting subcategories for Nakayama algebras. The case $\La = \K$ is not very interesting as $\La$ is semisimple and $\cC =\mo \La$ is the unique $n\ZZ$-cluster tilting subcategory for any $n$. To simplify the discussion we therefore exclude this case and assume from now on that the quiver of $\La$ has at least one arrow. Note that the restriction to connected Nakayama algebras is mainly for convenience as it does not affect the existence of $n\ZZ$-cluster tilting subcategories in any essential way.

The representation theory of Nakayama algebras is well-understood, see for example \cite[Chapter V]{ASS}. In particular, indecomposable modules over Nakayama algebras are uniserial, see \cite[Corollary V.3.6]{ASS}. To describe all indecomposable modules over a Nakayama algebra $\La$ we first introduce a $\K \tilde{A}_m$-module $M(i,j)$ for each pair of integers $i \le j$. We define $M(i,j)$ to have a basis $\{b_t \mid i \le t \le j\}$ and $\K \tilde{A}_m$-action defined by
\[
\epsilon_sb_t = 
\begin{cases} 
b_t &\mbox{if } s-t \in m\ZZ,\\ 
0 &\mbox{otherwise,}
\end{cases}
\quad
\mbox{and}
\quad
\alpha_sb_t = 
\begin{cases} 
b_{t-1} &\mbox{if } s-t \in m\ZZ \mbox{ and } i \le t-1,\\ 
0 &\mbox{otherwise.}
\end{cases}
\]
Note that $M(i+m,j+m) \isom M(i,j)$ by renaming basis vectors in the obvious way. For simplicity we will consider this isomorphism as an identity. Similarly, if $i \le k \le j$, then there is a monomorphism $M(i,k) \to M(i,j)$ and an epimorphism $M(i,j) \to M(k,j)$ obtained by forgetting some of the basis vectors. Hence we consider $M(i,k)$ and $M(k,j)$ as a submodule respectively quotient module of $M(i,j)$ in this case. 

To deal with acyclic Nakayama algebras we will consider $M(i,j)$ as a $\K A_m$-module whenever $\alpha_1M(i,j) = 0$. In this situation we may as well assume $1 \le i \le j \le m$.

Now let $\La$ be a Nakayama algebra. If $\La= \K \tilde{A}_m/I$ and $IM(i,j)  = 0$, then $M(i,j)$ defines a $\La$-module. Similarly, if $\La= \K A_m/I$ and $\alpha_1M(i,j) = 0$ as well as $IM(i,j)  = 0$, then $M(i,j)$ again defines a $\La$-module. We will indicate either of these situations simply by writing $M(i,j) \in \mo{\La}$ as a condition on the pair $(i,j)$.

The modules $M(i,j) \in \mo{\La}$ classify all indecomposable $\La$-modules up to isomorphism. Moreover, almost split sequences in $\mo{\La}$ are straightforward to compute for instance as in \cite[Theorem V.4.1]{ASS}. Thus we obtain the following description of the Auslander--Reiten quiver of $\La$.

\begin{proposition}\label{prop:AR quiver of Nakayama}
Let $\La=\K Q_m/I$ be a Nakayama algebra where $Q_m\in\{A_m,\tilde{A}_m\}$. Then the Auslander--Reiten quiver $\Gamma = \Gamma(\La)$ of $\La$ can be described as follows.
\begin{itemize}
    \item[$\bullet$] The set of vertices $\Gamma_0 = \{(i,j) \in \ZZ^2/\ZZ(m,m) \mid i \le j \mbox{ and } M(i,j) \in \mo{\La}\}$.
    \item[$\bullet$] For each $(i,j) \in \Gamma_0$ there is an arrow $(i,j) \to (i,j+1)$ if $(i,j+1) \in \Gamma_0$ and an arrow $(i,j) \to (i+1,j)$ if $(i+1,j) \in \Gamma_0$.
\end{itemize}
Moreover, all arrows $(i,j)\to (i,j+1)$ correspond to monomorphisms, all arrows $(i,j)\to (i+1,j)$ correspond to epimorphisms and the Auslander--Reiten translation is given by $\tau(i,j) = (i-1,j-1)$, whenever it is defined.
\end{proposition}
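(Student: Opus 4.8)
The plan is to deduce the whole structure of $\Gamma(\La)$ from the classification of indecomposables together with the explicit form of the almost split sequences, which for Nakayama algebras is classical (see e.g. \cite[Sections V.3--V.4]{ASS}). First I would record the vertex set. Since every indecomposable $\La$-module is uniserial and hence isomorphic to exactly one $M(i,j)$ with $M(i,j)\in\mo{\La}$, and since $M(i+m,j+m)\isom M(i,j)$ with no further coincidences, the isomorphism classes of indecomposables are parametrized precisely by the pairs $(i,j)\in\ZZ^2/\ZZ(m,m)$ with $i\le j$ and $M(i,j)\in\mo{\La}$ (note that both conditions are invariant under adding $(m,m)$); this is the asserted $\Gamma_0$. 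Along the way I would single out the two canonical families of maps: the monomorphism $M(i,j)\hookrightarrow M(i,j+1)$ (inclusion of a submodule) and the epimorphism $M(i,j)\twoheadrightarrow M(i+1,j)$ (passage to a quotient), each defined whenever the target lies in $\mo{\La}$. These will turn out to be exactly the irreducible morphisms, which will simultaneously settle the claims on arrows and on mono/epi.

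Next I would pin down $\tau$ and the almost split sequences. Since $\topp M(i,j)=S_j$, the projective cover of $M(i,j)$ is $P_j$, so $M(i,j)$ is non-projective precisely when $M(i-1,j)\in\mo{\La}$; dually $M(i,j)$ is non-injective precisely when $M(i,j+1)\in\mo{\La}$. Computing $\tau=D\operatorname{Tr}$ from the minimal projective presentation of $M(i,j)$ (or quoting the Nakayama case of \cite[Theorem V.4.1]{ASS}) yields $\tau M(i,j)\isom M(i-1,j-1)$ for non-projective $M(i,j)$, which is the stated formula, and shows that the almost split sequence ending at such a module is
\[
0 \to M(i-1,j-1) \to M(i-1,j)\oplus M(i,j-1) \to M(i,j)\to 0,
\]
with the convention that the summand $M(i,j-1)$ is omitted when $i=j$, i.e.\ when $M(i,j)$ is simple (then $M(i-1,j-1)=S_{i-1}$). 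Dually, the almost split sequence starting at a non-injective $M(i,j)$ has middle term $M(i,j+1)\oplus M(i+1,j)$.

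Finally I would assemble the arrow set. Every arrow of $\Gamma$ has a target which is either non-projective, in which case its source is an indecomposable summand of the middle term above, hence $M(i-1,j)$ or $M(i,j-1)$; or projective, say $M(i,j)=P_j$, in which case the unique irreducible morphism into it is the inclusion $\operatorname{rad}(P_j)=M(i,j-1)\hookrightarrow M(i,j)$ (and there is none when $P_j$ is simple). In all cases the arrow is either $(i,j-1)\to(i,j)$ or $(i-1,j)\to(i,j)$; reindexing, the arrows of $\Gamma$ are exactly the $(i,j)\to(i,j+1)$ with $(i,j+1)\in\Gamma_0$ and the $(i,j)\to(i+1,j)$ with $(i+1,j)\in\Gamma_0$, and conversely each such map is irreducible, being a summand of a minimal right (or left) almost split morphism into its target. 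Identifying these irreducible maps with the canonical mono- and epimorphisms of the first paragraph gives the statements that arrows $(i,j)\to(i,j+1)$ are monomorphisms and arrows $(i,j)\to(i+1,j)$ are epimorphisms, while $\tau(i,j)=(i-1,j-1)$ was already obtained. The only point needing real care is the bookkeeping in the cyclic case, where all indices live in $\ZZ/m\ZZ$, together with the degenerate boundary cases where a term of a middle term vanishes (simple modules, and projective or injective vertices); but since composition lengths are bounded by the Loewy length, each of these reduces locally to the familiar linear ($A_m$) picture and presents no genuine difficulty.
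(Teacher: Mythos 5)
Your proposal is correct and follows essentially the same route as the paper, which states the proposition without a detailed proof by invoking exactly the two ingredients you use: the classification of indecomposables over Nakayama algebras as the uniserial modules $M(i,j)$ (\cite[Corollary V.3.6]{ASS}) and the explicit almost split sequences from \cite[Theorem V.4.1]{ASS}. Your write-up simply fills in the bookkeeping (middle terms, the radical inclusion into projectives, and the boundary cases) that the paper leaves implicit.
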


To navigate the Auslander--Reiten quiver it is convenient to encode its shape as follows.

\begin{definition}
Let $\Gamma$ be the Auslander--Reiten quiver of a Nakayama algebra $\La$.

\begin{enumerate}[label=(\alph*)]
\item For $i \in \ZZ$ define $\rmax i \in \ZZ$ such that $(i,\rmax i) \in \Gamma_0$ with $\rmax i - i$ maximal.
\item For $j\in \ZZ$ define $\lmax j \in \ZZ$ such that $(\lmax j,j) \in \Gamma_0$ with $j- \lmax j$ maximal.
\end{enumerate}
\end{definition}

As an illustration we consider the following examples

\begin{example}\label{ex:AR quiver generic}
\begin{enumerate}
    \item[(a)] Let $\La=\K A_7/\langle \alpha_2\alpha_3\alpha_4, \alpha_5\alpha_6 \rangle$. The Auslander--Reiten quiver $\Gamma(\La)$ of $\La$ is
    \[
    \begin{tikzpicture}[scale=0.9, transform shape, baseline={(current bounding box.center)}]
    
    \tikzstyle{mod}=[rectangle, minimum width=6pt, draw=none, inner sep=1.5pt, scale=0.8]
    
    \node[mod] (11) at (0,0) {$(1,1)$};
    \node[mod] (22) at (1.4,0) {$(2,2)$};
    \node[mod] (33) at (2.8,0) {$(3,3)$};
    \node[mod] (44) at (4.2,0) {$(4,4)$};
    \node[mod] (55) at (5.6,0) {$(5,5)$};
    \node[mod] (66) at (7,0) {$(6,6)$};
    \node[mod] (77) at (8.4,0) {$(7,7)$\nospacepunct{.}};
    
    \draw[loosely dotted] (11.east) -- (22);
    \draw[loosely dotted] (22.east) -- (33);
    \draw[loosely dotted] (33.east) -- (44);
    \draw[loosely dotted] (44.east) -- (55);
    \draw[loosely dotted] (55.east) -- (66);
    \draw[loosely dotted] (66.east) -- (77);
        
    \node[mod] (12) at (0.7,0.7) {$(1,2)$};    
    \node[mod] (23) at (2.1,0.7) {$(2,3)$};    
    \node[mod] (34) at (3.5,0.7) {$(3,4)$};    
    \node[mod] (45) at (4.9,0.7) {$(4,5)$};    
    \node[mod] (56) at (6.3,0.7) {$(5,6)$};    
    \node[mod] (67) at (7.7,0.7) {$(6,7)$};  

    \draw[-{Stealth[scale=0.5]}] (11) -- (12);
    \draw[-{Stealth[scale=0.5]}] (22) -- (23);
    \draw[-{Stealth[scale=0.5]}] (33) -- (34);
    \draw[-{Stealth[scale=0.5]}] (44) -- (45);
    \draw[-{Stealth[scale=0.5]}] (55) -- (56);
    \draw[-{Stealth[scale=0.5]}] (66) -- (67);
    
    \draw[-{Stealth[scale=0.5]}] (12) -- (22);
    \draw[-{Stealth[scale=0.5]}] (23) -- (33);
    \draw[-{Stealth[scale=0.5]}] (34) -- (44);
    \draw[-{Stealth[scale=0.5]}] (45) -- (55);
    \draw[-{Stealth[scale=0.5]}] (56) -- (66);
    \draw[-{Stealth[scale=0.5]}] (67) -- (77);

    \draw[loosely dotted] (12.east) -- (23);
    \draw[loosely dotted] (23.east) -- (34);
    \draw[loosely dotted] (34.east) -- (45);
    \draw[loosely dotted] (56.east) -- (67);
    
    \node[mod] (13) at (1.4,1.4) {$(1,3)$};
    \node[mod] (24) at (2.8,1.4) {$(2,4)$};
    \node[mod] (35) at (4.2,1.4) {$(3,5)$};
    \node[mod] (57) at (7,1.4) {$(5,7)$};

    \draw[loosely dotted] (24.east) -- (35);    
    
    \draw[-{Stealth[scale=0.5]}] (12) -- (13);
    \draw[-{Stealth[scale=0.5]}] (23) -- (24);
    \draw[-{Stealth[scale=0.5]}] (34) -- (35);
    \draw[-{Stealth[scale=0.5]}] (56) -- (57);
    
    \draw[-{Stealth[scale=0.5]}] (13) -- (23);
    \draw[-{Stealth[scale=0.5]}] (24) -- (34);
    \draw[-{Stealth[scale=0.5]}] (35) -- (45);
    \draw[-{Stealth[scale=0.5]}] (57) -- (67);
    
    \node[mod] (25) at (3.5,2.1) {$(2,5)$};

    \draw[-{Stealth[scale=0.5]}] (24) -- (25);
    \draw[-{Stealth[scale=0.5]}] (25) -- (35);
    
    \end{tikzpicture}
    \]
    Moreover, 
    \[
    \rmax 1 = 3, \quad  \rmax 2 = 5, \quad  \rmax 3 = 5, \quad  \rmax 4 = 5, \quad  \rmax 5 = 7, \quad  \rmax 6 = 7, \quad  \rmax 7 = 7
    \]
    and
    \[
    \lmax 1 = 1, \quad  \lmax 2 = 1, \quad  \lmax 3 = 1, \quad  \lmax 4 = 2, \quad  \lmax 5 = 2, \quad  \lmax 6 = 5, \quad  \lmax 7 = 5.
    \]
    
    \item[(b)] Let $\tilde{\La}=\K \tilde{A}_7/\langle \alpha_2\alpha_3\alpha_4, \alpha_5\alpha_6,\alpha_7\alpha_1, \alpha_1\alpha_2\alpha_3 \rangle$ be a cyclic Nakayama algebra. The Auslander--Reiten quiver $\Gamma(\tilde{\La})$ of $\tilde{\La}$ is
   \[
    \begin{tikzpicture}[scale=0.9, transform shape, baseline={(current bounding box.center)}]
    
    \tikzstyle{mod}=[rectangle, minimum width=6pt, draw=none, inner sep=1.5pt, scale=0.8]
    
    \node[mod] (11) at (0,0) {$(1,1)$};
    \node[mod] (22) at (1.4,0) {$(2,2)$};
    \node[mod] (33) at (2.8,0) {$(3,3)$};
    \node[mod] (44) at (4.2,0) {$(4,4)$};
    \node[mod] (55) at (5.6,0) {$(5,5)$};
    \node[mod] (66) at (7,0) {$(6,6)$};
    \node[mod] (77) at (8.4,0) {$(7,7)$};
    \node[mod] (88) at (9.8,0) {$(1,1)$};
    
    \draw[loosely dotted] (11.east) -- (22);
    \draw[loosely dotted] (22.east) -- (33);
    \draw[loosely dotted] (33.east) -- (44);
    \draw[loosely dotted] (44.east) -- (55);
    \draw[loosely dotted] (55.east) -- (66);
    \draw[loosely dotted] (66.east) -- (77);
    \draw[loosely dotted] (77.east) -- (88);
        
    \node[mod] (12) at (0.7,0.7) {$(1,2)$};    
    \node[mod] (23) at (2.1,0.7) {$(2,3)$};    
    \node[mod] (34) at (3.5,0.7) {$(3,4)$};    
    \node[mod] (45) at (4.9,0.7) {$(4,5)$};    
    \node[mod] (56) at (6.3,0.7) {$(5,6)$};    
    \node[mod] (67) at (7.7,0.7) {$(6,7)$};  
    \node[mod] (78) at (9.1,0.7) {$(7,8)$};
    \node[mod] (89) at (10.5,0.7) {$(1,2)$};

    \draw[-{Stealth[scale=0.5]}] (11) -- (12);
    \draw[-{Stealth[scale=0.5]}] (22) -- (23);
    \draw[-{Stealth[scale=0.5]}] (33) -- (34);
    \draw[-{Stealth[scale=0.5]}] (44) -- (45);
    \draw[-{Stealth[scale=0.5]}] (55) -- (56);
    \draw[-{Stealth[scale=0.5]}] (66) -- (67);
    \draw[-{Stealth[scale=0.5]}] (77) -- (78);
    \draw[-{Stealth[scale=0.5]}] (88) -- (89);
    
    \draw[-{Stealth[scale=0.5]}] (12) -- (22);
    \draw[-{Stealth[scale=0.5]}] (23) -- (33);
    \draw[-{Stealth[scale=0.5]}] (34) -- (44);
    \draw[-{Stealth[scale=0.5]}] (45) -- (55);
    \draw[-{Stealth[scale=0.5]}] (56) -- (66);
    \draw[-{Stealth[scale=0.5]}] (67) -- (77);
    \draw[-{Stealth[scale=0.5]}] (78) -- (88);

    \draw[loosely dotted] (12.east) -- (23);
    \draw[loosely dotted] (23.east) -- (34);
    \draw[loosely dotted] (34.east) -- (45);
    \draw[loosely dotted] (56.east) -- (67);
    \draw[loosely dotted] (78.east) -- (89);    
    
    \node[mod] (13) at (1.4,1.4) {$(1,3)$};
    \node[mod] (24) at (2.8,1.4) {$(2,4)$};
    \node[mod] (35) at (4.2,1.4) {$(3,5)$};
    \node[mod] (57) at (7,1.4) {$(5,7)$};
    \node[mod] (79) at (9.8,1.4) {$(7,9)$};
    \node[mod] (810) at (11.2,1.4) {$(1,3)$};

    \draw[loosely dotted] (24.east) -- (35);    
    
    \draw[-{Stealth[scale=0.5]}] (12) -- (13);
    \draw[-{Stealth[scale=0.5]}] (23) -- (24);
    \draw[-{Stealth[scale=0.5]}] (34) -- (35);
    \draw[-{Stealth[scale=0.5]}] (56) -- (57);
    \draw[-{Stealth[scale=0.5]}] (78) -- (79);
    \draw[-{Stealth[scale=0.5]}] (89) -- (810);
        
    \draw[-{Stealth[scale=0.5]}] (13) -- (23);
    \draw[-{Stealth[scale=0.5]}] (24) -- (34);
    \draw[-{Stealth[scale=0.5]}] (35) -- (45);
    \draw[-{Stealth[scale=0.5]}] (57) -- (67);
    \draw[-{Stealth[scale=0.5]}] (79) -- (89);
    
    \node[mod] (25) at (3.5,2.1) {$(2,5)$};

    \draw[-{Stealth[scale=0.5]}] (24) -- (25);
    \draw[-{Stealth[scale=0.5]}] (25) -- (35);
    
    \end{tikzpicture}
    \]
    where $(1,1)$, $(1,2)$ and $(1,3)$ have been drawn twice. Note that by our labelling convention $(7,7) = (0,0)$, $(7,8) = (0,1)$ and $(7,9) = (0,2)$.
    Moreover, 
    \[
    \rmax 1 = 3, \quad  \rmax 2 = 5, \quad  \rmax 3 = 5, \quad  \rmax 4 = 5, \quad  \rmax 5 = 7, \quad  \rmax 6 = 7, \quad  \rmax 7 = 9
    \]
    and
    \[
    \lmax 1 = 0, \quad  \lmax 2 = 0, \quad  \lmax 3 = 1, \quad  \lmax 4 = 2, \quad  \lmax 5 = 2, \quad  \lmax 6 = 5, \quad  \lmax 7 = 5.
    \]

\end{enumerate}
\end{example}

The following results gives a general description of the shape of the Auslander--Reiten quiver.

\begin{proposition}\label{prop:basic Nakayama results1}
Let $\La$ be a Nakayama algebra with Auslander--Reiten quiver $\Gamma$.
\begin{enumerate}[label=(\alph*)]
\item If $(i,j) \in \Gamma_0$, then $(i',j') \in \Gamma_0$ for all $i \le i' \le j' \le j$.
\item If $i \le i'$, then $\rmax i \le \rmax {i'}$. Similarly, if $j' \le j$, then $\lmax {j'} \le \lmax j$.
\item We have $\rmax{i+m} = \rmax{i}+m$ and $\lmax{j+m} = \lmax{j}+m$ for all $i,j \in \ZZ$.
\item If $\La$ is cyclic, then $\rmax i \ge i+1$ and $\lmax j \le j-1$ for all $i,j \in \ZZ$.
\item If $\La$ is acyclic, then $\rmax m = m$ and $i+1 \le \rmax i \le m$ for all $1 \le i \le m-1$. Similarly $\lmax 1 = 1 $ and $1 \le \lmax j \le j-1$ for all $2 \le j \le m$.
\end{enumerate}
\end{proposition}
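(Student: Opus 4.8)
My plan is to reduce all five parts to the combinatorial description of when a uniserial module $M(i,j)$ (with $i\le j$) belongs to $\mo{\La}$. First I would record a few elementary facts that are immediate from the explicit $\K Q_m$-action on $M(i,j)$: the module $M(i,j)$ has composition length $j-i+1$; its submodules are exactly the $M(i,j')$ with $j'\le j$ and its quotients the $M(i',j)$ with $i\le i'$, so that every $M(i',j')$ with $i\le i'\le j'\le j$ occurs as a subquotient of $M(i,j)$; a path of length $k$ acts on $M(i,j)$ by $b_t\mapsto b_{t-k}$ (read as $0$ when $t-k<i$), so that $R^k M(i,j)=0$ exactly when $k>j-i$; and $\alpha_1 b_t\ne 0$ exactly when $t\equiv 1\pmod m$ and $i<t\le j$. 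Recall also that admissibility of $I$ gives $I\subseteq R^2$.

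Granting these, part (a) follows because if $I$ — and, in the acyclic case, also $\alpha_1$ — annihilates $M(i,j)$, then it annihilates every subquotient of $M(i,j)$, so $M(i',j')\in\mo{\La}$ and hence $(i',j')\in\Gamma_0$ for $i\le i'\le j'\le j$. Part (c) is just the $\ZZ(m,m)$-periodicity of $\Gamma_0$: from $(a,b)\in\Gamma_0\iff(a+m,b+m)\in\Gamma_0$ one gets $\{\,j':(i+m,j')\in\Gamma_0\,\}=\{\,j''+m:(i,j'')\in\Gamma_0\,\}$, whose largest element is $\rmax{i}+m$, and dually for $\lmax{j}$. For part (b), the simple module $M(i,i)$ always lies in $\mo{\La}$, so $(i,i)\in\Gamma_0$ and $\rmax{i}\ge i$; then, given $i\le i'$, if $i'\le\rmax{i}$ part (a) applied to $(i,\rmax{i})\in\Gamma_0$ gives $(i',\rmax{i})\in\Gamma_0$ and hence $\rmax{i'}\ge\rmax{i}$, while if $i'>\rmax{i}$ then $\rmax{i'}\ge i'>\rmax{i}$; the statement for $\lmax{}$ is dual.

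For part (d), if $\La$ is cyclic there is no $\alpha_1$-condition, so since $I\subseteq R^2$ annihilates the length-two modules $M(i,i+1)$ and $M(j-1,j)$, both lie in $\mo{\La}$, giving $(i,i+1),(j-1,j)\in\Gamma_0$ and hence $\rmax{i}\ge i+1$ and $\lmax{j}\le j-1$. For part (e), if $\La$ is acyclic I would normalise representatives so that $1\le i\le m$; then $\alpha_1 M(i,j)\ne 0$ whenever $j\ge m+1$ (for then $m+1\in(i,j]$ and $m+1\equiv 1\pmod m$), so membership in $\mo{\La}$ forces $j\le m$ and thus $\rmax{i}\le m$, with $\rmax{m}=m$ because $M(m,m)\in\mo{\La}$; conversely, for $1\le i\le m-1$ we have $i+1\le m$, hence $\alpha_1 M(i,i+1)=0$ and (as in part (d)) $I M(i,i+1)=0$, so $(i,i+1)\in\Gamma_0$ and $\rmax{i}\ge i+1$. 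Dually, $M(j-1,j)\in\mo{\La}$ for $2\le j\le m$ gives $\lmax{j}\le j-1$, while $\alpha_1 M(0,j)\ne 0$ for $2\le j\le m$ (take $t=1$) shows $(0,j)\notin\Gamma_0$, whence $\lmax{j}\ge 1$ by part (a), and $\lmax{1}=1$ since $(1,1)\in\Gamma_0$ but $(0,1)\notin\Gamma_0$. The step that needs the most care is part (e): one must keep the $\ZZ(m,m)$-periodic indexing of $\Gamma_0$ consistent with the normalisation $1\le i\le m$ and pin down precisely when the arrow $\alpha_1$ acts nontrivially on $M(i,j)$; everything else reduces, via the subquotient structure of the $M(i,j)$ and the inclusion $I\subseteq R^2$, to short combinatorial checks.
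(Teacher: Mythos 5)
Your proposal is correct and follows essentially the same route as the paper: (a) via the subquotient structure of the $M(i,j)$, (b) as a consequence of (a), (c) from the $\ZZ(m,m)$-periodicity, and (d), (e) from admissibility $I\subseteq R^2$ together with the vanishing of $\alpha_1$ in the acyclic case. You merely fill in the short combinatorial checks that the paper leaves as a sketch, and these checks are accurate.
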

\begin{proof}
Part (a) follows from the fact that $M(i',j')$ is a subquotient of $M(i,j)$ if $i \le i' \le j' \le j$. Part (b) follows from (a). Part (c) follows from $M(i,j) = M(i+m,j+m)$. Part (d) follows from the admissibility of the ideal $I$ in $\La = \K \tilde{A}_m/I$. Part (e) similarly follows from admissibility and the fact that $\alpha_1M(i,j) = 0$ for all $M(i,j) \in \mo{\La}$ if $\La$ is acyclic.
\end{proof}

Note that as a consequence of Proposition~\ref{prop:basic Nakayama results1}(a), it is enough to know either the numbers $\rmax i$ or the numbers $\lmax j$ to recover the shape of the Auslander--Reiten quiver $\Gamma$.

Next we observe that $\Gamma$ has a particularly nice shape in case the Nakayama algebra is homogeneous.

\begin{proposition}\label{prop:homogeneous Nakayama AR-quiver}
\begin{enumerate}[label=(\alph*)]
    \item Let $\La=\K A_m/I$ be an acyclic Nakayama algebra. Then $I=R_{A_m}^l$ if and only if $\rmax i = \min\{i+l-1,m\}$ for all $1 \le i \le m$, if and only if $\lmax j = \max\{j-l+1,1\}$ for all $1 \le j \le m$.
    \item Let $\La=\K\tilde{A}_m/I$ be a cyclic Nakayama algebra. Then $I=R_{\tilde{A}_m}^l$ if and only if $\rmax i = i+l-1$ for all $i \in \ZZ$, if and only if $\lmax j = j-l+1$ for all $j \in \ZZ$.
\end{enumerate}
\end{proposition}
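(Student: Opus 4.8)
The plan is to translate both characterizations into a single statement about the set $\Gamma_0$ — equivalently, about the $\K$-dimensions of the indecomposable projectives — and then match it against a direct computation for the ideal $R_{Q_m}^l$. As a preliminary reduction, observe that the condition on $(\rmax i)_i$ and the condition on $(\lmax j)_j$ are equivalent: by Proposition~\ref{prop:basic Nakayama results1}(a) and the definitions we have $(i,j)\in\Gamma_0\iff i\le j\le\rmax i\iff\lmax j\le i\le j$, so each sequence determines $\Gamma_0$, and one checks immediately that in the acyclic case both ``$\rmax i=\min\{i+l-1,m\}$ for all $1\le i\le m$'' and ``$\lmax j=\max\{j-l+1,1\}$ for all $1\le j\le m$'' say exactly that $\Gamma_0=\{(i,j)\mid 1\le i\le j\le m,\ j-i\le l-1\}$, while in the cyclic case both say that $\Gamma_0=\{(i,j)\in\ZZ^2/\ZZ(m,m)\mid i\le j\le i+l-1\}$. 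By Proposition~\ref{prop:AR quiver of Nakayama} it therefore suffices to prove that $I=R_{Q_m}^l$ holds if and only if $M(i,j)\in\mo{\La}$ precisely when $j-i\le l-1$ (together with $1\le i\le j\le m$ in the acyclic case).

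For the forward direction, suppose $I=R_{Q_m}^l$. The explicit $\K\tilde{A}_m$-action on $M(i,j)$ shows that a path of length $k$ sends the generator $b_j$ to $b_{j-k}$ when $j-k\ge i$ and to $0$ otherwise, so $\mathrm{rad}^kM(i,j)=M(i,j-k)$ for $k\le j-i$ and $\mathrm{rad}^kM(i,j)=0$ for $k>j-i$; hence $IM(i,j)=R_{Q_m}^lM(i,j)=\mathrm{rad}^lM(i,j)=0$ exactly when $j-i\le l-1$. In the acyclic case one moreover has $\alpha_1M(i,j)=0$ automatically whenever $1\le i\le j\le m$ (the formula for the action gives $\alpha_1b_t=0$ for all $1\le t\le m$), so the modules $M(i,j)$ lying in $\mo{\La}$ are exactly those with $1\le i\le j\le m$ and $j\le i+l-1$, i.e.\ $\rmax i=\min\{i+l-1,m\}$; the cyclic case is the same computation in $\ZZ^2/\ZZ(m,m)$ with no upper cap, giving $\rmax i=i+l-1$.

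For the converse, assume the description of $\Gamma_0$ above. Since each indecomposable projective $P_a$ is uniserial and is the longest module with top $S_a$, we have $P_a\cong M(\lmax a,a)$ (up to a $\ZZ(m,m)$-shift in the cyclic case), so the hypothesis becomes $\dim_\K P_a=a-\lmax a+1=\min\{l,a\}$ in the acyclic case and $\dim_\K P_a=l$ in the cyclic case; in particular $\dim_\K P_a\le l$ for every vertex $a$. Hence $\La$ has Loewy length at most $l$, so $\mathrm{rad}^l\La=0$, so $R_{Q_m}^l\subseteq I$, yielding a surjection of algebras $\K Q_m/R_{Q_m}^l\twoheadrightarrow\K Q_m/I=\La$. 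Finally $\K Q_m/R_{Q_m}^l$ has $\K$-basis the paths of $Q_m$ of length $<l$, and counting these by source (there are $\min\{l,a\}$ starting at the vertex $a$ of $A_m$, and $l$ starting at each vertex of $\tilde{A}_m$) gives $\dim_\K\K Q_m/R_{Q_m}^l=\sum_a\dim_\K P_a=\dim_\K\La$; a surjection between finite-dimensional algebras of equal dimension is an isomorphism, so $I=R_{Q_m}^l$.

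The two directions are otherwise routine computations. The points to watch are the acyclic bookkeeping — a path in $A_m$ starting at $a$ has length at most $a-1$, which is why the truncation $\min\{l,a\}$ appears — and the structural fact that over a Nakayama algebra each indecomposable projective is uniserial and is the largest module with its prescribed top, so that the left-boundary data $(\lmax a)_a$ pins down $\La$ completely. The single genuinely delicate step is closing the converse; rather than invoking the (true) statement that the relations of a Nakayama algebra are monomial, I would run the equal-dimension argument above, which avoids that issue entirely.
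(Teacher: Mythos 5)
Your proof is correct. The forward direction is exactly the observation the paper's entire proof consists of, namely that $R_{Q_m}^l M(i,j)=0$ if and only if $j-i+1\le l$ (plus the routine remark that $\alpha_1 M(i,j)=0$ automatically in the acyclic normalization), and your preliminary reduction identifying both the $\rmax{i}$-condition and the $\lmax{j}$-condition with a single description of $\Gamma_0$ is the same implicit bookkeeping. Where you genuinely diverge is the converse: the paper treats ``the AR-quiver has this shape $\Rightarrow I=R^l$'' as immediate from the same fact, which tacitly uses that an admissible ideal of $\K Q_m$ is determined by which uniserial modules it kills (essentially that such ideals are monomial, which in the cyclic case needs an argument). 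You instead read off $\dim_\K \La\epsilon_a=a-\lmax{a}+1$ from the hypothesis, deduce $\operatorname{rad}^l\La=0$ and hence $R_{Q_m}^l\subseteq I$, and close by comparing $\dim_\K\K Q_m/R_{Q_m}^l$ with $\dim_\K\La$ via the path count $\min\{l,a\}$ per source vertex (respectively $l$ in the cyclic case). This costs a few extra lines but is more self-contained than the paper's one-liner, since it sidesteps the monomiality of admissible ideals for $\tilde{A}_m$ entirely; the only mild caveat is that your identification $P_a\cong M(\lmax{a},a)$ is Proposition~\ref{prop:basic Nakayama results2}(b), which appears just after this statement in the paper, but its proof there is independent of the present proposition, so there is no circularity.
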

\begin{proof}
The claims follow from the fact that $R_{\tilde{A}_m}^l M(i,j) = 0$ if and only if $j-i+1 \le l$.
\end{proof}

The following proposition shows how a number of computations can be done simply by considering the shape of the Auslander--Reiten quiver.

\begin{proposition}\label{prop:basic Nakayama results2} 
Let $\La$ be a Nakayama algebra and $M(i,j) \in \mo\La$.
\begin{enumerate}[label=(\alph*)]
\item We have $\topp(M(i,j)) = M(j,j)$ and $\soc(M(i,j)) = M(i,i)$. In particular, $M(i,j)$ is simple if and only if $i = j$.
\item The projective cover of $M(i,j)$ is $M(\lmax j,j)$. In particular, $M(i,j)$ is projective if and only if $i = \lmax j$. Otherwise we have $\om(M(i,j))\isom M(\lmax j,i-1)$.
\item The injective hull of $M(i,j)$ is $M(i,\rmax i)$. In particular, $M(i,j)$ is injective if and only if $j = \rmax i$. Otherwise we have $\om^-(M(i,j))\isom M(j+1,\rmax i)$.
\end{enumerate}
\end{proposition}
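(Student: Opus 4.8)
The plan is to handle the three parts in turn, using throughout that every indecomposable $\La$-module is of the form $M(i,j)$ and hence uniserial, so has simple top and simple socle.

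For part~(a) I would read off the structure from the explicit action. Since $\alpha_t$ (with $t$ taken modulo $m$) sends $b_t\mapsto b_{t-1}$ whenever $t-1\ge i$, the radical $\operatorname{rad}(\La)\cdot M(i,j)$ is the span of $b_i,\dots,b_{j-1}$, which is the submodule $M(i,j-1)$; hence $\topp(M(i,j)) = M(i,j)/M(i,j-1)\isom M(j,j)$. Dually, $\K b_i$ is a submodule isomorphic to $M(i,i)$, and since $M(i,j)$ is uniserial its socle is simple, so $\soc(M(i,j)) = \K b_i = M(i,i)$. Then $M(i,j)$ is simple iff its top and socle agree iff $i=j$.

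For part~(b) the crux is to identify the projective cover of $S_j=M(j,j)$ with $M(\lmax j,j)$. The projective cover of $M(i,j)$ is the indecomposable projective $P_j=\La\epsilon_j$, which is uniserial with top $S_j$, hence of the form $M(k,j)$ with $M(k,j)\in\mo\La$; by maximality of $j-\lmax j$ this gives $k\ge\lmax j$. Conversely $M(\lmax j,j)$ has top $S_j$, so it is a quotient of $P_j$; since the quotients of $M(k,j)$ are exactly the $M(k',j)$ with $k\le k'\le j$ (obtained by forgetting basis vectors, as recalled before Proposition~\ref{prop:AR quiver of Nakayama}), this forces $\lmax j\ge k$, whence $P_j\isom M(\lmax j,j)$; this is also in \cite[Chapter~V]{ASS}. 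Consequently $M(i,j)$ is projective iff it equals $P_j$ iff $i=\lmax j$, and otherwise the projective cover $M(\lmax j,j)\twoheadrightarrow M(i,j)$ is the map forgetting $b_{\lmax j},\dots,b_{i-1}$, so $\om(M(i,j))$ is the submodule they span, namely $M(\lmax j,i-1)$.

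Part~(c) is the dual statement, obtained either by applying the duality $D$ (which swaps tops with socles, projectives with injectives, and the functions $\lmax{}$ and $\rmax{}$) or by repeating the argument of~(b) with directions reversed: the injective envelope of $\soc(M(i,j))=S_i$ is the indecomposable injective $I_i$, which is uniserial with socle $S_i$, hence equal to the longest $M(i,j')\in\mo\La$, i.e.\ $M(i,\rmax i)$ by maximality of $\rmax i-i$. Thus $M(i,j)$ is injective iff $j=\rmax i$, and otherwise the inclusion $M(i,j)\hookrightarrow M(i,\rmax i)$ identifies $M(i,j)$ with the span of $b_i,\dots,b_j$, so $\om^-(M(i,j))$ is the cokernel, spanned by the images of $b_{j+1},\dots,b_{\rmax i}$, i.e.\ $M(j+1,\rmax i)$. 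I do not expect a genuine obstacle here; the only points needing care are the conventions (left modules, arrows $\alpha_s\colon s\to s-1$, and the $\ZZ(m,m)$-identification of pairs) and pinning down $P_j\isom M(\lmax j,j)$ and $I_i\isom M(i,\rmax i)$, which link the abstract projectives and injectives to this combinatorial bookkeeping.
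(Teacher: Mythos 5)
Your proof is correct and follows essentially the same route as the paper: uniseriality gives (a), identifying the indecomposable projective $\La\epsilon_j$ with $M(\lmax j,j)$ (you characterize it among the $M(k,j)$ by maximality rather than computing $\La\epsilon_j$ directly, which is an equivalent bookkeeping) and computing the kernel of the canonical quotient gives (b), and (c) is dual. The only slip is the remark in (a) that $M(i,j)$ is simple if and only if its top and socle agree: over a cyclic Nakayama algebra this criterion can fail when $j-i$ is a positive multiple of $m$ (for instance $M(1,m+1)$ over $\K\tilde{A}_m/R^{m+2}$ has top isomorphic to its socle but is not simple), so the simplicity claim should instead be justified directly by $\dim_\K M(i,j)=j-i+1$, after which everything stands.
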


\begin{proof}
For part (a) note that $M(i,j)$ is uniserial and has $M(j,j)$ as a quotient module of dimension $1$. Hence $\topp(M(i,j)) = M(j,j)$. Similarly $\soc(M(i,j)) = M(i,i)$. 

For part (b) one may compute that $M(\lmax j,j) \isom \La \epsilon_j$, which shows that $M(\lmax j,j)$ is projective and indecomposable. Moreover there is an epimorphism $M(\lmax j,j) \to M(i,j)$ whose kernel is $M(\lmax j,i-1)$ if $i > \lmax j$. The claim follows. 

Part (c) follows similarly to (b).
\end{proof}
Note that by Proposition~\ref{prop:basic Nakayama results2}, the sequences $(j-\lmax j +1)_j$ and $(\rmax i - i+1)_i$ are just the Kupisch series of $\La$ and $\La^{\rm op}$ originally introduced in \cite{Kup}.

Finally we observe which Nakayama algebras are selfinjective.

\begin{corollary}\label{cor:SelfinjNakayama}
Let $\La$ be a Nakayama algebra. Then $\La$ is selfinjective if and only if it is cyclic and homogeneous.
\end{corollary}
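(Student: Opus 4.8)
The plan is to reduce selfinjectivity to a combinatorial condition on the function $\lmax{}$ (equivalently $\rmax{}$) of the Auslander--Reiten quiver, using the descriptions of projective covers and injective hulls in Proposition~\ref{prop:basic Nakayama results2}. Since ${}_\La\La$ is a finite direct sum of indecomposable projectives and a finite direct sum of modules is injective precisely when each summand is, $\La$ is selfinjective if and only if every indecomposable projective $\La$-module is injective. By Proposition~\ref{prop:basic Nakayama results2}(b) the indecomposable projectives are the modules $M(\lmax j,j)$ with $j\in\ZZ$, and by Proposition~\ref{prop:basic Nakayama results2}(c) such a module is injective if and only if $j=\rmax{\lmax j}$. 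So the whole statement reduces to the reformulation \[\text{$\La$ is selfinjective} \iff \rmax{\lmax j}=j \text{ for all } j\in\ZZ,\] which I will use for both directions.

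For the ``if'' direction, if $\La$ is cyclic and homogeneous, say $I=R_{\tilde{A}_m}^l$ with $l\ge2$, then Proposition~\ref{prop:homogeneous Nakayama AR-quiver}(b) gives $\lmax j=j-l+1$ and $\rmax i=i+l-1$ for all $i,j\in\ZZ$, so $\rmax{\lmax j}=(j-l+1)+l-1=j$ for all $j$, and $\La$ is selfinjective.

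For the ``only if'' direction, assume $\La$ is selfinjective. First I would rule out the acyclic case: if $Q_m=A_m$ then $m\ge 2$ (the quiver has an arrow by assumption), and Proposition~\ref{prop:basic Nakayama results1}(e) gives $\lmax 1=1$ while $\rmax 1\ge 2$; hence by Proposition~\ref{prop:basic Nakayama results2}(b),(c) the simple module $M(1,1)$ is projective but not injective, contradicting selfinjectivity. So $\La$ is cyclic. It remains to prove $I$ is homogeneous. From the reformulation, $\rmax{\lmax j}=j$ for all $j$, so $\lmax{}$ is injective; being also weakly increasing by Proposition~\ref{prop:basic Nakayama results1}(b), it is strictly increasing, i.e.\ $\lmax{j+1}\ge\lmax j+1$ for all $j$. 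Equivalently the function $j\mapsto j-\lmax j$ is weakly decreasing; by Proposition~\ref{prop:basic Nakayama results1}(c) it is also $m$-periodic, hence constant, say equal to $l-1$, and Proposition~\ref{prop:basic Nakayama results1}(d) (cyclic case) forces $l-1\ge 1$. Thus $\lmax j=j-l+1$ for all $j$ with $l\ge 2$, and Proposition~\ref{prop:homogeneous Nakayama AR-quiver}(b) yields $I=R_{\tilde{A}_m}^l$, so $\La$ is homogeneous.

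I do not expect any real obstacle: the argument is a translation via Proposition~\ref{prop:basic Nakayama results2} followed by the elementary observation that a weakly decreasing $m$-periodic function on $\ZZ$ is constant. The only point worth stating carefully is why $\lmax{}$ becomes injective, which is exactly where selfinjectivity (via ``every indecomposable projective is injective'') enters.
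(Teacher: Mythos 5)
Your proof is correct, and it follows the same overall strategy as the paper (translate selfinjectivity into the combinatorics of $\rmax{}$, $\lmax{}$ and finish with Proposition~\ref{prop:homogeneous Nakayama AR-quiver}), but the converse direction differs in two local arguments. First, to rule out the acyclic case the paper simply observes that an acyclic Nakayama algebra has finite global dimension, hence cannot be selfinjective; you instead produce the explicit witness $M(1,1)$, which by Proposition~\ref{prop:basic Nakayama results1}(e) is projective ($\lmax 1=1$) but not injective ($\rmax 1\ge 2$ since $m\ge 2$) -- this is more elementary and stays entirely inside the $\rmax{}$/$\lmax{}$ formalism already set up via Proposition~\ref{prop:basic Nakayama results2}. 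Second, for homogeneity the paper works with $\rmax{}$ and shows directly that selfinjectivity forces the exact identity $\rmax{i+1}=\rmax i+1$ (otherwise one of $M(i+1,\rmax i)$, $M(i+1,\rmax i+1)$ would be injective non-projective or projective non-injective), which immediately yields $\rmax i=i+\rmax 1-1$; you encode selfinjectivity as $\rmax{\lmax j}=j$, deduce that $\lmax{}$ is injective, hence strictly increasing, and then must invoke the $m$-periodicity of $j\mapsto j-\lmax j$ (Proposition~\ref{prop:basic Nakayama results1}(c)) to upgrade the inequality $\lmax{j+1}\ge\lmax j+1$ to the constancy of $j-\lmax j$. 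Both arguments are sound; the paper's is slightly shorter because it obtains an equality at once, while your reformulation ``every indecomposable projective is injective, i.e.\ $\rmax{\lmax j}=j$'' makes the precise point where selfinjectivity enters more transparent.
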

\begin{proof}
If $\La$ is cyclic and homogeneous then $\La$ is selfinjective by Proposition~\ref{prop:homogeneous Nakayama AR-quiver}(b) and Proposition~\ref{prop:basic Nakayama results2}(b) and (c). If $\La$ is selfinjective, then $\La$ is clearly cyclic since otherwise $\La$ has finite global dimension. Since $\La$ is selfinjective, we have for all $i\in \ZZ$ that $\rmax{i+1}=\rmax i+1$ (otherwise either $M(i+1,\rmax i)$ is injective but not projective or $M(i+1,\rmax i+1)$ is projective but not injective). Hence for all $i\in\ZZ$ we have $\rmax i=i+\rmax 1-1$ and $\La$ is homogeneous by Proposition~\ref{prop:homogeneous Nakayama AR-quiver}(b).
\end{proof}

\begin{example}\label{ex:AR quiver homoheneous}
\begin{enumerate}
    \item[(a)] Let $\La=\K A_7/R^3$ be a homogeneous acyclic Nakayama algebra. Then the Auslander--Reiten quiver $\Gamma(\La)$ of $\La$ is
    \[
    \begin{tikzpicture}[scale=0.9, transform shape, baseline={(current bounding box.center)}]
    
    \tikzstyle{mod}=[rectangle, minimum width=6pt, draw=none, inner sep=1.5pt, scale=0.8]
    
    \node[mod] (11) at (0,0) {$(1,1)$};
    \node[mod] (22) at (1.4,0) {$(2,2)$};
    \node[mod] (33) at (2.8,0) {$(3,3)$};
    \node[mod] (44) at (4.2,0) {$(4,4)$};
    \node[mod] (55) at (5.6,0) {$(5,5)$};
    \node[mod] (66) at (7,0) {$(6,6)$};
    \node[mod] (77) at (8.4,0) {$(7,7)$\nospacepunct{.}};
    
    \draw[loosely dotted] (11.east) -- (22);
    \draw[loosely dotted] (22.east) -- (33);
    \draw[loosely dotted] (33.east) -- (44);
    \draw[loosely dotted] (44.east) -- (55);
    \draw[loosely dotted] (55.east) -- (66);
    \draw[loosely dotted] (66.east) -- (77);
        
    \node[mod] (12) at (0.7,0.7) {$(1,2)$};    
    \node[mod] (23) at (2.1,0.7) {$(2,3)$};    
    \node[mod] (34) at (3.5,0.7) {$(3,4)$};    
    \node[mod] (45) at (4.9,0.7) {$(4,5)$};    
    \node[mod] (56) at (6.3,0.7) {$(5,6)$};    
    \node[mod] (67) at (7.7,0.7) {$(6,7)$};  

    \draw[-{Stealth[scale=0.5]}] (11) -- (12);
    \draw[-{Stealth[scale=0.5]}] (22) -- (23);
    \draw[-{Stealth[scale=0.5]}] (33) -- (34);
    \draw[-{Stealth[scale=0.5]}] (44) -- (45);
    \draw[-{Stealth[scale=0.5]}] (55) -- (56);
    \draw[-{Stealth[scale=0.5]}] (66) -- (67);
    
    \draw[-{Stealth[scale=0.5]}] (12) -- (22);
    \draw[-{Stealth[scale=0.5]}] (23) -- (33);
    \draw[-{Stealth[scale=0.5]}] (34) -- (44);
    \draw[-{Stealth[scale=0.5]}] (45) -- (55);
    \draw[-{Stealth[scale=0.5]}] (56) -- (66);
    \draw[-{Stealth[scale=0.5]}] (67) -- (77);

    \draw[loosely dotted] (12.east) -- (23);
    \draw[loosely dotted] (23.east) -- (34);
    \draw[loosely dotted] (34.east) -- (45);
    \draw[loosely dotted] (45.east) -- (56);
    \draw[loosely dotted] (56.east) -- (67);
    
    \node[mod] (13) at (1.4,1.4) {$(1,3)$};
    \node[mod] (24) at (2.8,1.4) {$(2,4)$};
    \node[mod] (35) at (4.2,1.4) {$(3,5)$};
    \node[mod] (46) at (5.6,1.4) {$(4,6)$};
    \node[mod] (57) at (7,1.4) {$(5,7)$};
    
    \draw[-{Stealth[scale=0.5]}] (12) -- (13);
    \draw[-{Stealth[scale=0.5]}] (23) -- (24);
    \draw[-{Stealth[scale=0.5]}] (34) -- (35);
    \draw[-{Stealth[scale=0.5]}] (45) -- (46);
    \draw[-{Stealth[scale=0.5]}] (56) -- (57);
    
    \draw[-{Stealth[scale=0.5]}] (13) -- (23);
    \draw[-{Stealth[scale=0.5]}] (24) -- (34);
    \draw[-{Stealth[scale=0.5]}] (35) -- (45);
    \draw[-{Stealth[scale=0.5]}] (46) -- (56);
    \draw[-{Stealth[scale=0.5]}] (57) -- (67);
    
    \end{tikzpicture}
    \]
    
    \item[(b)] Let $\tilde{\La}=\K \tilde{A}_6/R^3$ be a homogeneous cyclic Nakayama algebra. Then the Auslander--Reiten quiver $\Gamma(\tilde{\La})$ of $\tilde{\La}$ is
    \[
    \begin{tikzpicture}[scale=0.9, transform shape, baseline={(current bounding box.center)}]
    
    \tikzstyle{mod}=[rectangle, minimum width=6pt, draw=none, inner sep=1.5pt, scale=0.8]
    
    \node[mod] (11) at (0,0) {$(1,1)$};
    \node[mod] (22) at (1.4,0) {$(2,2)$};
    \node[mod] (33) at (2.8,0) {$(3,3)$};
    \node[mod] (44) at (4.2,0) {$(4,4)$};
    \node[mod] (55) at (5.6,0) {$(5,5)$};
    \node[mod] (66) at (7,0) {$(6,6)$};
    \node[mod] (77) at (8.4,0) {$(1,1)$};
    
    \draw[loosely dotted] (11.east) -- (22);
    \draw[loosely dotted] (22.east) -- (33);
    \draw[loosely dotted] (33.east) -- (44);
    \draw[loosely dotted] (44.east) -- (55);
    \draw[loosely dotted] (55.east) -- (66);
    \draw[loosely dotted] (66.east) -- (77);
        
    \node[mod] (12) at (0.7,0.7) {$(1,2)$};    
    \node[mod] (23) at (2.1,0.7) {$(2,3)$};    
    \node[mod] (34) at (3.5,0.7) {$(3,4)$};    
    \node[mod] (45) at (4.9,0.7) {$(4,5)$};    
    \node[mod] (56) at (6.3,0.7) {$(5,6)$};    
    \node[mod] (67) at (7.7,0.7) {$(6,7)$};  
    \node[mod] (78) at (9.1,0.7) {$(1,2)$};

    \draw[-{Stealth[scale=0.5]}] (11) -- (12);
    \draw[-{Stealth[scale=0.5]}] (22) -- (23);
    \draw[-{Stealth[scale=0.5]}] (33) -- (34);
    \draw[-{Stealth[scale=0.5]}] (44) -- (45);
    \draw[-{Stealth[scale=0.5]}] (55) -- (56);
    \draw[-{Stealth[scale=0.5]}] (66) -- (67);
    \draw[-{Stealth[scale=0.5]}] (77) -- (78);
    
    \draw[-{Stealth[scale=0.5]}] (12) -- (22);
    \draw[-{Stealth[scale=0.5]}] (23) -- (33);
    \draw[-{Stealth[scale=0.5]}] (34) -- (44);
    \draw[-{Stealth[scale=0.5]}] (45) -- (55);
    \draw[-{Stealth[scale=0.5]}] (56) -- (66);
    \draw[-{Stealth[scale=0.5]}] (67) -- (77);

    \draw[loosely dotted] (12.east) -- (23);
    \draw[loosely dotted] (23.east) -- (34);
    \draw[loosely dotted] (34.east) -- (45);
    \draw[loosely dotted] (45.east) -- (56);
    \draw[loosely dotted] (56.east) -- (67);
    \draw[loosely dotted] (67.east) -- (78);
    
    \node[mod] (13) at (1.4,1.4) {$(1,3)$};
    \node[mod] (24) at (2.8,1.4) {$(2,4)$};
    \node[mod] (35) at (4.2,1.4) {$(3,5)$};
    \node[mod] (46) at (5.6,1.4) {$(4,6)$};
    \node[mod] (57) at (7,1.4) {$(5,7)$};
    \node[mod] (68) at (8.4,1.4) {$(6,8)$};
    \node[mod] (79) at (9.8,1.4) {$(1,3)$};
    
    \draw[-{Stealth[scale=0.5]}] (12) -- (13);
    \draw[-{Stealth[scale=0.5]}] (23) -- (24);
    \draw[-{Stealth[scale=0.5]}] (34) -- (35);
    \draw[-{Stealth[scale=0.5]}] (45) -- (46);
    \draw[-{Stealth[scale=0.5]}] (56) -- (57);
    \draw[-{Stealth[scale=0.5]}] (67) -- (68);
    \draw[-{Stealth[scale=0.5]}] (78) -- (79);
    
    \draw[-{Stealth[scale=0.5]}] (13) -- (23);
    \draw[-{Stealth[scale=0.5]}] (24) -- (34);
    \draw[-{Stealth[scale=0.5]}] (35) -- (45);
    \draw[-{Stealth[scale=0.5]}] (46) -- (56);
    \draw[-{Stealth[scale=0.5]}] (57) -- (67);
    \draw[-{Stealth[scale=0.5]}] (68) -- (78);
    
    \end{tikzpicture}
    \]
    where $(1,1)$, $(1,2)$ and $(1,3)$ have been drawn twice.
    Notice that $\tilde{\La}$ is selfinjective as claimed in Corollary \ref{cor:SelfinjNakayama}.
\end{enumerate}
\end{example}

\section{\texorpdfstring{$n\ZZ$}{nZ}-cluster tilting subcategories for Nakayama algebras}

\subsection{Computations} The aim of this paper is to classify all Nakayama algebras that admit an $n\ZZ$-cluster tilting subcategory for some $n$. In this section we perform some computations that will be useful to achieve this aim.

Since $n\ZZ$-cluster tilting subcategories are closed under $\om^n$, $\om^{-n}$, $\tn$ and $\tno$ it is crucial to describe the action of these functors on the Auslander--Reiten quiver. We start by computing the action of iterated syzygies and cosyzygies.

\begin{lemma}\label{lem:two in the diagonal, syzygies}
Let $\La$ be Nakayama algebra and $k\in \ZZ$. Let $i_1 \le j_1$ and $i_2 \le j_2$ be such that $M(i_1,j_1), M(i_2,j_2) \in \mo \La$ and $\om^{k}(M(i_1,j_1)), \om^{k}(M(i_2,j_2))$ are non-zero. Then there are $i_1' \le j_1'$ and $i_2' \le j_2'$ such that $\om^{k}(M(i_1,j_1)) \isom M(i'_1,j'_1)$, $\om^{k}(M(i_2,j_2)) \isom M(i'_2,j'_2)$ and 
\begin{enumerate}[label=(\alph*)]
\item if $k$ is even, the following implications hold
\[\begin{array}{cc}
i_1 = i_2 \Rightarrow i'_1 = i'_2, & i_1 \le i_2 \Rightarrow i'_1 \le i'_2,
\\
j_1 = j_2 \Rightarrow j'_1 = j'_2, & j_1 \le j_2 \Rightarrow j'_1 \le j'_2,
\end{array}\]
\item if $k$ is odd, the following implications hold
\[\begin{array}{cc}
i_1 = i_2 \Rightarrow j'_1 = j'_2, & i_1 \le i_2 \Rightarrow j'_1 \le j'_2,
\\
j_1 = j_2 \Rightarrow i'_1 = i'_2, & j_1 \le j_2 \Rightarrow i'_1 \le i'_2.
\end{array}\]
\end{enumerate}
\end{lemma}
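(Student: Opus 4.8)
The plan is to reduce the statement to the behaviour of a single module and then induct on $|k|$, keeping track of \emph{which} endpoint of $\om^k(M(i,j))$ depends on which endpoint of $M(i,j)$. The two facts I would use are: Proposition~\ref{prop:basic Nakayama results2}(b), which gives $\om(M(i,j)) \isom M(\lmax{j}, i-1)$ for non-projective $M(i,j)$ (and, dually, Proposition~\ref{prop:basic Nakayama results2}(c), giving $\om^-(M(i,j)) \isom M(j+1, \rmax{i})$ for non-injective $M(i,j)$); and Proposition~\ref{prop:basic Nakayama results1}(b), which says that $\lmax{}$ and $\rmax{}$ are weakly increasing functions of their argument. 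The essential point is that one application of $\om$ \emph{swaps} the roles of the endpoints: the new left endpoint $\lmax{j}$ is a weakly increasing function of the old \emph{right} endpoint only, and the new right endpoint $i-1$ is a (strictly, hence weakly) increasing function of the old \emph{left} endpoint only; the cosyzygy formula has exactly the same shape.

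Concretely, I would prove by induction on $k \ge 0$ the claim: if $\om^k(M(i,j)) \ne 0$, then $\om^k(M(i,j)) \isom M(a,b)$ where, if $k$ is even, $a = f(i)$ and $b = g(j)$, and if $k$ is odd, $a = f(j)$ and $b = g(i)$, for some weakly increasing $f, g\colon \ZZ \to \ZZ$ depending only on $\La$ and $k$. The base case $k = 0$ is trivial with $f = g = \mathrm{id}$. For the step, observe that $\om^{k+1}(M(i,j)) \ne 0$ forces $\om^k(M(i,j))$ to be non-zero and non-projective, so the inductive description $\om^k(M(i,j)) \isom M(a,b)$ together with Proposition~\ref{prop:basic Nakayama results2}(b) gives $\om^{k+1}(M(i,j)) \isom M(\lmax{b}, a-1)$; substituting the expressions for $a$ and $b$ and using that a composite of weakly increasing maps is weakly increasing yields the claim for $k+1$ with the parity of the dependence flipped, as required. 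The case $k < 0$ is the same argument with $\om$ replaced by $\om^-$, ``non-projective'' by ``non-injective'', and Proposition~\ref{prop:basic Nakayama results2}(b) by Proposition~\ref{prop:basic Nakayama results2}(c).

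To finish, I would apply the claim to both $M(i_1,j_1)$ and $M(i_2,j_2)$ with the same $k$, obtaining $\om^k(M(i_t,j_t)) \isom M(i'_t, j'_t)$ for $t = 1,2$ with the same weakly increasing $f, g$ and the same choice (dictated by the parity of $k$) of whether $i'_t, j'_t$ are $f, g$ applied to $i_t$ or to $j_t$. Since any function sends equal inputs to equal outputs and a weakly increasing function preserves the relation $\le$, all four implications of part (a) (when $k$ is even) and of part (b) (when $k$ is odd) follow immediately. I do not expect a genuine obstacle here: all the real content sits in Propositions~\ref{prop:basic Nakayama results1} and~\ref{prop:basic Nakayama results2}, and the only points needing care are the bookkeeping of the endpoint-swapping --- which alternates with the parity of $k$ and is precisely the dichotomy between (a) and (b) --- and the remark that the non-vanishing hypothesis forces every intermediate syzygy (respectively cosyzygy) to be non-projective (respectively non-injective), so that the relevant formula is applicable at each step.
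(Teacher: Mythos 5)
Your proposal is correct and takes essentially the same route as the paper: both arguments rest solely on the syzygy/cosyzygy formulas of Proposition~\ref{prop:basic Nakayama results2}(b),(c) and the monotonicity of $\lmax{}$ and $\rmax{}$ from Proposition~\ref{prop:basic Nakayama results1}(b), combined with an induction on $|k|$ in which one application of $\om$ (or $\om^-$) swaps the roles of the endpoints, and the non-vanishing hypothesis guarantees the intermediate syzygies are non-projective (resp.\ non-injective) so the formulas apply. The only difference is presentational: the paper reduces to the cases $k=1$ and $k=2$, whereas you carry explicit weakly increasing functions $f,g$ through the induction, which is just a different way of organizing the same bookkeeping.
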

\begin{proof}
We assume $k \ge 0$ as the case $k \le 0$ is similar. 

(a) The statement is trivial for $k = 0$ as we can choose $(i'_1,j'_1) = (i_1,j_1)$ and $(i'_2,j'_2)=(i_2,j_2)$. By induction it is enough to consider the case $k = 2$. By Proposition~\ref{prop:basic Nakayama results2}(b) we have $\om(M(i,j))\isom M(\lmax j,i-1)$ and $\om^2(M(i,j)) \isom \om M(\lmax j,i-1) \isom M(\lmax {i-1},\lmax j-1)$. Now by Proposition~\ref{prop:basic Nakayama results1}(b) the maps $i \mapsto \lmax {i-1}$ and $j \mapsto \lmax {j}-1$ are weakly increasing so the claim follows.

(b) By (a) it is enough to consider the case $k = 1$. Again by Proposition~\ref{prop:basic Nakayama results1}(b) we have $\om(M(i,j))\isom M(\lmax j,i-1)$ and since the maps $i \mapsto i-1$ and $j \mapsto \lmax {j}$ are weakly increasing the claim follows.
\end{proof}

As a corollary we get a similar result for the $n$-Auslander--Reiten translations.

\begin{corollary}\label{cor:two in the diagonal, n-AR-translations}
Let $\La$ be Nakayama algebra. Let $i_1 \le j_1$ and $i_2 \le j_2$ be such that $M(i_1,j_1), M(i_2,j_2) \in \mo \La$ and $\tn(M(i_1,j_1)), \tn(M(i_2,j_2))$ are non-zero. Then there are $i_1' \le j_1'$ and $i_2' \le j_2'$ such that $\tn(M(i_1,j_1)) \isom M(i'_1,j'_1)$, $\tn(M(i_2,j_2)) \isom M(i'_2,j'_2)$ and 
\begin{enumerate}[label=(\alph*)]
\item if $n$ is even, the following implications hold
\[\begin{array}{cc}
i_1 = i_2 \Rightarrow j'_1 = j'_2, & i_1 \le i_2 \Rightarrow j'_1 \le j'_2,
\\
j_1 = j_2 \Rightarrow i'_1 = i'_2, & j_1 \le j_2 \Rightarrow i'_1 \le i'_2,
\end{array}\]
\item if $n$ is odd, the following implications hold
\[\begin{array}{cc}
i_1 = i_2 \Rightarrow i'_1 = i'_2, & i_1 \le i_2 \Rightarrow i'_1 \le i'_2,
\\
j_1 = j_2 \Rightarrow j'_1 = j'_2, & j_1 \le j_2 \Rightarrow j'_1 \le j'_2.
\end{array}\]
\end{enumerate}
The same is true replacing $\tn$ by $\tno$ everywhere.
\end{corollary}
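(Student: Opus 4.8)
The plan is to deduce this from Lemma~\ref{lem:two in the diagonal, syzygies} by means of the factorizations $\tn = \tau\om^{n-1}$ and $\tno = \tau^-\om^{-(n-1)}$: iterated (co)syzygies are already controlled by that lemma, and by Proposition~\ref{prop:AR quiver of Nakayama} the ordinary Auslander--Reiten translation acts on the Auslander--Reiten quiver of a Nakayama algebra simply by $\tau(i,j) = (i-1,j-1)$, and dually $\tau^-(i,j) = (i+1,j+1)$, so it preserves all equalities and inequalities between coordinates.

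First I would check that the non-vanishing hypotheses propagate. Since $\tn(M(i_k,j_k)) = \tau\bigl(\om^{n-1}(M(i_k,j_k))\bigr)$ is non-zero for $k = 1,2$, in particular $\om^{n-1}(M(i_k,j_k))$ is non-zero, so Lemma~\ref{lem:two in the diagonal, syzygies} applies with $k$ replaced by $n-1$ and produces $i_k'' \le j_k''$ with $\om^{n-1}(M(i_k,j_k)) \isom M(i_k'',j_k'')$ whose chosen integer representatives satisfy the monotonicity implications dictated by the parity of $n-1$. Moreover each $M(i_k'',j_k'')$ is non-projective, as otherwise $\tn(M(i_k,j_k))$ would vanish; hence $\tau(M(i_k'',j_k'')) \isom M(i_k''-1,\,j_k''-1)$, and we may take $(i_k',j_k') = (i_k''-1,\,j_k''-1)$.

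It then only remains to match parities. If $n$ is even, then $n-1$ is odd and Lemma~\ref{lem:two in the diagonal, syzygies}(b) gives the coordinate-swapping implications, which after the shift by $\tau$ are precisely those of part (a); if $n$ is odd, then $n-1$ is even and Lemma~\ref{lem:two in the diagonal, syzygies}(a) gives the coordinate-preserving implications, which become those of part (b). The assertion for $\tno$ follows verbatim, using the $k \le 0$ case of Lemma~\ref{lem:two in the diagonal, syzygies} for $\om^{-(n-1)}$ (whose value is again non-zero since $\tno(M(i_k,j_k)) \ne 0$) together with $\tau^-(i,j) = (i+1,j+1)$. There is no real obstacle in this argument; it is pure bookkeeping, and the only mild points to be careful about are the propagation of non-vanishing through $\tau$ and $\om^{n-1}$ and the correct pairing of parities with the two cases of Lemma~\ref{lem:two in the diagonal, syzygies}, the coordinates being understood, as in that lemma, as chosen integer representatives of classes modulo $\ZZ(m,m)$, on which $\tau$ and $\tau^-$ act compatibly.
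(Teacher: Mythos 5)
Your proposal is correct and follows essentially the same route as the paper: the paper's proof likewise invokes Lemma~\ref{lem:two in the diagonal, syzygies} with $k=n-1$ and then observes that $\tau$ and $\tau^-$ simply shift both coordinates by $\mp 1$, so the parity of $n-1$ accounts for the swap of cases between the lemma and the corollary. Your extra remarks on propagating non-vanishing and non-projectivity are exactly the bookkeeping the paper leaves implicit.
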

\begin{proof}
By Lemma~\ref{lem:two in the diagonal, syzygies} it is enough to note that $\tau M(i,j) = M(i-1,j-1)$ and $\tau^{-}M(i,j) = M(i+1,j+1)$ if $M(i,j)$ is not projective respectively not injective.
\end{proof}

Finally we give a sufficient condition for non-vanishing of $\Ext^1_\La$ that is useful to exclude possible $n\ZZ$-cluster tilting subcategories.

\begin{proposition}\label{prop:tetragon corollary}
Let $\La$ be a Nakayama algebra and $M(i,j) \in \mo \La$. Further let $i',j' \in \ZZ$ be such that $i+1 \le i' \le j+1 \le j' \le \rmax i$. Then $M(i',j')\in \mo \La$ and $\Ext_{\La}^1(M(i',j'),M(i,j)) \neq 0$.
\end{proposition}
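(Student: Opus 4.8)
The plan is to exhibit a non-split short exact sequence with $M(i',j')$ on the left and $M(i,j)$ on the right, which immediately forces $\Ext^1_\La(M(i',j'),M(i,j)) \neq 0$. First I would check that $M(i',j') \in \mo\La$: by hypothesis $i+1 \le i' \le j' \le \rmax i$, so $i \le i' \le j' \le \rmax i$, and since $M(i,\rmax i) \in \mo\La$ (it is the injective hull of $M(i,i)$, or simply $(i,\rmax i) \in \Gamma_0$ by definition of $\rmax i$), Proposition~\ref{prop:basic Nakayama results1}(a) gives $M(i',j') \in \mo\La$. The same argument shows $M(i',\rmax i), M(i,j') \in \mo\La$, which I will use below.

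Next I would construct the relevant sequence. Since $i \le i' - 1 \le j \le j' \le \rmax i$, consider the module $M(i,j')$; it contains $M(i,i'-1)$ as a submodule and $M(i',j')$ as the corresponding quotient (using the submodule/quotient conventions set up after the definition of $M(i,j)$), giving a short exact sequence
\[
0 \to M(i,i'-1) \to M(i,j') \to M(i',j') \to 0 .
\]
However $M(i,i'-1)$ is not $M(i,j)$ in general (we have $i'-1 \le j$), so instead I would build the extension of $M(i',j')$ by $M(i,j)$ directly. The idea is to take the module $N = M(i,j')$ together with its quotient $M(i',j')$, but then extend $M(i,i'-1)$ further: more precisely, form the pushout of $0 \to M(i,i'-1) \to M(i,j') \to M(i',j') \to 0$ along the natural inclusion $M(i,i'-1) \hookrightarrow M(i,j)$ (valid since $i \le i'-1 \le j$). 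This produces a short exact sequence
\[
0 \to M(i,j) \to E \to M(i',j') \to 0
\]
for some module $E$, and I would identify $E$: the pushout of a uniserial module along an inclusion of uniserial modules with the same socle is again uniserial, so $E \isom M(i,j'')$ where the top of $E$ agrees with the top $M(j',j')$ of $M(i',j')$ — thus $E \isom M(i,j')$ — provided this module lies in $\mo\La$, which holds since $i \le j' \le \rmax i$.

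The main point, and the step requiring the most care, is to verify that this sequence does not split. If it split, then $M(i',j')$ would be a direct summand of $E \isom M(i,j')$; but $M(i,j')$ is indecomposable (all $M(a,b)$ are indecomposable uniserial modules) and has length $j'-i+1 > j'-i'+1$ since $i < i'$, so it cannot have $M(i',j')$ as a proper summand. Hence the sequence is non-split and $\Ext^1_\La(M(i',j'),M(i,j)) \neq 0$. I expect the bookkeeping around the pushout — checking that $E$ is genuinely $M(i,j')$ and that all intermediate modules lie in $\mo\La$, for which the hypothesis $j' \le \rmax i$ is exactly what is needed — to be the only delicate part; everything else is a direct consequence of the uniserial structure of Nakayama modules and Proposition~\ref{prop:basic Nakayama results1}(a).
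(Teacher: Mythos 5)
Your overall strategy---producing a nonzero class in $\Ext^1_{\La}(M(i',j'),M(i,j))$ by pushing out the canonical sequence $0 \to M(i,i'-1) \to M(i,j') \to M(i',j') \to 0$ along the inclusion $M(i,i'-1)\hookrightarrow M(i,j)$---is genuinely different from the paper's proof, which applies the Auslander--Reiten formula and shows that the composite $M(i,j)\twoheadrightarrow M(i'-1,j)\hookrightarrow M(i'-1,j'-1)$ does not factor through the injective hull $M(i,\rmax{i})$. The pushout idea can in fact be made to work, but your verification of non-splitness has a genuine gap: the identification $E\isom M(i,j')$ is false except in the boundary case $i'=j+1$. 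The middle term of the pushout has length $(j-i+1)+(j'-i'+1)$, which strictly exceeds the length $j'-i+1$ of $M(i,j')$ whenever $i'\le j$, and $E$ need not be uniserial or even indecomposable. Concretely, for $\La=\K A_4$ with no relations, $M(i,j)=M(1,2)$ and $(i',j')=(2,3)$ satisfy the hypotheses, and the (unique non-split) extension of $M(2,3)$ by $M(1,2)$ has middle term $M(1,3)\oplus M(2,2)$, not $M(1,3)$. Since your argument against splitting is precisely ``$M(i',j')$ would be a proper summand of the indecomposable $M(i,j')$'', the proof collapses at its main point; as written it establishes the statement only when $i'=j+1$.

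To repair it you must show directly that the pushed-out class is nonzero. Since $M(i,j')\in\mo\La$ forces $\lmax{j'}\le i<i'$, the module $M(i',j')$ is not projective, its projective cover is $M(\lmax{j'},j')$, and your pushout class is represented by the composite $\om M(i',j')=M(\lmax{j'},i'-1)\twoheadrightarrow M(i,i'-1)\hookrightarrow M(i,j)$; splitting would mean this map extends to $M(\lmax{j'},j')$. In the acyclic case one checks quickly that $\Hom_{\La}(M(\lmax{j'},j'),M(i,j))=0$ because $j'>j$, so the argument closes. In the cyclic case, however, such Hom spaces can be nonzero because long modules wrap around the cycle (e.g.\ for $\K\tilde{A}_2/R^4$ one has $\Hom(M(1,4),M(1,2))\neq 0$), so one must argue that the specific map above does not factor---this is exactly the subtlety the paper's proof handles by comparing $M(i,j)=R^{\rmax{i}-j}M(i,\rmax{i})$ with the image $M(i'-1,j)=R^{j'-1-j}M(i'-1,j'-1)$. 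Without such an argument, the non-splitness claim is unsupported.
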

\begin{proof}
First note that $j' \le \rmax i$ implies $M(i,j') \in \mo \La$. Since $i < i'$ we have that $M(i',j')$ is a proper quotient module of $M(i,j')$ which gives that $M(i',j')$ is a non-projective $\La$-module. Next we apply the Auslander--Reiten formula to obtain
\[
\Ext_{\La}^1(M(i',j'),M(i,j)) \isom D \overline{\Hom}_\La (M(i,j), \tau M(i',j')) = D \overline{\Hom}_\La (M(i,j), M(i'-1,j'-1)). 
\]
Now $i \le i'-1$ and $j \le j'-1$ tell us that there is a non-zero morphism $M(i,j) \to M(i'-1,j'-1)$ obtained as the composition of the quotient $M(i,j) \to  M(i'-1,j)$ and the inclusion $M(i'-1,j) \to M(i'-1,j'-1)$. Assume towards a contradiction that this morphism factors through an injective $\La$-module. Then it must factor through the inclusion of $M(i,j)$ in its injective hull, which is $M(i,\rmax i)$ by Proposition~\ref{prop:basic Nakayama results2}(c). This contradicts $j' -1 < \rmax i$ since $M(i,j) = R^{\rmax i-j} M(i,\rmax i)$, but the image of $M(i,j)$ in $M(i'-1,j'-1)$ is $M(i'-1,j) = R^{j' -1-j}M(i'-1,j'-1)$, which is strictly larger than $R^{\rmax i-j}M(i'-1,j'-1)$.
\end{proof}

\subsection{Necessary conditions} In this section we introduce some necessary conditions for the existence of an $n\ZZ$-cluster tilting subcategory for a Nakayama algebra. 

First we observe the following rather strong condition that holds assuming just the existence of an $n$-cluster tilting subcategory.

\begin{lemma}\label{lem:no peaks in AR-quiver}
Let $\La$ be a Nakayama algebra that admits an $n$-cluster tilting subcategory. If $M(i,j) \in \mo \La$, then $M(i-1,j-1) \in \mo \La$ or $M(i+1,j+1) \in \mo \La$.
\end{lemma}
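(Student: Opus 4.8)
The statement says: if $M(i,j) \in \mo\La$ then the Auslander–Reiten quiver has no "peak" at the diagonal position corresponding to $M(i,j)$ in the following sense — either $M(i-1,j-1)$ or $M(i+1,j+1)$ is again a module. Translating via Proposition~\ref{prop:basic Nakayama results1}, having $M(i,j)\in\mo\La$ but neither $M(i-1,j-1)\in\mo\La$ nor $M(i+1,j+1)\in\mo\La$ means $\rmax{i-1} = j-1$ (so $M(i-1,j)\notin\mo\La$ even though $M(i,j)\in\mo\La$, forcing $\rmax{i-1}<j$ but $\rmax{i-1}\ge i-1$, and combined with $\rmax i \ge j$ we get $\rmax{i-1}\le j-1$) and dually $\lmax{j+1} = i+1$. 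So the plan is to assume for contradiction that $M(i,j)$ is such a peak, identify the precise local shape of $\Gamma$ around it, and exhibit a pair of indecomposables violating the $n$-cluster tilting property — concretely, produce $X, Y \in \cC$ with $\Ext^k_\La(X,Y)\ne 0$ for some $0<k<n$.

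First I would set up notation: let $\cC\subseteq\mo\La$ be the $n$-cluster tilting subcategory, which exists and is unique by Corollary~\ref{cor:unique n-ct for representation-directed} in the acyclic case (and handled by the cyclic-case tools otherwise, but the argument here should be uniform). The key structural input is that $\cC$ contains $\La$ and $D(\La)$ and is closed under $\tn,\tno$, with $\om^i M$ indecomposable for $M\in\cC_P$, $0<i<n$ (Proposition~\ref{prop:basic nZ-cluster tilting results}). Next, from the peak assumption I would extract that $M(i,j)$ is simultaneously "far from projective and injective" in a way that is incompatible with these indecomposability constraints: since $M(i-1,j-1)\notin\mo\La$, the module $M(i,j)$ cannot be in the image of $\tau^-$ (i.e.\ $\tau M(i,j)$ would be $M(i-1,j-1)$, which does not exist), so $M(i,j)$ is injective, i.e.\ $j=\rmax i$. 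Dually $M(i+1,j+1)\notin\mo\La$ forces $M(i,j)$ projective, i.e.\ $i=\lmax j$. So $M(i,j)$ is projective-injective. Then I would analyze the neighbours: $M(i,j-1)$ (if $j>i$) is a proper submodule, non-injective with $\om^- M(i,j-1) = M(j,\rmax i) = M(j,j)$ simple; and $M(i+1,j)$ is a proper quotient, non-projective with $\om M(i+1,j) = M(\lmax j, i) = M(i,i)$ simple. These give short "stretches" of the AR-quiver.

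The heart of the argument is to use Proposition~\ref{prop:tetragon corollary} or a direct Auslander–Reiten-formula computation to find a nonzero $\Ext^k$ between two modules that the $n$-cluster tilting structure forces into $\cC$. The cleanest route: using closure of $\cC$ under $\tn$ and $\tno$ together with Proposition~\ref{prop:basic nZ-cluster tilting results2}(d),(e) (which give $\om^-\tau(M)\in\cC$ for $M\in\cC_P$), trace the orbit of the projective-injective $P=M(i,j)$ and the two simples $M(i,i)$, $M(j,j)$ through syzygies and cosyzygies; monotonicity (Lemma~\ref{lem:two in the diagonal, syzygies}, Corollary~\ref{cor:two in the diagonal, n-AR-translations}) pins down exactly where these orbits land, and I expect to produce either a violation of the "$\om^i M$ indecomposable for $0<i<n$" condition (because at the peak a syzygy forced into $\cC$ decomposes, since the AR-quiver "pinches" to a single module at $(i,j)$ and cannot support the $n$-step walk) or directly an $\Ext^1$ from Proposition~\ref{prop:tetragon corollary} applied to $M(i,j-1)$ with $(i',j') = (i+1, j)$ — check: $i+1\le i'=i+1\le j+1$? need $j'=j$ with $j\le\rmax i = j$, and $i+1\le i'\le (j-1)+1 = j\le j'=j\le\rmax i$, giving $\Ext^1_\La(M(i+1,j),M(i,j-1))\ne0$, and both $M(i+1,j)$, $M(i,j-1)$ would need to be shown to lie in $\cC$ — which is exactly where the peak geometry is used, since near a peak the relevant $\om^n$ or $\tn$-orbits are degenerate.

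The main obstacle will be showing that the two witnessing modules actually belong to $\cC$: $\cC$ is not all of $\mo\La$, so this requires genuinely using the peak hypothesis to show the AR-quiver is so constrained near $(i,j)$ that the defining closure properties of $\cC$ (under $\om^n$, $\tn$, and indecomposability of intermediate syzygies) cannot all hold unless the witnesses are present. I anticipate doing a small case split on whether $j = i$ (the peak is a simple projective-injective, an isolated vertex — then $\cC$ closed under $\tn$ forces issues immediately since $\tn$ of a simple cannot stay indecomposable through $n-1$ syzygy steps in a pinched quiver) versus $j>i$, and in the latter case the argument above with Proposition~\ref{prop:tetragon corollary} combined with $\om$-indecomposability should close it; the bookkeeping of indices under iterated $\lmax\cdot$ and $\rmax\cdot$ near the peak is the routine-but-delicate part.
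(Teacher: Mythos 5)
Your overall strategy points at the same contradiction as the paper's proof, namely a nonzero $\Ext^1$ between two modules that must lie in $\cC$, and you even name the right pair: $\Ext^1_\La(M(i+1,j),M(i,j-1))\neq 0$. But there is a genuine gap at exactly the point you yourself flag as ``the main obstacle'': showing that the two witnesses belong to $\cC$. In the paper this is immediate from the peak hypothesis and needs no orbit-tracing at all: for $i<j$, $M(i+1,j)$ and $M(i,j-1)$ lie in $\mo\La$ by Proposition~\ref{prop:basic Nakayama results1}(a); $M(i+1,j+1)\notin\mo\La$ forces $\rmax{i+1}=j$ and $M(i-1,j-1)\notin\mo\La$ forces $\lmax{j-1}=i$, so $M(i+1,j)$ is \emph{injective} and $M(i,j-1)$ is \emph{projective} by Proposition~\ref{prop:basic Nakayama results2}(b),(c), hence both lie in any $n$-cluster tilting subcategory by Proposition~\ref{prop:basic nZ-cluster tilting results}(a). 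Since moreover $\tau M(i+1,j)\isom M(i,j-1)$, the almost split sequence (or your application of Proposition~\ref{prop:tetragon corollary}) gives $\Ext^1_\La(M(i+1,j),M(i,j-1))\neq 0$, contradicting $n\geq 2$. You record these two modules only as ``non-projective'' and ``non-injective'', thereby missing precisely the properties that put them in $\cC$ for free, and instead defer membership to an unexecuted plan of tracing $\tn$-, $\om^n$- and $\om^{-}\tau$-orbits. That plan, as sketched, relies on Proposition~\ref{prop:basic nZ-cluster tilting results2}(d),(e), which are only available for $n\ZZ$-cluster tilting subcategories, whereas the lemma assumes only an $n$-cluster tilting subcategory; so even if carried out it would not prove the statement in its stated generality.

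Two smaller points. First, your deduction about $M(i,j)$ itself is swapped: nonexistence of $M(i-1,j-1)=\tau M(i,j)$ forces $M(i,j)$ to be \emph{projective} (not injective), and nonexistence of $M(i+1,j+1)=\tau^{-}M(i,j)$ forces it to be \emph{injective}; the combined conclusion that $M(i,j)$ is projective-injective survives, but it is not actually needed for the proof. Second, the case $i=j$ requires no cluster-tilting argument at all: for any connected Nakayama algebra with at least one arrow it follows from Proposition~\ref{prop:basic Nakayama results1}(d),(e) that a simple module always has a neighbouring simple, so your proposed argument via $\tn$ of a simple in a ``pinched'' quiver is both unnecessary and problematic, since a simple projective-injective module is not in $\cC_P$ and the closure properties you want to exploit do not apply to it.
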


\begin{proof}
If $i = j$, then the claim follows from Proposition~\ref{prop:basic Nakayama results1}(d),(e). Assume towards a contradiction that $i < j$, $M(i-1,j-1) \not \in \mo \La$ and $M(i+1,j+1) \not\in \mo \La$. By Proposition~\ref{prop:basic Nakayama results1}(a) $M(i+1,j) \in \mo \La$ and $M(i,j-1) \in \mo \La$. Hence $\rmax{i+1} = j$ and $\lmax{j-1} = i$. By Proposition~\ref{prop:basic Nakayama results2}(b)(c) we get that $M(i+1,j)$ is injective and $M(i,j-1)$ is projective, and so both belong to the $n$-cluster tilting subcategory. This implies $\Ext_\La^1(M(i+1,j),M(i,j-1)) = 0$ contradicting $\tau M(i+1,j) \isom M(i,j-1)$.
\end{proof}

In the rest of this section we assume that $\La$ is a Nakayama algebra and $\cC\subseteq \mo \La$ is an $n\ZZ$-cluster tilting subcategory.

\begin{lemma}\label{lem:n-th syzygy and n-th cosyzygy are indecomposable}
Let $\La$ be a Nakayama algebra and let $\cC\subseteq \mo \La$ be an $n$-cluster tilting subcategory. 
\begin{enumerate}[label=(\alph*)]
    \item $\Omega^i M$ is indecomposable for all $M\in\cC_{P}$ and $0\leq i\leq n$.
    \item $\Omega^{-i} N$ is indecomposable for all $N\in\cC_{I}$ and $0\leq i \leq n$.
\end{enumerate}
\end{lemma}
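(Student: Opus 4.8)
The plan is to reduce everything to the observation that over a Nakayama algebra the syzygy of an indecomposable module is either zero or again indecomposable. Indeed, by Proposition~\ref{prop:basic Nakayama results2}(b), if $M(i,j)\in\mo\La$ is not projective, then $\Omega M(i,j)\isom M(\lmax j,i-1)$, which is again an interval module and hence indecomposable; dually for cosyzygies via Proposition~\ref{prop:basic Nakayama results2}(c). Consequently, arguing by induction on $i$ starting from the indecomposable module $M=\Omega^0M$, for part (a) it suffices to prove that $\Omega^iM\neq0$ for every $M\in\cC_P$ and every $0\le i\le n$; indecomposability of $\Omega^iM$ is then automatic.

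For $0\le i\le n-1$ the non-vanishing is already available: $\Omega^0M=M\neq0$ since the objects of $\cC_P$ are nonzero, and for $0<i<n$ Proposition~\ref{prop:basic nZ-cluster tilting results}(c) gives that $\Omega^iM$ is indecomposable, in particular nonzero. So the only case needing an argument is $i=n$. Here I would write $\Omega^nM=\Omega(\Omega^{n-1}M)$ and first check that the indecomposable module $\Omega^{n-1}M$ is non-projective: as $M\in\cC_P$, Proposition~\ref{prop:basic nZ-cluster tilting results}(b) ensures that $\tn M=\tau\Omega^{n-1}M$ is a nonzero module (it lies in $\cC_I$), while $\tau$ of a projective module vanishes. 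Since the syzygy of an indecomposable non-projective module is nonzero, we conclude $\Omega^nM\neq0$, which finishes part (a).

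Part (b) is obtained dually, by replacing $\Omega^i$ with $\Omega^{-i}$, syzygies with cosyzygies, $\cC_P$ with $\cC_I$, $\tn$ with $\tno$, and using Proposition~\ref{prop:basic Nakayama results2}(c) and Proposition~\ref{prop:basic nZ-cluster tilting results}(b),(d) in place of Proposition~\ref{prop:basic Nakayama results2}(b) and Proposition~\ref{prop:basic nZ-cluster tilting results}(c); here the bijection $\tno\colon\cC_I\to\cC_P$ forces $\Omega^{-(n-1)}N$ to be non-injective, which is what prevents $\Omega^{-n}N$ from vanishing. The only place where the $n$-cluster tilting hypothesis genuinely enters — hence the only real content beyond the general behaviour of (co)syzygies over Nakayama algebras — is the exclusion of $\Omega^nM=0$ (and dually $\Omega^{-n}N=0$), which relies on the bijection of Proposition~\ref{prop:basic nZ-cluster tilting results}(b). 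I do not expect any serious obstacle here.
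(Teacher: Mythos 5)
Your proof is correct, and it follows the paper's proof almost step for step: the cases $0\le i<n$ via Proposition~\ref{prop:basic nZ-cluster tilting results}(c), the observation that over a Nakayama algebra the syzygy of an indecomposable module is again an interval module (hence indecomposable or zero), and the reduction of the case $i=n$ to showing that $\om^{n-1}M$ is not projective. The one place where you diverge is in how that non-projectivity is established. The paper argues by contradiction with the Ext-vanishing condition: if $\om^{n-1}M$ were projective (and it is nonzero, being indecomposable), then $\Ext^{n-1}_{\La}(M,\om^{n-1}M)\neq 0$ with both arguments in $\cC$ (note $1\le n-1<n$ since $n\ge 2$), contradicting that $\cC$ is $n$-cluster tilting. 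You instead invoke Proposition~\ref{prop:basic nZ-cluster tilting results}(b): since $M\in\cC_P$, the module $\tn M=\tau\om^{n-1}M$ is an indecomposable, hence nonzero, object of $\cC_I$, whereas $\tau$ annihilates projectives, so $\om^{n-1}M$ cannot be projective. Both arguments are valid and of comparable length; the paper's uses only the Ext-vanishing part of the definition, while yours leans on Iyama's bijection $\tn\colon\cC_P\to\cC_I$, which the paper records anyway as Proposition~\ref{prop:basic nZ-cluster tilting results}(b), so no additional input is required. Your dual treatment of part (b) via $\tno$ and the vanishing of $\tau^-$ on injectives is likewise correct.
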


\begin{proof}
We only prove (a) as (b) is similar. For $i=0$ the result is clear by definition and for $0<i<n$ the result follows by Proposition~\ref{prop:basic nZ-cluster tilting results}(c). Since $\La$ is a Nakayama algebra and $\Omega^{n-1}M$ is indecomposable, it follows that $\Omega^{n}M$ is indecomposable or zero. Hence it is enough to show that $\Omega^{n-1}M$ is not projective. Assume towards a contradiction that $\Omega^{n-1}M$ is projective. Then $\Ext^{n-1}_{\La}(M,\Omega^{n-1}M)\neq 0$, contradicting that $\cC$ is $n$-cluster tilting. 
\end{proof}

\begin{lemma}\label{lem:necessary modules for projective non-injective}
Let $\La$ be a Nakayama algebra and let $\cC\subseteq \mo \La$ be an $n\ZZ$-cluster tilting subcategory. 
\begin{enumerate}[label=(\alph*)]
\item Assume for some $i<j$ that $M(i,j+1)\in\mo{\La}$ and $M(i-1,j)\not\in\mo\La$. Then
\[
\{M(i,j),M(i,j+1),M(i,i),M(i,j-1)\} \subseteq \cC.
\]
If moreover $M(i-1,j-1)\in\mo\La$, then $M(i-1,j-1)\in\cC$.
\item Assume for some $i<j$ that $M(i-1,j)\in\mo\La$ and $M(i,j+1)\not\in\mo\La$. Then
\[
\{M(i,j),M(i-1,j),M(j,j),M(i+1,j)\} \subseteq \cC.
\]
If moreover $M(i+1,j+1)\in\mo\La$, then $M(i+1,j+1)\in\cC$.
\end{enumerate}
\end{lemma}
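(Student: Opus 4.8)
The two parts are dual, so I would prove (a) in detail and then indicate that (b) follows by the analogous argument (applying it to $\La^{\mathrm{op}}$, or equivalently swapping the roles of $i$ and $j$, projective and injective, $\om$ and $\om^-$, and using the injective version of all auxiliary results). So fix $i<j$ with $M(i,j+1)\in\mo\La$ and $M(i-1,j)\notin\mo\La$.

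\medskip

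First I would pin down which modules are projective and which are injective. Since $M(i,j+1)\in\mo\La$ but $M(i-1,j)\notin\mo\La$, Proposition~\ref{prop:basic Nakayama results1}(a) forces $M(i,j)\in\mo\La$ and $\lmax{j}=i$, so by Proposition~\ref{prop:basic Nakayama results2}(b) the module $M(i,j)$ is projective. Moreover $M(i,j-1)\in\mo\La$ (it is a subquotient of $M(i,j)$) and again $\lmax{j-1}=i$ by Proposition~\ref{prop:basic Nakayama results1}(b), so $M(i,j-1)$ is also projective; likewise $M(i,i)=\soc M(i,j)$ is projective. Now $M(i,j)$ is projective, hence lies in $\cC$ by Proposition~\ref{prop:basic nZ-cluster tilting results}(a). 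The key syzygy computation is $\om(M(i,j+1))\isom M(\lmax{j+1},i-1)$; but from $M(i,j+1)\in\mo\La$, $M(i-1,j)\notin\mo\La$ and Proposition~\ref{prop:basic Nakayama results1}(a) one gets $\lmax{j+1}=i$, so $\om(M(i,j+1))\isom M(i,i-1)=0$, i.e. $M(i,j+1)$ is projective and therefore $M(i,j+1)\in\cC$ as well. (If instead $\om(M(i,j+1))\ne 0$ one would argue via $\cC$ being closed under $\om^n$ together with Lemma~\ref{lem:n-th syzygy and n-th cosyzygy are indecomposable}; but the geometry here makes $M(i,j+1)$ projective outright.)

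\medskip

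It remains to place $M(i,j-1)$ and $M(i,i)$ in $\cC$, and, under the extra hypothesis, $M(i-1,j-1)$. For $M(i,j-1)$: it is projective, so it is in $\cC$ by Proposition~\ref{prop:basic nZ-cluster tilting results}(a) — wait, only $\La$ and $D(\La)$ are guaranteed there, not every projective, so this needs the indecomposable projectives specifically, which $M(\lmax{j'},j')$ are; since $M(i,j-1)=M(\lmax{j-1},j-1)$ is an indecomposable projective it is a summand of $\La$ and hence lies in $\cC$. The same reasoning gives $M(i,i)=M(\lmax{i},i)\in\cC$ once we check $\lmax{i}=i$, which again follows from Proposition~\ref{prop:basic Nakayama results1}(a)–(b) and $M(i-1,i-1)$ being forced out by $M(i-1,j)\notin\mo\La$ together with the no-peaks Lemma~\ref{lem:no peaks in AR-quiver} applied appropriately. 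Finally, for the extra hypothesis $M(i-1,j-1)\in\mo\La$: here I would use the $n\ZZ$-closure more seriously. Since $M(i,j+1)\in\cC$ is projective and non-injective (it is non-injective because $M(i,j+2)$ or the relevant successor keeps $\rmax i$ larger — one checks $\rmax i>j+1$ using $M(i-1,j-1)\in\mo\La$ and Proposition~\ref{prop:basic Nakayama results1}, or alternatively $M(i,j+1)$ being injective would make $M(i,j)$ a non-split-off situation contradicting $M(i-1,j)\notin\mo\La$), Proposition~\ref{prop:basic nZ-cluster tilting results2}(d) gives $\om^-\tau(M(i,j+1))\in\cC$; computing $\tau M(i,j+1)=M(i-1,j)$ — but that is not in $\mo\La$! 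So instead I apply part (d) to a suitable non-injective module of $\cC$ whose $\om^-\tau$ lands on $M(i-1,j-1)$, or more directly: by Lemma~\ref{lem:n-th syzygy and n-th cosyzygy are indecomposable} applied to the projective $M(i,j+1)\in\cC_?$ is vacuous, so the cleanest route is to observe $M(i-1,j-1)=\tau M(i,j)$ and that $M(i,j)\in\cC$ is non-injective (again because $\rmax i\ge j+1>j$), whence by Proposition~\ref{prop:basic nZ-cluster tilting results}(b) applied in reverse — $\tn^{-}$ lands $M(i-1,j-1)$'s image back — one concludes $M(i-1,j-1)\in\cC$. The honest statement: $M(i,j)$ is projective non-injective, so $\tau M(i,j)=M(i-1,j-1)$ is non-injective and indecomposable, and one shows it is in $\cC$ using that $\cC$ is determined by the vanishing conditions in Definition~\ref{def:nZ-cluster tilting} — every $\Ext^k_\La(\cC,M(i-1,j-1))$ and $\Ext^k_\La(M(i-1,j-1),\cC)$ vanishes for $0<k<n$ because $M(i-1,j-1)$ sits $\tau$-adjacent to the projective $M(i,j)\in\cC$.

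\medskip

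\textbf{Main obstacle.} The delicate point is the last one: showing $M(i-1,j-1)\in\cC$ when it merely exists as a module. The $n$-cluster tilting property is a two-sided $\Ext$-vanishing condition, and $M(i-1,j-1)$ is not projective nor visibly obtained from known members of $\cC$ by $\om^{\pm n}$ or $\tn^{\pm}$ in one obvious step. I expect the real work is to track, via Proposition~\ref{prop:basic Nakayama results2} and the explicit $\om$/$\tau$ formulas on the pairs $(i,j)$, exactly which projective–injective-adjacent modules are forced into $\cC$ by Lemma~\ref{lem:no peaks in AR-quiver} and the $n\ZZ$-closure, and to verify the relevant $\Ext$'s vanish using Proposition~\ref{prop:tetragon corollary} in the contrapositive (any module causing a bad $\Ext^1$ cannot be adjacent in the way forced here). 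Everything else is bookkeeping with the Auslander--Reiten quiver coordinates.
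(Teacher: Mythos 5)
There is a genuine gap, and it sits exactly at the heart of the lemma. You claim that $M(i,j-1)$ and $M(i,i)$ are projective, deducing $\lmax{j-1}=i$ and $\lmax{i}=i$ from monotonicity of $\lmax{}$ and from $M(i-1,j)\not\in\mo\La$. But monotonicity only gives $\lmax{j-1}\le\lmax{j}=i$ and $\lmax{i}\le i$, and $M(i-1,j)\not\in\mo\La$ does not exclude $M(i-1,j-1)$ or $M(i-1,i)$ from $\mo\La$: whenever $M(i-1,j-1)\in\mo\La$ (the case singled out in the lemma's last sentence) one has $\lmax{j-1}\le i-1$ and $\lmax{i}\le i-1$, so neither $M(i,j-1)$ nor $M(i,i)$ is projective. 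This is precisely the situation the lemma is designed for, and it is where the $n\ZZ$-hypothesis must be used: in the paper, $M(i,j)$ is projective non-injective, so $M(i,j)\in\cC_I$ and Proposition~\ref{prop:basic nZ-cluster tilting results2}(e) gives $\om\tau^-(M(i,j))\isom M(\lmax{j+1},i)=M(i,i)\in\cC$; then, if $M(i-1,j-1)\in\mo\La$, the module $M(i,i)$ is non-projective, so $M(i,i)\in\cC_P$ and Proposition~\ref{prop:basic nZ-cluster tilting results2}(d) gives $\om^-\tau(M(i,i))\isom M(i,\rmax{i-1})=M(i,j-1)\in\cC$ (if instead $M(i-1,j-1)\not\in\mo\La$, then $M(i,j-1)$ is indeed projective and the claim is trivial). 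Your argument never invokes these closure properties for $M(i,i)$ and $M(i,j-1)$, so it only covers the degenerate case and would fail, for instance, for the non-homogeneous algebras that are the actual target of the classification.

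The final step is also off: you assert $M(i-1,j-1)$ is non-injective and then gesture at an $\Ext$-vanishing argument via $\tau$-adjacency, which is not a proof. In fact the opposite holds and makes the step immediate: $M(i-1,j)\not\in\mo\La$ together with $M(i-1,j-1)\in\mo\La$ forces $\rmax{i-1}=j-1$, so $M(i-1,j-1)$ is injective by Proposition~\ref{prop:basic Nakayama results2}(c) and hence lies in $\cC$ because $D(\La)\in\cC$ (Proposition~\ref{prop:basic nZ-cluster tilting results}(a)). The parts of your write-up that do work — $M(i,j)$ and $M(i,j+1)$ projective, hence in $\cC$, and the duality reducing (b) to (a) — agree with the paper, but the core of the lemma (membership of $M(i,i)$, $M(i,j-1)$ and $M(i-1,j-1)$ in the non-projective case) is not established by your argument.
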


\begin{proof}
We only prove (a) as (b) is similar. We will use Proposition~\ref{prop:basic Nakayama results2}(b)(c) repeatedly to identify projective and injective modules as well as compute syzygies and cosyzygies. Recall also that the Auslander--Reiten translation can be computed using Proposition~\ref{prop:AR quiver of Nakayama}.

By Proposition~\ref{prop:basic Nakayama results1}(a) $M(i,j+1) \in \mo \La$ implies $M(i,j) \in \mo \La$. Also, by the same proposition $M(i-1,j) \not \in \mo \La$ implies $M(i-1,j+1) \not \in \mo \La$. Hence $M(i,j+1)$ is projective and $M(i,j)$ is projective non-injective. In particular, \[\{M(i,j),M(i,j+1)\} \subseteq \cC.\]
Since $M(i,j)$ is not injective, we have $\om\tau^{-}\left(M(i,j)\right)\in\cC$ by Proposition~\ref{prop:basic nZ-cluster tilting results2}(e). We compute that
\[
\cC \ni \om\tau^{-}(M(i,j)) \isom \om(M(i+1,j+1)) \isom M(\lmax{j+1},i+1-1) = M(i,i)
\]
since $M(i,j+1)$ is projective.

It remains to show $M(i,j-1) \in \cC$. If $M(i-1,j-1)\not\in\mo\La$, then $M(i,j-1)$ is projective and so $M(i,j-1)\in\cC$. Otherwise, assume $M(i-1,j-1)\in\mo\La$. This implies $M(i-1,i) \in \mo \La$ by Proposition~\ref{prop:basic Nakayama results1}(a) and so $\lmax i \le i-1$. In particular $M(i,i)$ is not projective. Thus Proposition~\ref{prop:basic nZ-cluster tilting results2}(d) gives $\om^-\tau M(i,i) \in \cC$. Again we compute that
\[
\cC \ni \om^-\tau M(i,i) \isom \om^- M(i-1,i-1) \isom M(i-1+1,\rmax{i-1}) = M(i,j-1)
\]
since $M(i-1,j)\not\in\mo\La$ implies that $M(i-1,j-1)$ is injective. Hence in this case it also follows that $M(i-1,j-1)\in\cC$.
\end{proof}

The strength of Lemma~\ref{lem:necessary modules for projective non-injective}(a) is that for each indecomposable projective non-injective module $M(i,j)$, we get two more indecomposable modules in $\cC$, namely $M(i,i)$ and $M(i,j-1)$. Similarly Lemma~\ref{lem:necessary modules for projective non-injective}(b) can be used for any indecomposable injective non-projective module. Later we will show that this drastically reduces the possible $n\ZZ$-cluster tilting subcategories of $\mo \La$ and we only need to consider some special cases.

\begin{lemma}\label{lem:two connected in the diagonal implies projective or injective}
Let $\La$ be a Nakayama algebra, $\cC\subseteq \mo \La$ an $n\ZZ$-cluster tilting subcategory and $M(i,j) \in \cC$. 
\begin{enumerate}[label=(\alph*)]
\item If $M(i,j+1)\in \cC$, then $M(i,j+1)$ is projective.
\item If $M(i-1,j)\in \cC$, then $M(i-1,j)$ is injective.
\end{enumerate}
\end{lemma}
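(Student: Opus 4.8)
The plan is to prove (a) and deduce (b) by the dual argument, interchanging syzygies with cosyzygies, Proposition~\ref{prop:basic nZ-cluster tilting results2}(e) with~(d), and projectives with injectives (equivalently, (b) for $\La$ is (a) for $\La^{\mathrm{op}}$, and both being a Nakayama algebra and admitting an $n\ZZ$-cluster tilting subcategory are stable under $(-)^{\mathrm{op}}$). So suppose $M(i,j),M(i,j+1)\in\cC$ and, towards a contradiction, that $M(i,j+1)$ is not projective. As $\cC$ is in particular $n$-cluster tilting and $n\ge 2$, we have $\Ext^1_\La(\cC,\cC)=0$; thus it suffices to produce two modules in $\cC$ with a nonzero $\Ext^1$ between them.

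First I would record the elementary consequences of the hypotheses. Since $M(i,j+1)\in\mo\La$ we have $\rmax i\ge j+1$, so $M(i,j)$ is not injective and hence $M(i,j)\in\cC_I$. Since $M(i,j+1)$ is not projective, Proposition~\ref{prop:basic Nakayama results2}(b) gives $l:=\lmax{j+1}\le i-1$; moreover $M(l,j+1)\in\mo\La$ by the definition of $\lmax{j+1}$, so $\rmax l\ge j+1$.

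The crucial step is to manufacture an auxiliary module in $\cC$ via closure under $\om\tau^-$. Applying Proposition~\ref{prop:basic nZ-cluster tilting results2}(e) to $M(i,j)\in\cC_I$ yields $\om\tau^-(M(i,j))\in\cC$, and I would compute this module explicitly: by Proposition~\ref{prop:AR quiver of Nakayama} we have $\tau^-(M(i,j))=M(i+1,j+1)$, which is not projective because $\lmax{j+1}\le i-1<i+1$, so Proposition~\ref{prop:basic Nakayama results2}(b) gives $\om(M(i+1,j+1))=M(\lmax{j+1},i)=M(l,i)$. Therefore $M(l,i)\in\cC$. I expect this to be the only genuinely delicate point, namely checking that $M(i+1,j+1)$ really is non-projective so that its syzygy is nonzero and given by the stated formula; this is precisely where the assumption that $M(i,j+1)$ is non-projective is used.

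Finally I would invoke Proposition~\ref{prop:tetragon corollary} with the module $M(l,i)$ in the role of $M(i,j)$ and with $(i',j')=(i,j+1)$. Its hypotheses $l+1\le i\le i+1\le j+1\le\rmax l$ hold: the first is $l\le i-1$, the two middle ones follow from $i\le j$, and the last is the inequality $\rmax l\ge j+1$ noted above. Hence $\Ext^1_\La(M(i,j+1),M(l,i))\ne 0$, and since both $M(i,j+1)$ and $M(l,i)$ lie in $\cC$ this contradicts $\Ext^1_\La(\cC,\cC)=0$; so $M(i,j+1)$ is projective. Apart from the syzygy computation, the argument is just bookkeeping with the numbers $\rmax i$ and $\lmax j$. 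For part (b) the dual recipe is to use Proposition~\ref{prop:basic nZ-cluster tilting results2}(d) and Proposition~\ref{prop:basic Nakayama results2}(c) to produce $M(j,\rmax{i-1})\in\cC$ and then to apply Proposition~\ref{prop:tetragon corollary} with $M(i-1,j)$ in the role of $M(i,j)$.
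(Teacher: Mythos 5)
Your proof is correct and is essentially the paper's argument: both produce an extra object of $\cC$ by using closure under $\om^{\pm}\tau^{\mp}$ (Proposition~\ref{prop:basic nZ-cluster tilting results2}(d)(e)) together with the explicit syzygy/cosyzygy formulas of Proposition~\ref{prop:basic Nakayama results2}, and then contradict $\Ext^1_{\La}(\cC,\cC)=0$ via Proposition~\ref{prop:tetragon corollary}. The only (harmless, slightly streamlining) difference is that the paper applies $\om^-\tau$ to $M(i,j+1)$ and needs Proposition~\ref{prop:tetragon corollary} twice --- first to force $\rmax{i-1}=j+1$ and then against $M(i,j)$ --- whereas you apply $\om\tau^-$ to $M(i,j)$, land on $M(\lmax{j+1},i)\in\cC$, and reach the contradiction against $M(i,j+1)$ in a single application; your dual recipe for (b) checks out in the same way.
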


\begin{proof}
We only prove (a) as (b) is similar. Assume towards a contradiction that $M(i,j+1)$ is not projective. By Proposition~\ref{prop:basic nZ-cluster tilting results2}(d) we have that $\om^{-}\tau(M(i,j+1))\in\cC$. Using Proposition~\ref{prop:basic Nakayama results2}(c) we compute
\[
\om^{-}\tau(M(i,j+1)) \isom \om^{-}(M(i-1,j)) \isom  M(j+1,\rmax{i-1}).
\]
We claim that $\rmax{i-1} = j+1$. Indeed, if $\rmax{i-1} > j+1$, then Proposition~\ref{prop:tetragon corollary} gives $$\Ext_{\La}^1(M(j+1,\rmax{i-1}),M(i,j+1)) \neq 0$$ as $\rmax{i-1} \le \rmax{i}$ by Proposition~\ref{prop:basic Nakayama results1}(b), which contradicts that $\cC$ is $n$-cluster tilting.

Hence $M(j+1,j+1) \in \cC$. But $\rmax i \ge j+1$ so Proposition~\ref{prop:tetragon corollary} gives $\Ext_{\La}^1(M(j+1,j+1),M(i,j))\neq 0$ contradicting that $\cC$ is $n$-cluster tilting.
\end{proof}

\begin{lemma}\label{lem:connected pairs stay connected}
Let $\La$ be a Nakayama algebra, $\cC\subseteq \mo \La$ an $n\ZZ$-cluster tilting subcategory and $M(i,j) \in \cC$. 
\begin{enumerate}[label=(\alph*)]
\item Assume $M(i,j)$ is not projective and $\tn(M(i,j)) \isom M(i',j')$. If $M(i+1,j) \in \cC$, then
\[
\tn(M(i+1,j)) \isom
\begin{cases} 
M(i',j'+1) &\mbox{if $n$ is even},\\ 
M(i'+1,j')&\mbox{if $n$ is odd.}
\end{cases}\]
\item Assume $M(i,j)$ is not injective and $\tno(M(i,j)) \isom M(i',j')$. If $M(i,j-1) \in \cC$, then
\[
\tno(M(i,j-1)) \isom
\begin{cases} 
M(i'-1,j') &\mbox{if $n$ is even},\\ 
M(i',j'-1)&\mbox{if $n$ is odd.}
\end{cases}\]
\end{enumerate}
\end{lemma}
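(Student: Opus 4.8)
The plan is to prove part (a) directly by computing $\tn(M(i+1,j))$ using the definition $\tn = \tau\om^{n-1}$ together with Lemma~\ref{lem:two in the diagonal, syzygies} applied to the pair $M(i,j)$ and $M(i+1,j)$; part (b) will follow by a dual argument (or by applying part (a) in the opposite algebra). So let me focus on (a).

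First I would record the setup. Since $M(i,j)\in\cC$ is not projective and $\tn(M(i,j))\isom M(i',j')$, Proposition~\ref{prop:basic nZ-cluster tilting results}(b) guarantees $M(i,j)\in\cC_P$, and by Lemma~\ref{lem:n-th syzygy and n-th cosyzygy are indecomposable}(a) all the syzygies $\om^0 M(i,j),\dots,\om^{n-1}M(i,j)$ are indecomposable (and nonzero, hence nonprojective for indices $<n-1$, which is what makes the iterated syzygy well-defined at the level of the AR-quiver). Similarly $M(i+1,j)\in\cC$ is a proper quotient of $M(i,j)$ (it lies directly below $M(i,j)$ in the AR-quiver via an epimorphism by Proposition~\ref{prop:AR quiver of Nakayama}), so $M(i+1,j)$ is also non-projective, it lies in $\cC_P$, and its iterated syzygies $\om^0,\dots,\om^{n-1}$ are indecomposable and nonzero as well. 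This puts us exactly in the hypotheses of Lemma~\ref{lem:two in the diagonal, syzygies} with $k=n-1$, $(i_1,j_1)=(i,j)$, $(i_2,j_2)=(i+1,j)$: we have $i_1 \le i_2$ (with $i_1 \ne i_2$) and $j_1 = j_2$.

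Next I would run the two parity cases. Write $\om^{n-1}M(i,j)\isom M(a,b)$ and $\om^{n-1}M(i+1,j)\isom M(c,d)$. If $n$ is odd, then $n-1$ is even, so Lemma~\ref{lem:two in the diagonal, syzygies}(a) gives $b=d$ (from $j_1=j_2$) and $a\le c$; I then want to upgrade $a\le c$ to $c=a+1$. The point is that $\om^{n-1}$ applied to the short exact sequence $0\to M(i,i)\to M(i,j)\to M(i+1,j)\to 0$ (or rather to the covering epimorphism $M(i,j)\to M(i+1,j)$ whose kernel is the simple $M(i,i)$ by Proposition~\ref{prop:basic Nakayama results2}(a)) shows that $M(c,d)$ is a quotient of $M(a,b)$ of codimension exactly $1$, since dimensions behave additively along syzygies of simple-cokernel epimorphisms in a Nakayama algebra — concretely $\dim\om^{n-1}M(i,j) - \dim\om^{n-1}M(i+1,j) = \pm\dim\om^{n-1}M(i,i)$ up to keeping track of parity, and $M(i,i)$ being simple forces this to be $1$. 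Combined with $b=d$ and $a\le c$ this forces $(c,d)=(a+1,b)$. Then applying $\tau$, which sends $(x,y)\mapsto(x-1,y-1)$, gives $\tn(M(i,j))\isom M(a-1,b-1)=M(i',j')$ and $\tn(M(i+1,j))\isom M(a,b-1)=M(i'+1,j')$, as required. If $n$ is even, $n-1$ is odd, and Lemma~\ref{lem:two in the diagonal, syzygies}(b) gives $c=a$ (from $j_1=j_2$) and $b\le d$; the same codimension-one bookkeeping forces $d=b+1$, so $(c,d)=(a,b+1)$, and applying $\tau$ yields $\tn(M(i+1,j))\isom M(a-1,b)=M(i',j'+1)$.

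The main obstacle is justifying the codimension-one claim cleanly — that passing an epimorphism with simple kernel through one syzygy again yields either an epimorphism or a monomorphism whose cokernel/kernel is again simple (hence one-dimensional), and iterating $n-1$ times. This is really a statement about how the maps $i\mapsto\lmax{i-1}$, $j\mapsto\lmax j -1$ from the proof of Lemma~\ref{lem:two in the diagonal, syzygies} interact with a difference of $1$ in one coordinate: one needs that these weakly increasing maps move a gap of size $1$ to a gap of size at most $1$, and that it cannot collapse to $0$ because the two modules $\om^{n-1}M(i,j)$ and $\om^{n-1}M(i+1,j)$ are non-isomorphic (their preimages differ, and syzygy is injective on non-projectives here since all intermediate syzygies are indecomposable and the algebra is Nakayama). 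I would phrase this as: Lemma~\ref{lem:two in the diagonal, syzygies} already gives the weak inequality plus equality in the other coordinate, so the only thing to rule out is that both coordinates agree, i.e.\ $\om^{n-1}M(i,j)\isom\om^{n-1}M(i+1,j)$; but then $\tn M(i,j)\isom \tn M(i+1,j)$, and since $\tn$ restricts to a bijection $\cC_P\to\cC_I$ by Proposition~\ref{prop:basic nZ-cluster tilting results}(b) this would force $M(i,j)\isom M(i+1,j)$, a contradiction. That observation disposes of the obstacle without any dimension count, so I would use it in place of the bookkeeping sketch above.
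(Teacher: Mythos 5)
There is a genuine gap in your argument, and it sits exactly where the paper has to work hardest. Applying Lemma~\ref{lem:two in the diagonal, syzygies} with $k=n-1$ to the pair $(i,j)$, $(i+1,j)$ only gives you equality in one coordinate and a \emph{weak} inequality in the other: writing $\om^{n-1}M(i,j)\isom M(a,b)$ and $\om^{n-1}M(i+1,j)\isom M(c,d)$, you get (say, for $n$ odd) $b=d$ and $a\le c$, with no upper bound on $c-a$. Your final argument via the bijection $\tn\colon\cC_P\to\cC_I$ from Proposition~\ref{prop:basic nZ-cluster tilting results}(b) correctly rules out the collapse $c=a$, but it says nothing about the case $c\ge a+2$, and your claim that ``the only thing to rule out is that both coordinates agree'' is false. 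The gap can genuinely grow for a Nakayama algebra: after one syzygy the pair becomes $M(\lmax{j},i-1)$, $M(\lmax{j},i)$, and after the next syzygy the first coordinates differ by $\lmax{i}-\lmax{i-1}$, which exceeds $1$ whenever the relations are not homogeneous at that spot (compare the $\lmax{}$ values in Example~\ref{ex:AR quiver generic}(a)). The dimension-count sketch you abandoned does not repair this either: only at the first syzygy do the two modules share a projective cover, so ``codimension one'' is not preserved along iterated syzygies as a general fact about Nakayama algebras.

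This is precisely why the paper's proof proceeds by induction on $k$ rather than jumping to $k=n-1$: at each odd $k$ it must prove $\lmax{j_k+1}=\lmax{j_k}+1$, and ruling out $\lmax{j_k+1}>\lmax{j_k}+1$ requires the $n\ZZ$-cluster tilting hypothesis in an essential way. Concretely, in that case Lemma~\ref{lem:necessary modules for projective non-injective}(b) forces $M(\lmax{j_k+1},j_k)\in\cC$, and Proposition~\ref{prop:tetragon corollary} then yields $\Ext^{k+1}_{\La}(M(i+1,j),M(\lmax{j_k+1},j_k))\neq 0$ with $0<k+1<n$, contradicting cluster tilting. Your proposal never invokes anything of this kind, so the case where the gap widens is simply not addressed; the collapse case (which you do handle, in the same way as the paper) is the easier half of the inductive step.
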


\begin{proof}
We only prove (a) as (b) is similar. Since $M(i,j) \in \mo \La$ we get that $M(i+1,j)$ is not projective by Proposition~\ref{prop:basic Nakayama results2}(b). Now Proposition~\ref{prop:basic nZ-cluster tilting results}(c) implies that $\om^k M(i,j) = M(i_k,j_k)$ and $\om^k M(i+1,j) = M(i_k',j_k')$ for all $0 \le k \le n-1$ and some $i_k \le j_k$, $i_k' \le j_k'$. We show by induction on $k$, that
\[
M(i_k',j_k') = \begin{cases} 
M(i_k+1,j_k) &\mbox{if $k$ is even},\\ 
M(i_k,j_k+1) &\mbox{if $k$ is odd.}
\end{cases}
\]
Then (a) follows from the case $k = n-1$.

Note that the case $k=0$ is trivial. Now assume the statement holds for some $0 \le k < n-1$. Proposition~\ref{prop:basic Nakayama results2}(b) gives $M(i_{k+1},j_{k+1}) = M(\lmax{j_k}, i_k-1)$ and $M(i_{k+1}',j_{k+1}') = M(\lmax{j_k'}, i_k'-1)$. Hence by induction hypothesis
\[
M(i_{k+1}',j_{k+1}') = 
\begin{cases} 
M(\lmax{j_k},i_k) &\mbox{if $k$ is even},\\ 
M(\lmax{j_k+1},i_k-1) &\mbox{if $k$ is odd.}
\end{cases}
\]
On the other hand we need to show
\[\begin{split}
M(i_{k+1}',j_{k+1}')
&= \begin{cases} 
M(i_{k+1}+1,j_{k+1}) &\mbox{if $k+1$ is even},\\ 
M(i_{k+1},j_{k+1}+1) &\mbox{if $k+1$ is odd},
\end{cases}\\
&=
\begin{cases} 
M(\lmax{j_k}+1,i_k-1) &\mbox{if $k+1$ is even},\\ 
M(\lmax{j_k},i_k) &\mbox{if $k+1$ is odd.}
\end{cases}
\end{split}\]
Hence it suffices to show that $\lmax{j_k+1} = \lmax{j_k}+1$ if $k$ is odd.

First assume towards a contradiction that $\lmax{j_k+1} < \lmax{j_k}+1$. Then $\lmax{j_k+1} = \lmax{j_k}$ by  Proposition~\ref{prop:basic Nakayama results1}(b). Hence $M(i_{k+1}',j_{k+1}') = M(\lmax{j_k+1},i_{k}-1) = M(i_{k+1},j_{k+1})$. But then
\[
\tn M(i,j)= \tau \om^{n-k-2}M(i_{k+1},j_{k+1}) = \tau \om^{n-k-2}M(i_{k+1}',j_{k+1}')= \tn M(i+1,j)
\]
contradicting Proposition~\ref{prop:basic nZ-cluster tilting results}(b).

Next assume towards a contradiction that $\lmax{j_k+1} > \lmax{j_k}+1$. Then 
\[
\{M(\lmax{j_k+1},j_k+1),M(\lmax{j_k+1}-2,j_k)\} \subseteq \mo \La
\quad\mbox{and}\quad
M(\lmax{j_k+1}-1,j_k+1) \not \in \mo \La,
\]
so by Lemma~\ref{lem:necessary modules for projective non-injective}(b) we get $M(\lmax{j_k+1},j_k) \in \cC$. Now recall $M(i_k',j_k') = M(i_k,j_k+1)$, which is not projective implying that $\lmax{j_k+1} < i_k$. This allows us to apply Proposition~\ref{prop:tetragon corollary} to obtain $$\Ext_{\La}^1(M(i_k,j_k+1),M(\lmax{j_k+1},j_k))\neq 0$$ (note that $j_k+1 \le \rmax{\lmax{j_k+1}}$ holds since $t \le \rmax{\lmax{t}}$ is true for all $t \in \ZZ$).
But then
\[\begin{split}
\Ext_{\La}^{k+1}(M(i+1,j),M(\lmax{j_k+1},j_k)) &\isom
\Ext_{\La}^{1}(\om^kM(i+1,j),M(\lmax{j_k+1},j_k)) \isom \\
\Ext_{\La}^{1}(M(i_k',j_k'),M(\lmax{j_k+1},j_k)) &\isom
\Ext_{\La}^{1}(M(i_k,j_k+1),M(\lmax{j_k+1},j_k)) \neq 0,
\end{split}\]
which contradicts that $\cC$ is $n\ZZ$-cluster tilting.
\end{proof}

\begin{lemma}\label{lem:tau of projective is injective}
Let $\La$ be a Nakayama algebra and $\cC\subseteq \mo \La$ an $n\ZZ$-cluster tilting subcategory. Assume for some $i < j$ that $\{M(i,j),M(i-1,j-1)\} \subseteq \mo \La$ and $M(i-1,j) \not \in \mo \La$. Then both $M(i,j)$ and $M(i-1,j-1)$ are both projective and injective unless $n$ is even and $j=i+1$.
\end{lemma}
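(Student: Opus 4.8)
The plan is to first read off from the hypotheses that $M(i,j)$ is projective and $M(i-1,j-1)$ is injective, and then reduce the remaining content to a single statement by duality. Since $M(i,j)\in\mo\La$ while $M(i-1,j)\notin\mo\La$, Proposition~\ref{prop:basic Nakayama results1}(a) forces $\lmax j=i$, so $M(i,j)$ is projective by Proposition~\ref{prop:basic Nakayama results2}(b); dually $M(i-1,j-1)\in\mo\La$ together with $M(i-1,j)\notin\mo\La$ forces $\rmax{i-1}=j-1$, so $M(i-1,j-1)$ is injective by Proposition~\ref{prop:basic Nakayama results2}(c). Thus the content of the lemma is that, unless $n$ is even and $j=i+1$, the projective module $M(i,j)$ is also injective and the injective module $M(i-1,j-1)$ is also projective. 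The duality $D=\Hom_\K(-,\K)$ takes Nakayama algebras to Nakayama algebras, preserves the $n\ZZ$-cluster tilting property, swaps projectives with injectives, preserves the length $j-i+1$ of an indecomposable, and carries the present configuration over $\La$ to one of exactly the same shape over $\La^{\mathrm{op}}$ in which the roles of $M(i,j)$ and $M(i-1,j-1)$ are interchanged; the exceptional length $j-i+1=2$ is preserved. So it suffices to prove that $M(i,j)$ is injective, the claim for $M(i-1,j-1)$ then following by applying this to $\La^{\mathrm{op}}$.

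Next I would assume for contradiction that $M(i,j)$ is not injective, i.e.\ $\rmax i\ge j+1$, so that $M(i,j+1)\in\mo\La$, which is again projective since $M(i-1,j+1)\notin\mo\La$. Applying Lemma~\ref{lem:necessary modules for projective non-injective}(a) in turn to the pairs $(i,j),(i,j+1),\dots,(i,\rmax i-1)$ (its hypotheses hold since $M(i-1,k)\notin\mo\La$ for all $k\ge j$) puts a whole strip into $\cC$: one gets $M(i,k)\in\cC$ for every $j-1\le k\le\rmax i$, together with $M(i,i)\in\cC$ and $M(i-1,j-1)\in\cC$. Among these $M(i,i)$ is non-injective (as $\rmax i\ge j+1>i$), hence lies in $\cC_I$, so Proposition~\ref{prop:basic nZ-cluster tilting results}(b) gives that $\tno(M(i,i))$ is a nonzero indecomposable in $\cC_P$. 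Since $\om^{-1}(M(i,i))\isom M(i+1,\rmax i)$, unwinding $\tno=\tau^-\om^{-(n-1)}$ (using Lemma~\ref{lem:n-th syzygy and n-th cosyzygy are indecomposable}(b) for indecomposability) forces $M(i+1,\rmax i)$ to be non-injective, i.e.\ $\rmax{i+1}>\rmax i$; the same reasoning shows that the cosyzygies $\om^{-k}(M(i,j))$ are nonzero non-injective indecomposables for $0\le k\le n-1$.

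The heart of the argument is then to follow where $\tno$ sends the strip. Applying Lemma~\ref{lem:connected pairs stay connected}(b) to the consecutive pairs of $M(i,j-1),M(i,j),\dots,M(i,\rmax i-1)\in\cC_I$, the images $\tno(M(i,k))$ again form a connected strip, each step shifting exactly one coordinate --- the right endpoint when $n$ is odd, the left endpoint when $n$ is even. Computing the anchor $\tno(M(i,j))$ explicitly from the cosyzygy chain and the formula $\tau^-M(a,b)=M(a+1,b+1)$, and using that $\tno$ is injective on $\cC_I$, one pins down the modules of this image strip and exhibits one of them, say $M(i',j')\in\cC$, whose position relative to one of the already-placed modules $M(i,i)$, $M(i,j-1)$, $\topp(M(i,j))=M(j,j)$ meets the numerical hypotheses of Proposition~\ref{prop:tetragon corollary}; this yields $\Ext^1_\La$ nonzero between two objects of $\cC$, contradicting that $\cC$ is $n$-cluster tilting since $0<1<n$. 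When $n$ is even and $j=i+1$ the strip degenerates, since then $M(i,j-1)=M(i,i)$, and the coordinate that Lemma~\ref{lem:connected pairs stay connected}(b) shifts is precisely the one needed to build the configuration above, so no such $\Ext^1$ appears --- this is exactly the case the lemma excludes.

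The main obstacle is this last step: carrying the strip of $\tno$-images through both parities of $n$ and pinning down, without a long case analysis, the module that collides via Proposition~\ref{prop:tetragon corollary} with an object already known to lie in $\cC$; everything before that is routine use of the earlier lemmas. A possibly cleaner endgame would instead iterate the observation ``a non-injective object of $\cC_I$ has a nonzero $\tno$-image in $\cC_P$'' to force $\rmax{i+1}>\rmax i$, then $\rmax{i+2}>\rmax{i+1}$, and so on, eventually contradicting the $m$-periodicity $\rmax{t+m}=\rmax t+m$ of Proposition~\ref{prop:basic Nakayama results1}(c); but in any approach the delicate point is to see exactly why the configuration ``$n$ even, $j=i+1$'' escapes the contradiction.
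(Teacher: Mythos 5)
Your opening reductions are fine and match the paper's: from $M(i-1,j)\not\in\mo\La$ one gets that $M(i,j)$ is projective and $M(i-1,j-1)$ is injective, the statement for $M(i-1,j-1)$ is symmetric to the one for $M(i,j)$, and assuming $M(i,j+1)\in\mo\La$ one can feed Lemma~\ref{lem:necessary modules for projective non-injective}(a) to place $M(i,i)$, $M(i,j-1)$, $M(i,j)$ (and more) in $\cC$, with $\{M(i,j-1),M(i,j),M(i,i)\}\subseteq\cC_I$, so that Proposition~\ref{prop:basic nZ-cluster tilting results}(b) and Lemma~\ref{lem:connected pairs stay connected}(b) let you follow their $\tno$-images. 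But the decisive step is missing: you never actually exhibit the contradiction, you only assert that ``one pins down the modules of this image strip and exhibits one of them \dots\ [that] meets the numerical hypotheses of Proposition~\ref{prop:tetragon corollary}'', and you yourself name this as the main obstacle. This is exactly where the content of the lemma lies, including the reason the case ``$n$ even and $j=i+1$'' must be excluded, and your sketch neither carries out the parity analysis nor explains the exception beyond saying the strip ``degenerates''. A smaller but real problem: you list $\topp(M(i,j))=M(j,j)$ among the ``already-placed'' modules, but nothing in your argument puts $M(j,j)$ into $\cC$, so an $\Ext^1$ against it would not contradict $n$-cluster tilting; and your alternative endgame (forcing $\rmax{i+1}>\rmax i$, then $\rmax{i+2}>\rmax{i+1}$, etc.) needs membership in $\cC_I$ of modules at the shifted positions, which you have not established.

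For comparison, the paper's proof resolves precisely this endgame by a case split on the parity of $n$. For $n$ odd, Lemma~\ref{lem:connected pairs stay connected}(b) gives $\tno(M(i,j))\isom M(i',j')$ and $\tno(M(i,j-1))\isom M(i',j'-1)$, and Lemma~\ref{lem:two connected in the diagonal implies projective or injective}(a) then forces $M(i',j')$ to be projective, contradicting $M(i',j')\in\cC_P$. For $n$ even and $j>i+1$, the three distinct modules $M(i,j)$, $M(i,j-1)$, $M(i,i)$ of $\cC_I$ have $\tno$-images $M(i',j')$, $M(i'-1,j')$, $M(k,j')$ with $k<i'-1$; then $M(i'-1,j')$ is injective by Lemma~\ref{lem:two connected in the diagonal implies projective or injective}(b), hence so is $M(i'-2,j')$, which is therefore in $\cC_P$, and applying $\tn$ together with Lemma~\ref{lem:connected pairs stay connected}(a) yields $M(i,j-2)\in\cC$; finally Lemma~\ref{lem:two connected in the diagonal implies projective or injective}(a) forces $M(i,j-1)$ to be projective, contradicting $M(i-1,j-1)\in\mo\La$. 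None of this chain (nor an equivalent one) is present in your proposal, so as it stands the proof is incomplete.
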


\begin{proof}
Since $M(i-1,j) \not \in \mo \La$ we have that $M(i,j)$ is projective and $M(i-1,j-1)$ is injective. We only show that $M(i,j)$ injective. The proof that $M(i-1,j-1)$ is projective is similar. Note that it suffices to show that $M(i,j+1) \not \in \mo \La$. Thus we assume towards a contradiction that $M(i,j+1) \in \mo \La$.

Then by Lemma~\ref{lem:necessary modules for projective non-injective}(a)
\[
\{M(i,j),M(i-1,j-1),M(i,j+1),M(i,i),M(i,j-1)\} \subseteq \cC.
\]
Moreover, since $M(i,j+1) \in \mo \La$, we get $\{M(i,j),M(i,i),M(i,j-1)\}\subseteq \cC_I$. Recall from Proposition~\ref{prop:basic nZ-cluster tilting results}(b), that there are mutually inverse bijections
\[\begin{tikzpicture}
        \node (0) at (0,0) {$\cC_P$};
        \node (1) at (2,0) {$\cC_I$.};
        
        \draw[-latex] (0) to [bend left=20] node [above] {$\tn$} (1);
        \draw[-latex] (1) to [bend left=20] node [below] {$\tno$} (0);
\end{tikzpicture}\] 
We use these repeatedly in the rest of the proof. In particular note that $$\{\tno(M(i,j)),\tno(M(i,i)),\tno(M(i,j-1))\} \subseteq \cC_P.$$

Now assume that $n$ is odd. By Lemma~\ref{lem:connected pairs stay connected}(b) we get $\tno(M(i,j)) \isom M(i',j')$ and $\tno(M(i,j-1)) \isom M(i',j'-1)$ for some $i'<j'$. But then Lemma~\ref{lem:two connected in the diagonal implies projective or injective}(a) implies $M(i',j')$ is projective contradicting $M(i',j') \in \cC_P$.

Next assume that $n$ is even and $j > i+1$ so that $M(i,i) \neq M(i,j-1)$. By Lemma~\ref{lem:connected pairs stay connected}(b) and Corollary~\ref{cor:two in the diagonal, n-AR-translations}(a) we get
\[\begin{split}
&\tno(M(i,j)) \isom M(i',j')\\
&\tno(M(i,j-1)) \isom M(i'-1,j') \\
&\tno(M(i,i)) \isom M(k,j')
\end{split}\]
for some $i', j', k$ satisfying $i' < j'$ and $k \le i'-1$. Moreover, the above modules are non-projective and distinct so in fact $k < i'-1$. In particular, $M(i'-2,j') \in \mo \La$ is not projective. Since $\{M(i',j'),M(i'-1,j')\}\subseteq \cC$ it follows that $M(i'-1,j')$ is injective by Lemma~\ref{lem:two connected in the diagonal implies projective or injective}(b), and therefore so is also $M(i'-2,j')$. In particular $M(i'-2,j') \in \cC_P$. Now $\tn(M(i'-1,j')) \isom \tn(\tno(M(i,j-1))) \isom M(i,j-1)$, so by Lemma~\ref{lem:connected pairs stay connected}(a) we have that $\tn(M(i'-2,j')) \isom M(i,j-2) \in \cC$.

Hence we have $M(i,j-2), M(i,j-1) \in \cC$ and $M(i-1,j-1)\in\mo\La$. By Lemma \ref{lem:two connected in the diagonal implies projective or injective}(a) we have that $M(i,j-1)$ is projective. But this contradicts $M(i-1,j-1)\in\mo\La$.
\end{proof}

Lemma~\ref{lem:tau of projective is injective} provides strong restrictions on which $\La$ admit an $n\ZZ$-cluster tilting subcategory. For instance we can now show the following.

\begin{corollary}\label{cor:n odd implies homogeneous Nakayama}
Let $\La$ be a Nakayama algebra and let $\cC\subseteq \mo \La$ be an $n\ZZ$-cluster tilting subcategory. If $n$ is odd, then the algebra $\La$ is a homogeneous Nakayama algebra.
\end{corollary}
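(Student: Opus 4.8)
The plan is to feed well-chosen configurations of indecomposables into Lemma~\ref{lem:tau of projective is injective}. Since $n$ is odd, the exceptional case ``$n$ even and $j=i+1$'' of that lemma never occurs, so it applies in the following streamlined form: if $i<j$, $M(i,j),M(i-1,j-1)\in\mo\La$ and $M(i-1,j)\notin\mo\La$, then $M(i,j)$ and $M(i-1,j-1)$ are both projective and injective. By Proposition~\ref{prop:basic Nakayama results2}(b),(c) this is equivalent to the four equalities $\lmax{j}=i$, $\rmax{i}=j$, $\lmax{j-1}=i-1$ and $\rmax{i-1}=j-1$; in particular $\rmax{i}=\rmax{i-1}+1$ and $\lmax{j}=\lmax{j-1}+1$ whenever such $i,j$ exist.

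From this I would first show that the sequences $i\mapsto\rmax{i}$ and $j\mapsto\lmax{j}$ have no ``jumps'', i.e.\ all successive differences lie in $\{0,1\}$. Suppose $\rmax{i_0}\ge\rmax{i_0-1}+2$ and put $j:=\rmax{i_0-1}+1$. Then $M(i_0-1,j-1)=M(i_0-1,\rmax{i_0-1})$ is injective, so it lies in $\mo\La$; $M(i_0-1,j)\notin\mo\La$ by the definition of $\rmax{i_0-1}$; and $M(i_0,j)\in\mo\La$ with $i_0<j$, the necessary index inequalities following from Proposition~\ref{prop:basic Nakayama results1}(a),(b),(d),(e). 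Applying the streamlined lemma now gives $\rmax{i_0}=j=\rmax{i_0-1}+1$, contradicting the hypothesis. The bound $\lmax{j_0}\le\lmax{j_0-1}+1$ follows dually, using the configuration formed by the projective cover $M(\lmax{j_0},j_0)$ of the simple module at $j_0$ together with $M(\lmax{j_0}-1,j_0-1)$ and $M(\lmax{j_0}-1,j_0)$.

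Finally I would deduce homogeneity from the absence of jumps. If $\La$ is cyclic, Proposition~\ref{prop:basic Nakayama results1}(b),(c) shows that any $m$ consecutive differences of $j\mapsto\lmax{j}$ are nonnegative, bounded by $1$, and sum to $m$, hence are all equal to $1$; thus $\lmax{j}=j-l+1$ with $l:=1-\lmax{1}$, which satisfies $l\ge2$ by Proposition~\ref{prop:basic Nakayama results1}(d), and $\La\cong\K\tilde{A}_m/R^l$ by Proposition~\ref{prop:homogeneous Nakayama AR-quiver}(b). If $\La$ is acyclic, then $\lmax{1}=1$ and each difference $\lmax{j}-\lmax{j-1}$ equals $0$ or $1$; if these differences were not of the form $0,\dots,0,1,\dots,1$ there would be some $j_0$ with $\lmax{j_0}=\lmax{j_0-1}+1$ and $\lmax{j_0+1}=\lmax{j_0}$, and writing $a:=\lmax{j_0}$ the monotonicity of $i\mapsto\rmax{i}$ (Proposition~\ref{prop:basic Nakayama results1}(b)) would force $\rmax{a-1}=j_0-1$ while $\rmax{a}\ge j_0+1$, contradicting the absence of jumps. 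Hence the differences are zeros followed by ones, i.e.\ $\lmax{j}=\max\{j-l+1,1\}$ for some $l$, with $l\ge2$ because $\La$ is not semisimple, and $\La\cong\K A_m/R^l$ by Proposition~\ref{prop:homogeneous Nakayama AR-quiver}(a).

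The main obstacle is bookkeeping rather than anything conceptual: in each application one must verify that the modules $M(\cdot,\cdot)$ invoked lie in, respectively outside, $\mo\La$, that all indices stay in the admissible range (in particular, for acyclic $\La$, that the index $j=\rmax{i_0-1}+1$ does not exceed $m$, which holds since $\rmax{i_0}\le m$), and that the strict inequality $i<j$ demanded by Lemma~\ref{lem:tau of projective is injective} genuinely holds.
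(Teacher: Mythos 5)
Your argument is correct and rests on the same two ingredients as the paper's own proof---Lemma~\ref{lem:tau of projective is injective} (whose exceptional case is vacuous for odd $n$) applied to boundary configurations $M(i,j),M(i-1,j-1)\in\mo{\La}$, $M(i-1,j)\notin\mo{\La}$, combined with the $\lmax{j}$/$\rmax{i}$ characterisation of homogeneity in Proposition~\ref{prop:homogeneous Nakayama AR-quiver}---so it is essentially the paper's proof with its one-line existence claim for a violating configuration unpacked into explicit unit-step statements about the sequences $\rmax{i}$ and $\lmax{j}$. The only slip is a harmless off-by-one in the cyclic case: constant differences give $\lmax{j}=j+\lmax{1}-1$, so $l=2-\lmax{1}$ rather than $1-\lmax{1}$, and with this correction $l\geq 2$ does follow from Proposition~\ref{prop:basic Nakayama results1}(d).
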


\begin{proof}
Assume towards a contradiction that $\La$ is not homogeneous. Then by Proposition~\ref{prop:basic Nakayama results1} and Proposition~\ref{prop:homogeneous Nakayama AR-quiver} there exists $i<j$ such that $\{M(i-1,j-1),M(i,j)\} \subseteq \mo \La$, $M(i-1,j) \not \in \mo \La$ and such that $M(i-2,j-1)\in \mo \La$ or $M(i,j+1)\in \mo \La$. Hence $M(i-1,j-1)$ is not projective or $M(i,j)$ is not injective. In either case Lemma~\ref{lem:tau of projective is injective} is contradicted.
\end{proof}

\begin{corollary}\label{cor:not homogeneous implies ungluing simple}
Let $\La$ be a Nakayama algebra and assume $\mo{\La}$ has at least one $n\ZZ$-cluster tilting subcategory. If $\La$ is not homogeneous, then there exists a simple module $M(i,i)\in \mo{\La}$ which is neither projective nor injective, such that $M(i-1,i+1)\not\in\mo{\La}$ and such that any $n\ZZ$-cluster tilting subcategory of $\La$ contains $M(i,i)$.
\end{corollary}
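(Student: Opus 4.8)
The plan is to pin down a ``hole'' in the Auslander--Reiten quiver of $\La$ caused by non-homogeneity, show that this hole must sit at the very bottom of the quiver (between a module of length $2$ and a missing module of length $3$), and then feed the length-$2$ module on the edge of the hole into Lemma~\ref{lem:necessary modules for projective non-injective} to force a simple module into every $n\ZZ$-cluster tilting subcategory.

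First I would fix any $n\ZZ$-cluster tilting subcategory $\cC\subseteq\mo\La$ (one exists by hypothesis). Since $\La$ is not homogeneous, Corollary~\ref{cor:n odd implies homogeneous Nakayama} shows that $n$ must be even. Repeating the opening of the proof of Corollary~\ref{cor:n odd implies homogeneous Nakayama} (via Propositions~\ref{prop:basic Nakayama results1} and \ref{prop:homogeneous Nakayama AR-quiver}), non-homogeneity produces $a<b$ with $\{M(a,b),M(a-1,b-1)\}\subseteq\mo\La$, $M(a-1,b)\notin\mo\La$, and with $M(a-2,b-1)\in\mo\La$ or $M(a,b+1)\in\mo\La$. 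By Proposition~\ref{prop:basic Nakayama results2}(b),(c) the first alternative makes $M(a-1,b-1)$ non-projective and the second makes $M(a,b)$ non-injective, so in either case $M(a,b)$ and $M(a-1,b-1)$ are not both projective-injective; since $n$ is even, Lemma~\ref{lem:tau of projective is injective} then forces $b=a+1$. Set $i:=a$, so that $M(i-1,i),M(i,i+1)\in\mo\La$, $M(i-1,i+1)\notin\mo\La$, and $M(i-2,i)\in\mo\La$ or $M(i,i+2)\in\mo\La$.

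Next I would read off the properties of the simple $M(i,i)$: it lies in $\mo\La$ by Proposition~\ref{prop:basic Nakayama results1}(a); from $M(i-1,i)\in\mo\La$ we get $\lmax i\le i-1$, so $M(i,i)$ is not projective; from $M(i,i+1)\in\mo\La$ we get $\rmax i\ge i+1$, so $M(i,i)$ is not injective (Proposition~\ref{prop:basic Nakayama results2}(b),(c)); and $M(i-1,i+1)\notin\mo\La$ is one of our hypotheses. The remaining point is that $M(i,i)\in\cC$. If $M(i,i+2)\in\mo\La$, then $M(i-1,i+1)\notin\mo\La$ together with $M(i,i+1)\in\mo\La$ forces $\lmax{i+1}=i$, so $M(i,i+1)$ is projective, while $\rmax i\ge i+2$ shows it is not injective; applying Lemma~\ref{lem:necessary modules for projective non-injective}(a) to this projective non-injective module $M(i,i+1)$ (its hypotheses $M(i,i+2)\in\mo\La$ and $M(i-1,i+1)\notin\mo\La$ hold) yields $M(i,i)\in\cC$. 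Otherwise $M(i-2,i)\in\mo\La$, and dually $M(i-1,i+1)\notin\mo\La$ together with $M(i-1,i)\in\mo\La$ forces $\rmax{i-1}=i$, so $M(i-1,i)$ is injective, while $\lmax i\le i-2$ shows it is not projective; applying Lemma~\ref{lem:necessary modules for projective non-injective}(b) to this injective non-projective module $M(i-1,i)$ yields $M(i,i)\in\cC$.

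The hard part is the reduction $b=a+1$: this is what converts a generic ``hole'' in the quiver into one flanked by a projective-non-injective (respectively injective-non-projective) module sitting directly next to a missing vertex, which is exactly the configuration Lemma~\ref{lem:necessary modules for projective non-injective} needs. After that the argument is bookkeeping with Propositions~\ref{prop:basic Nakayama results1} and \ref{prop:basic Nakayama results2}, with some care to treat the two mirror-image cases (hole on the projective side versus on the injective side) symmetrically.
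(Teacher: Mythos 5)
Your proof is correct and follows essentially the same route as the paper: extract the non-homogeneity configuration $\{M(a,b),M(a-1,b-1)\}\subseteq\mo\La$ with $M(a-1,b)\notin\mo\La$, use Lemma~\ref{lem:tau of projective is injective} to force $b=a+1$, and then apply Lemma~\ref{lem:necessary modules for projective non-injective}(a) or (b) to place $M(i,i)$ in every $n\ZZ$-cluster tilting subcategory, reading off non-projectivity and non-injectivity from the neighbouring vertices. The only differences are cosmetic: you spell out the two mirror cases that the paper dismisses as ``similar,'' and your preliminary appeal to Corollary~\ref{cor:n odd implies homogeneous Nakayama} for $n$ even is redundant, since the exceptional clause of Lemma~\ref{lem:tau of projective is injective} already yields $b=a+1$ once the two modules are not both projective-injective.
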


\begin{proof}
Since $\La$ is not homogeneous, there exist $i,j\in \ZZ$ with $i<j$ and such that 
\[
\{M(i-1,j-1),M(i,j)\}\subseteq \mo{\La}, \; M(i-1,j)\not\in\mo{\La}
\]
and at least one of the conditions 
\[
M(i,j+1)\in\mo{\La}, \;M(i-2,j-1)\in\mo{\La}
\]
holds. Assume that $M(i,j+1)\in\mo{\La}$; the other case is similar. Then it follows by Lemma~\ref{lem:tau of projective is injective} that $j=i+1$ and so $M(i-1,i+1)=M(i-1,j)\not\in\mo{\La}$. It follows by Lemma~\ref{lem:necessary modules for projective non-injective}(a) that $M(i,i)$ belongs to any $n\ZZ$-cluster tilting subcategory of $\mo{\La}$. Finally, since $\{M(i-1,i),M(i,i+1)\}=\{M(i-1,j-1),M(i,j)\}\subseteq\mo{\La}$, it follows that $M(i,i)$ is neither projective nor injective.
\end{proof}

\section{Classification}

In this section we classify all Nakayama algebras admitting an $n\ZZ$-cluster tilting subcategory.

\subsection{Homogeneous relations} 
We begin by restricting our attention to homogeneous Nakayama algebras. Homogeneous acyclic Nakayama algebras admitting an $n$-cluster tilting subcategory were classified in \cite{Vas1}. Homogeneous cyclic (i.e. self-injective) Nakayama algebras admitting an $n$-cluster tilting subcategory were classified in \cite{DI}. We recall these classifications.

\begin{theorem}\label{thrm:homogeneous Nakayama with n-ct}
Let $\La=\K Q_m/R^l$ be a homogeneous Nakayama algebra where $Q_m\in\{A_m,\tilde{A}_m\}$ and $l\geq 2$. Then $\La$ admits an $n$-cluster tilting subcategory if and only if at least one of the following conditions is satisfied.
\begin{enumerate}[label=(\alph*)]
    \item $Q_m=A_m$, $l=2$ and $n\divides m-1$.
    \item $Q_m=A_m$, $n$ is even and $(l(n-1)+2)\divides m-1-\frac{n}{2}l$.
    \item $Q_m=\tilde{A}_m$ and $(l(n-1)+2)\divides 2m$.
    \item $Q_m=\tilde{A}_m$ and $(l(n-1)+2)\divides tm$, where $t=\gcd(n+1,2(l-1))$.
\end{enumerate}
\end{theorem}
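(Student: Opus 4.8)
The plan is to deduce each half of the statement from an existing classification and then translate notation. The acyclic case comes from the third author's classification \cite[Theorem 3]{Vas1}, and the cyclic case --- which by Corollary~\ref{cor:SelfinjNakayama} coincides with the selfinjective case --- from the classification of Darp\"o and Iyama \cite{DI}. So the substance of the argument is to set up the two reductions and to check that the numerical criteria in those sources rewrite as conditions (a)--(d) in the present conventions.

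\textbf{Acyclic case.} Since $\La=\K A_m/R^l$ is representation-directed, Corollary~\ref{cor:unique n-ct for representation-directed} tells us that an $n$-cluster tilting subcategory is unique if it exists and must equal $\cC=\add\{\tau_n^{-r}(\La)\mid r\ge 0\}$. By the final assertion of Proposition~\ref{prop:basic nZ-cluster tilting results} it therefore suffices to decide for which triples $(m,l,n)$ this candidate $\cC$ satisfies conditions (a)--(d) of that proposition. First I would record the shape of the Auslander--Reiten quiver from Proposition~\ref{prop:homogeneous Nakayama AR-quiver}(a), namely $\rmax i=\min\{i+l-1,m\}$; then, using the formulas for $\tau$, $\om$ and $\om^-$ from Proposition~\ref{prop:AR quiver of Nakayama} and Proposition~\ref{prop:basic Nakayama results2}, I would compute the orbit $\{\tau_n^{-r}(\La)\}_{r\ge 0}$ explicitly inside $\Gamma(\La)$. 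The membership $\La\in\cC$ is automatic; requiring $D(\La)\in\cC$ (equivalently, that the orbit meets every indecomposable injective) yields the divisibility conditions, while requiring all intermediate syzygies and cosyzygies of modules in $\cC$ to be indecomposable forces either $l=2$ or $n$ even, which is exactly the dichotomy between (a) and (b). Carrying this through reproduces \cite[Theorem 3]{Vas1}; in practice I would cite that theorem and verify that its hypotheses rewrite as (a) and (b).

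\textbf{Cyclic case.} Here $\La=\K\tilde A_m/R^l$ is selfinjective by Corollary~\ref{cor:SelfinjNakayama}, so the classification of selfinjective Nakayama algebras possessing an $n$-cluster tilting module, due to Darp\"o and Iyama \cite{DI}, applies directly. That criterion is again a family of divisibility conditions in $m$, the Loewy length $l$ and $n$, and the plan is to match it to (c) and (d), using Proposition~\ref{prop:homogeneous Nakayama AR-quiver}(b) ($\rmax i=i+l-1$) to identify the relevant combinatorial data on the cylindrical Auslander--Reiten quiver. The number $l(n-1)+2$ and the quantity $t=\gcd(n+1,2(l-1))$ encode, respectively, the step length and the wrap-around period of the $\tau_n$-orbits; the two conditions (c) and (d) record the two ways such an orbit can close up, and one should note that neither implies the other in general and that they overlap, so the criterion is their union.

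\textbf{Expected main obstacle.} The conceptual work is all in \cite{Vas1} and \cite{DI}; the genuine difficulty is bookkeeping. Those references parametrize Nakayama algebras differently from us (counting arrows rather than vertices, normalizing the Kupisch series differently, or using the Loewy length in place of the exponent $l$), so the bulk of the proof is a careful comparison of conventions to confirm that their criteria become precisely (a)--(d). The most delicate point is in the cyclic case: tracking the $\gcd$-periodicity correctly, so as to be certain that (c) and (d) together capture all selfinjective examples and no others.
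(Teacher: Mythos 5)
Your proposal takes essentially the same route as the paper, whose entire proof is a citation of the two prior classifications: parts (a) and (b) are \cite[Theorem 2]{Vas1} (you cite Theorem 3 of that paper, but the relevant statement is its Theorem 2) and parts (c) and (d) are \cite[Theorem 5.1]{DI}. Your extra sketch of re-deriving the acyclic case via Corollary~\ref{cor:unique n-ct for representation-directed} and Proposition~\ref{prop:basic nZ-cluster tilting results} is sound but not needed, since the paper's conventions are set up so that the cited criteria transfer directly.
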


\begin{proof}
Parts (a) and (b) are \cite[Theorem 2]{Vas1}. Parts (c) and (d) are \cite[Theorem 5.1]{DI}.
\end{proof}

Our aim is now to determine which of the above algebras $\La$ admit not only an $n$-cluster tilting subcategory but an $n\ZZ$-cluster tilting subcategory. Since $\La$ is homogeneous Proposition~\ref{prop:homogeneous Nakayama AR-quiver} gives explicit formulas for $\rmax{i}$ and $\lmax{j}$, which we can use to determine projectives and injectives, as well as compute syzygies and co-syzygies using Proposition~\ref{prop:basic Nakayama results2}. We will use these results freely in this section without further reference.

We start with the acyclic case. Note that then $\La$ is representation-directed and so there is at most one $n$-cluster tilting subcategory in $\mo \La$ by Corollary~\ref{cor:unique n-ct for representation-directed}.

\begin{proposition}\label{prop:homogeneous acyclic Nakayama with nZ-ct}
Let $\La=\K A_m/R^l$ be a homogeneous acyclic Nakayama algebra. There exists an $n\ZZ$-cluster tilting subcategory $\cC\subseteq \mo \La$ if and only if one of the following conditions is satisfied.
\begin{enumerate}[label=(\alph*)]
    \item $l=2$ and $n\divides m-1$.
    \item $l \ge 3$, $l\divides m-1$ and $n=2\frac{m-1}{l}$.
\end{enumerate}
More explicitly we have in case (a) that
\[
\cC = \add(\{\La\}\cup\{\tau_n^{-k}(M(1,1))\mid 1\leq k\leq \tfrac{m-1}{n}\})=\add(\{\La\}\cup\{M(1+kn,1+kn)\mid 1\leq k \leq \tfrac{m-1}{n}\}
\]
and in case (b) that $\cC = \add(\La\oplus D(\La))$.
\end{proposition}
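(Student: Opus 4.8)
Since $\La=\K A_m/R^l$ is acyclic it is representation-directed, so by Corollary~\ref{cor:unique n-ct for representation-directed} it admits at most one $n$-cluster tilting subcategory, and this is $\cC=\add\{\tau_n^{-k}(\La)\mid k\ge0\}$ whenever one exists. The homogeneous acyclic Nakayama algebras admitting such a $\cC$ are precisely those in Theorem~\ref{thrm:homogeneous Nakayama with n-ct}(a),(b), so the statement reduces to deciding, for each of those, whether $\cC$ is $n\ZZ$-cluster tilting, equivalently (Proposition~\ref{prop:basic nZ-cluster tilting results2}) whether $\Omega^n(\cC)\subseteq\cC$. A preliminary input I would establish first is the minimal projective resolution of the source simple $M(m,m)$ via Proposition~\ref{prop:basic Nakayama results2}(b): it has the two-step shape $\Omega M(t,t)\cong M(t-l+1,t-1)$, $\Omega^2 M(t,t)\cong M(t-l,t-l)$ for $t\ge l+1$, descending to a simple $M(t_0,t_0)$ with $1\le t_0\le l$; reading this off gives that $\gldim\La$ is even exactly when $l\mid m-1$, in which case $\gldim\La=2\tfrac{m-1}{l}$ (here $l\le m-1$; the hereditary case $l=m$ admits no $n$-cluster tilting subcategory).

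\textbf{The ``if'' direction.} In case (a) ($l=2$, $n\mid m-1$): for $l=2$ every indecomposable projective but $P_1=M(1,1)$ is injective, so $\La\cong M(1,1)$ in $\cmo\La$, and iterating $\tau_n^-=\tau^-\Omega^{-(n-1)}$ with Proposition~\ref{prop:basic Nakayama results2} yields $\tau_n^{-k}(M(1,1))\cong M(1+kn,1+kn)$ for $0\le k\le\tfrac{m-1}{n}$ and $0$ beyond; this gives the stated $\cC$, and $\Omega^n(\cC)\subseteq\cC$ because $\Omega^n M(1+kn,1+kn)\cong M(1+(k-1)n,1+(k-1)n)\in\cC$ (equal to $P_1$ when $k=1$) while $\Omega^n$ annihilates $\La$. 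In case (b) ($l\ge3$, $l\mid m-1$, $n=2\tfrac{m-1}{l}$): the computation above gives $\gldim\La=n$, so Theorem~\ref{thrm:homogeneous Nakayama with n-ct}(b) applies (its divisibility condition reads $(l(n-1)+2)\mid0$) and $\cC$ exists; being $n$-cluster tilting over an algebra of global dimension $n$ it is automatically $n\ZZ$-cluster tilting. For the explicit form: $\La\cong\bigoplus_{j=1}^{l-1}M(1,j)$ in $\cmo\La$, and since $n=2\tfrac{m-1}{l}$ forces $\tfrac n2 l+1=m$, one application of $\tau_n^-$ sends $M(1,j)$ to the injective module $M(m-l+j+1,m)$ (of length $l-j$), so $\tau_n^-(\La)$ exhausts the indecomposable injective non-projectives and $\tau_n^{-2}(\La)=0$; hence $\cC=\add\{\La,\tau_n^-(\La)\}=\add(\La\oplus D\La)$.

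\textbf{The ``only if'' direction.} Let $\cC\subseteq\mo\La$ be $n\ZZ$-cluster tilting; then it is $n$-cluster tilting, so by Theorem~\ref{thrm:homogeneous Nakayama with n-ct} either (a) holds, or $n$ is even and $(l(n-1)+2)\mid(m-1-\tfrac n2 l)$. If $l=2$ the latter already forces $n\mid m-1$, i.e.\ case (a); so assume $l\ge3$. Since $\gldim\La<\infty$ and $\cC$ is $n\ZZ$-cluster tilting, $\gldim\La\in n\ZZ$; as $n$ is even, $\gldim\La$ is even, so by the computation above $l\mid m-1$ and $\gldim\La=2\tfrac{m-1}{l}=ns$ for some $s\ge1$, and it remains to prove $s=1$. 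Assume $s\ge2$ and set $\delta:=(n-1)l+2$. Iterating $\tau_n^-$ on the summands $M(1,j)$ of $\La$ in $\cmo\La$ (with $s\ge2$ keeping all relevant indices $\le m$ and the orbit away from injectives) one finds $\tau_n^{-2}(M(1,1))\cong M(1+\delta,1+\delta)$, so the simple $M(1+\delta,1+\delta)$ lies in $\cC$. But $1+\delta\ge\tfrac n2 l+1$, so Proposition~\ref{prop:basic Nakayama results2}(b) gives $\Omega^n M(1+\delta,1+\delta)\cong M((\tfrac n2-1)l+3,(\tfrac n2-1)l+3)$, a genuine nonzero simple; it is not in $\cC$, because the simple modules occurring as summands of the $\tau_n^{-k}(\La)$ all have index in $\{1+r\delta\}_{r\ge0}\cup\{\tfrac n2 l+1+r\delta\}_{r\ge0}$, whereas $(\tfrac n2-1)l+3$ lies strictly between $1$ and $1+\delta$ and is $<\tfrac n2 l+1$. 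This contradicts $\Omega^n(\cC)\subseteq\cC$; hence $s=1$, i.e.\ $n=2\tfrac{m-1}{l}$, which is case (b).

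\textbf{Main obstacle.} The difficulty is concentrated in the last step: one must control which simple modules occur in $\cC=\add\{\tau_n^{-k}(\La)\}$ precisely enough to see simultaneously that $M(1+\delta,1+\delta)\in\cC$ and that $\Omega^nM(1+\delta,1+\delta)\notin\cC$. This rests on understanding the $\tau_n^-$-orbits of the boundary uniserial modules $M(1,1),\dots,M(1,l-1)$ — along such an orbit $\tau_n^-$ alternates the uniserial length between $j$ and $l-j$, so only the orbits of $M(1,1)$ and $M(1,l-1)$ meet simple modules, accounting for the two arithmetic progressions above — which is exactly the content of the explicit description of these $n$-cluster tilting subcategories in \cite{Vas1}; everything else is routine bookkeeping with Proposition~\ref{prop:basic Nakayama results2}.
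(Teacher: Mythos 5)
Your argument is correct, but in the hardest part it follows a genuinely different route from the paper. The ``if'' direction is essentially the paper's: in case (a) both arguments verify $\om^n(\cC)\subseteq\cC$ directly, and in case (b) both reduce to $\gldim\La=n$; the only difference is that you derive $\gldim\La=n$ and the explicit forms of $\cC$ by hand (via Proposition~\ref{prop:basic Nakayama results2} and the formula for $\tno$), whereas the paper simply cites \cite[Theorem 3]{Vas1}. The real divergence is in the ``only if'' direction for $l\ge 3$. The paper never invokes the global dimension: it applies $\tno$ \emph{once} to the projective non-injectives $M(1,j)$ and uses the $n\ZZ$-specific Lemmas~\ref{lem:two connected in the diagonal implies projective or injective} and~\ref{lem:connected pairs stay connected} to force $\tno(M(1,l-t))\isom M(m-t+1,m)$, i.e.\ these images are already injective; comparing with the formula from \cite[Lemma 4.8(b)]{Vas1} then gives $m=\tfrac n2 l+1$ in one stroke. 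You instead use $\gldim\La\in n\ZZ$ (a fact only stated in the paper's introduction, attributed to \cite{IJ}), parity of $n$, and your syzygy computation to reduce to $\gldim\La=ns$, and then kill $s\ge 2$ by producing a simple $M(1+\delta,1+\delta)\in\cC$ whose $n$-th syzygy is a simple outside $\cC$. This is a valid contradiction with Proposition~\ref{prop:basic nZ-cluster tilting results2}(b), but it costs you control of \emph{all} simples in $\cC$, which you delegate to the explicit description in \cite{Vas1} plus the length-alternation along $\tno$-orbits. That claim is true, and if you prefer not to lean on \cite{Vas1} you can close it within the paper's toolkit: by Corollary~\ref{cor:unique n-ct for representation-directed} every non-projective indecomposable of $\cC$ lies in a $\tno$-orbit of some $M(1,j)$, and by Proposition~\ref{prop:basic nZ-cluster tilting results}(b),(d) the cosyzygies $\om^{-k}N$ for $N\in\cC_I$, $0\le k\le n-1$, never become injective, which rules out truncation at the right boundary and shows that the generic formula $\tno(M(i,j))\isom M(j+\tfrac{n-2}{2}l+2,\,i+\tfrac n2 l)$ holds along the whole orbit inside $\cC$; the alternation of lengths between $j$ and $l-j$ and your two arithmetic progressions then follow. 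In exchange, your route makes the obstruction for $\gldim\La=ns$, $s\ge2$, very concrete (an explicit non-closed $\om^n$-image of a simple), while the paper's route is shorter, avoids any discussion of global dimension or long orbits, and exhibits more clearly how the $n\ZZ$ hypothesis rigidifies a single application of $\tno$.
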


\begin{proof}
Assume first that $\cC\subseteq \mo \La$ is $n\ZZ$-cluster tilting. In particular, $\cC$ is $n$-cluster tilting. 

If $l=2$, then by Theorem~\ref{thrm:homogeneous Nakayama with n-ct}(a)(b) we have that at least one of the conditions
\begin{enumerate}[label=(\roman*)]
    \item $n\divides m-1$, or
    \item $2n\divides m-1-n$
\end{enumerate} 
is satisfied. If (ii) is satisfied, then $n\divides m-1-n$ and so $n\divides m-1$. Either way condition (a) is satisfied.

Now assume that $l \ge 3$. From Theorem~\ref{thrm:homogeneous Nakayama with n-ct} it follows that $n$ is even. Since $\La$ is homogeneous we have that $\rmax{1} = l$ and $\lmax{j} = 1$ for $1 \le j \le l$. Thus $M(1,l)$ is projective and injective, while $M(1,j)$ is projective and non-injective for $1 \le j \le l-1$. In particular we have $M(1,j) \in \cC_I$ and $\tno (M(1,j)) \in \cC_P$ for $1 \le j \le l-1$ by Proposition~\ref{prop:basic nZ-cluster tilting results}(a)(b). Write $\tno(M(1,l-1)) \isom M(i',j')$ for some $i' \le j'$. By Lemma~\ref{lem:connected pairs stay connected}(b) we get that 
\[
\tno (M(1,l-t)) \isom M(i'-t+1,j') \quad \mbox{for } 1 \le t \le l-1.
\]
Next we use this equation to show $i' = j' = m$. For $t = l-1$ we obtain $\tno (M(1,1)) \isom M(i'-l+2,j')$ which is not projective. Thus $i'-l+2 > \lmax{j'} =\max\{j'-l+1,1\}$. Since $i' \le j'$ the only possibility is $i' = j'$. For $t = 2$ we obtain $\tno (M(1,l-2)) \isom M(i'-1,i')\in \cC$ which is injective by Lemma~\ref{lem:two connected in the diagonal implies projective or injective}(b). Hence $i' = \rmax{i'-1} = \min\{i'+l-2,m\}$. Since $l \ge 3$ we get $i'=m$. We now obtain
\[
\tno (M(1,l-t)) \isom M(m-t+1,m) \quad \mbox{for } 1 \le t \le l-1.
\]
On the other hand, for homogeneous acyclic Nakayama algebras there is a formula for computing $\tno$ given in \cite[Lemma 4.8(b)]{Vas1} which translated to our notation reads
\[
\tno(M(i,j)) \isom M(j+\tfrac{n-2}{2}l+2,i +\tfrac{n}{2}l).
\]
Comparing this formula to the one above gives $n=2\frac{m-1}{l}$, as required.

For the converse implication assume that one of the conditions (a) or (b) is satisfied.

If (a) is satisfied, then there exists a unique $n$-cluster tilting subcategory $\cC \subseteq \mo \La$ by Theorem~\ref{thrm:homogeneous Nakayama with n-ct}(a). We show that $\cC$ is actually $n\ZZ$-cluster tilting. Let $2 \le j \le m$. Since $l = 2$ we get $\lmax{j} = j-1$ and
\[
\om(M(j,j)) = M(\lmax j, j-1) = M(j-1, j-1) = \tau M(j,j).
\]
Note that the above formula applies to every non-projective indecomposable since $l = 2$. Hence $\om^n(M)\isom \tn(M)$ for every $M\in\cC_P$. By Proposition~\ref{prop:basic nZ-cluster tilting results}(b) and Proposition~\ref{prop:basic nZ-cluster tilting results2} (b) it follows that  $\cC$ is $n\ZZ$-cluster tilting. The explicit form follows from Corollary \ref{cor:unique n-ct for representation-directed} by noticing that $M(1,1)$ is the unique indecomposable projective non-injective $\La$-module.

Finally, assume that (b) is satisfied. By Theorem 3 in Section 5.2 in \cite{Vas1} there exists a unique $n$-cluster tilting subcategory $\cC=\add(\La\oplus D(\La)) \subseteq \mo \La$ and $\gldim(\La)=n$. In particular, $\cC$ is $n\ZZ$-cluster tilting.
\end{proof}

We now turn our attention to the cyclic homogeneous case. Thus for the rest of this section let $\La=\K \tilde{A}_m/R^l$ for some $l \ge 2$. Then $\La$ is selfinjective and $M(i,j)$ is projective (or injective) if and only if $j-i = l-1$. In other words, $\rmax i = i+l-1$ and $\lmax j = j-l+1$. Using this we get the following convenient formulas for $\om$, $\om^-$, $\tau$ and $\tau^-$.

\begin{lemma}\label{lem:homogeneous cyclic syzygy and tau}
Let $i \le j \le i+l-2$. Then
\begin{enumerate}[label=(\alph*)]
\item $\om (M(i,j)) \isom M(j-l+1,i-1)$.
\item $\om^- (M(i,j)) \isom M(j+1,i+l-1)$.
\item $\om \om^- (M(i,j)) \isom \om^- \om (M(i,j))\isom M(i,j)$.
\item $\tau \tau^- (M(i,j)) \isom \tau^- \tau (M(i,j)) \isom M(i,j)$.
\item $\om \tau^- (M(i,j)) \isom \tau^- \om (M(i,j) \isom M(j-l+2,i)$.
\item $\om^- \tau (M(i,j)) \isom \tau \om^- (M(i,j)\isom M(j,i+l-2)$.
\end{enumerate}
\end{lemma}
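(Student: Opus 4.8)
The plan is to reduce everything to the formulas of Proposition~\ref{prop:basic Nakayama results2} together with the explicit values $\rmax i = i+l-1$ and $\lmax j = j-l+1$ recorded above, and to the fact that $\tau$ and $\tau^-$ act on the Auslander--Reiten quiver by $(i,j)\mapsto(i-1,j-1)$ and $(i,j)\mapsto(i+1,j+1)$ (Proposition~\ref{prop:AR quiver of Nakayama}). The only thing that needs care is keeping track, at each stage, of which intermediate modules are non-projective or non-injective so that these formulas apply. Here the key observation is uniform: since $\La$ is selfinjective and homogeneous, $M(a,b)$ is projective if and only if it is injective if and only if $b-a+1 = l$, so every $M(a,b)$ with $1 \le b-a+1 \le l-1$ is both non-projective and non-injective. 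Note that the hypothesis $i \le j \le i+l-2$ says precisely that $M(i,j)$ has length $j-i+1 \le l-1$.

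For parts (a) and (b) I would simply substitute $\lmax j = j-l+1$ into the syzygy formula $\om(M(i,j)) \isom M(\lmax j, i-1)$ of Proposition~\ref{prop:basic Nakayama results2}(b) and $\rmax i = i+l-1$ into the cosyzygy formula $\om^-(M(i,j)) \isom M(j+1,\rmax i)$ of Proposition~\ref{prop:basic Nakayama results2}(c); the hypothesis $j \le i+l-2$ guarantees $j-l+1 \le i-1$ and $j+1 \le i+l-1$, so the resulting symbols are legitimate. For part (c), applying (b) gives $\om^-(M(i,j)) \isom M(j+1,i+l-1)$, a module of length $i+l-1-j \in \{1,\dots,l-1\}$, hence non-projective, so (a) applies again and returns $M(i,j)$; the composite $\om^-\om$ is handled symmetrically, using (a) then (b). Part (d) is immediate from the shift description of $\tau,\tau^-$, both of which are defined on the relevant modules since their lengths never exceed $l-1$.

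Parts (e) and (f) are the same game with one composite of (a) or (b) and one shift. For (e): $\tau^-(M(i,j)) \isom M(i+1,j+1)$ (non-projective, length $\le l-1$), then (a) gives $M((j+1)-l+1, i) = M(j-l+2,i)$; in the other order (a) gives $M(j-l+1,i-1)$ (non-injective, length $\le l-1$), then the shift for $\tau^-$ gives $M(j-l+2,i)$ --- the two orders agree, which is exactly the asserted commutativity. Part (f) is the mirror computation with the roles of $\om$, $\om^-$ and $\tau$, $\tau^-$ interchanged, landing at $M(j,i+l-2)$. The only genuine (though still routine) point throughout is the interval bookkeeping needed to verify non-projectivity and non-injectivity of each intermediate module; no abstract argument in $\smo{\La}$ is needed, since both orders of composition are computed explicitly and seen to coincide.
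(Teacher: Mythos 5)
Your proof is correct and is essentially the argument the paper intends: the lemma is stated as an immediate consequence of the explicit values $\rmax i = i+l-1$, $\lmax j = j-l+1$ together with Proposition~\ref{prop:basic Nakayama results2} and the $\tau$-action from Proposition~\ref{prop:AR quiver of Nakayama}, which is exactly the computation you carry out. Your careful length bookkeeping (every module of length at most $l-1$ is neither projective nor injective, so all formulas apply at each intermediate step) is the only point requiring attention, and you handle it correctly.
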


Our aim is to find all $n\ZZ$-cluster tilting subcategories $\cC \subseteq \mo \La$. Since  $M(i,i+l-1)$ is projective (and injective) we have $ M(i,i+l-1) \in\cC$ for all $i$. Hence to describe $\cC$ we need to determine for which $i \le j \le i+l-2$ we have 
$M(i,j) \in \cC_P=\cC_I$. As we shall see next, it turns out that only the extreme cases $i = j$ and $j = i+l-2$ (which coincide for $l = 2$) can occur. In other words $\cC_P$ will only contain simples and (co-)syzygies of simples. We also remark that $\cC_P \neq \emptyset$ as $\La$  is not semi-simple.

\begin{lemma}\label{lem:homogeneous cyclic extreme}
Let $\cC\subseteq \mo \La$ be $n\ZZ$-cluster tilting.
\begin{enumerate}[label=(\alph*)]
\item If $M(i,j) \in \cC_P$, then $j = i$ or $j = i+l-2$.
\item $M(i,i) \in \cC_P$ if and only if $M(i,i+l-2) \in \cC_P$ if and only if $M(i+l-2,i+l-2) \in \cC_P$.
\end{enumerate}
\end{lemma}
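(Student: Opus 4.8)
The plan is to derive both parts from three ingredients already available: the closure properties of Proposition~\ref{prop:basic nZ-cluster tilting results2}(d),(e), the explicit formulas of Lemma~\ref{lem:homogeneous cyclic syzygy and tau}(e),(f), and the $\Ext^1$-nonvanishing of Proposition~\ref{prop:tetragon corollary}. Throughout, recall that here $M(a,b)$ is projective (equivalently injective) exactly when $b-a=l-1$, that $\cC_P=\cC_I$, and that consequently every indecomposable of width $b-a\le l-2$ which happens to lie in $\cC$ automatically lies in $\cC_P=\cC_I$. Note also the trivial reductions: for $l=2$ the three modules in (b) coincide so (b) is vacuous, and for $l\le 3$ there is no $j$ with $i<j<i+l-2$ so (a) is vacuous; hence I assume $l\ge 4$ in (a).

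For part (a), suppose towards a contradiction that $M(i,j)\in\cC_P$ with $i<j<i+l-2$. Then $M(i,j)$ is neither projective nor injective, so Proposition~\ref{prop:basic nZ-cluster tilting results2}(d) gives $\om^-\tau(M(i,j))\in\cC$, and by Lemma~\ref{lem:homogeneous cyclic syzygy and tau}(f) this module is $M(j,i+l-2)$. I would then apply Proposition~\ref{prop:tetragon corollary} to the module $M(i,j)$ with the pair $(i',j')=(j,i+l-2)$: the required inequalities $i+1\le j\le j+1\le i+l-2\le\rmax i=i+l-1$ hold precisely because $i<j$ and $j<i+l-2$. This yields $\Ext^1_\La(M(j,i+l-2),M(i,j))\neq 0$ with both modules in $\cC$, contradicting that $\cC$ is $n$-cluster tilting (as $0<1<n$). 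Hence $j=i$ or $j=i+l-2$.

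For part (b), I would establish the two ``adjacent'' equivalences $M(i,i)\in\cC_P\Leftrightarrow M(i,i+l-2)\in\cC_P$ and $M(i,i+l-2)\in\cC_P\Leftrightarrow M(i+l-2,i+l-2)\in\cC_P$ by the same mechanism, after which the full chain follows by transitivity. For the forward direction of each: if $N$ is one of $M(i,i)$ or $M(i,i+l-2)$ and $N\in\cC_P$, then $N$ is not projective, so $\om^-\tau(N)\in\cC$ by Proposition~\ref{prop:basic nZ-cluster tilting results2}(d); and Lemma~\ref{lem:homogeneous cyclic syzygy and tau}(f) computes $\om^-\tau(M(i,i))\cong M(i,i+l-2)$ and $\om^-\tau(M(i,i+l-2))\cong M(i+l-2,i+l-2)$, each of width $\le l-2$, hence in $\cC_P$. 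For the backward direction: if $N$ is one of $M(i,i+l-2)$ or $M(i+l-2,i+l-2)$ and $N\in\cC_P=\cC_I$, then $\om\tau^-(N)\in\cC$ by Proposition~\ref{prop:basic nZ-cluster tilting results2}(e), and Lemma~\ref{lem:homogeneous cyclic syzygy and tau}(e) computes $\om\tau^-(M(i,i+l-2))\cong M(i,i)$ and $\om\tau^-(M(i+l-2,i+l-2))\cong M(i,i+l-2)$, again of width $\le l-2$ and so in $\cC_P$.

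There is no genuine obstacle here beyond careful bookkeeping: at each step one must check that the module produced really has width in $\{0,1,\dots,l-2\}$ — so that it is simultaneously non-projective and non-injective, and therefore lies in $\cC_P=\cC_I$, which is exactly what licenses the next application of Proposition~\ref{prop:basic nZ-cluster tilting results2}(d) or (e) — and one must verify the inequalities needed to invoke Lemma~\ref{lem:homogeneous cyclic syzygy and tau} and Proposition~\ref{prop:tetragon corollary}. No ingredient beyond the results already established should be required.
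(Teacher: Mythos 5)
Your proof is correct and follows essentially the same route as the paper: part (a) is the identical contradiction via Proposition~\ref{prop:basic nZ-cluster tilting results2}(d), Lemma~\ref{lem:homogeneous cyclic syzygy and tau}(f) and Proposition~\ref{prop:tetragon corollary}, and part (b) uses the same computations of $\om^-\tau$ and $\om\tau^-$ from Lemma~\ref{lem:homogeneous cyclic syzygy and tau}(e),(f) together with Proposition~\ref{prop:basic nZ-cluster tilting results2}(d),(e). The only cosmetic difference is that you verify each adjacent equivalence in both directions, whereas the paper closes a cycle of implications (using $\om\tau^-$ twice to return from $M(i+l-2,i+l-2)$ to $M(i,i)$); the content is the same.
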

\begin{proof}
(a) Assume towards a contradiction that $i < j < i+l-2$. By Proposition~\ref{prop:basic nZ-cluster tilting results2}(d) and Proposition~\ref{lem:homogeneous cyclic syzygy and tau}(f) we have $\cC_P \ni  \om^- \tau(M(i,j)) \isom M(j,i+l-2)$. But then $\Ext^1_\La(M(j,i+l-2),M(i,j)) \neq 0$ by Proposition~\ref{prop:tetragon corollary} contradicting that $\cC$ is $n$-cluster tilting.

(b) We apply Proposition~\ref{lem:homogeneous cyclic syzygy and tau} to compute 
\[
\om^- \tau (M(i,i)) \isom M(i,i+l-2),  \quad \om^- \tau (M(i,i+l-2)) \isom M(i+l-2,i+l-2)\] 
\[\mbox{and} \quad \om\tau^{-}\om\tau^{-}(M(i+l-2,i+l-2))\isom M(i,i).
\]
The claim follows by Proposition~\ref{prop:basic nZ-cluster tilting results2}(d)(e).
\end{proof}

Note that Lemma~\ref{lem:homogeneous cyclic extreme} is independent of the number $n$. We now involve $n$ to get another constraint.

\begin{lemma}\label{lem:homogeneous cyclic is closed under n-th translation}
Let $\cC\subseteq \mo \La$ be $n\ZZ$-cluster tilting, let $k\in\ZZ$ be an integer, and assume $M(i,j)\in\cC_P$. Then $M(i+k,j+k)\in\cC_P$ if and only if $n\divides k$.
\end{lemma}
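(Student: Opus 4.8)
The plan is to reduce to the case of simple modules and then treat the two implications separately. Since $\La$ is selfinjective, an indecomposable module is projective if and only if it is injective, so $\cC_P=\cC_I$; by Proposition~\ref{prop:basic nZ-cluster tilting results}(b) the maps $\tn,\tno$ are then mutually inverse bijections of $\cC_P$, and by Proposition~\ref{prop:basic nZ-cluster tilting results2} the subcategory $\cC$ is closed under $\om^{n}$ and $\om^{-n}$. By Lemma~\ref{lem:homogeneous cyclic extreme}(a) the hypothesis $M(i,j)\in\cC_P$ forces $j=i$ or $j=i+l-2$, and in either case Lemma~\ref{lem:homogeneous cyclic extreme}(b) shows that $M(i+k,j+k)\in\cC_P$ is equivalent to $M(i+k,i+k)\in\cC_P$. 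Hence, writing $S=\{t\in\ZZ\mid M(t,t)\in\cC_P\}$ (a nonempty set by Lemma~\ref{lem:homogeneous cyclic extreme}, since $\La$ is not semisimple, and invariant under translation by $l-2$ directly by Lemma~\ref{lem:homogeneous cyclic extreme}(b)), it suffices to prove that $i\in S$ implies $i+k\in S\iff n\divides k$.

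For the implication $n\divides k\Rightarrow i+k\in S$ I would prove the stronger fact that $\cC$ is closed under the shift functor $[n]\colon M(a,b)\mapsto M(a+n,b+n)$, i.e.\ under $\tau^{-n}$. Using Lemma~\ref{lem:homogeneous cyclic syzygy and tau}(a) one computes that on non-projective indecomposables $\om^2$ is the shift $M(a,b)\mapsto M(a-l,b-l)$; combining this with $\tau M(a,b)=M(a-1,b-1)$ (Proposition~\ref{prop:AR quiver of Nakayama}) one finds that $\tn$ is the pure shift by $-\tfrac{(n-1)l}{2}-1$ when $n$ is odd, and that $\om^n$ is the pure shift by $-\tfrac{nl}{2}$ when $n$ is even. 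Since $\cC$ is closed under $\tn$ and under $\om^{\pm n}$, it is closed under $\tn^2$ and under $\om^{2n}=[-nl]$, hence under their composite $[l-2]$. Finally, because $l\equiv 2\pmod{l-2}$ one checks $-\tfrac{(n-1)l}{2}-1\equiv -n$ and $-\tfrac{nl}{2}\equiv -n\pmod{l-2}$, so closure under the relevant pure shift together with closure under $[l-2]$ forces closure under $[-n]$, hence under $[n]$. Applying $[n]$ repeatedly to $M(i,i)\in\cC_P$ gives $M(i+qn,i+qn)\in\cC_P$ for all $q\in\ZZ$.

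For the converse, after subtracting a multiple of $n$ (permissible since $\cC$ is closed under $[n]$) we may assume $0\le k<n$ and must show $k=0$; suppose $k\ge 1$. For a selfinjective algebra $\Ext^t_\La(X,Y)\isom\underline{\Hom}_\La(\om^tX,Y)$ for $t\ge 1$, and by Lemma~\ref{lem:homogeneous cyclic syzygy and tau} the $\om$-orbit of $M(i+k,i+k)$ alternates between the simples $M(i+k-sl,i+k-sl)$ and the width-$(l-2)$ modules $M(i+k-sl-l+1,i+k-sl-1)$. An elementary computation of $\Hom$-spaces between uniserial modules then shows that for every $q\in\ZZ$ one has
\[
\Ext^t_\La\big(M(i+k,i+k),\ M(i+q(l-2),i+q(l-2))\big)\neq 0
\]
exactly when $t=2s$ with $k\equiv q(l-2)+sl\pmod m$, or $t=2s+1$ with $k\equiv q(l-2)+sl+1\pmod m$; here the target lies in $\cC$ because $S$ is invariant under translation by $l-2$. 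Since $\gcd(l,l-2)$ divides $2$, one can choose $q$ and $s$ and decide between the two forms so that such a nonzero $\Ext^t$ occurs with $t$ controlled by $k$; the $n$-cluster tilting property gives $t\notin(0,n)$ and the $n\ZZ$-property gives $n\divides t$, and running this over the family of solutions $(q,s)$ forces $n\divides k$. The few borderline residues of $k$ that survive the exponent-divisibility argument (notably $k=\tfrac n2$) are cleared by a closer look at small-degree Ext groups, using Proposition~\ref{prop:tetragon corollary}, which together with $\rmax i=i+l-1$ yields $\Ext^1_\La(M(i+1,i+1),M(i,i))\neq 0$, so that $i$ and $i\pm 1$ can never both lie in $S$. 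With $0\le k<n$ this gives $k=0$.

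The step I expect to be the main obstacle is precisely the last one: arranging the congruences so that an arbitrary $k$ is realised by a pair $(q,s)$ producing a nonzero $\Ext^t$ whose degree is tied tightly enough to $k$ to conclude $n\divides k$, and disposing of the small residues that remain. One should also keep an eye on the degenerate case $l=2$, where simple and width-$(l-2)$ modules coincide and several of the formulas above collapse, and on the bookkeeping of the parity of $n$ (equivalently, $\om^n$ being a genuine shift versus a "reflection" on the Auslander--Reiten quiver) throughout both directions.
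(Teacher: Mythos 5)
Your forward direction (closure under the shift $[n]$) is essentially correct: the computation $\om^2=[-l]$ on non-projectives, the reduction to simples via Lemma~\ref{lem:homogeneous cyclic extreme}, and the congruences $-\tfrac{(n-1)l}{2}-1\equiv-\tfrac{nl}{2}\equiv -n \pmod{l-2}$ do combine with the $[l-2]$-invariance from Lemma~\ref{lem:homogeneous cyclic extreme}(b) to give $n\divides k \Rightarrow M(i+k,j+k)\in\cC_P$ (with the $l=2$ and parity caveats you flag). The genuine gap is the converse. After reducing to $0\le k<n$ you state a correct Ext-criterion, namely $\Ext^t_\La(M(i+k,i+k),M(i+q(l-2),i+q(l-2)))\neq 0$ exactly when $t=2s$ with $k\equiv q(l-2)+sl$ or $t=2s+1$ with $k\equiv q(l-2)+sl+1 \pmod m$, but the step that would turn this into $n\divides k$ --- exhibiting a pair $(q,s)$ whose degree $t$ violates the $n\ZZ$-condition --- is precisely what you leave open ("the main obstacle"), together with unspecified "borderline residues" such as $k=\tfrac n2$. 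As written, the only-if direction is not proved. It is, however, easily repaired inside your own framework: for $1\le k\le n-1$ take $t=k$ and $q=-s$ with $s=\lfloor k/2\rfloor$; the congruence becomes the identity $k=2s$ resp.\ $k=2s+1$, the target $M(i-s(l-2),i-s(l-2))$ lies in $\cC$ by the $[l-2]$-invariance, and one obtains $\Ext^k_\La\bigl(M(i+k,i+k),M(i-s(l-2),i-s(l-2))\bigr)\neq 0$ with $0<k<n$, contradicting that $\cC$ is $n$-cluster tilting; no residue analysis and no appeal to Proposition~\ref{prop:tetragon corollary} is needed.

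For comparison, the paper proves both directions at once, without reducing to simples and without splitting on the parity of $n$: by Lemma~\ref{lem:homogeneous cyclic syzygy and tau} one has $M(i+k,j+k)\isom\tau^{-k}(M(i,j))\isom\om^{-k}(\om\tau^-)^k(M(i,j))$, where $(\om\tau^-)^k(M(i,j))\in\cC_P$ by Proposition~\ref{prop:basic nZ-cluster tilting results2}(e); for $k=n$ the containment $\om^{-n}(\cC)\subseteq\cC$ gives $M(i+n,j+n)\in\cC_P$, while for $1\le k\le n-1$ one gets $\Ext^k_\La(M(i+k,j+k),(\om\tau^-)^kM(i,j))\neq 0$, so $M(i+k,j+k)\notin\cC_P$; general $k$ then follows by induction. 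Your shift-and-congruence route for the forward direction is correct but buys nothing over this, and your converse, as it stands, still needs the repair indicated above.
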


\begin{proof}
By Proposition~\ref{lem:homogeneous cyclic syzygy and tau},
\[
M(i+k,j+k) \isom \tau^{-k} (M(i,j)) \isom \om^{-k}(\om \tau^-)^k (M(i,j))
\]
where $(\om \tau^-)^k (M(i,j)) \in \cC_P$ by Proposition~\ref{prop:basic nZ-cluster tilting results2}(e).

For $k = n$ we get $M(i+n,j+n) \in \cC_P$ by Proposition~\ref{prop:basic nZ-cluster tilting results2}(c). 

For $1 \le k \le n-1$ we get
\[
\Ext^k_{\La}(M(i+k,j+k),(\om \tau^-)^k M(i,j)) \neq 0
\]
which implies $M(i+k,j+k) \not \in \cC_P$. 

For $k \ge 0$ the claim now follows by induction. The case $k \le 0$ is similar.
\end{proof}

Together Lemma~\ref{lem:homogeneous cyclic extreme} and Lemma~\ref{lem:homogeneous cyclic is closed under n-th translation} pose very strong restrictions on $l$, $m$ and $n$.

\begin{corollary}\label{cor:homogeneous cyclic implies division}
If $\mo \La$ admits an $n\ZZ$-cluster tilting subcategory then $n \divides m$ and $n \divides l-2$.
\end{corollary}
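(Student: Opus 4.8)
The plan is to extract a single simple module lying in $\cC_P$ and then apply the two lemmas just proved to it; essentially all the work has been front-loaded into Lemma~\ref{lem:homogeneous cyclic extreme} and Lemma~\ref{lem:homogeneous cyclic is closed under n-th translation}, so the corollary is just bookkeeping.

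First I would observe that $\cC_P \neq \emptyset$. Since $\La$ is selfinjective we have $\add(\La) = \add(D(\La))$, so if $\cC$ contained only projective modules it would equal $\add(\La)$; but $\Ext_\La^i(\La,-) = 0$ for all $i > 0$, so the defining property of an $n$-cluster tilting subcategory would then force every $\La$-module to be projective, which is absurd because $l \ge 2$ gives non-projective simple modules. (Alternatively one can just cite the remark preceding Lemma~\ref{lem:homogeneous cyclic extreme}.) Hence there is some $M(i,j) \in \cC_P$. By Lemma~\ref{lem:homogeneous cyclic extreme}(a) we have $j = i$ or $j = i + l - 2$, and by part (b) of the same lemma we may in either case assume $M(i,i) \in \cC_P$; that is, $\cC_P$ contains a simple module $M(i,i)$.

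For divisibility by $m$: recall that vertices of $\Gamma$ are taken in $\ZZ^2/\ZZ(m,m)$, so $M(i+m,i+m) = M(i,i) \in \cC_P$. Applying Lemma~\ref{lem:homogeneous cyclic is closed under n-th translation} to $M(i,i) \in \cC_P$ with shift $k = m$ then gives $n \divides m$. For divisibility by $l - 2$: Lemma~\ref{lem:homogeneous cyclic extreme}(b) gives $M(i+l-2,i+l-2) \in \cC_P$, and applying Lemma~\ref{lem:homogeneous cyclic is closed under n-th translation} to $M(i,i)$ with shift $k = l-2$ yields $n \divides l-2$ (which is vacuous when $l = 2$).

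I do not anticipate any genuine obstacle: the only subtlety is the non-emptiness of $\cC_P$, and once a simple object of $\cC_P$ is in hand the conclusion is immediate from the periodicity encoded in the cyclic labelling $M(i,i) = M(i+m,i+m)$ together with the ``no intermediate translates'' statement of Lemma~\ref{lem:homogeneous cyclic is closed under n-th translation}.
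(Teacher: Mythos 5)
Your proof is correct and follows essentially the same route as the paper: nonemptiness of $\cC_P$, Lemma~\ref{lem:homogeneous cyclic extreme} to land on simples, and Lemma~\ref{lem:homogeneous cyclic is closed under n-th translation} applied with the shifts $m$ and $l-2$. The only cosmetic difference is that you reduce to a simple module before invoking the periodicity $M(i,i)=M(i+m,i+m)$, whereas the paper applies the shift-by-$m$ argument directly to an arbitrary $M(i,j)\in\cC_P$; this changes nothing of substance.
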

\begin{proof}
Let $\cC \subseteq \mo \La$ be $n\ZZ$-cluster tilting. Since $\cC_P \neq \emptyset$ there are $i \le j \le i+l-2$ such that $M(i,j) \in \cC_P$. Then $M(i+m,j+m) = M(i,j) \in \cC_p$ together with Lemma~\ref{lem:homogeneous cyclic is closed under n-th translation} implies $n \divides m$. 

By Lemma~\ref{lem:homogeneous cyclic extreme} we have that $j = i$ or $j = i+l-2$ and regardless we get $\{M(i,i), M(i+l-2,i+l-2)\} \subseteq \cC_P$. Again Lemma~\ref{lem:homogeneous cyclic is closed under n-th translation} implies $n \divides l-2$.
\end{proof}

Note that the condition $n \divides l-2$ is trivially satisfied for $l=2$ and implies $l \neq 3$. It turns out that for $l \ge 4$ we must have $n = l-2$. To show this we need to get better control over $\Ext$-groups involving $M(i,i)$ and $M(i,i+l-2)$.

\begin{lemma}\label{lem:homogeneous cyclic nonzero extensions}
Let $i \in \ZZ$ and $X \in \mo \La$ be indecomposable.
\begin{enumerate}[label=(\alph*)]
\item $\Ext^1_{\La}(X,M(i,i)) \neq 0$ if and only if $X \isom M(i+1,i+t)$ for some $1 \le t \le l-1$.
\item $\Ext^1_{\La}(X,M(i,i+l-2)) \neq 0$ if and only if $X \isom M(i+t,i+l-1)$ for some $1 \le t \le l-1$.
\item $\Ext^1_{\La}(M(i,i),X) \neq 0$ if and only if $X \isom M(i-t,i-1)$ for some $1 \le t \le l-1$.
\item $\Ext^1_{\La}(M(i,i+l-2),X) \neq 0$ if and only if $X \isom M(i-1,i+l-2-t)$ for some $1 \le t \le l-1$.
\end{enumerate}
\end{lemma}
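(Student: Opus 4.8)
The plan is to derive all four statements from the Auslander--Reiten formula $\Ext^1_\La(A,B)\isom D\overline{\Hom}_\La(B,\tau A)$, together with $\tau M(a,b)=M(a-1,b-1)$ (Proposition~\ref{prop:AR quiver of Nakayama}) and the fact that $\La$ is selfinjective. One may always assume that $X$ is non-projective, since otherwise both sides of each equivalence vanish and, correspondingly, all the modules appearing on the right-hand sides have length at most $l-1$.

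The key auxiliary step is a description of $\overline{\Hom}$ into and out of a simple module. Using that indecomposable $\La$-modules are uniserial of the form $M(a,b)$, that the injective envelope of $M(i,i)$ is $M(i,i+l-1)$, and that its projective cover is $M(i-l+1,i)$ (Proposition~\ref{prop:basic Nakayama results2}), I would show that for indecomposable $X$
\[
\overline{\Hom}_\La(M(i,i),X)\neq 0 \iff X\isom M(i,b)\ \text{with}\ i\le b\le i+l-2,
\]
\[
\overline{\Hom}_\La(X,M(i,i))\neq 0 \iff X\isom M(a,i)\ \text{with}\ i-l+2\le a\le i.
\]
For the first: a nonzero map $M(i,i)\to X$ is the inclusion of the simple socle of $X$, so $\soc X\isom M(i,i)$; and such a map factors through an injective module only if it factors through the injective envelope $M(i,i+l-1)$ of $M(i,i)$, which then forces a monomorphism $M(i,i+l-1)\hookrightarrow X$, hence $X\isom M(i,i+l-1)$ by comparing lengths. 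Thus it is nonzero in $\overline{\Hom}$ precisely when $X$ is not injective, i.e.\ $b\le i+l-2$. The second statement is dual, with the projective cover in place of the injective envelope.

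Parts (a) and (c) then follow at once. For (a), $\Ext^1_\La(X,M(i,i))\isom D\overline{\Hom}_\La(M(i,i),\tau X)$ is nonzero exactly when $\tau X\isom M(i,b)$ with $i\le b\le i+l-2$; as such $M(i,b)$ is non-injective this is equivalent to $X\isom \tau^-M(i,b)\isom M(i+1,b+1)$, and setting $t=b-i+1$ yields (a). For (c), $\Ext^1_\La(M(i,i),X)\isom D\overline{\Hom}_\La(X,M(i-1,i-1))$ is nonzero exactly when $X\isom M(a,i-1)$ with $i-l+1\le a\le i-1$, and setting $t=i-a$ yields (c).

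For (b) and (d) I would pass through the syzygy functor. Since $\La$ is selfinjective, $\Omega$ induces an autoequivalence of $\smo\La$ and $\Ext^1_\La(A,B)\isom\underline{\Hom}_\La(\Omega A,B)\isom\Ext^1_\La(\Omega A,\Omega B)$, while $\Omega M(i,i+l-2)\isom M(i-1,i-1)$ by Lemma~\ref{lem:homogeneous cyclic syzygy and tau}(a). Hence $\Ext^1_\La(X,M(i,i+l-2))\isom\Ext^1_\La(\Omega X,M(i-1,i-1))$, which by (a) is nonzero exactly when $\Omega X\isom M(i,i-1+t)$ for some $1\le t\le l-1$; applying $\Omega^-$ and Lemma~\ref{lem:homogeneous cyclic syzygy and tau}(b) rewrites this as $X\isom M(i+t,i+l-1)$, which is (b). Part (d) is obtained in the same way from (c), via $\Ext^1_\La(M(i,i+l-2),X)\isom\Ext^1_\La(M(i-1,i-1),\Omega X)$ and Lemma~\ref{lem:homogeneous cyclic syzygy and tau}(b). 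I do not anticipate a genuine obstacle; the only points requiring care are the bookkeeping of the pairs $(i,j)$ modulo $(m,m)$ --- in particular checking that the modules produced by $\tau^-$ and $\Omega^-$ are really non-injective, so that $X\isom\tau^-\tau X$, $X\isom\Omega^-\Omega X$ and the formulas of Lemma~\ref{lem:homogeneous cyclic syzygy and tau} apply on the nose --- together with the degenerate case $l=2$, where $M(i,i+l-2)=M(i,i)$ and (b), (d) reduce to (a), (c).
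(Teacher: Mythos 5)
Your proof is correct and takes essentially the same route as the paper: the Auslander--Reiten formula together with the uniserial socle/top analysis handles (a) and (c), and a syzygy shift using $\Omega M(i,i+l-2)\isom M(i-1,i-1)$ and selfinjectivity reduces (b) and (d) to (a) and (c), exactly as in the paper. The only cosmetic difference is which form of the AR formula you use in (a) and (c) (you move $\tau$ onto $X$ and work with $\overline{\Hom}$, while the paper moves $\tau^-$ onto the simple and works with $\underline{\Hom}$), which changes nothing of substance.
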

\begin{proof}
We only prove (a) and (b) as (c) and (d) are similar.

(a) By the Auslander--Reiten formula
\[
\Ext^1_{\La}(X,M(i,i)) \isom D\underline{\Hom}_{\La}(\tau^-(M(i,i)),X) \isom D\underline{\Hom}_{\La}(M(i+1,i+1),X).
\]
By Proposition~\ref{prop:basic Nakayama results2}(a), $M(i+1,i+1)$ is simple. Moreover, $X$ is indecomposable and hence uniserial. It follows that $D\underline{\Hom}_{\La}(M(i+1,i+1),X) \neq 0$ if and only if $\soc X \isom M(i+1,i+1)$ and $X$ is not projective. By Proposition~\ref{prop:basic Nakayama results2} this is equivalent to $X \isom M(i+1,j)$ for some $i+1 \le j \le i+l-1$. The claim follows.

(b) By Proposition~\ref{lem:homogeneous cyclic syzygy and tau}(b) we have that $\om^{-} M(i-1,i-1) \isom M(i,i+l-2)$. Hence 
\[
\Ext^1_{\La}(X,M(i,i+l-2)) \isom \Ext^1_{\La}(X,\om^{-} M(i-1,i-1)) \isom \Ext^{1}_{\La}(\om X,M(i-1,i-1))
\]
and the claim follows from (a) and Proposition~\ref{lem:homogeneous cyclic syzygy and tau}.
\end{proof}

\begin{lemma}\label{lem:homogeneous cyclic implies n=l-2}
Assume that $l \ge 4$ and $\cC\subseteq \mo{\La}$ is an $n\ZZ$-cluster tilting subcategory. Then $n=l-2$.
\end{lemma}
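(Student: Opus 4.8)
The plan is to determine $\cC$ completely, to exhibit one module $X$ that cannot belong to $\cC$ once $n<l-2$, and then to show that this very $X$ is annihilated by $\Ext^j_\La(\cC,-)$ for all $0<j<n$, contradicting that $\cC$ is $n$-cluster tilting. Note that $n\divides l-2$ already holds by Corollary~\ref{cor:homogeneous cyclic implies division}, so $n\le l-2$ (using $l\ge 4$), and it only remains to exclude $n<l-2$.

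First I would describe $\cC$. Since $\La$ is selfinjective, the indecomposable objects of $\cC$ are the projective-injectives $M(a,a+l-1)$ together with the modules of $\cC_P=\cC_I$. By Lemma~\ref{lem:homogeneous cyclic extreme} every module of $\cC_P$ is $M(c,c)$ or $M(c,c+l-2)$, with one shape occurring exactly when the other does; by Lemma~\ref{lem:homogeneous cyclic is closed under n-th translation} the set of $c$ with $M(c,c)\in\cC_P$ is a single coset of $n\ZZ$; and $\cC_P\neq\emptyset$. Hence, fixing $i$ with $M(i,i)\in\cC_P$, I get $\cC_P=\{\,M(i+kn,i+kn),\ M(i+kn,i+kn+l-2)\mid k\in\ZZ\,\}$, and in particular every module of $\cC_P$ has first index $\equiv i\pmod n$. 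Assuming $n<l-2$, the module to test is $X=M(i,i+n)$: it is non-projective because $n+1<l$, and it is neither simple nor of the form $M(c,c+l-2)$ because $0<n<l-2$, so $X\notin\cC$ by the description above.

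The core of the proof is to show $\Ext^j_\La(Y,X)=0$ for every indecomposable $Y\in\cC$ and $0<j<n$. This is clear for projective $Y$, so take $Y\in\cC_P$; as $\La$ is selfinjective all syzygies of $Y$ are non-projective, and dimension shifting gives $\Ext^j_\La(Y,X)\isom\Ext^1_\La(\om^{j-1}Y,X)$. By Lemma~\ref{lem:homogeneous cyclic syzygy and tau}(a) one has $\om M(c,c)\isom M(c-l+1,c-1)$ and $\om M(c,c+l-2)\isom M(c-1,c-1)$, so $\om^{j-1}Y$ is again of one of those two shapes, and --- because $l\equiv 2\pmod n$, which is precisely $n\divides l-2$ --- each application of $\om$ decreases the first index by $1$ modulo $n$, so the first index of $\om^{j-1}Y$ is $\equiv i-(j-1)\pmod n$. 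On the other hand, Lemma~\ref{lem:homogeneous cyclic nonzero extensions}(c),(d), combined with the observation that an isomorphism between the modules $M(\cdot,\cdot)$ fixes their lengths, shows that $\Ext^1_\La(M(c,c),X)\neq 0$ or $\Ext^1_\La(M(c,c+l-2),X)\neq 0$ forces the first index $c$ to be $\equiv i+1\pmod n$ in either case. Comparing, non-vanishing would require $i-(j-1)\equiv i+1\pmod n$, i.e.\ $j\equiv 0\pmod n$, impossible for $0<j<n$. Hence $\Ext^j_\La(\cC,X)=0$ for all $0<j<n$, whence $X\in\cC$ by the definition of $n$-cluster tilting --- contradicting the previous paragraph. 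Therefore $n=l-2$.

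I expect the only genuine obstacle to be organising the third step so that the case analysis really does collapse to the single congruence ``$j\equiv 0\pmod n$''. The three facts that make this happen are: all of $\cC_P$ sits in one residue class of first indices modulo $n$; $\om$ shifts first indices by $-1$ modulo $n$ exactly because $l\equiv 2\pmod n$; and a non-zero $\Ext^1$ into $M(i,i+n)$ pins down the first index of its source modulo $n$. It is worth noting that this argument needs only the $\Ext^1$-computations of Lemma~\ref{lem:homogeneous cyclic nonzero extensions} together with dimension shifting, and does not invoke the sharper non-vanishing of Proposition~\ref{prop:tetragon corollary}.
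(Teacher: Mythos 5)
Your proof is correct, and it takes a genuinely different route through the same toolkit. Both arguments start identically: Corollary~\ref{cor:homogeneous cyclic implies division} gives $n\divides l-2$, and assuming $n<l-2$ the witness $X=M(i,i+n)$ (where $M(i,i)\in\cC_P$) lies outside $\cC$ by Lemma~\ref{lem:homogeneous cyclic extreme}. The paper then uses the second defining equality of $n$-cluster tilting negatively: from $X\notin\cC$ it extracts some $C\in\cC_P$ and $1\le k\le n-1$ with $\Ext^k_\La(X,C)\neq 0$, identifies $\om^{-(k-1)}C$ via Lemma~\ref{lem:homogeneous cyclic nonzero extensions}(a),(b) and cosyzygies, and converts this into a nonzero $\Ext^k_\La(M(i,i),C)$ or $\Ext^k_\La(M(i+n,i+n),C)$ between two objects of $\cC$ --- a contradiction. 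You instead run the first defining equality positively: using Lemmas~\ref{lem:homogeneous cyclic extreme} and~\ref{lem:homogeneous cyclic is closed under n-th translation} to confine the first indices of $\cC_P$ to a single residue class modulo $n$, the syzygy formulas of Lemma~\ref{lem:homogeneous cyclic syzygy and tau}, and Lemma~\ref{lem:homogeneous cyclic nonzero extensions}(c),(d), you show $\Ext^j_\La(\cC,X)=0$ for all $0<j<n$, forcing $X\in\cC$, the same contradiction. Your mod-$n$ bookkeeping (syzygy shifts the first index by $-1$ modulo $n$ because $l\equiv 2$, while a nonzero $\Ext^1$ into $X$ pins the source's first index at $i+1$ modulo $n$) handles all of $\cC_P$ uniformly and collapses the case analysis to $j\equiv 0\pmod n$; what it buys is a direct verification rather than the paper's transport of a single bad extension into $\cC$. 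One point to make explicit: the residue class of a first index modulo $n$ is only well defined, and your reduction of the mod-$m$ congruences coming from $M(i,i+n)\isom M(c-t,c-1)$ (resp.\ $M(c-1,c+l-2-t)$) to congruences modulo $n$ is only valid, because $n\divides m$ --- this is the other half of Corollary~\ref{cor:homogeneous cyclic implies division}, which you cite only for $n\divides l-2$; since it is available, this is a presentational omission rather than a gap.
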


\begin{proof}
Since $\cC_P \neq \emptyset$, Lemma~\ref{lem:homogeneous cyclic extreme} gives $M(i,i) \in \cC_P$ for some $1 \le i \le m$ and by Lemma~\ref{lem:homogeneous cyclic is closed under n-th translation} we have that $M(i+n,i+n) \in \cC_P$ as well. By Corollary~\ref{cor:homogeneous cyclic implies division} we have that $n \divides l-2$ and so $n \le l-2$. Assume towards a contradiction that $n < l-2$. Then $M(i,i+n) \not \in \cC_P$ by Lemma~\ref{lem:homogeneous cyclic extreme} and so there is $C \in \cC_P$ such that $0 \neq \Ext^k(M(i,i+n),C) \isom \Ext^1(M(i,i+n),\om^{-(k-1)}C)$ for some $1 \le k \le n-1$. By Lemma~\ref{lem:homogeneous cyclic extreme} it follows that $C$ is simple or a syzygy of a simple and by Lemma~\ref{lem:homogeneous cyclic syzygy and tau}, so is $\om^{-(k-1)}C$. 

Assume first that $\om^{-(k-1)}C$ is simple. Then there is $0 \le i' \le m-1$ such that $\om^{-(k-1)}C \isom M(i',i')$. By Lemma~\ref{lem:homogeneous cyclic nonzero extensions}(a) we get that $i'+1=i$ and so $\om^{-(k-1)}C \isom M(i-1,i-1)$. But then 
\[
\Ext_\La^k(M(i,i),C) \isom \Ext_\La^1(M(i,i),\om^{-(k-1)}C) \isom \Ext_\La^1(M(i,i),M(i-1,i-1)) \neq 0
\]
(again by Lemma~\ref{lem:homogeneous cyclic nonzero extensions}(a)), which contradicts that $\cC$ is $n\ZZ$-cluster tilting.

Secondly assume that $\om^{-(k-1)}C$ is a syzygy of a simple. Then we can use Lemma~\ref{lem:homogeneous cyclic nonzero extensions}(b) to show in a similar way that $\om^{-(k-1)}C \isom M(i+n-l+1,i+n-1)$ and $\Ext_\La^k(M(i+n,i+n),C) \isom \Ext_\La^1(M(i+n,i+n),M(i+n-l+1,i+n-1) \neq 0$. Again this contradicts that $\cC$ is $n\ZZ$-cluster tilting.
\end{proof}

Now we are ready to deal with homogeneous cyclic Nakayama algebras.

\begin{proposition}\label{prop:homogeneous cyclic Nakayama with nZ-ct}
Let $\La=\K \tilde{A}_m/R^l$ be a homogeneous cyclic Nakayama algebra. There exists an $n\ZZ$-cluster tilting subcategory $\cC\subseteq \mo{\La}$ if and only if one of the following conditions is satisfied.
\begin{enumerate}[label=(\alph*)]
    \item $l=2$ and $n\divides m$.
    \item $l \ge 4$, $n=l-2$ and $n\divides m$.
\end{enumerate}
Then there is a unique $1 \le i \le n$ such that $M(i,i) \in \cC$. More explicitly we have in case (a) that
\[
\cC = \add(\{\La\} \cup \{\tau_n^{k} \left(M(i,i)\right) \mid k \in \ZZ\}) = \add(\{\La\} \cup \{M(i+kn,i+kn)\mid k \in \ZZ\})
\]
and in case (b) that
\[\begin{split}
\cC &= \add(\{\La\} \cup \{(\om^{-}\tau)^k (M(i,i)) \mid k \in \ZZ\}) \\
&= \add(\{\La\} \cup \{M(i+kn,i+kn)\oplus M(i+kn,i+(k+1)n) \mid k \in \ZZ\}).
\end{split}\]
\end{proposition}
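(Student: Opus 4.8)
The \emph{only if} direction is immediate from what precedes: an $n\ZZ$-cluster tilting subcategory of $\mo\La$ forces $n\divides m$ and $n\divides l-2$ by Corollary~\ref{cor:homogeneous cyclic implies division}. If $l=2$ the condition $n\divides l-2$ is vacuous and we are in case~(a). If $l\ge 3$, then $n\ge 2$ together with $n\divides l-2$ forces $l\ge 4$, and Lemma~\ref{lem:homogeneous cyclic implies n=l-2} then gives $n=l-2$, so we are in case~(b).

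For the \emph{if} direction, fix $1\le i\le n$ and let $\cC$ be the subcategory attached to this $i$ as in the statement. Since $n\divides m$, the index sets $\{i+kn\}$ are finite modulo $m$, so $\cC$ has finitely many isomorphism classes of indecomposable objects and is in particular functorially finite. One first checks that the two displayed descriptions of $\cC$ agree. In case~(a) this is because $\om$ and $\tau$ coincide on non-projective indecomposables when $l=2$, whence $\tn$ acts on the simple $M(i,i)$ as $\tau^n$ and $\tau_n^k(M(i,i))\isom M(i+kn,i+kn)$. In case~(b) one uses Lemma~\ref{lem:homogeneous cyclic syzygy and tau}(b),(f) to compute $\om^-\tau(M(i+kn,i+kn))\isom M(i+kn,i+(k+1)n)$ and $\om^-\tau(M(i+kn,i+(k+1)n))\isom M(i+(k+1)n,i+(k+1)n)$, so that the $(\om^-\tau)$-orbit of $M(i,i)$ is precisely the family of simples $M(i+kn,i+kn)$ together with the near-projectives $M(i+kn,i+(k+1)n)=M(i+kn,i+kn+l-2)$.

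Next I would verify that $\cC$ is $n\ZZ$-cluster tilting by a direct $\Ext$-computation. Apart from the projective-injective summands of $\La$, the indecomposables of $\cC$ are the simples $M(i+kn,i+kn)$ (in case~(a)) and in addition the near-projectives $M(i+kn,i+(k+1)n)$ (in case~(b)); by Lemma~\ref{lem:homogeneous cyclic syzygy and tau} the syzygies of these objects run through the same two families, alternating between them. Combining this with $\Ext^j_\La(A,B)\isom\Ext^1_\La(\om^{j-1}A,B)$ and the description of $\Ext^1$ in Lemma~\ref{lem:homogeneous cyclic nonzero extensions}, a non-vanishing $\Ext^j_\La(A,B)$ with $A,B$ indecomposables of $\cC$ becomes a congruence on $j$ which, using $l\equiv 2\pmod n$ (equivalent to $n\divides l-2$) and $n\divides m$, reduces to $j\equiv 0\pmod n$. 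This shows at once that $\cC$ is $n$-rigid and that $\Ext^j_\La(\cC,\cC)\ne 0$ implies $j\in n\ZZ$. For the two maximality equalities defining $n$-cluster tilting, one must show that every indecomposable $X\notin\cC$ carries a non-zero $\Ext^j_\La(C,X)$ (respectively $\Ext^j_\La(X,C)$) for some $C\in\cC$ and some $0<j<n$. When $X$ is a simple, or a cosyzygy (respectively syzygy) of a simple, this is again read off from Lemma~\ref{lem:homogeneous cyclic nonzero extensions}, choosing the syzygy index of a suitable $C\in\cC$ so that the witnessing extension lands in degree $<n$; the elementary arithmetic here is exactly where $n\divides m$ and (in case~(b)) $n=l-2$ are used. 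For the remaining ``intermediate'' indecomposables $X=M(a,b)$ with $1\le b-a\le l-3$ one instead uses the Auslander--Reiten formula $\Ext^1_\La(C,M(a,b))\isom D\,\underline{\Hom}_\La(M(a+1,b+1),C)$, which is non-zero as soon as $C$ contains $M(a+1,b+1)$ as the appropriate subquotient and the comparison map does not factor through a projective; one then picks $C$ a simple (or near-projective) of $\cC$ and a degree $0<j<n$ realizing it. Once the maximality equalities are established, $\cC$ is $n$-cluster tilting, hence $n\ZZ$-cluster tilting by the $\Ext$-computation above together with Proposition~\ref{prop:basic nZ-cluster tilting results2}.

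It remains to count. Since each residue $1\le i\le n$ yields such a $\cC$ (alternatively, the cyclic automorphism of $\La$ rotating the vertices induces an autoequivalence of $\mo\La$ permuting $n\ZZ$-cluster tilting subcategories and shifting $i$ by one), and since $\cC$ contains the simple $M(i,i)$, there are at least $n$ of them. Conversely, given any $n\ZZ$-cluster tilting $\cC'\subseteq\mo\La$, Lemma~\ref{lem:homogeneous cyclic extreme} shows that the indecomposables of $\cC'$ which are not projective-injective are only simples and (in case~(b)) cosyzygies of simples, and Lemma~\ref{lem:homogeneous cyclic is closed under n-th translation} shows the relevant left endpoints form a single residue class modulo $n$; since all projective-injectives lie in $\add\La$, this pins down $\cC'$ as one of the $n$ subcategories already found, with a unique $M(i,i)$ for $1\le i\le n$. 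I expect the main obstacle to be the maximality step for the intermediate modules: keeping track of which Auslander--Reiten maps factor through projectives and confirming that the witnessing extension always occurs in cohomological degree strictly below $n$ is where the hypotheses $n=l-2$ and $n\divides m$ are genuinely needed and where the bookkeeping is heaviest.
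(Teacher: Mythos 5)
Your ``only if'' direction and the uniqueness of the residue $i$ are exactly the paper's argument (Corollary~\ref{cor:homogeneous cyclic implies division}, Lemma~\ref{lem:homogeneous cyclic implies n=l-2}, then Lemmas~\ref{lem:homogeneous cyclic extreme} and~\ref{lem:homogeneous cyclic is closed under n-th translation} to pin down $\cC_P$), so no comment there. For the ``if'' direction your route differs in two respects. First, in case (a) the paper does not re-verify $n$-cluster tilting from scratch: since $l(n-1)+2=2n\divides 2m$, existence of an $n$-cluster tilting subcategory is quoted from Theorem~\ref{thrm:homogeneous Nakayama with n-ct}(c), and only the closure $\om^n(\cC)\subseteq\cC$ is checked via $\tn\isom\om^n$ on simples; your direct verification for $l=2$ is easy and legitimate, it just redoes work the classification already provides. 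Second, and more importantly, in case (b) your plan coincides in spirit with the paper's proof (reduce $\Ext^j$ to $\Ext^1$ via syzygies and apply Lemma~\ref{lem:homogeneous cyclic nonzero extensions}), but the decisive maximality step is only announced, not carried out, and you yourself flag it as the heaviest bookkeeping. This is the one genuine deficit of the proposal: you must exhibit, for \emph{every} indecomposable $X\notin\cC$, a witness in degree strictly between $0$ and $n$, and with $n$ even the congruences $2s\equiv\ast\pmod n$ have parity obstructions, so for a given $X$ one target family (the simples of $\cC$) may fail and one must switch to the other family (the modules $M(i+kn,i+(k+1)n)$), while also keeping the degree below $n$. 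The paper packages exactly this by introducing $\mathcal{X}=\{M(i+s,i+s+t)\mid 1\le s\le n-1,\ 0\le t\le n\}$, observing that the $(\om^-\tau)^k$-translates of $\mathcal{X}$ exhaust the indecomposables outside $\cC_P$, and converting membership in $\mathcal{X}$ into $\Ext^s_\La(X,(\om^-\tau)^{-s+1}(M(i,i)))\isom\Ext^1_\La(X,M(i+s-1,i+s-1))\neq 0$ via Lemma~\ref{lem:homogeneous cyclic nonzero extensions}(a); this handles all $X$ uniformly, so your separate Auslander--Reiten-formula treatment of the ``intermediate'' modules is unnecessary (Lemma~\ref{lem:homogeneous cyclic nonzero extensions}(a),(b) already allow an arbitrary first argument). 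If you complete your sketch along these lines the argument closes, and the final passage from $n$-cluster tilting to $n\ZZ$-cluster tilting via $\om^n=\tn(\om^-\tau)^{-1}$ on $\cC_P$ and Proposition~\ref{prop:basic nZ-cluster tilting results2} is as in the paper.
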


\begin{proof}
First note that if there exists an $n\ZZ$-cluster tilting subcategory $\cC \subseteq\mo \La$, then (a) or (b) hold by Corollary~\ref{cor:homogeneous cyclic implies division} and Lemma~\ref{lem:homogeneous cyclic implies n=l-2}. Moreover, Lemma~\ref{lem:homogeneous cyclic extreme} and Lemma~\ref{lem:homogeneous cyclic is closed under n-th translation} imply that there is unique $1 \le i \le n$ such that $M(i,i) \in \cC$. Thus it remains to show that if (a) or (b) holds, then there exists an $n\ZZ$-cluster tilting subcategory $\cC \subseteq\mo \La$, and it has the desired form.

Assume that condition (a) is satisfied. Then
\[l(n-1)+2 = 2n \divides 2m\]
and hence there exists an $n$-cluster tilting subcategory $\cC \subseteq\mo \La$ by Theorem~\ref{thrm:homogeneous Nakayama with n-ct}. Note that since $l = 2$ every indecomposable $\La$-module is either simple or projective-injective. Let $M(i,i) \in \cC_P$. Using Proposition~\ref{lem:homogeneous cyclic syzygy and tau} we compute
\[
\tn M(i,i) = \tau \om^{n-1}M(i,i) \isom \tau M(i-n+1,i-n+1) \isom M(i-n,i-n) \isom \om^{n}M(i,i)
\]
Hence $\cC$ is $n\ZZ$-cluster tilting by Proposition~\ref{prop:basic nZ-cluster tilting results}(b) and Proposition~\ref{prop:basic nZ-cluster tilting results2} (b). The second part of the claim also follows.

Next assume that condition (b) is satisfied. To simplify notation we define
\[
F(M(i,j)) := M(j,i+n) = M(j,i+l-2)
\]
for all $i \le j \le i+n$ so that $F(M(i,j)) \isom \om^- \tau (M(i,j))$ by Lemma~\ref{lem:homogeneous cyclic syzygy and tau}.

Let $1 \le i \le n$ and set
\[
\cC = \add(\{\La\} \cup (\om^{-}\tau)^k (M(i,i)) \mid k \in \ZZ\})
\]
Using the notation just introduced we get 
\[\begin{split}
\cC &= \add(\{\La\} \cup \{F^k(M(i,i)) \mid k \in \ZZ\}) \\
&= \add(\{\La\} \cup \{M(i+kn,i+kn)\oplus M(i+kn,i+(k+1)n) \mid k \in \ZZ\}).
\end{split}\]
In particular, we note that $F^{\tfrac{2m}{n}}(M(i,i)) = M(i+m,i+m) = M(i,i)$. Hence $\cC_P$ has exactly $\tfrac{2m}{n}$ elements, half of which are simple.

To complete the proof we show that $\cC$ is $n$-cluster tilting. Then it follows that $\cC$ is $n\ZZ$-cluster tilting as for $M(i,j) \in \cC_P$ we have by Lemma~\ref{lem:homogeneous cyclic syzygy and tau} that
\[
\om^{n}(M(i,j)) = \tau \om^{n-1}(\tau \om^{-})^{-1}(M(i,j)) = \tn F^{-1}(M(i,j)) \in \cC_P.
\]
Moreover, any $n\ZZ$-cluster tilting subcategory containing $M(i,i)$ must contain $\cC$ (and thus be equal to $\cC$) by Proposition~\ref{prop:basic nZ-cluster tilting results2}(d) and (e). 

We show 
\[
\cC = \{X\in \mo\La \mid \Ext^{s}_{\La}(X,\cC) = 0 \mbox{ for } 1 \le s \le n-1\}.
\]
The equality 
\[
\cC = \{X\in \mo\La \mid \Ext^{s}_{\La}(\cC,X) = 0 \mbox{ for } 1 \le s \le n-1\}
\] 
is similar. Set 
\[
\mathcal{X} = \{M(i+s,i+s+t) \mid 1 \le s \le n-1, 0 \le t \le n\}.
\]
and let $X = M(i',j')$ for some $1 \le i' \le j' \le i'+n$. It is straightforward to show that $X \in F^k\mathcal{X}$ for some $k \in \ZZ$ if and only if $X \not \in \cC_P$. On the other hand, we claim that $X \in \mathcal{X}$ if and only if $\Ext^s_{\La}(X,F^{-s+1}(M(i,i)) \neq 0$ for some $1 \le s \le n-1$. Indeed, this follows from the calculation
\[
\Ext^s_{\La}(X,F^{-s+1}(M(i,i))) \isom \Ext^1_{\La}(\om^{s-1}(X),\om^{s-1} \tau^{-s+1}(M(i,i))) \isom \Ext^1_{\La}(X,M(i+s-1,i+s-1)) 
\]
and Lemma~\ref{lem:homogeneous cyclic nonzero extensions}(a). Hence $X \in F^k\mathcal{X}$ for some $k \in \ZZ$ if and only if $\Ext^s_{\La}(X,F^{k-s+1}(M(i,i))) \neq 0$ for some $k \in \ZZ$ and $1 \le s \le n-1$, which is equivalent to 
\[
X \not \in \{Y\in \mo\La \mid \Ext^{s}_{\La}(Y,\cC) = 0 \mbox{ for } 1 \le s \le n-1\}.
\]
The claim follows.
\end{proof}

\begin{remark}\label{rmk:homogeneous cyclic Nakayama with nZ-ct}
Note that in Proposition~\ref{prop:homogeneous cyclic Nakayama with nZ-ct} any $n\ZZ$-cluster tilting subcategory $\cC \subseteq \mo \La$ is determined by $1 \le i \le n$ such that $M(i,i) \in \cC$. By symmetry any such $i$ can appear. Thus for a homogeneous cyclic Nakayama algebras there are precisely $n$ distinct $n\ZZ$-cluster tilting subcategories if any at all.
\end{remark}

We now combine our results to achieve the classification of all homogeneous Nakayama algebras admitting $n\ZZ$-cluster tilting subcategories.

\begin{theorem}\label{thrm:homogeneous Nakayama with nZ-ct}
Let $\La=\K Q_m/R^l$ be a homogeneous Nakayama algebra where $Q_m\in\{A_m,\tilde{A}_m\}$ and $l\geq 2$. Then $\La$ admits an $n\ZZ$-cluster tilting subcategory if and only if one of the following conditions is satisfied.
\begin{enumerate}[label=(\alph*)]
    \item $Q_m=A_m$, $l=2$ and $n\divides m-1$.
    \item $Q_m=A_m$, $l \ge 3$, $l\divides m-1$ and $n=2\frac{m-1}{l}$.
    \item $Q_m=\tilde{A}_m$, $l=2$ and $n\divides m$.
    \item $Q_m=\tilde{A}_m$, $l \ge 4$, $n=l-2$ and $n\divides m$.
\end{enumerate}
\end{theorem}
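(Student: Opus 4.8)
The plan is to recognize that Theorem~\ref{thrm:homogeneous Nakayama with nZ-ct} is nothing more than the assembly of the two classifications already established, so the proof reduces to a single case distinction on the underlying quiver $Q_m$. First I would record that every homogeneous Nakayama algebra $\La = \K Q_m / R^l$ with $l \ge 2$ falls into exactly one of two mutually exclusive and exhaustive classes: acyclic when $Q_m = A_m$, and cyclic when $Q_m = \tilde{A}_m$. Hence it suffices to verify the biconditional for each class separately.

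For the acyclic case $Q_m = A_m$, I would invoke Proposition~\ref{prop:homogeneous acyclic Nakayama with nZ-ct}, which states that $\mo\La$ admits an $n\ZZ$-cluster tilting subcategory precisely when $l = 2$ and $n \divides m-1$, or $l \ge 3$, $l \divides m-1$ and $n = 2\frac{m-1}{l}$. These are exactly conditions (a) and (b) of the theorem.

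For the cyclic case $Q_m = \tilde{A}_m$, I would invoke Proposition~\ref{prop:homogeneous cyclic Nakayama with nZ-ct}, which states that $\mo\La$ admits an $n\ZZ$-cluster tilting subcategory precisely when $l = 2$ and $n \divides m$, or $l \ge 4$, $n = l - 2$ and $n \divides m$. These are exactly conditions (c) and (d); in particular the value $l = 3$ is automatically excluded here, since Corollary~\ref{cor:homogeneous cyclic implies division} forces $n \divides l - 2$ while by standing assumption $n \ge 2$. Combining the two cases yields the theorem.

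There is essentially no obstacle to overcome at this stage: the substantive work — the necessary divisibility conditions coming from Corollary~\ref{cor:homogeneous cyclic implies division} and Lemma~\ref{lem:homogeneous cyclic implies n=l-2}, together with the explicit constructions of the $n\ZZ$-cluster tilting subcategories in the ``if'' directions — has already been carried out inside the two propositions (which also record the explicit form of $\cC$ in each case). The present statement merely unifies them into a single classification, so the proof is just the two-line dichotomy above.
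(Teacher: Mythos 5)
Your proposal is correct and is essentially identical to the paper's own proof, which also just combines Proposition~\ref{prop:homogeneous acyclic Nakayama with nZ-ct} and Proposition~\ref{prop:homogeneous cyclic Nakayama with nZ-ct} according to whether $Q_m=A_m$ or $Q_m=\tilde{A}_m$. Your extra aside about excluding $l=3$ is already built into the cyclic proposition, so nothing further is needed.
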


\begin{proof}
This follows immediately by Proposition~\ref{prop:homogeneous acyclic Nakayama with nZ-ct} and Proposition~\ref{prop:homogeneous cyclic Nakayama with nZ-ct}.
\end{proof}

\begin{example}\label{ex:homogeneous case}
We give examples of Theorem~\ref{thrm:homogeneous Nakayama with nZ-ct} below. In each case the additive closure of the modules corresponding to vertices in $\Gamma(\La)$ marked by rectangles forms an $n\ZZ$-cluster tilting subcategory.
\begin{enumerate}
 \item[(a)] Let $\La=\K A_9/R^2$ and $n = 4$. Then $\Gamma(\La)$ is
    \[
    \begin{tikzpicture}[scale=0.9, transform shape, baseline={(current bounding box.center)}]
    
    \tikzstyle{mod}=[rectangle, minimum width=6pt, draw=none, inner sep=1.5pt, scale=0.8]
    \tikzstyle{nct}=[rectangle, minimum width=3pt, draw, inner sep=1.5pt, scale=0.8]
    
    \node[nct] (11) at (0,0) {$(1,1)$};
    \node[mod] (22) at (1.4,0) {$(2,2)$};
    \node[mod] (33) at (2.8,0) {$(3,3)$};
    \node[mod] (44) at (4.2,0) {$(4,4)$};
    \node[nct] (55) at (5.6,0) {$(5,5)$};
    \node[mod] (66) at (7,0) {$(6,6)$};
    \node[mod] (77) at (8.4,0) {$(7,7)$};
    \node[mod] (88) at (9.8,0) {$(8,8)$};
    \node[nct] (99) at (11.2,0) {$(9,9)$\nospacepunct{,}};
    
    \draw[loosely dotted] (11.east) -- (22);
    \draw[loosely dotted] (22.east) -- (33);
    \draw[loosely dotted] (33.east) -- (44);
    \draw[loosely dotted] (44.east) -- (55);
    \draw[loosely dotted] (55.east) -- (66);
    \draw[loosely dotted] (66.east) -- (77);
    \draw[loosely dotted] (77.east) -- (88);
    \draw[loosely dotted] (88.east) -- (99);
        
    \node[nct] (12) at (0.7,0.7) {$(1,2)$};    
    \node[nct] (23) at (2.1,0.7) {$(2,3)$};    
    \node[nct] (34) at (3.5,0.7) {$(3,4)$};    
    \node[nct] (45) at (4.9,0.7) {$(4,5)$};    
    \node[nct] (56) at (6.3,0.7) {$(5,6)$};    
    \node[nct] (67) at (7.7,0.7) {$(6,7)$};    
    \node[nct] (78) at (9.1,0.7) {$(7,8)$};    
    \node[nct] (89) at (10.5,0.7) {$(8,9)$};  

    \draw[-{Stealth[scale=0.5]}] (11) -- (12);
    \draw[-{Stealth[scale=0.5]}] (22) -- (23);
    \draw[-{Stealth[scale=0.5]}] (33) -- (34);
    \draw[-{Stealth[scale=0.5]}] (44) -- (45);
    \draw[-{Stealth[scale=0.5]}] (55) -- (56);
    \draw[-{Stealth[scale=0.5]}] (66) -- (67);
    \draw[-{Stealth[scale=0.5]}] (77) -- (78);
    \draw[-{Stealth[scale=0.5]}] (88) -- (89);
    
    \draw[-{Stealth[scale=0.5]}] (12) -- (22);
    \draw[-{Stealth[scale=0.5]}] (23) -- (33);
    \draw[-{Stealth[scale=0.5]}] (34) -- (44);
    \draw[-{Stealth[scale=0.5]}] (45) -- (55);
    \draw[-{Stealth[scale=0.5]}] (56) -- (66);
    \draw[-{Stealth[scale=0.5]}] (67) -- (77);
    \draw[-{Stealth[scale=0.5]}] (78) -- (88);
    \draw[-{Stealth[scale=0.5]}] (89) -- (99);

    \end{tikzpicture}
    \]
    
    \item[(b)] Let $\La=\K A_7/R^3$ and $n = 4$. Then $\Gamma(\La)$ is
    \[
    \begin{tikzpicture}[scale=0.9, transform shape, baseline={(current bounding box.center)}]
    
    \tikzstyle{mod}=[rectangle, minimum width=6pt, draw=none, inner sep=1.5pt, scale=0.8]
    \tikzstyle{nct}=[rectangle, minimum width=3pt, draw, inner sep=1.5pt, scale=0.8]
    
    \node[nct] (11) at (0,0) {$(1,1)$};
    \node[mod] (22) at (1.4,0) {$(2,2)$};
    \node[mod] (33) at (2.8,0) {$(3,3)$};
    \node[mod] (44) at (4.2,0) {$(4,4)$};
    \node[mod] (55) at (5.6,0) {$(5,5)$};
    \node[mod] (66) at (7,0) {$(6,6)$};
    \node[nct] (77) at (8.4,0) {$(7,7)$\nospacepunct{,}};
    
    \draw[loosely dotted] (11.east) -- (22);
    \draw[loosely dotted] (22.east) -- (33);
    \draw[loosely dotted] (33.east) -- (44);
    \draw[loosely dotted] (44.east) -- (55);
    \draw[loosely dotted] (55.east) -- (66);
    \draw[loosely dotted] (66.east) -- (77);
        
    \node[nct] (12) at (0.7,0.7) {$(1,2)$};    
    \node[mod] (23) at (2.1,0.7) {$(2,3)$};    
    \node[mod] (34) at (3.5,0.7) {$(3,4)$};    
    \node[mod] (45) at (4.9,0.7) {$(4,5)$};    
    \node[mod] (56) at (6.3,0.7) {$(5,6)$};    
    \node[nct] (67) at (7.7,0.7) {$(6,7)$};  

    \draw[-{Stealth[scale=0.5]}] (11) -- (12);
    \draw[-{Stealth[scale=0.5]}] (22) -- (23);
    \draw[-{Stealth[scale=0.5]}] (33) -- (34);
    \draw[-{Stealth[scale=0.5]}] (44) -- (45);
    \draw[-{Stealth[scale=0.5]}] (55) -- (56);
    \draw[-{Stealth[scale=0.5]}] (66) -- (67);
    
    \draw[-{Stealth[scale=0.5]}] (12) -- (22);
    \draw[-{Stealth[scale=0.5]}] (23) -- (33);
    \draw[-{Stealth[scale=0.5]}] (34) -- (44);
    \draw[-{Stealth[scale=0.5]}] (45) -- (55);
    \draw[-{Stealth[scale=0.5]}] (56) -- (66);
    \draw[-{Stealth[scale=0.5]}] (67) -- (77);

    \draw[loosely dotted] (12.east) -- (23);
    \draw[loosely dotted] (23.east) -- (34);
    \draw[loosely dotted] (34.east) -- (45);
    \draw[loosely dotted] (45.east) -- (56);
    \draw[loosely dotted] (56.east) -- (67);
    
    \node[nct] (13) at (1.4,1.4) {$(1,3)$};
    \node[nct] (24) at (2.8,1.4) {$(2,4)$};
    \node[nct] (35) at (4.2,1.4) {$(3,5)$};
    \node[nct] (46) at (5.6,1.4) {$(4,6)$};
    \node[nct] (57) at (7,1.4) {$(5,7)$};
    
    \draw[-{Stealth[scale=0.5]}] (12) -- (13);
    \draw[-{Stealth[scale=0.5]}] (23) -- (24);
    \draw[-{Stealth[scale=0.5]}] (34) -- (35);
    \draw[-{Stealth[scale=0.5]}] (45) -- (46);
    \draw[-{Stealth[scale=0.5]}] (56) -- (57);
    
    \draw[-{Stealth[scale=0.5]}] (13) -- (23);
    \draw[-{Stealth[scale=0.5]}] (24) -- (34);
    \draw[-{Stealth[scale=0.5]}] (35) -- (45);
    \draw[-{Stealth[scale=0.5]}] (46) -- (56);
    \draw[-{Stealth[scale=0.5]}] (57) -- (67);
    
    \end{tikzpicture}
    \]
    
    \item[(c)] Let $\tilde{\La}=\K \tilde{A}_8/R^2$ and $n = 4$. Then $\Gamma(\La)$ is
    \[
    \begin{tikzpicture}[scale=0.9, transform shape, baseline={(current bounding box.center)}]
    
    \tikzstyle{mod}=[rectangle, minimum width=6pt, draw=none, inner sep=1.5pt, scale=0.8]
    \tikzstyle{nct}=[rectangle, minimum width=3pt, draw, inner sep=1.5pt, scale=0.8]
    
    \node[nct] (11) at (0,0) {$(1,1)$};
    \node[mod] (22) at (1.4,0) {$(2,2)$};
    \node[mod] (33) at (2.8,0) {$(3,3)$};
    \node[mod] (44) at (4.2,0) {$(4,4)$};
    \node[nct] (55) at (5.6,0) {$(5,5)$};
    \node[mod] (66) at (7,0) {$(6,6)$};
    \node[mod] (77) at (8.4,0) {$(7,7)$};
    \node[mod] (88) at (9.8,0) {$(8,8)$};
    \node[nct] (99) at (11.2,0) {$(1,1)$};
    
    \draw[loosely dotted] (11.east) -- (22);
    \draw[loosely dotted] (22.east) -- (33);
    \draw[loosely dotted] (33.east) -- (44);
    \draw[loosely dotted] (44.east) -- (55);
    \draw[loosely dotted] (55.east) -- (66);
    \draw[loosely dotted] (66.east) -- (77);
    \draw[loosely dotted] (77.east) -- (88);
    \draw[loosely dotted] (88.east) -- (99);
        
    \node[nct] (12) at (0.7,0.7) {$(1,2)$};    
    \node[nct] (23) at (2.1,0.7) {$(2,3)$};    
    \node[nct] (34) at (3.5,0.7) {$(3,4)$};    
    \node[nct] (45) at (4.9,0.7) {$(4,5)$};    
    \node[nct] (56) at (6.3,0.7) {$(5,6)$};    
    \node[nct] (67) at (7.7,0.7) {$(6,7)$};    
    \node[nct] (78) at (9.1,0.7) {$(7,8)$};    
    \node[nct] (89) at (10.5,0.7) {$(8,9)$};  

    \draw[-{Stealth[scale=0.5]}] (11) -- (12);
    \draw[-{Stealth[scale=0.5]}] (22) -- (23);
    \draw[-{Stealth[scale=0.5]}] (33) -- (34);
    \draw[-{Stealth[scale=0.5]}] (44) -- (45);
    \draw[-{Stealth[scale=0.5]}] (55) -- (56);
    \draw[-{Stealth[scale=0.5]}] (66) -- (67);
    \draw[-{Stealth[scale=0.5]}] (77) -- (78);
    \draw[-{Stealth[scale=0.5]}] (88) -- (89);
    
    \draw[-{Stealth[scale=0.5]}] (12) -- (22);
    \draw[-{Stealth[scale=0.5]}] (23) -- (33);
    \draw[-{Stealth[scale=0.5]}] (34) -- (44);
    \draw[-{Stealth[scale=0.5]}] (45) -- (55);
    \draw[-{Stealth[scale=0.5]}] (56) -- (66);
    \draw[-{Stealth[scale=0.5]}] (67) -- (77);
    \draw[-{Stealth[scale=0.5]}] (78) -- (88);
    \draw[-{Stealth[scale=0.5]}] (89) -- (99);

    \end{tikzpicture}
    \]
    where $(1,1)$ has been drawn twice.
    \item[(d)] Let $\La=\K \tilde{A}_6/R^5$and $n = 3$. Then $\Gamma(\La)$ is
    \[
    \begin{tikzpicture}[scale=0.9, transform shape, baseline={(current bounding box.center)}]
    
    \tikzstyle{mod}=[rectangle, minimum width=6pt, draw=none, inner sep=1.5pt, scale=0.8]
    \tikzstyle{nct}=[rectangle, minimum width=3pt, draw, inner sep=1.5pt, scale=0.8]
    
    \node[nct] (11) at (0,0) {$(1,1)$};
    \node[mod] (22) at (1.4,0) {$(2,2)$};
    \node[mod] (33) at (2.8,0) {$(3,3)$};
    \node[nct] (44) at (4.2,0) {$(4,4)$};
    \node[mod] (55) at (5.6,0) {$(5,5)$};
    \node[mod] (66) at (7,0) {$(6,6)$};
    \node[nct] (11b) at (8.4,0) {$(1,1)$};
    
    \draw[loosely dotted] (11.east) -- (22);
    \draw[loosely dotted] (22.east) -- (33);
    \draw[loosely dotted] (33.east) -- (44);
    \draw[loosely dotted] (44.east) -- (55);
    \draw[loosely dotted] (55.east) -- (66);
    \draw[loosely dotted] (66.east) -- (11b);
        
    \node[mod] (12) at (0.7,0.7) {$(1,2)$};    
    \node[mod] (23) at (2.1,0.7) {$(2,3)$};    
    \node[mod] (34) at (3.5,0.7) {$(3,4)$};    
    \node[mod] (45) at (4.9,0.7) {$(4,5)$};    
    \node[mod] (56) at (6.3,0.7) {$(5,6)$};    
    \node[mod] (67) at (7.7,0.7) {$(6,7)$};  
    \node[mod] (12b) at (9.1,0.7) {$(1,2)$};

    \draw[-{Stealth[scale=0.5]}] (11) -- (12);
    \draw[-{Stealth[scale=0.5]}] (22) -- (23);
    \draw[-{Stealth[scale=0.5]}] (33) -- (34);
    \draw[-{Stealth[scale=0.5]}] (44) -- (45);
    \draw[-{Stealth[scale=0.5]}] (55) -- (56);
    \draw[-{Stealth[scale=0.5]}] (66) -- (67);
    \draw[-{Stealth[scale=0.5]}] (11b) -- (12b);
    
    \draw[-{Stealth[scale=0.5]}] (12) -- (22);
    \draw[-{Stealth[scale=0.5]}] (23) -- (33);
    \draw[-{Stealth[scale=0.5]}] (34) -- (44);
    \draw[-{Stealth[scale=0.5]}] (45) -- (55);
    \draw[-{Stealth[scale=0.5]}] (56) -- (66);
    \draw[-{Stealth[scale=0.5]}] (67) -- (11b);

    \draw[loosely dotted] (12.east) -- (23);
    \draw[loosely dotted] (23.east) -- (34);
    \draw[loosely dotted] (34.east) -- (45);
    \draw[loosely dotted] (45.east) -- (56);
    \draw[loosely dotted] (56.east) -- (67);
    \draw[loosely dotted] (67.east) -- (12b);
    
    \node[mod] (13) at (1.4,1.4) {$(1,3)$};
    \node[mod] (24) at (2.8,1.4) {$(2,4)$};
    \node[mod] (35) at (4.2,1.4) {$(3,5)$};
    \node[mod] (46) at (5.6,1.4) {$(4,6)$};
    \node[mod] (57) at (7,1.4) {$(5,7)$};
    \node[mod] (68) at (8.4,1.4) {$(6,8)$};
    \node[mod] (13b) at (9.8,1.4) {$(1,3)$};
    
    \draw[-{Stealth[scale=0.5]}] (12) -- (13);
    \draw[-{Stealth[scale=0.5]}] (23) -- (24);
    \draw[-{Stealth[scale=0.5]}] (34) -- (35);
    \draw[-{Stealth[scale=0.5]}] (45) -- (46);
    \draw[-{Stealth[scale=0.5]}] (56) -- (57);
    \draw[-{Stealth[scale=0.5]}] (67) -- (68);
    \draw[-{Stealth[scale=0.5]}] (12b) -- (13b);
    
    \draw[-{Stealth[scale=0.5]}] (13) -- (23);
    \draw[-{Stealth[scale=0.5]}] (24) -- (34);
    \draw[-{Stealth[scale=0.5]}] (35) -- (45);
    \draw[-{Stealth[scale=0.5]}] (46) -- (56);
    \draw[-{Stealth[scale=0.5]}] (57) -- (67);
    \draw[-{Stealth[scale=0.5]}] (68) -- (12b);
    
    \draw[loosely dotted] (13.east) -- (24);
    \draw[loosely dotted] (24.east) -- (35);
    \draw[loosely dotted] (35.east) -- (46);
    \draw[loosely dotted] (46.east) -- (57);
    \draw[loosely dotted] (57.east) -- (68);
    \draw[loosely dotted] (68.east) -- (13b);
    
    \node[nct] (14) at (2.1,2.1) {$(1,4)$};
    \node[mod] (25) at (3.5,2.1) {$(2,5)$};
    \node[mod] (36) at (4.9,2.1) {$(3,6)$};
    \node[nct] (47) at (6.3,2.1) {$(4,7)$};
    \node[mod] (58) at (7.7,2.1) {$(5,8)$};
    \node[mod] (69) at (9.1,2.1) {$(6,9)$};
    \node[nct] (14b) at (10.5,2.1) {$(1,4)$};
    
    \draw[-{Stealth[scale=0.5]}] (13) -- (14);
    \draw[-{Stealth[scale=0.5]}] (24) -- (25);
    \draw[-{Stealth[scale=0.5]}] (35) -- (36);
    \draw[-{Stealth[scale=0.5]}] (46) -- (47);
    \draw[-{Stealth[scale=0.5]}] (57) -- (58);
    \draw[-{Stealth[scale=0.5]}] (68) -- (69);
    \draw[-{Stealth[scale=0.5]}] (13b) -- (14b);
    
    \draw[-{Stealth[scale=0.5]}] (14) -- (24);
    \draw[-{Stealth[scale=0.5]}] (25) -- (35);
    \draw[-{Stealth[scale=0.5]}] (36) -- (46);
    \draw[-{Stealth[scale=0.5]}] (47) -- (57);
    \draw[-{Stealth[scale=0.5]}] (58) -- (68);
    \draw[-{Stealth[scale=0.5]}] (69) -- (13b);
    
    \draw[loosely dotted] (14.east) -- (25);
    \draw[loosely dotted] (25.east) -- (36);
    \draw[loosely dotted] (36.east) -- (47);
    \draw[loosely dotted] (47.east) -- (58);
    \draw[loosely dotted] (58.east) -- (69);
    \draw[loosely dotted] (69.east) -- (14b);
    
    \node[nct] (15) at (2.8,2.8) {$(1,5)$};
    \node[nct] (26) at (4.2,2.8) {$(2,6)$};
    \node[nct] (37) at (5.6,2.8) {$(3,7)$};
    \node[nct] (48) at (7,2.8) {$(4,8)$};
    \node[nct] (59) at (8.4,2.8) {$(5,9)$};
    \node[nct] (610) at (9.8,2.8) {$(6,10)$};
    \node[nct] (15b) at (11.2,2.8) {$(1,5)$};
    
    \draw[-{Stealth[scale=0.5]}] (14) -- (15);
    \draw[-{Stealth[scale=0.5]}] (25) -- (26);
    \draw[-{Stealth[scale=0.5]}] (36) -- (37);
    \draw[-{Stealth[scale=0.5]}] (47) -- (48);
    \draw[-{Stealth[scale=0.5]}] (58) -- (59);
    \draw[-{Stealth[scale=0.5]}] (69) -- (610);
    \draw[-{Stealth[scale=0.5]}] (14b) -- (15b);
    
    \draw[-{Stealth[scale=0.5]}] (15) -- (25);
    \draw[-{Stealth[scale=0.5]}] (26) -- (36);
    \draw[-{Stealth[scale=0.5]}] (37) -- (47);
    \draw[-{Stealth[scale=0.5]}] (48) -- (58);
    \draw[-{Stealth[scale=0.5]}] (59) -- (69);
    \draw[-{Stealth[scale=0.5]}] (610) -- (14b);
    \end{tikzpicture}
    \]
    where $(1,j)$, has been drawn twice for $1 \le j \le 5$.
\end{enumerate}
\end{example}

\subsection{Non-homogeneous relations: acyclic case}

To give the classification in this case, we first need to recall the notion of gluing from \cite{Vas3}. Since we shall deal with several Nakayama algebras at the same time, it is convenient to use a different notation for their quivers. To this end, for $m_1,m_2\in\ZZ$ with $m_1<m_2$ we denote by $A_{[m_1,m_2]}$ the quiver
\[\begin{tikzpicture}
\node (Q) at (-1,0) {$A_{[m_1,m_2]}:$};
\node (1) at (0,0) {$m_2$}; 
\node (2) at (2,0) {$m_2-1$};
\node (3) at (4.5,0) {$m_2-2$};
\node (4) at (6.75,0) {$\cdots$};
\node (m-1) at (9,0) {$m_1+1$};
\node (m) at (11.25,0) {$m_1$.};

\draw[-{Stealth[scale=0.5]}] (1) -- (2) node[draw=none,midway,above] {$\alpha_{m_2}$};
\draw[-{Stealth[scale=0.5]}] (2) -- (3) node[draw=none,midway,above] {$\alpha_{m_2-1}$};
\draw[-{Stealth[scale=0.5]}] (3) -- (4) node[draw=none,midway,above] {$\alpha_{m_2-2}$};
\draw[-{Stealth[scale=0.5]}] (4) -- (m-1) node[draw=none,midway,above] {$\alpha_{m_1+2}$};
\draw[-{Stealth[scale=0.5]}] (m-1) -- (m) node[draw=none,midway,above] {$\alpha_{m_1+1}$};
\end{tikzpicture}\]

If $\La=\K A_{[m_1,m_2]}/I$ is an acyclic Nakayama algebra and $M(i,j)\in\mo{\La}$, we then have that $M(i,j)\isom M(i+m_2-m_1+1,j+m_2-m_1+1)$ where we consider this isomorphism as the identity. To avoid certain technicalities arising from this identification, from now on we drop our assumption that $i\in\ZZ$ and we only allow $m_1\leq i\leq m_2$. We also denote by $\ind(\La)$ the set
\[
\ind(\La) \coloneqq \{ M(i,j)\in\mo{\La} \mid m_1 \leq i \leq m_2\},
\]
which is then a complete and irredundant set of representatives of isomorphism classes of indecomposable $\La$-modules. If $\cC\subseteq \mo{\La}$ is a subcategory, we set $\ind(\cC)=\{M(i,j)\in\cC \mid m_1\leq i\leq m_2\}$.

\begin{definition}\label{def:gluing of acyclic Nakayama}
	Let $m_1,m_2,m_3\in\ZZ$ with $m_1<m_2<m_3$ and let
	$\La_1=\K A_{[m_1,m_2]}/I_1$ and
	$\La_2=\K A_{[m_2,m_3]}/I_2$ be two acyclic Nakayama algebras. We define the \emph{gluing of $\La_2$ and $\La_1$} to be the acyclic Nakayama algebra $\La= \La_1\glue \La_2$ given by $\La=\K A_{[m_1,m_3]}/ I_{\La}$ where $I_{\La}$ is the ideal generated by $I_1\cup I_2\cup \{\alpha_{m_2}\alpha_{m_2+1}\}$. 
\end{definition}

We immediately have the following lemma.

\begin{lemma}\label{lem:ungluing at a simple, acyclic}
Let $m_1,m_2,m_3\in\ZZ$ with $m_1<m_2<m_3$ and let $\La=\K A_{[m_1,m_3]}/I_{\La}$ be an acyclic Nakayama algebra. Assume that $M(m_2-1,m_2+1)\not\in\mo{\La}$. Let $I_1 = \K A_{[m_1,m_2]} \cap I_\La$ and $I_2 = \K A_{[m_2,m_3]} \cap I_\La$. Set $\La_1=\K A_{[m_1,m_2]}/I_1$ and $\La_2=\K A_{[m_2,m_3]}/I_2$. Then $\La=\La_1\glue \La_2$.
\end{lemma}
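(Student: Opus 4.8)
The plan is to show that the defining ideal $I_\La$ of $\La=\K A_{[m_1,m_3]}/I_\La$ coincides with the ideal $I'$ of $\K A_{[m_1,m_3]}$ generated by $I_1\cup I_2\cup\{\alpha_{m_2}\alpha_{m_2+1}\}$; by Definition~\ref{def:gluing of acyclic Nakayama} the latter is precisely the ideal defining $\La_1\glue\La_2$, so this yields $\La=\La_1\glue\La_2$. Throughout I would regard $\K A_{[m_1,m_2]}$ and $\K A_{[m_2,m_3]}$ as the subalgebras of $\K A_{[m_1,m_3]}$ spanned by the paths all of whose vertices lie in $\{m_1,\dots,m_2\}$, respectively $\{m_2,\dots,m_3\}$; concretely these are the corner subalgebras $e\K A_{[m_1,m_3]}e$ and $f\K A_{[m_1,m_3]}f$ for $e=\epsilon_{m_1}+\dots+\epsilon_{m_2}$ and $f=\epsilon_{m_2}+\dots+\epsilon_{m_3}$, and I would first record the routine check that $I_1$ and $I_2$ are admissible, so that $\La_1$ and $\La_2$ are genuinely acyclic Nakayama algebras and $\La_1\glue\La_2$ makes sense.

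The argument would rest on two preliminary observations. First, I would note that every two-sided ideal $J$ of $\K A_{[m_1,m_3]}$ is spanned by paths: since the quiver is of type $A$ with linear orientation, for $m_1\le a,b\le m_3$ the space $\epsilon_a(\K A_{[m_1,m_3]})\epsilon_b$ is one-dimensional (spanned by the unique path from $b$ down to $a$) if $a\le b$, and zero otherwise, so in the Peirce decomposition $J=\bigoplus_{a,b}\epsilon_aJ\epsilon_b$ each summand is either $0$ or all of $\epsilon_a(\K A_{[m_1,m_3]})\epsilon_b$. Second, I would derive from the hypothesis that $\alpha_{m_2}\alpha_{m_2+1}\in I_\La$. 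Since $m_1\le m_2-1<m_2+1\le m_3$, the module $M(m_2-1,m_2+1)$ is a legitimate indecomposable candidate over $\K A_{[m_1,m_3]}$, and $M(m_2-1,m_2+1)\notin\mo{\La}$ means precisely that $I_\La$ does not annihilate it. As this module has socle $\K b_{m_2-1}$ and Loewy length $3$, for $x\in I_\La$ — hence $x$ in the square of the arrow ideal — one computes $x\cdot b_{m_2}=x\cdot b_{m_2-1}=0$ and $x\cdot b_{m_2+1}=(\epsilon_{m_2-1}x\epsilon_{m_2+1})\cdot b_{m_2+1}$; since $\epsilon_{m_2-1}(\K A_{[m_1,m_3]})\epsilon_{m_2+1}=\K\,\alpha_{m_2}\alpha_{m_2+1}$ and $\epsilon_{m_2-1}x\epsilon_{m_2+1}\in I_\La$, the ideal $I_\La$ fails to annihilate $M(m_2-1,m_2+1)$ if and only if $\alpha_{m_2}\alpha_{m_2+1}\in I_\La$.

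It would then remain to prove $I_\La=I'$. The inclusion $I'\subseteq I_\La$ is immediate, since the generators $I_1=\K A_{[m_1,m_2]}\cap I_\La$, $I_2=\K A_{[m_2,m_3]}\cap I_\La$ and $\alpha_{m_2}\alpha_{m_2+1}$ all lie in the ideal $I_\La$. For $I_\La\subseteq I'$, by the first observation it is enough to take a path $p\in I_\La$, say from a vertex $b$ down to a vertex $a$ with $m_1\le a\le b\le m_3$ and $b-a\ge2$ by admissibility, and show $p\in I'$. If $b\le m_2$ then all vertices of $p$ lie in $\{m_1,\dots,m_2\}$, so $p\in\K A_{[m_1,m_2]}\cap I_\La=I_1\subseteq I'$; if $a\ge m_2$ then symmetrically $p\in I_2\subseteq I'$; and if $a\le m_2-1$ and $b\ge m_2+1$ then $p$ contains the subpath $m_2+1\to m_2\to m_2-1$, so $p=p'\,(\alpha_{m_2}\alpha_{m_2+1})\,p''$ for subpaths $p'',p'$, whence $p\in I'$. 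As these three cases are exhaustive, $I_\La\subseteq I'$, and therefore $I_\La=I'$ and $\La=\La_1\glue\La_2$. I expect the only point needing genuine care — and the nearest thing to an obstacle, though a mild one — to be the bookkeeping with the arrow-indexing convention, ensuring that the identifications of $\K A_{[m_1,m_2]}$ and $\K A_{[m_2,m_3]}$ inside $\K A_{[m_1,m_3]}$ are the intended ones and that the factorization $p=p'(\alpha_{m_2}\alpha_{m_2+1})p''$ is literally valid; once the ideals are known to be monomial, everything else is formal.
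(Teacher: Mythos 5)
Your proof is correct and takes essentially the same route as the paper: the key step in both is that $M(m_2-1,m_2+1)\not\in\mo{\La}$ is equivalent to $\alpha_{m_2}\alpha_{m_2+1}\in I_\La$, after which the statement reduces to comparing $I_\La$ with the ideal from Definition~\ref{def:gluing of acyclic Nakayama}. The paper leaves that comparison as ``follows directly from the definition'', while you spell it out (ideals in a linearly oriented $A$-quiver are spanned by paths, then the three-case analysis giving $I_\La=I'$); that verification is accurate.
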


\begin{proof}
The condition $M(m_2-1,m_2+1)\not\in\mo{\La}$ means precisely $\alpha_{m_2}\alpha_{m_2+1} \in I_\La$, and so the claim follows directly from Definition~\ref{def:gluing of acyclic Nakayama}.
\end{proof}

Definition~\ref{def:gluing of acyclic Nakayama} is a special case of \cite[Lemma 3.21]{Vas3}. Since both $\La_1$ and $\La_2$ can be viewed as quotient algebras of $\La$ there are full and faithful embeddings of $\mo{\La_1}$ and $\mo{\La_2}$ in $\mo{\La}$. Specifically, an indecomposable module $M(i,j)$ over $\La_1$ satisfies $m_1\leq i \leq m_2$ and is mapped to the indecomposable $\La$-module $M(i,j)$ with $m_1\leq i \leq m_3$ and similarly for $\La_2$. For more details we refer to \cite[Section 3]{Vas3}. It also follows directly from the definition that
\[(\La_1 \glue \La_2) \glue \La_3 = \La_1 \glue (\La_2 \glue \La_3),\]
and hence gluing is associative. 

In the following proposition we collect some basic properties of gluing.

\begin{proposition}\label{prop:basic gluing results}
	Let $m_1,m_2,m_3\in\ZZ$ with $m_1<m_2<m_3$ and let $\La_1=\K A_{[m_1,m_2]}/I_1$ and $\La_2=\K A_{[m_2,m_3]}/I_2$ be two acyclic Nakayama algebras. Let $\La=\La_1\glue \La_2$.
	\begin{enumerate}[label=(\alph*)]
		\item $\ind(\La)=\ind(\La_1)\cup\ind(\La_2)$ and $\ind(\La_1)\cap\ind(\La_2) = \{M(m_2,m_2)\}$.
		\item $M(i,j)$ is a projective $\La$-module if and only if exactly one of the following conditions hold
		\begin{itemize}
			\item[(i)] either $M(i,j)$ is a projective $\La_1$-module, or
			\item[(ii)] $M(i,j)$ is a projective $\La_2$-module different from $M(m_2,m_2)$.
		\end{itemize}
		\item $M(i,j)$ is an injective $\La$-module if and only if exactly one of the following conditions hold
		\begin{itemize}
			\item[(i)] either $M(i,j)$ an injective $\La_2$-module, or
			\item[(ii)] $M(i,j)$ is an injective $\La_1$-module different from $M(m_2,m_2)$.
		\end{itemize}
		\item If $M(i,j)\in\ind(\La_1)$, then $\tau_{\La}\left(M(i,j)\right)= \tau_{\La_1}\left(M(i,j)\right)$ and $\om_{\La}\left(M(i,j)\right)= \om_{\La_1}\left(M(i,j)\right)$. If moreover $M(i,j)\neq M(m_2,m_2)$, then $\tau_{\La}^-\left(M(i,j)\right)= \tau_{\La_1}^-\left(M(i,j)\right)$ and  $\om_{\La}^-\left(M(i,j)\right)= \om_{\La_1}^-\left(M(i,j)\right)$.
		\item If $M(i,j)\in\ind(\La_2)$, then $\tau_{\La}^-\left(M(i,j)\right)= \tau_{\La_2}^-\left(M(i,j)\right)$ and $\om_{\La}^-\left(M(i,j)\right)= \om_{\La_2}^-\left(M(i,j)\right)$. If moreover $M(i,j)\neq M(m_2,m_2)$, then $\tau_{\La}\left(M(i,j)\right)= \tau_{\La_2}\left(M(i,j)\right)$ and  $\om_{\La}\left(M(i,j)\right)= \om_{\La_2}\left(M(i,j)\right)$.
	\end{enumerate}
\end{proposition}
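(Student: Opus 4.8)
The plan is to reduce every part to the combinatorics of the Auslander--Reiten quiver encoded by the functions $\rmax{\cdot}$ and $\lmax{\cdot}$ via Proposition~\ref{prop:AR quiver of Nakayama} and Proposition~\ref{prop:basic Nakayama results2}; since everything else rests on part~(a), I would prove it first. The defining extra relation of $\La=\La_1\glue\La_2$ (Definition~\ref{def:gluing of acyclic Nakayama}) is $\alpha_{m_2}\alpha_{m_2+1}\in I_\La$, and reading off the action on the basis of a string module $M(i,j)$ over $\K A_{[m_1,m_3]}$ — exactly as in the proof of Proposition~\ref{prop:basic Nakayama results1} — one checks that $\alpha_{m_2}\alpha_{m_2+1}M(i,j)=0$ if and only if $i\ge m_2$ or $j\le m_2$. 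If $j\le m_2$ then $M(i,j)$ is supported on the vertices $m_1,\dots,m_2$, so the arrows $\alpha_{m_2+1},\dots,\alpha_{m_3}$ and hence all of $I_2$ act as zero on it, and $I_\La M(i,j)=0$ reduces to $I_1M(i,j)=0$; thus $M(i,j)\in\ind(\La_1)$, and dually $i\ge m_2$ forces $M(i,j)\in\ind(\La_2)$. Conversely $\La_1$ and $\La_2$ are quotient algebras of $\La$ (the surjections kill the idempotents outside the respective vertex intervals and send every generator of $I_\La$ to zero), so each indecomposable $\La_1$- or $\La_2$-module inflates to the indecomposable $\La$-module with the same string, giving $\ind(\La_1)\cup\ind(\La_2)\subseteq\ind(\La)$; and $j\le m_2$ together with $i\ge m_2$ and $i\le j$ force $i=j=m_2$, so the intersection is $\{M(m_2,m_2)\}$.

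From (a) it follows that for $m_1\le j\le m_2$ a module $M(i,j)$ lies in $\mo\La$ if and only if it lies in $\mo{\La_1}$, and for $m_2<j\le m_3$ if and only if it lies in $\mo{\La_2}$; minimising over $i$ gives $\lmax{j}^{\La}=\lmax{j}^{\La_1}$ for $j\le m_2$ and $\lmax{j}^{\La}=\lmax{j}^{\La_2}$ for $j>m_2$, and dually (using $\rmax{m_2}^{\La_1}=m_2$, which pushes the vertex $m_2$ onto the $\La_2$-side) $\rmax{i}^{\La}=\rmax{i}^{\La_1}$ for $i<m_2$ and $\rmax{i}^{\La}=\rmax{i}^{\La_2}$ for $i\ge m_2$. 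Now (b) follows from Proposition~\ref{prop:basic Nakayama results2}(b): $M(i,j)$ is $\La$-projective iff $i=\lmax{j}^{\La}$, which by the previous sentence is $\La_1$-projectivity when $j\le m_2$ and $\La_2$-projectivity when $j>m_2$; in the first case $M(i,j)\in\ind(\La_1)$ and is not in $\ind(\La_2)$ unless it is $M(m_2,m_2)$ — for which $\La$- and $\La_1$-projectivity coincide while condition~(ii) is vacuously false — so exactly condition~(i) holds, and in the second case $M(i,j)\in\ind(\La_2)\setminus\{M(m_2,m_2)\}$, so exactly condition~(ii) holds; the same computation shows that if $M(i,j)$ is not $\La$-projective then neither condition holds. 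Part~(c) is the mirror statement, interchanging $\La_1\leftrightarrow\La_2$, $\lmax{\cdot}\leftrightarrow\rmax{\cdot}$ and projective $\leftrightarrow$ injective, and invoking Proposition~\ref{prop:basic Nakayama results2}(c).

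For (d), let $M(i,j)\in\ind(\La_1)$. By Proposition~\ref{prop:AR quiver of Nakayama} and Proposition~\ref{prop:basic Nakayama results2}(b), whenever $M(i,j)$ is not $\La$-projective one has $\tau_{\La}(M(i,j))=M(i-1,j-1)$ and $\om_{\La}(M(i,j))=M(\lmax{j}^{\La},i-1)$, and the same formulas hold over $\La_1$ when $M(i,j)$ is not $\La_1$-projective; since $\La$- and $\La_1$-projectivity agree on $\ind(\La_1)$ by (b) and $\lmax{j}^{\La}=\lmax{j}^{\La_1}$ for $j\le m_2$, the two sides coincide (both are $0$ in the projective case), and non-projectivity forces $\lmax{j}^{\La}\le i-1$, keeping all the modules in $\ind(\La_1)$. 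The $\tau^-$ and $\om^-$ statements are handled identically using $\rmax{i}^{\La}=\rmax{i}^{\La_1}$, but this equality only holds for $i<m_2$, i.e. for $M(i,j)\in\ind(\La_1)\setminus\{M(m_2,m_2)\}$, which is precisely the origin of the extra hypothesis: $M(m_2,m_2)$ is $\La_1$-injective, but when $\rmax{m_2}^{\La_2}>m_2$ it is not $\La$-injective, and then $\om^-_{\La}(M(m_2,m_2))=M(m_2+1,\rmax{m_2}^{\La_2})$ leaves $\ind(\La_1)$ altogether. Part~(e) is the mirror image, swapping $\La_1\leftrightarrow\La_2$ and interchanging the roles of $(\tau,\om)$ and $(\tau^-,\om^-)$; here $M(m_2,m_2)$ is the module that is $\La_2$-projective without being $\La$-projective in general, which is why the $\tau$- and $\om$-clauses of (e) carry the hypothesis $M(i,j)\ne M(m_2,m_2)$. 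I do not expect a genuinely hard step anywhere; the one point needing care is this asymmetric status of the overlap module $M(m_2,m_2)$ — $\La_1$-projective exactly when $\La$-projective and $\La_2$-injective exactly when $\La$-injective, but generically $\La_2$-projective and $\La_1$-injective without being so over $\La$ — so the case analysis must be organised around the position of $M(i,j)$ relative to the gluing vertex $m_2$ in order for the hypotheses in (b)--(e) to come out exactly as stated.
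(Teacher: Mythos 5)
Your proposal is correct and follows essentially the same route as the paper, which derives (a) directly from the definition of gluing and then obtains (b)--(e) from Proposition~\ref{prop:basic Nakayama results2} and Proposition~\ref{prop:AR quiver of Nakayama} (deferring the detailed verification to the cited results of \cite{Vas3}). You have simply written out in full the support/relation analysis for (a), the comparison of the functions $\rmax{\cdot}$ and $\lmax{\cdot}$ across the gluing vertex, and the resulting case analysis around $M(m_2,m_2)$, all of which is accurate.
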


\begin{proof}
Part (a) follows from definition. Parts (b) and (c) follow from part (a) and  Proposition~\ref{prop:basic Nakayama results2}(b) and (c). Parts (d) and (e) follow from part (a), Proposition~\ref{prop:AR quiver of Nakayama} and Proposition~\ref{prop:basic Nakayama results2}(b) and (c). For a more detailed proof of a more general version of this proposition see \cite[Corollary 3.37]{Vas3} and \cite[Corollary 3.39]{Vas3}.
\end{proof}

With this we can show the following facts about $n\ZZ$-cluster tilting subcategories of glued acyclic Nakayama algebras.

\begin{proposition}\label{prop:gluing of nZ-cluster tilting is nZ-cluster tilting}
	Let $m_1,m_2,m_3\in\ZZ$ with $m_1<m_2<m_3$ and let
	$\La_1=\K A_{[m_1,m_2]}/I_1$ and 
	$\La_2=\K A_{[m_2,m_3]}/I_2$ be two acyclic Nakayama algebras. Let $\La=\La_1\glue \La_2$. If $\La_i$ admits an $n\ZZ$-cluster tilting subcategory $\cC_{\La_i}$ for $i=1,2$, then $\cC_{\La}\coloneqq\add\{\cC_{\La_1},\cC_{\La_2}\}$ is an $n\ZZ$-cluster tilting subcategory of $\mo{\La}$.
\end{proposition}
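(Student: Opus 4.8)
The plan is to exploit that $\La=\La_1\glue\La_2$ is itself an acyclic Nakayama algebra, hence representation-directed. By the last assertion of Proposition~\ref{prop:basic nZ-cluster tilting results} it then suffices to check that $\cC_{\La}$ is closed under finite direct sums and summands, which is clear, and that it satisfies conditions (a)--(d) of that proposition; afterwards, by Proposition~\ref{prop:basic nZ-cluster tilting results2}, it remains only to verify $\om^n(\cC_{\La})\subseteq\cC_{\La}$. The whole argument runs on Proposition~\ref{prop:basic gluing results}: on $\ind(\La_1)$ the functors $\tau$ and $\om$ over $\La$ coincide with those over $\La_1$, and $\tau^-,\om^-$ over $\La$ coincide with those over $\La_1$ except at the single module $M(m_2,m_2)$; symmetrically on $\ind(\La_2)$ with the roles of $\{\tau,\om\}$ and $\{\tau^-,\om^-\}$ exchanged. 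I will also use that $M(m_2,m_2)$, the simple module at a source of $A_{[m_1,m_2]}$ and at a sink of $A_{[m_2,m_3]}$, is injective over $\La_1$ and projective over $\La_2$; in particular it lies in both $\cC_{\La_1}$ and $\cC_{\La_2}$, so $\ind(\cC_{\La})=\ind(\cC_{\La_1})\cup\ind(\cC_{\La_2})$.

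First I would settle the bookkeeping of projectives and injectives together with condition (a). From Proposition~\ref{prop:basic gluing results}(b)(c) one reads off that a module in $\ind(\La_1)$ is projective, respectively non-injective, over $\La$ if and only if it is so over $\La_1$, and dually on $\ind(\La_2)$; combined with the fact that $M(m_2,m_2)$ is projective over $\La_2$ and injective over $\La_1$, this yields the disjoint decompositions $(\cC_{\La})_P=(\cC_{\La_1})_P\sqcup(\cC_{\La_2})_P$ and $(\cC_{\La})_I=(\cC_{\La_1})_I\sqcup(\cC_{\La_2})_I$. Condition (a) follows at once: by Proposition~\ref{prop:basic gluing results}(b) every indecomposable projective $\La$-module is projective over $\La_1$ or over $\La_2$, hence lies in $\cC_{\La_1}$ or $\cC_{\La_2}$, and dually $D(\La)\in\cC_{\La}$.

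The heart of the matter is a no-leakage statement: if $M\in\cC_{\La_2}$ is non-projective over $\La_2$, then $\om^j_{\La_2}(M)\ne M(m_2,m_2)$ for all $0\le j\le n-1$. For $j=0$ this holds since $M(m_2,m_2)$ is projective over $\La_2$ while $M$ is not. If $\om^j_{\La_2}(M)\isom M(m_2,m_2)$ for some $1\le j\le n-1$, then $\om^{j-1}_{\La_2}(M)$ is non-projective (its syzygy is the nonzero module $M(m_2,m_2)$), so its syzygy sequence is non-split and dimension shifting gives $\Ext^j_{\La_2}(M,M(m_2,m_2))\isom\Ext^1_{\La_2}(\om^{j-1}_{\La_2}M,\om^j_{\La_2}M)\ne0$; but $M$ and $M(m_2,m_2)$ both lie in $\cC_{\La_2}$ and $j\notin n\ZZ$, contradicting that $\cC_{\La_2}$ is $n\ZZ$-cluster tilting. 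Dually, if $N\in\cC_{\La_1}$ is non-injective over $\La_1$, then $\om^{-j}_{\La_1}(N)\ne M(m_2,m_2)$ for $0\le j\le n-1$. Granting this, Proposition~\ref{prop:basic gluing results}(d)(e) shows that the iterated syzygies $\om^i_{\La}(M)$ of a module $M\in(\cC_{\La_2})_P$ are computed entirely inside $\mo{\La_2}$ for $0\le i\le n$ (and those of $M\in(\cC_{\La_1})_P$ inside $\mo{\La_1}$, with no exception at all), and dually for the cosyzygies of the modules in $(\cC_{\La})_I$. Applying $\tau_{\La}$, respectively $\tau^-_{\La}$, in the final step and invoking Proposition~\ref{prop:basic gluing results}(d)(e) once more shows that the $n$-Auslander--Reiten translations $\tn,\tno$ of $\La$ restrict on the two summands of $(\cC_{\La})_P$ and of $(\cC_{\La})_I$ to the corresponding $n$-Auslander--Reiten translations of $\La_1$ and of $\La_2$. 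Hence $\tn$ and $\tno$ are, on $(\cC_{\La})_P=(\cC_{\La_1})_P\sqcup(\cC_{\La_2})_P$ and $(\cC_{\La})_I=(\cC_{\La_1})_I\sqcup(\cC_{\La_2})_I$, the disjoint unions of the (mutually inverse) bijections for $\La_1$ and $\La_2$, giving (b); and $\om^i_{\La}M$ and $\om^{-i}_{\La}N$, being computed inside $\mo{\La_1}$ or $\mo{\La_2}$, are indecomposable for $0<i<n$ by Lemma~\ref{lem:n-th syzygy and n-th cosyzygy are indecomposable}, giving (c) and (d). Therefore $\cC_{\La}$ is $n$-cluster tilting.

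It then remains to verify $\om^n(\cC_{\La})\subseteq\cC_{\La}$ on indecomposables. For $M\in\ind(\cC_{\La_1})$ we have $\om^n_{\La}(M)=\om^n_{\La_1}(M)\in\cC_{\La_1}$ because $\cC_{\La_1}$ is $n\ZZ$-cluster tilting. For $M\in\ind(\cC_{\La_2})$: if $M$ is projective over $\La_2$ and $M\ne M(m_2,m_2)$ then $M$ is projective over $\La$ and $\om^n_{\La}(M)=0$; the module $M(m_2,m_2)$ lies in $\ind(\cC_{\La_1})$ and has just been treated; and if $M$ is non-projective over $\La_2$ the no-leakage statement gives $\om^n_{\La}(M)=\om^n_{\La_2}(M)\in\cC_{\La_2}$. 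So $\om^n(\cC_{\La})\subseteq\cC_{\La}$, and $\cC_{\La}$ is $n\ZZ$-cluster tilting by Proposition~\ref{prop:basic nZ-cluster tilting results2}. The only genuine obstacle in this scheme is the exceptional behaviour of $\tau^-,\om^-$ for $\La_1$-modules and of $\tau,\om$ for $\La_2$-modules at the gluing vertex $M(m_2,m_2)$; the no-leakage statement neutralises it, and is the only place where the hypothesis that each $\cC_{\La_i}$ is $n\ZZ$-cluster tilting, rather than merely $n$-cluster tilting, enters.
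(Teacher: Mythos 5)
Your proposal is correct, but it takes a genuinely different route from the paper for the first half of the argument. For the statement that $\cC_{\La}$ is $n$-cluster tilting the paper simply cites \cite[Corollary 4.17]{Vas3}, and its own work is confined to the $n\ZZ$-closure: for a non-projective $M(i,j)\in\ind(\cC_{\La})$ it checks, essentially as you do, that the first $n$ syzygies over $\La$ are computed entirely inside $\mo{\La_1}$, respectively inside $\mo{\La_2}$ avoiding $M(m_2,m_2)$, and then invokes $\om^n_{\La_i}(\cC_{\La_i})\subseteq\cC_{\La_i}$. You instead reprove the $n$-cluster tilting statement from scratch via the representation-directed criterion (the converse part of Proposition~\ref{prop:basic nZ-cluster tilting results}), verifying (a)--(d) through Proposition~\ref{prop:basic gluing results} and your no-leakage lemma; this is in effect the strategy the paper reserves for the ungluing statement, Proposition~\ref{prop:ungluing of nZ-cluster tilting at simple, acyclic}, transported to the gluing direction. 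What your route buys is independence from the external reference \cite{Vas3}; what the paper's buys is brevity. Note also that the paper obtains your no-leakage fact more cheaply: for $M\in\cC_{\La_2}$ non-projective over $\La_2$, the syzygies $\om^k_{\La_2}(M)$ with $0\le k\le n-1$ cannot be projective (in particular cannot equal $M(m_2,m_2)$) simply because $\om^{k+1}_{\La_2}(M)\neq 0$ by Lemma~\ref{lem:n-th syzygy and n-th cosyzygy are indecomposable}(a); your dimension-shifting argument is valid but not needed, and it only uses the vanishing of $\Ext^j$ for $0<j<n$, i.e.\ $n$-cluster tilting.

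Two small inaccuracies, neither of which damages the proof. First, the equivalence you read off from Proposition~\ref{prop:basic gluing results}(b)(c) has exactly one exception, at the gluing vertex: $M(m_2,m_2)$ is injective over $\La_1$ but not over $\La$, and projective over $\La_2$ but not over $\La$. This is precisely why it contributes to $(\cC_{\La_1})_P$ and to $(\cC_{\La_2})_I$ in your (correct) disjoint decompositions, so the exception should be stated rather than subsumed in a blanket ``if and only if''. Second, your closing remark misidentifies where the $n\ZZ$ hypothesis enters: the no-leakage step needs only the $n$-cluster tilting property of $\cC_{\La_2}$, whereas the hypothesis that each $\cC_{\La_i}$ is $n\ZZ$-cluster tilting is genuinely used in the final step $\om^n_{\La_i}(M)\in\cC_{\La_i}$, exactly as in the paper's proof.
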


\begin{proof}
	That $\cC_{\La}$ is an $n$-cluster tilting subcategory of $\mo{\La}$ follows by \cite[Corollary 4.17]{Vas3}. To show that $\cC_{\La}$ is $n\ZZ$-cluster tilting, let $M(i,j)\in\ind(\cC_{\La})$ and we show that $\Omega_{\La}^n(M(i,j))\in\cC_{\La}$. Clearly we may assume that $M(i,j)$ is not projective and so $\Omega^n_{\La}(M(i,j))$ is indecomposable by Lemma~\ref{lem:n-th syzygy and n-th cosyzygy are indecomposable}(a). We consider the cases $M(i,j)\in\cC_{\La_1}$ and $M(i,j)\in\cC_{\La_2}\setminus\{M(m_2,m_2)\}$ separately (notice that $M(m_2,m_2)\in\cC_{\La_1}$ since it is injective as a $\La_1$-module).
	
	If $M(i,j)\in\cC_{\La_1}$, then $M(i,j)$ is not projective as a $\La_1$-module by Proposition~\ref{prop:basic gluing results}(b). Hence $\Omega^{k}_{\La_1}(M(i,j))\in\ind(\La_1)$ for all $0\leq k \leq n$ by Lemma~\ref{lem:n-th syzygy and n-th cosyzygy are indecomposable}(a). By Proposition~\ref{prop:basic gluing results}(d) and since $\cC_{\La_1}$ is $n\ZZ$-cluster tilting, it follows that 
	\[
	\Omega^n_{\La}(M(i,j)) = \Omega^n_{\La_1}(M(i,j))\in\cC_{\La_1}\subseteq \cC_{\La},
	\]
	as required.
	
	If $M(i,j)\in\cC_{\La_2}$ and $M(i,j)\neq M(m_2,m_2)$, then $M(i,j)$ is not projective as a $\La_2$-module by Proposition~\ref{prop:basic gluing results}(b). By Lemma~\ref{lem:n-th syzygy and n-th cosyzygy are indecomposable}(a) and since $M(m_2,m_2)$ is a projective $\La_2$-module it follows that $\Omega^k_{\La_2}(M(i,j))\in\ind(\La_2)\setminus \{M(m_2,m_2)\}$ for all $0\leq k\leq n-1$. By Proposition~\ref{prop:basic gluing results}(d) and since $\cC_{\La_2}$ is $n\ZZ$-cluster tilting, we conclude that 
	\[
	\Omega^n_{\La}(M(i,j)) = \Omega^n_{\La_2}(M(i,j))\in\cC_{\La_2}\subseteq \cC,
	\]
	as required.
\end{proof}

The converse of Proposition~\ref{prop:gluing of nZ-cluster tilting is nZ-cluster tilting} does not hold, but we have the following partial converse instead.

\begin{proposition}\label{prop:ungluing of nZ-cluster tilting at simple, acyclic}
Let $m_1,m_2,m_3\in\ZZ$ with $m_1<m_2<m_3$ and let $\La_1=\K A_{[m_1,m_2]}/I_1$ and $\La_2=\K A_{[m_2,m_3]}/I_2$ be two acyclic Nakayama algebras. Let $\La=\La_1\glue \La_2$. If $\La$ admits an $n\ZZ$-cluster tilting subcategory $\cC_{\La}$ such that $M(m_2,m_2)\in\cC_{\La}$, then
\[
\cC_{\La_1} \coloneqq \mo{\La_1}\cap \cC_{\La} \text{ and } \cC_{\La_2} \coloneqq \mo{\La_2}\cap \cC_{\La}
\]
are $n\ZZ$-cluster tilting subcategories of $\mo{\La_1}$ and $\mo{\La_2}$, respectively.
\end{proposition}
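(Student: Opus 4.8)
The plan is to prove the statement for $\cC_{\La_1}$; the case of $\cC_{\La_2}$ will follow by an entirely symmetric argument. Since $\La_1$ is acyclic it is representation-directed and of finite representation type, so $\cC_{\La_1}=\mo{\La_1}\cap\cC_{\La}$ is automatically functorially finite and closed under finite direct sums and summands; by the last part of Proposition~\ref{prop:basic nZ-cluster tilting results} it therefore suffices to verify conditions (a)--(d) of that proposition for $\cC_{\La_1}$ in $\mo{\La_1}$, and then to check $\om^n_{\La_1}(\cC_{\La_1})\subseteq\cC_{\La_1}$ in order to invoke Proposition~\ref{prop:basic nZ-cluster tilting results2}. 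The whole argument runs on the dictionary supplied by Proposition~\ref{prop:basic gluing results}: for an indecomposable $\La_1$-module $M(i,j)$, being projective over $\La_1$ is the same as being projective over $\La$; being injective over $\La_1$ is the same as being injective over $\La$ except for $M(m_2,m_2)$, which is injective over $\La_1$ but not over $\La$; the syzygies agree, $\om^k_{\La_1}(M(i,j))\isom\om^k_\La(M(i,j))$ for all $k\ge0$ (and stay in $\ind(\La_1)$); and the cosyzygies agree as long as no term of the chain equals $M(m_2,m_2)$. From the first three facts, together with the hypothesis $M(m_2,m_2)\in\cC_\La$, one reads off that every indecomposable projective and every indecomposable injective $\La_1$-module lies in $\cC_{\La_1}$, giving (a); that $(\cC_{\La_1})_P\subseteq(\cC_\La)_P$ and $(\cC_{\La_1})_I\subseteq(\cC_\La)_I$; and that the syzygy chain of any $M\in(\cC_{\La_1})_P$ is the same computed in $\La_1$ or in $\La$, so (c) follows from Proposition~\ref{prop:basic nZ-cluster tilting results}(c) applied to $\cC_\La$.

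The crux of the proof --- and the step I expect to be the main obstacle --- is to show that no cosyzygy chain starting inside $\cC_{\La_1}$ hits $M(m_2,m_2)$ too early; this is exactly where $M(m_2,m_2)\in\cC_\La$ is indispensable. Precisely, for $N\in(\cC_{\La_1})_I$ I claim $\om^{-i}_\La(N)\not\isom M(m_2,m_2)$ for all $0\le i\le n-1$. The case $i=0$ is clear since $M(m_2,m_2)$ is injective over $\La_1$; for $1\le i\le n-1$, dimension shifting along the first $i$ steps of an injective coresolution of $N$ turns an isomorphism $\om^{-i}_\La(N)\isom M(m_2,m_2)$ into an isomorphism $\Ext^i_\La(M(m_2,m_2),N)\isom\Ext^1_\La(M(m_2,m_2),\om^{-(i-1)}_\La(N))$, and the latter group is nonzero because the short exact sequence $0\to\om^{-(i-1)}_\La(N)\to I\to M(m_2,m_2)\to0$ does not split ($\om^{-(i-1)}_\La(N)$ is not injective, having nonzero cosyzygy $M(m_2,m_2)$); this contradicts that $\cC_\La$ is $n\ZZ$-cluster tilting with $M(m_2,m_2),N\in\cC_\La$, as $0<i<n$. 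Granting the claim, Proposition~\ref{prop:basic gluing results} gives $\om^{-i}_{\La_1}(N)\isom\om^{-i}_\La(N)$ for $0\le i\le n-1$, so (d) follows from Lemma~\ref{lem:n-th syzygy and n-th cosyzygy are indecomposable}(b) applied in $\mo{\La}$; and (b) follows because, using the claim (in particular for $i=n-1$), $\tau_{n,\La_1}$ and $\tau^-_{n,\La_1}$ agree on $(\cC_{\La_1})_P$ and $(\cC_{\La_1})_I$ with $\tau_{n,\La}$ and $\tau^-_{n,\La}$, which are mutually inverse bijections $(\cC_\La)_P\leftrightarrow(\cC_\La)_I$ by Proposition~\ref{prop:basic nZ-cluster tilting results}(b); here one also uses that an Auslander--Reiten translate over $\La_1$ is never injective over $\La_1$ to see that $\tau_{n,\La_1}$ maps $(\cC_{\La_1})_P$ into $(\cC_{\La_1})_I$. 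With (a)--(d) in hand, $\cC_{\La_1}$ is an $n$-cluster tilting subcategory of $\mo{\La_1}$.

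Finally, the $n\ZZ$-property is cheap once $n$-cluster tilting is known: for an indecomposable $M\in\cC_{\La_1}$ that is not projective over $\La_1$ we have $\om^n_{\La_1}(M)\isom\om^n_\La(M)$, which lies in $\cC_\La$ because $\cC_\La$ is $n\ZZ$-cluster tilting (Proposition~\ref{prop:basic nZ-cluster tilting results2}) and lies in $\ind(\La_1)$ (or is zero), hence in $\cC_{\La_1}$; projective $M$ give $\om^n_{\La_1}(M)=0$. Thus $\om^n_{\La_1}(\cC_{\La_1})\subseteq\cC_{\La_1}$, and $\cC_{\La_1}$ is $n\ZZ$-cluster tilting by Proposition~\ref{prop:basic nZ-cluster tilting results2}. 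Running the same argument with $\La_1$ replaced by $\La_2$, and with the roles of syzygies and cosyzygies (and of the projectivity versus injectivity of $M(m_2,m_2)$) interchanged, yields that $\cC_{\La_2}$ is $n\ZZ$-cluster tilting as well.
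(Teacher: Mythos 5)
Your proof is correct and follows essentially the same route as the paper's: the central step in both is the claim that for $N\in(\cC_{\La_1})_I$ the cosyzygies $\om^{-k}_{\La}(N)$, $0\le k\le n-1$, never hit $M(m_2,m_2)$ (forced by $\Ext^k_\La(M(m_2,m_2),N)=0$ since both lie in $\cC_\La$), after which Proposition~\ref{prop:basic gluing results} transports conditions (a)--(d) of Proposition~\ref{prop:basic nZ-cluster tilting results} and the $\om^n$-closure of Proposition~\ref{prop:basic nZ-cluster tilting results2} from $\cC_\La$ to $\cC_{\La_1}$, with the representation-directed criterion closing the argument and the $\cC_{\La_2}$ case done dually. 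You merely spell out details the paper leaves implicit (the dimension shift and the fact that Auslander--Reiten translates are never injective, so $\tn_{\La_1}$ lands in $(\cC_{\La_1})_I$), which is fine.
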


\begin{proof}
	We only show that $\cC_{\La_1}=\mo{\La_1}\cap \cC_{\La}$ is an $n\ZZ$-cluster tilting subcategory of $\mo{\La_1}$ as the claim about $\cC_{\La_2}$ can be proved dually. We first claim that for all $M(i,j)\in\ind(\cC_{\La_1})$ such that $M(i,j)$ is not injective as a $\La_1$-module and for all $1\leq k\leq n-1$ we have
	\begin{equation}\label{eq:k-th cosyzygy}
	\Omega^{-k}_{\La_1}(M(i,j))=\Omega^{-k}_{\La}(M(i,j))\neq M(m_2,m_2).
	\end{equation}
	By Proposition~\ref{prop:basic gluing results}(d) it is enough to show that $\Omega^{-k}_{\La}(M(i,j))\neq M(m_2,m_2)$ for all $1\leq k\leq n-1$. But this follows since $\{M(i,j),M(m_2,m_2)\}\subseteq\cC_{\La}$ and so $\Ext^{k}_{\La}(M(m_2,m_2),M(i,j))=0$ for $1\leq k\leq n-1$.
	
	To show that $\cC_{\La_1}$ is $n\ZZ$-cluster tilting, it is enough to show that the statements (a)-(d) in Proposition~\ref{prop:basic nZ-cluster tilting results} and statement (b) in Proposition~\ref{prop:basic nZ-cluster tilting results2} hold. That Proposition~\ref{prop:basic nZ-cluster tilting results} (a) holds follows from Proposition~\ref{prop:basic gluing results}(b) and (c) and since $M(m_2,m_2)\in\cC_{\La}$. That Proposition~\ref{prop:basic nZ-cluster tilting results} (c) and Proposition~\ref{prop:basic nZ-cluster tilting results2} (b) hold follows from Proposition~\ref{prop:basic gluing results}(d). That Proposition~\ref{prop:basic nZ-cluster tilting results} (d) holds follows from (\ref{eq:k-th cosyzygy}). 
	
	Finally it remains to show that Proposition~\ref{prop:basic nZ-cluster tilting results}(b) holds for $\cC_{\La_1}$. For any non-projective $\La_1$-module $M(i,j)\in\ind(\La_1)$ we have that $(\tn)_{\La_1}(M(i,j))=(\tn)_{\La}(M(i,j))$ by Proposition~\ref{prop:basic gluing results}(d). For any non-injective $\La_1$-module $M(i,j)\in\ind(\La_1)$ we have that $(\tno)_{\La_1}(M(i,j))=(\tno)_{\La}(M(i,j))$ by (\ref{eq:k-th cosyzygy}) and Proposition~\ref{prop:basic gluing results}(d). Then Proposition~\ref{prop:basic nZ-cluster tilting results}(b) holds for $\cC_{\La_1}$ since it holds for $\cC_{\La}$.
\end{proof}

Using Proposition~\ref{prop:basic gluing results} we can also get some control of how global dimension behaves under gluing.

\begin{proposition}\label{prop:global dimension of gluing}
Let $\La_1$ and $\La_2$ be two acyclic Nakayama algebras. Then $$\gldim(\La_1\glue \La_2) \le \gldim(\La_1) + \gldim(\La_2).$$
\end{proposition}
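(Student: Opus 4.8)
The plan is to bound the projective dimension of each simple $\La$-module directly, propagating minimal projective resolutions across the gluing by means of Proposition~\ref{prop:basic gluing results}. Recall that $\gldim\Gamma=\sup\{\pdim_\Gamma S\mid S\text{ a simple }\Gamma\text{-module}\}$ for any finite-dimensional algebra $\Gamma$, that $\La_1,\La_2$ and $\La$ all have finite global dimension (being acyclic Nakayama algebras), and that the simple $\La$-modules are exactly the $M(k,k)$ with $m_1\le k\le m_3$ by Proposition~\ref{prop:basic Nakayama results2}(a). These split according to whether $k$ lies in the $\La_1$-part ($m_1\le k\le m_2$) or the $\La_2$-part ($m_2<k\le m_3$); the module $M(m_2,m_2)$ at the overlap vertex, which is projective over $\La_2$ but projective over neither $\La$ nor $\La_1$, plays the role of a ``crossing point''.

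For $m_1\le k\le m_2$ I would show $\Omega_\La^i M(k,k)\isom\Omega_{\La_1}^i M(k,k)$ for all $i$. This follows by induction from Proposition~\ref{prop:basic gluing results}(d), which gives $\Omega_\La=\Omega_{\La_1}$ on all of $\ind(\La_1)$, together with the fact that every syzygy $\Omega_{\La_1}^i M(k,k)$ again lies in $\ind(\La_1)$ (Proposition~\ref{prop:basic Nakayama results2}(b)). Using Proposition~\ref{prop:basic gluing results}(b) and $\ind(\La_1)\cap\ind(\La_2)=\{M(m_2,m_2)\}$, a module in $\ind(\La_1)$ is projective over $\La$ if and only if it is projective over $\La_1$, so the resolution terminates at the same stage over both algebras; hence $\pdim_\La M(k,k)=\pdim_{\La_1}M(k,k)\le\gldim\La_1$.

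For $m_2<k\le m_3$ I would follow the minimal $\La_2$-resolution of $M(k,k)$ as far as possible. By Proposition~\ref{prop:basic gluing results}(e), $\Omega_\La$ agrees with $\Omega_{\La_2}$ on $\ind(\La_2)\setminus\{M(m_2,m_2)\}$, so $\Omega_\La^i M(k,k)\isom\Omega_{\La_2}^i M(k,k)$ for all $i$ for which no earlier syzygy equals $M(m_2,m_2)$. The key observation is that $M(m_2,m_2)$ is projective over $\La_2$ (it is the simple at the sink of $A_{[m_2,m_3]}$), hence in a minimal $\La_2$-resolution of $M(k,k)$ it can occur only as the terminal syzygy $\Omega_{\La_2}^s M(k,k)$, where $s=\pdim_{\La_2}M(k,k)$. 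If it does not occur, the $\La_2$-resolution ends at a $\La_2$-projective distinct from $M(m_2,m_2)$, which is $\La$-projective by Proposition~\ref{prop:basic gluing results}(b), so $\pdim_\La M(k,k)=s\le\gldim\La_2$. If it does occur, i.e.\ $\Omega_{\La_2}^s M(k,k)\isom M(m_2,m_2)$, then over $\La$ this module is not projective, so the resolution must continue; since $M(m_2,m_2)\in\ind(\La_1)$, from this point onward $\Omega_\La$ agrees with $\Omega_{\La_1}$ by Proposition~\ref{prop:basic gluing results}(d), and we simply concatenate the minimal $\La_1$-resolution of $M(m_2,m_2)$. That resolution never revisits $M(m_2,m_2)$, because $\Omega_{\La_1}(M(a,b))$ has the form $M(\,\cdot\,,a-1)$ by Proposition~\ref{prop:basic Nakayama results2}(b), which can equal $M(m_2,m_2)$ only if $a-1=m_2$, impossible for $M(a,b)\in\ind(\La_1)$. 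Hence $\pdim_\La M(k,k)=s+\pdim_{\La_1}M(m_2,m_2)\le\gldim\La_2+\gldim\La_1$.

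Taking the supremum over $k$ then yields $\gldim\La\le\gldim\La_1+\gldim\La_2$. The only step that is not purely formal is the bookkeeping around the vertex $m_2$: one must verify that $M(m_2,m_2)$ is projective over $\La_2$ but not over $\La_1$ (its projective cover $\La_1\epsilon_{m_2}$ is non-simple, since $m_1<m_2$ and $I_1$ is admissible) and hence not over $\La$, and that it appears in a minimal $\La_2$-resolution only as the final syzygy, so that the $\La_2$- and $\La_1$-resolutions splice together cleanly and their lengths add rather than interfere. Everything else is a direct application of Proposition~\ref{prop:basic gluing results}.
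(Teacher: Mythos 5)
Your proof is correct and follows essentially the same route as the paper's: both track minimal projective resolutions across the gluing via Proposition~\ref{prop:basic gluing results}(b),(d),(e), with $M(m_2,m_2)$ as the unique splice point where a $\La_2$-resolution may continue into a $\La_1$-resolution, giving the additive bound. The only cosmetic difference is that you bound $\pdim_\La$ of the simple modules while the paper treats all indecomposables, and you spell out in more detail the (correct) bookkeeping that $M(m_2,m_2)$ is $\La_2$-projective but not $\La$- or $\La_1$-projective and can only appear as the terminal $\La_2$-syzygy.
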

\begin{proof}
This result follows from \cite[Corollary 2.38]{Vas3}. We include a proof for the reader's convenience.

Set $\gldim(\La_1) = d_1$, $\gldim(\La_2) = d_2$ and $\La = \La_1\glue \La_2$. As before let $M(m_2,m_2)$ be the simple that satisfies $M(m_2,m_2) \in \ind(\La_1)\cap\ind(\La_2)$.

Let $M(i,j) \in \ind(\La)$. If $M(i,j) \in \ind(\La_1)$, then $\pdim_\La M(i,j) = \pdim_{\La_1} M(i,j) \le d_1$ by Proposition~\ref{prop:basic gluing results}(d). If $M(i,j) \in \ind{\La_2}$, then by Proposition~\ref{prop:basic gluing results}(e) there is $0 \le k \le d_2$ such that $\Omega^{k}_\La M(i,j) = \Omega^{k}_{\La_2} M(i,j)$ is either projective or equal to $M(m_2,m_2)$. I the first case $\pdim_\La M(i,j) = k \le d_2$. In the second case $\pdim_\La M(i,j) = k + \pdim_\La M(m_2,m_2) = k + \pdim_{\La_1} M(m_2,m_2) \le d_2+d_1$.
\end{proof}

To describe acyclic Nakayama algebras with non-homogeneous relations which admit $n\ZZ$-cluster tilting subcategories we introduce the following notion.

\begin{definition}
	We call an acyclic Nakayama algebra $\La$ \emph{piecewise homogeneous} if $\La=\La_1\glue \cdots \glue\La_r$, where each $\La_i$ is a homogeneous acyclic Nakayama algebra.
\end{definition}

\begin{theorem}\label{thrm:acyclic non-homogeneous case}
	Let $\La$ be an acyclic Nakayama algebra. Then $\La$ admits an $n\ZZ$-cluster tilting subcategory $\cC_{\La}$ if and only if $\La$ is piecewise homogeneous 
	\[\La = \La_1 \glue \cdots \glue\La_r,\]
	where $\La_k$ for $1\leq k\leq r$ is a homogeneous acyclic Nakayama algebra which admits an $n\ZZ$-cluster tilting subcategory $\cC_{\La_k}$. In this case we have $\cC_{\La}=\add\{\cC_{\La_k}\mid 1\leq k\leq r\}$.
\end{theorem}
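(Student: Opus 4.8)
The plan is to prove both directions by combining the already-established gluing machinery with the homogeneous classification (Theorem~\ref{thrm:homogeneous Nakayama with nZ-ct}), using the ``ungluing at a simple'' results from the necessary-conditions section as the engine for the forward direction. The backward direction is essentially immediate: if $\La = \La_1 \glue \cdots \glue \La_r$ with each $\La_k$ homogeneous acyclic admitting an $n\ZZ$-cluster tilting subcategory $\cC_{\La_k}$, then by associativity of gluing and induction on $r$, repeated application of Proposition~\ref{prop:gluing of nZ-cluster tilting is nZ-cluster tilting} shows that $\add\{\cC_{\La_k} \mid 1 \le k \le r\}$ is an $n\ZZ$-cluster tilting subcategory of $\mo\La$. (For the induction step one writes $\La = \La_1 \glue (\La_2 \glue \cdots \glue \La_r)$, applies the induction hypothesis to the right-hand factor to get an $n\ZZ$-cluster tilting subcategory, and then applies Proposition~\ref{prop:gluing of nZ-cluster tilting is nZ-cluster tilting}.) The uniqueness of the resulting subcategory, and the fact that it must coincide with $\add\{\cC_{\La_k}\}$, follows from Corollary~\ref{cor:unique n-ct for representation-directed} since acyclic Nakayama algebras are representation-directed.

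For the forward direction, suppose $\La = \K A_m/I$ is acyclic and admits an $n\ZZ$-cluster tilting subcategory $\cC_{\La}$. If $\La$ is homogeneous we are done (take $r=1$), so assume $I$ is not homogeneous. Then Corollary~\ref{cor:not homogeneous implies ungluing simple} provides a simple module $M(i,i) \in \mo\La$, neither projective nor injective, with $M(i-1,i+1) \not\in \mo\La$ and $M(i,i) \in \cC_{\La}$. Writing the vertex $i$ as $m_2$ in the notation of the gluing section, the condition $M(m_2-1,m_2+1)\not\in\mo\La$ lets us apply Lemma~\ref{lem:ungluing at a simple, acyclic} to decompose $\La = \La_1 \glue \La_2$ as a nontrivial gluing at that simple, and then Proposition~\ref{prop:ungluing of nZ-cluster tilting at simple, acyclic} (whose hypothesis $M(m_2,m_2)\in\cC_{\La}$ is exactly what Corollary~\ref{cor:not homogeneous implies ungluing simple} gives us) shows that $\cC_{\La_1} = \mo\La_1 \cap \cC_{\La}$ and $\cC_{\La_2} = \mo\La_2 \cap \cC_{\La}$ are $n\ZZ$-cluster tilting subcategories of $\mo\La_1$ and $\mo\La_2$. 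We then recurse on $\La_1$ and $\La_2$.

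To make the recursion terminate we need a strictly decreasing complexity measure, for instance the number of vertices $m$ (which drops strictly at each ungluing since $m_1 < m_2 < m_3$). When the recursion bottoms out, every factor is a homogeneous acyclic Nakayama algebra, each carrying an $n\ZZ$-cluster tilting subcategory inherited from $\cC_{\La}$, so $\La$ is piecewise homogeneous as claimed. Finally, to identify $\cC_{\La}$ with $\add\{\cC_{\La_k}\mid 1\le k\le r\}$: the backward direction already shows $\add\{\cC_{\La_k}\}$ is an $n\ZZ$-cluster tilting subcategory of $\mo\La$, and by Corollary~\ref{cor:unique n-ct for representation-directed} there is only one such (indeed only one $n$-cluster tilting subcategory), so the two coincide. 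I expect the main obstacle to be purely bookkeeping: one must be careful that the ungluing produced by Corollary~\ref{cor:not homogeneous implies ungluing simple} applied successively to $\La_1$ and $\La_2$ genuinely yields the iterated gluing $\La_1\glue\cdots\glue\La_r$ (using associativity of $\glue$), and that the $n\ZZ$-cluster tilting subcategories of the pieces are compatible, i.e. that $\cC_{\La}\cap\mo\La_k$ really is the $\cC_{\La_k}$ coming out of the recursion — but this is forced once one knows each such intersection is $n\ZZ$-cluster tilting and that $n\ZZ$-cluster tilting subcategories of representation-directed algebras are unique.
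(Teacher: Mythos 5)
Your proposal is correct and follows essentially the same route as the paper: the backward direction by induction on $r$ via Proposition~\ref{prop:gluing of nZ-cluster tilting is nZ-cluster tilting}, and the forward direction by induction on the number of simples (equivalently your vertex count), combining Corollary~\ref{cor:not homogeneous implies ungluing simple}, Lemma~\ref{lem:ungluing at a simple, acyclic} and Proposition~\ref{prop:ungluing of nZ-cluster tilting at simple, acyclic}. The only cosmetic difference is that you invoke Corollary~\ref{cor:unique n-ct for representation-directed} to identify $\cC_{\La}$ with $\add\{\cC_{\La_k}\mid 1\leq k\leq r\}$, whereas the paper reads this off directly from the gluing construction; both are valid.
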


\begin{proof}
	Assume first that $\La= \La_1\glue \cdots \glue \La_r$ and that each $\La_k$ for $1\leq k\leq r$ is a homogeneous acyclic Nakayama algebra which admits an $n\ZZ$-cluster tilting subcategory $\cC_{\La_k}$. Set $\cC_{\La}=\add\{\cC_{\La_k}\mid 1\leq k\leq r\}$. Write $\La = \left(\La_1\glue\cdots\glue\La_{r-1}\right)\glue\La_r$. By an induction on $r$ and by using Proposition~\ref{prop:gluing of nZ-cluster tilting is nZ-cluster tilting} it follows that $\cC_{\La}$ is an $n\ZZ$-cluster tilting subcategory of $\mo{\La}$.
	
	Next we assume that $\La$ admits an $n\ZZ$-cluster tilting subcategory $\cC_{\La}$ and show that there exist homogeneous acyclic Nakayama algebras $\La_1,\ldots,\La_r$ such that $\La=\La_1\glue\cdots\glue\La_r$ and each $\La_k$ admits an $n\ZZ$-cluster tilting subcategory $\cC_{\La_k}$. We prove the claim by induction on the number of simple modules of $\La$. If $\La$ is homogeneous there is nothing to show. Assume that $\La$ is not homogeneous and write $\La=\K A_{[m_1,m_2]}/I_{\La}$ where $m_1,m_2\in\ZZ$ with $m_1<m_2$. By Corollary~\ref{cor:not homogeneous implies ungluing simple} there exists $i\in\ZZ$ with $m_1<i<m_2$ and such that $M(i-1,i+1)\not\in\mo{\La}$ and $M(i,i)\in\cC_{\La}$. By Lemma~\ref{lem:ungluing at a simple, acyclic} we have that $\La=A\glue B$ where $A=\K A_{[m_1,i]}/I_A$ and $B=\K A_{[i,m_2]}/I_B$ for suitable ideals $I_A$ and $I_B$. By Proposition~\ref{prop:ungluing of nZ-cluster tilting at simple, acyclic} we conclude that $\mo{A}$ and $\mo{B}$ admit $n\ZZ$-cluster tilting subcategories $\cC_A$ and $\cC_B$. The result follows by the induction hypothesis applied to $A$ and $B$.
\end{proof}

\begin{remark}\label{rem:uniqueness of decomposition in acyclic}
Let $\La$ be an acyclic Nakayama algebra which admits an $n\ZZ$-cluster tilting subcategory. Then $\La=\La_1\glue\cdots\glue\La_r$ where $\La_k=\K A_{[m_k,m_{k+1}]}/R^{l_k}$ for $1\leq k\leq r$ is a homogeneous acyclic Nakayama algebra which admits an $n\ZZ$-cluster tilting subcategory $\cC_{\La_k}$.
	\begin{enumerate}[label=(\alph*)]
		\item By Proposition~\ref{prop:homogeneous acyclic Nakayama with nZ-ct} we have that either $l_k=2$ and $n\divides m_{k+1}-m_k$ or $l_k\geq 3$, $l_k\divides m_{k+1}-m_k$ and $n=2\frac{m_{k+1}-m_k}{l_k}$. By \cite[Theorem 3]{Vas1} we have that $\gldim(\La_k)=n$ if and only if $n=2\frac{m_{k+1}-m_k}{l_k}$ if and only if $\cC_{\La_k}=\add(\La_k\oplus D(\La_k))$.
		
		\item The algebras $\La_1,\ldots,\La_r$ are not necessarily unique. Indeed, let $\La=\K A_{[1,m]}/R^2$ and $m-1=rn$ for some $r\in\ZZ_{\geq 2}$. Then $\La$ is a homogeneous acyclic Nakayama algebra which admits an $n\ZZ$-cluster tilting subcategory by Proposition~\ref{prop:homogeneous acyclic Nakayama with nZ-ct}. Moreover, for $1\leq k\leq r$, set $\La_k=\K A_{[1+(k-1)n,1+kn]}/R^2$. Then $\La_k$ admits an $n\ZZ$-cluster tilting subcategory $\cC_{\La_k}=\add(\La_k\oplus M(1+kn,1+kn))$ and $\La=\La_1\glue\cdots\glue\La_r$. Notice also that in this case $\gldim(\La_k)=n$.
		
		\item By (a) the only case where a simple module which is neither projective nor injective belongs to $\cC_{\La_k}$ is the case $l_k=2$ and $m_{k+1}-m_k=r'n$ for some $r'\in\ZZ_{\geq 2}$, which is equivalent to $\gldim(\La_k)>n$. By (b), in this case, we can write $\La_k$ as the gluing of $r'$ homogeneous acyclic Nakayama algebras of global dimension equal to $n$ in a unique way. Hence if we require that the algebras $\La_1,\ldots,\La_r$ have global dimension equal to $n$, then the decomposition $\La=\La_1\glue\cdots\glue\La_r$ is unique. Moreover, in this case $\gldim(\La) = rn$. Indeed, by Proposition~\ref{prop:global dimension of gluing}, $\gldim(\La) \le rn$. Furthermore, one may compute that $\Omega^{-kn}M(m_1,m_1) = M(m_{k+1},m_{k+1})$ for $0 \le k \le r$ by Proposition~\ref{prop:basic gluing results} and induction. In particular $\Omega^{-rn}M(m_1,m_1) \neq 0$ and $\gldim(\La) = rn$.
	\end{enumerate}
\end{remark}

\begin{example}\label{ex:non-homogeneous acyclic case}
Let $\La_1=\K A_{[1,7]}/R^3$ and $\La_2=\K A_{[7,15]}/R^2$. Then $\La_1$ and $\La_2$ admit $4\ZZ$-cluster subcategories $\cC_{\La_1}$ and $\cC_{\La_2}$ respectively by Theorem \ref{thrm:homogeneous Nakayama with nZ-ct} (see also Example \ref{ex:homogeneous case}). Let $\La=\La_1\glue \La_2$. Then $\La$ admits a $4\ZZ$-cluster tilting subcategory by Theorem \ref{thrm:acyclic non-homogeneous case}. Indeed, the Auslander--Reiten quiver $\Gamma(\La)$ of $\La$ is
\[\resizebox {\columnwidth} {!} {
    \begin{tikzpicture}[scale=0.9, transform shape, baseline={(current bounding box.center)}]
    
    \tikzstyle{mod}=[rectangle, minimum width=6pt, draw=none, inner sep=1.5pt, scale=0.8]
    \tikzstyle{nct}=[rectangle, minimum width=3pt, draw, inner sep=1.5pt, scale=0.8]
    
    \node[nct] (11) at (0,0) {$(1,1)$};
    \node[mod] (22) at (1.4,0) {$(2,2)$};
    \node[mod] (33) at (2.8,0) {$(3,3)$};
    \node[mod] (44) at (4.2,0) {$(4,4)$};
    \node[mod] (55) at (5.6,0) {$(5,5)$};
    \node[mod] (66) at (7,0) {$(6,6)$};
    \node[nct] (77) at (8.4,0) {$(7,7)$};
    
    \draw[loosely dotted] (11.east) -- (22);
    \draw[loosely dotted] (22.east) -- (33);
    \draw[loosely dotted] (33.east) -- (44);
    \draw[loosely dotted] (44.east) -- (55);
    \draw[loosely dotted] (55.east) -- (66);
    \draw[loosely dotted] (66.east) -- (77);
        
    \node[nct] (12) at (0.7,0.7) {$(1,2)$};    
    \node[mod] (23) at (2.1,0.7) {$(2,3)$};    
    \node[mod] (34) at (3.5,0.7) {$(3,4)$};    
    \node[mod] (45) at (4.9,0.7) {$(4,5)$};    
    \node[mod] (56) at (6.3,0.7) {$(5,6)$};    
    \node[nct] (67) at (7.7,0.7) {$(6,7)$};  

    \draw[-{Stealth[scale=0.5]}] (11) -- (12);
    \draw[-{Stealth[scale=0.5]}] (22) -- (23);
    \draw[-{Stealth[scale=0.5]}] (33) -- (34);
    \draw[-{Stealth[scale=0.5]}] (44) -- (45);
    \draw[-{Stealth[scale=0.5]}] (55) -- (56);
    \draw[-{Stealth[scale=0.5]}] (66) -- (67);
    
    \draw[-{Stealth[scale=0.5]}] (12) -- (22);
    \draw[-{Stealth[scale=0.5]}] (23) -- (33);
    \draw[-{Stealth[scale=0.5]}] (34) -- (44);
    \draw[-{Stealth[scale=0.5]}] (45) -- (55);
    \draw[-{Stealth[scale=0.5]}] (56) -- (66);
    \draw[-{Stealth[scale=0.5]}] (67) -- (77);

    \draw[loosely dotted] (12.east) -- (23);
    \draw[loosely dotted] (23.east) -- (34);
    \draw[loosely dotted] (34.east) -- (45);
    \draw[loosely dotted] (45.east) -- (56);
    \draw[loosely dotted] (56.east) -- (67);
    
    \node[nct] (13) at (1.4,1.4) {$(1,3)$};
    \node[nct] (24) at (2.8,1.4) {$(2,4)$};
    \node[nct] (35) at (4.2,1.4) {$(3,5)$};
    \node[nct] (46) at (5.6,1.4) {$(4,6)$};
    \node[nct] (57) at (7,1.4) {$(5,7)$};
    
    \draw[-{Stealth[scale=0.5]}] (12) -- (13);
    \draw[-{Stealth[scale=0.5]}] (23) -- (24);
    \draw[-{Stealth[scale=0.5]}] (34) -- (35);
    \draw[-{Stealth[scale=0.5]}] (45) -- (46);
    \draw[-{Stealth[scale=0.5]}] (56) -- (57);
    
    \draw[-{Stealth[scale=0.5]}] (13) -- (23);
    \draw[-{Stealth[scale=0.5]}] (24) -- (34);
    \draw[-{Stealth[scale=0.5]}] (35) -- (45);
    \draw[-{Stealth[scale=0.5]}] (46) -- (56);
    \draw[-{Stealth[scale=0.5]}] (57) -- (67);
    
    \node[mod] (88) at (9.8,0) {$(8,8)$};
    \node[mod] (99) at (11.2,0) {$(9,9)$};
    \node[mod] (1010) at (12.6,0) {$(10,10)$};
    \node[nct] (1111) at (14,0) {$(11,11)$};
    \node[mod] (1212) at (15.4,0) {$(12,12)$};
    \node[mod] (1313) at (16.8,0) {$(13,13)$};
    \node[mod] (1414) at (18.2,0) {$(14,14)$};
    \node[nct] (1515) at (19.6,0) {$(15,15)$\nospacepunct{,}};
    
    \draw[loosely dotted] (77.east) -- (88);
    \draw[loosely dotted] (88.east) -- (99);
    \draw[loosely dotted] (99.east) -- (1010);
    \draw[loosely dotted] (1010.east) -- (1111);
    \draw[loosely dotted] (1111.east) -- (1212);
    \draw[loosely dotted] (1212.east) -- (1313);
    \draw[loosely dotted] (1313.east) -- (1414);
    \draw[loosely dotted] (1414.east) -- (1515);
    
    \node[nct] (78) at (9.1,0.7) {$(7,8)$};
    \node[nct] (89) at (10.5,0.7) {$(8,9)$};    
    \node[nct] (910) at (11.9,0.7) {$(9,10)$};    
    \node[nct] (1011) at (13.3,0.7) {$(10,11)$};    
    \node[nct] (1112) at (14.7,0.7) {$(11,12)$};    
    \node[nct] (1213) at (16.1,0.7) {$(12,13)$};    
    \node[nct] (1314) at (17.5,0.7) {$(13,14)$};    
    \node[nct] (1415) at (18.9,0.7) {$(14,15)$};   

    \draw[-{Stealth[scale=0.5]}] (77) -- (78);
    \draw[-{Stealth[scale=0.5]}] (88) -- (89);
    \draw[-{Stealth[scale=0.5]}] (99) -- (910);
    \draw[-{Stealth[scale=0.5]}] (1010) -- (1011);
    \draw[-{Stealth[scale=0.5]}] (1111) -- (1112);
    \draw[-{Stealth[scale=0.5]}] (1212) -- (1213);
    \draw[-{Stealth[scale=0.5]}] (1313) -- (1314);
    \draw[-{Stealth[scale=0.5]}] (1414) -- (1415);
    
    \draw[-{Stealth[scale=0.5]}] (78) -- (88);
    \draw[-{Stealth[scale=0.5]}] (89) -- (99);
    \draw[-{Stealth[scale=0.5]}] (910) -- (1010);
    \draw[-{Stealth[scale=0.5]}] (1011) -- (1111);
    \draw[-{Stealth[scale=0.5]}] (1112) -- (1212);
    \draw[-{Stealth[scale=0.5]}] (1213) -- (1313);
    \draw[-{Stealth[scale=0.5]}] (1314) -- (1414);
    \draw[-{Stealth[scale=0.5]}] (1415) -- (1515);
    \end{tikzpicture}}
    \]
where the rectangles indicate the $4\ZZ$-cluster tilting subcategory $\cC_{\La}$.

Notice that $\gldim(\La_1)=4$ while $\gldim(\La_2)=8$. Hence, following Remark \ref{rem:uniqueness of decomposition in acyclic}, we can write $\La_2$ as a gluing of homogeneous acyclic Nakayama algebras of global dimension equal to $4$ which admit $4\ZZ$-cluster tilting subcategories. Indeed, if $\La_2'=\K A_{[7,11]}/R^2$ and $\La_2''=\K A_{[11,15]}/R^2$, then $\La_2=\La_2'\glue \La_2''$ and $\La_2'$ and $\La_2''$ both admit $4\ZZ$-cluster tilting subcategories. In this way we have $\La = \La_1 \glue \La_2' \glue \La_2''$ and this is the unique decomposition of $\La$ in acyclic homogeneous Nakayama algebras of global dimension equal to $4$ such that each of them admits a $4\ZZ$-cluster tilting subcategory. Also note that $\gldim(\La) = 12$.
\end{example}

\subsection{Non-homogeneous relations: cyclic case}

To give the classification in this case, we first need to recall the notion of self-gluing from \cite{Vas2}.

\begin{definition}\label{def:self-gluing of acyclic Nakayama}
Let $m\ge 2$. Note that the arrows in $A_{[0,m]}$ and $\tilde{A}_{m}$ have the same labels. The correspondence given by this labelling extends to a multiplicative linear map $R_{A_{[0,m]}} \to R_{\tilde{A}_{m}}$, which induces a bijection $R_{A_{[0,m]}} \to R_{\tilde{A}_{m}}/\langle \alpha_m\alpha_1\rangle$. Thus every admissible ideal $I \subseteq \K A_{[0,m]}$ corresponds bijectively to an admissible ideal $\tilde{I} \subseteq \K \tilde{A}_{m}$ such that $\alpha_m\alpha_1 \in \tilde{I}$. We define the \emph{self-gluing of $\La = \K A_{[0,m]}/I$} to be the cyclic Nakayama algebra $\tilde{\La}=\K \tilde{A}_{m}/\tilde{I}$, where $\tilde{I}$ is the ideal generated by $I\cup\{\alpha_m\alpha_1\}$. 
\end{definition}

We immediately have the following lemma.

\begin{lemma}\label{lem:ungluing at a simple, cyclic}
Let $m\geq 2$ and $\tilde{\La}$ be a cyclic Nakayama algebra with $m$ vertices. Assume that $M(-1,1)\not\in\mo{\tilde{\La}}$. Then $\tilde{\La}$ is the self-gluing of an acyclic Nakayama algebra $\La=\K A_{[0,m]}/I$, i.e., $\tilde{\La}=\K \tilde{A}_{m}/\tilde{I}$.
\end{lemma}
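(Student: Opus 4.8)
The plan is to follow the template of Lemma~\ref{lem:ungluing at a simple, acyclic}: first translate the hypothesis $M(-1,1)\not\in\mo{\tilde{\La}}$ into a statement about the defining ideal of $\tilde{\La}$, and then invoke Definition~\ref{def:self-gluing of acyclic Nakayama} to produce the desired acyclic algebra $\La$.

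Write $\tilde{\La}=\K\tilde{A}_{m}/\tilde{I}$ with $\tilde{I}$ admissible, so $\tilde{I}\subseteq R_{\tilde{A}_{m}}^{2}$. The first, and essentially only non-formal, step is to verify the equivalence
\[
M(-1,1)\not\in\mo{\tilde{\La}} \iff \alpha_{m}\alpha_{1}\in\tilde{I}.
\]
For this I would use the explicit $\K\tilde{A}_{m}$-action on $M(-1,1)$ recalled in Section~\ref{sec:nakayama algebras}: the module $M(-1,1)$ has Loewy length $3$, so any element of $\tilde{I}$ acts on it only through its degree-$2$ component, and among the paths of length $2$ only the composite $\alpha_{m}\alpha_{1}\colon 1\to m\to m-1$ acts nontrivially on $M(-1,1)$ (it sends $b_{1}$ to $b_{-1}$); since admissibility rules out relations of length $\le 1$, it follows that $\tilde{I}$ annihilates $M(-1,1)$ precisely when $\alpha_{m}\alpha_{1}\notin\tilde{I}$, which is the claimed equivalence. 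Put differently, $\tilde{I}\subseteq R_{\tilde{A}_m}^{2}$ already forces $M(-1,0)$ and $M(0,1)$ to lie in $\mo{\tilde{\La}}$, so the sole obstruction to $M(-1,1)$ being a $\tilde{\La}$-module is the vanishing of $\alpha_{m}\alpha_{1}$.

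Once this is established, the conclusion is immediate from Definition~\ref{def:self-gluing of acyclic Nakayama}. Indeed, since $\alpha_{m}\alpha_{1}\in\tilde{I}$, the ideal $\tilde{I}$ lies in the image of the bijection there between admissible ideals $I\subseteq\K A_{[0,m]}$ and admissible ideals of $\K\tilde{A}_{m}$ containing $\alpha_{m}\alpha_{1}$; letting $I\subseteq\K A_{[0,m]}$ be the ideal corresponding to $\tilde{I}$ and setting $\La=\K A_{[0,m]}/I$, we obtain $\tilde{I}=\langle I\cup\{\alpha_{m}\alpha_{1}\}\rangle$ by construction, which says exactly that $\tilde{\La}=\K\tilde{A}_{m}/\tilde{I}$ is the self-gluing of $\La$. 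I do not anticipate any genuine obstacle here: the only point requiring care is the equivalence in the second paragraph, and there the work is just bookkeeping of the $\K\tilde{A}_{m}$-action together with the containment $\tilde I\subseteq R_{\tilde A_m}^{2}$ coming from admissibility; everything else is a direct unwinding of Definition~\ref{def:self-gluing of acyclic Nakayama}.
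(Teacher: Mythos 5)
Your proposal is correct and takes essentially the same route as the paper: its proof likewise consists of observing that $M(-1,1)\not\in\mo{\tilde{\La}}$ says precisely that $\alpha_m\alpha_1\in\tilde{I}$ (i.e.\ $\alpha_m\alpha_1=0$ in $\tilde{\La}$) and then invoking Definition~\ref{def:self-gluing of acyclic Nakayama}, so your bookkeeping of the $\K\tilde{A}_m$-action on $M(-1,1)$ just makes explicit what the paper compresses into ``means precisely''. The only point to tighten is the passage from ``some element of $\tilde{I}$ has nonzero $\alpha_m\alpha_1$-coefficient'' to ``$\alpha_m\alpha_1\in\tilde{I}$'': a priori $\tilde{I}$ could contain $\alpha_m\alpha_1$ only in combination with longer paths $1\to m-1$ (of length $2+km$), and one rules this out using $R^N\subseteq\tilde{I}$ to strip those terms (equivalently, $\tilde{I}\epsilon_1=R^k\epsilon_1$ for some $k$), so admissibility enters there rather than merely to exclude relations of length $\le 1$.
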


\begin{proof}
The condition $M(-1,1)\not\in\mo{\tilde{\La}}$ means precisely $\alpha_{m}\alpha_{1} = 0$ in $\tilde{\La}$, and so the claim follows directly from Definition~\ref{def:self-gluing of acyclic Nakayama}.
\end{proof}

Definition~\ref{def:self-gluing of acyclic Nakayama} is a special case of the methods described in \cite[Section 5.2]{Vas2}. In this case, no full and faithful embedding exists between the module category of an acyclic Nakayama algebra and its self-gluing. However, we can still compute syzygies, cosyzygies and Auslander--Reiten translations in $\mo{\tilde{\La}}$ using the corresponding concepts in $\mo{\La}$. 

In the following, for an indecomposable $\La$-module $M(i,j)$ we write $\tilde{M}(i,j)$ or $M(i,j)^{\sim}$ for the corresponding $\tilde{\La}$-module. This is useful for distinguishing the modules in $\mo\La$ and $\mo{\tilde{\La}}$.

\begin{proposition}\label{prop:basic self-gluing results}
Let $m\geq 2$ and $\La=\K A_{[0,m]}/I$ be an acyclic Nakayama algebra. Let $\tilde{\La}$ be the self-gluing of $\La$. Then
	\begin{enumerate}[label=(\alph*)]
		\item The set
		\[\ind(\tilde{\La}) \coloneqq \{ \tilde{M}(i,j) \mid M(i,j)\in \ind(\La) \text{ and } 0\leq i \leq m\}\]
		is a complete and irredundant set of representatives of isomorphism classes of indecomposable $\tilde{\La}$-modules.
	\end{enumerate}
	If moreover $i\in\ZZ$ with $0\leq i \leq m$, then
	\begin{enumerate}
		\item[(b)] $\tilde{M}(i,j)$ is a projective $\tilde{\La}$-module if and only if $M(i,j)$ is a projective $\La$-module different from $M(0,0)$.
		\item[(c)] $\tilde{M}(i,j)$ is an injective $\tilde{\La}$-module if and only if $M(i,j)$ is an injective $\La$-module different from $M(m,m)$.
		\item[(d)] If $M(i,j)\in\mo{\La}$ with $M(i,j)\neq M(0,0)$, then $\tau(\tilde{M}(i,j))= \left(\tau(M(i,j))\right)^{\sim}$ and $\Omega(\tilde{M}(i,j))=\left(\Omega(M(i,j))\right)^{\sim}$. 
		\item[(e)] If $M(i,j)\in\mo{\La}$ with $M(i,j)\neq M(m,m)$, then $\tau^{-}(\tilde{M}(i,j))=\left(\tau^{-}(M(i,j))\right)^{\sim}$ and $\Omega^{-}(\tilde{M}(i,j))= \left(\Omega^{-}(M(i,j))\right)^{\sim}$.
	\end{enumerate}
\end{proposition}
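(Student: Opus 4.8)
The plan is to reduce every part to the combinatorial description of uniserial modules over Nakayama algebras from Proposition~\ref{prop:AR quiver of Nakayama} and Proposition~\ref{prop:basic Nakayama results2}, combined with the explicit shape $\tilde I=\langle I\cup\{\alpha_m\alpha_1\}\rangle$ of the defining ideal. Parts (a)--(c) are in fact instances of the general framework of \cite[Section~5.2]{Vas2}, so one option is simply to quote that reference; I will instead sketch a direct argument, since the bookkeeping is short.

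First I would establish (a). Since $\tilde\La$ is a Nakayama algebra, each of its indecomposable modules is some $M(i,j)$ with $i\le j$ (modulo $(m,m)$) and $\tilde I M(i,j)=0$. Writing out the $\K\tilde A_m$-action gives $\alpha_m\alpha_1 M(i,j)\ne 0$ precisely when $[i,j]$ contains an integer congruent to $1$ modulo $m$ which is neither $i$ nor $i+1$; hence, after shifting the representative so that $0\le i$, the relation $\alpha_m\alpha_1\in\tilde I$ forces $j\le m$. For $M(i,j)$ with $0\le i\le j\le m$ the $\K\tilde A_m$-action agrees with the $\K A_{[0,m]}$-action under the arrow correspondence, and $\alpha_m\alpha_1$ acts as zero, so $\tilde I M(i,j)=0$ is equivalent to $M(i,j)\in\mo\La$. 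This exhibits $M(i,j)\mapsto\tilde M(i,j)$ as a bijection onto the indecomposable $\tilde\La$-modules (the simple at the glued vertex being hit by $\tilde M(0,0)=\tilde M(m,m)$), which is (a).

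For (b) and (c) I would compare projective covers, respectively injective hulls, across the gluing. By Proposition~\ref{prop:basic Nakayama results2}(b) the projective $\tilde\La$-modules are the $\tilde M(\lmax{v}^{\tilde\La},v)$ as $v$ runs over the vertices of $\tilde A_m$, and the computation from (a) shows $\lmax{v}^{\tilde\La}=\lmax{v}^\La$ for each such $v$: the relation $\alpha_m\alpha_1$ cannot shorten $P_v^\La$, since that would require an integer $\equiv 1\pmod m$ strictly past $\lmax{v}^\La$ and at most $v$, impossible as $\lmax{v}^\La\ge 0$ and $v\le m$; and the remaining relations are those of $I$, so it cannot lengthen it either. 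Hence the projective $\tilde\La$-modules are exactly the $(P_v^\La)^\sim$ with $P_v^\La\ne P_0^\La=M(0,0)$, which is (b); statement (c) is the dual assertion, using that $M(m,m)=Q_m^\La$ is the injective hull of the simple at the source of $A_{[0,m]}$ and that $\rmax{v}^{\tilde\La}=\rmax{v}^\La$ for the relevant $v$.

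Finally, (d) and (e) follow once (a)--(c) are in place. If $M(i,j)$ is $\La$-projective and distinct from $M(0,0)$, then $\tilde M(i,j)$ is $\tilde\La$-projective by (b) and both sides of (d) vanish; otherwise $M(i,j)$ non-projective forces $i\ge 1$ (as $i\ge\lmax{j}^\La\ge 0$, with equality exactly in the projective case), so neither $\tau$ nor $\Omega$ produces a wraparound, and Proposition~\ref{prop:AR quiver of Nakayama} together with Proposition~\ref{prop:basic Nakayama results2}(b) give $\tau_{\tilde\La}(\tilde M(i,j))=\tilde M(i-1,j-1)=(\tau_\La M(i,j))^\sim$ and $\Omega_{\tilde\La}(\tilde M(i,j))=\tilde M(\lmax{j}^{\tilde\La},i-1)=(\Omega_\La M(i,j))^\sim$, using $\lmax{j}^{\tilde\La}=\lmax{j}^\La$ from the proof of (b). Part (e) is dual, with $M(i,j)\ne M(m,m)$ forcing $j\le m-1$ in the non-injective case. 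I expect the only real work to lie in part (a) and in matching the functions $\lmax{}$ (and $\rmax{}$) at the glued vertex; after that the remaining parts are routine.
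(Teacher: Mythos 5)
Your argument is correct and follows essentially the same route as the paper: the paper's proof simply defers to the analogous gluing statement (Proposition~\ref{prop:basic gluing results}) and to \cite[Section~5.2]{Vas2}, and your direct verification via Propositions~\ref{prop:AR quiver of Nakayama} and~\ref{prop:basic Nakayama results2} is exactly the routine check being deferred there. The only small imprecision is in (a): one should normalize the representative to $0\le i\le m-1$ (not merely $0\le i$) before concluding $j\le m$ from the relation $\alpha_m\alpha_1$, since a representative with $i=m$ need not satisfy that bound.
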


\begin{proof}
	Similar to the proof of Proposition~\ref{prop:basic gluing results}; we refer to \cite[Section 5.2]{Vas2} for more details. 
\end{proof}

\begin{remark}\label{rem:the extra computations in self-gluing}
	\begin{enumerate}[label=(\alph*)]
		\item Proposition~\ref{prop:basic self-gluing results}(a) implies that the set $\ind(\tilde{\La})$ has one less element than the set $\ind(\La)$. This is due to the fact that $\tilde{M}(0,0)=\tilde{M}(m,m)$ but $M(0,0)\neq M(m,m)$.
		\item Using Proposition~\ref{prop:basic self-gluing results}(d) we can also compute $\tau(\tilde{M}(0,0))=\tau(\tilde{M}(m,m))=\tilde{M}(m-1,m-1)$ and $\Omega(\tilde{M}(0,0))=\Omega(\tilde{M}(m,m))=\left(\Omega(M(m,m))\right)^{\sim}$.
		\item Using Proposition~\ref{prop:basic self-gluing results}(e) we can also compute $\tau^-(\tilde{M}(m,m))=\tau^-(M(0,0))=\tilde{M}(1,1)$ and $\Omega^-(\tilde{M}(m,m))=\Omega^-(\tilde{M}(0,0))=\left(\Omega^-(M(1,1))\right)^{\sim}$.
	\end{enumerate}
\end{remark}

With this we can show the following results about $n\ZZ$-cluster tilting subcategories of self-gluings of acyclic Nakayama algebras.

\begin{proposition}\label{prop:self-gluing of nZ-cluster tilting is nZ-cluster tilting}
Let $m\geq 2$ and $\La=\K A_{[0,m]}/I$ be an acyclic Nakayama algebra. Let $\tilde{\La}$ be the self-gluing of $\La$. If $\La$ admits an $n\ZZ$-cluster tilting subcategory $\mathcal{C}_{\La}$, then $\tilde{\La}$ admits an $n\ZZ$-cluster tilting subcategory $\cC_{\tilde{\La}}$ where
	\[
	\cC_{\tilde{\La}} = \add\{\tilde{M}(i,j) \mid  M(i,j) \in \cC_{\La} \text{ and } 0\leq i\leq m\}
	\]
\end{proposition}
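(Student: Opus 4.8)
The plan is to follow the pattern of the proof of Proposition~\ref{prop:gluing of nZ-cluster tilting is nZ-cluster tilting}. First, the fact that $\cC_{\tilde{\La}}$ is an $n$-cluster tilting subcategory of $\mo{\tilde{\La}}$ is part of the self-gluing construction and follows from \cite[Section 5.2]{Vas2}; here one uses that $M(0,0)$ and $M(m,m)$, being respectively projective and injective over $\La$, automatically belong to $\cC_{\La}$ by Proposition~\ref{prop:basic nZ-cluster tilting results}(a). It then remains to show that $\cC_{\tilde{\La}}$ is closed under $\Omega_{\tilde{\La}}^n$, since this is equivalent to being $n\ZZ$-cluster tilting by Proposition~\ref{prop:basic nZ-cluster tilting results2}.

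So I would fix an indecomposable object $X \in \cC_{\tilde{\La}}$ and show $\Omega_{\tilde{\La}}^n(X) \in \cC_{\tilde{\La}}$; we may assume $X$ is not projective. The first step is to choose a representative $X \isom \tilde{M}(i,j)$ with $0 \le i \le j \le m$ for which $M(i,j)$ is \emph{not} projective as a $\La$-module and lies in $\cC_{\La}$: if $X \not\isom \tilde{M}(0,0)$ this is automatic by Proposition~\ref{prop:basic self-gluing results}(a) and (b) together with the definition of $\cC_{\tilde{\La}}$, while if $X \isom \tilde{M}(0,0) = \tilde{M}(m,m)$ one takes the representative $M(m,m)$, which is $\La$-injective, hence lies in $\cC_{\La}$, and is not $\La$-projective.

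The key point I would then establish is the identity $\Omega_{\tilde{\La}}^n(\tilde{M}(i,j)) \isom \left(\Omega_{\La}^n(M(i,j))\right)^{\sim}$. Writing $\Omega_{\La}^k(M(i,j)) \isom M(i_k,j_k)$, Lemma~\ref{lem:n-th syzygy and n-th cosyzygy are indecomposable}(a) gives that these are indecomposable for $0 \le k \le n$; in particular $M(i_k,j_k) \neq M(0,0)$ for $0 \le k \le n-1$, since otherwise $\Omega_{\La}^{k+1}(M(i,j)) = 0$ (as $M(0,0)$ is $\La$-projective), contradicting the indecomposability of $\Omega_{\La}^{k+1}(M(i,j))$ as $k+1 \le n$. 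Hence Proposition~\ref{prop:basic self-gluing results}(d) is applicable at each step $k = 0, \dots, n-1$, and a straightforward induction on the number of syzygies yields the claimed identity. Since $\cC_{\La}$ is $n\ZZ$-cluster tilting, $\Omega_{\La}^n(M(i,j)) \in \cC_{\La}$ by Proposition~\ref{prop:basic nZ-cluster tilting results2}(b); writing it in standard form $M(i_n,j_n)$ with $0 \le i_n \le j_n \le m$ we conclude $\Omega_{\tilde{\La}}^n(X) \isom \tilde{M}(i_n,j_n) \in \cC_{\tilde{\La}}$.

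I expect the only real subtlety — beyond invoking \cite{Vas2} for the $n$-cluster tilting property — to be the bookkeeping forced by the identification $\tilde{M}(0,0) = \tilde{M}(m,m)$: one must select representatives with care, and, more importantly, ensure that the syzygy chain $M(i,j), M(i_1,j_1), \dots, M(i_{n-1},j_{n-1})$ in $\mo{\La}$ never passes through $M(0,0)$ before step $n$, because Proposition~\ref{prop:basic self-gluing results}(d) is only available away from that module. The indecomposability part of Lemma~\ref{lem:n-th syzygy and n-th cosyzygy are indecomposable}(a) is precisely what guarantees this.
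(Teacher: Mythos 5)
Your proposal is correct and follows essentially the same route as the paper: the paper cites \cite[Corollary 5.13]{Vas2} for the $n$-cluster tilting property and then says to adapt the proof of Proposition~\ref{prop:gluing of nZ-cluster tilting is nZ-cluster tilting} using Proposition~\ref{prop:basic self-gluing results}, leaving the details to the reader. The details you supply — switching to the representative $M(m,m)$ when $X\isom\tilde{M}(0,0)$, and using Lemma~\ref{lem:n-th syzygy and n-th cosyzygy are indecomposable}(a) to ensure the $\La$-syzygy chain avoids $M(0,0)$ so that Proposition~\ref{prop:basic self-gluing results}(d) applies at each of the $n$ steps — are exactly the omitted bookkeeping and are carried out correctly.
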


\begin{proof}
	That $\cC_{\tilde{\La}}$ is an $n$-cluster tilting subcategory of $\mo{\tilde{\La}}$ follows by \cite[Corollary 5.13]{Vas2}; see also \cite[Corollary 6.9]{Vas2}. To show that $\cC_{\tilde{\La}}$ is $n\ZZ$-cluster tilting, we can use Proposition~\ref{prop:basic self-gluing results} and follow the proof of Proposition~\ref{prop:gluing of nZ-cluster tilting is nZ-cluster tilting}. We leave the details to the reader.
\end{proof}

\begin{proposition}\label{prop:ungluing of nZ-cluster tilting at simple, cyclic}
Let $m\geq 2$ and $\La=\K A_{[0,m]}/I$ be an acyclic Nakayama algebra. Let $\tilde{\La}$ be the self-gluing of $\La$. If $\tilde{\La}$ admits an $n\ZZ$-cluster tilting subcategory $\cC_{\tilde{\La}}$ such that $\tilde{M}(m,m)\in \cC_{\tilde{\La}}$, then 
	\[
	\cC_{\La} = \add\{ M(i,j) \mid \tilde{M}(i,j)\in\cC_{\tilde{\La}} \text{ and } 0\leq i\leq m\} 
	\]
	is an $n\ZZ$-cluster tilting subcategory of $\mo{\La}$.
\end{proposition}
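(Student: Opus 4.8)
The plan is to mirror the proof of the acyclic analogue, Proposition~\ref{prop:ungluing of nZ-cluster tilting at simple, acyclic}, using the self-gluing correspondence $M(i,j)\mapsto\tilde{M}(i,j)$ of Proposition~\ref{prop:basic self-gluing results} and Remark~\ref{rem:the extra computations in self-gluing} in place of the embedding $\mo{\La_1}\hookrightarrow\mo{\La}$. First note that, since $\tilde{M}(m,m)=\tilde{M}(0,0)\in\cC_{\tilde{\La}}$, both the projective $\La$-module $M(0,0)$ and the injective $\La$-module $M(m,m)$ lie in $\cC_{\La}$; by $m\ge 2$ and admissibility of $I$ they are distinct, with $M(0,0)$ non-injective and $M(m,m)$ non-projective over $\La$. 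Since $\La=\K A_{[0,m]}/I$ is acyclic, hence representation-directed, by the last assertion of Proposition~\ref{prop:basic nZ-cluster tilting results} together with Proposition~\ref{prop:basic nZ-cluster tilting results2}(b) it is enough to verify that $\cC_{\La}$ is closed under direct sums and summands (immediate from its definition) and that it satisfies Proposition~\ref{prop:basic nZ-cluster tilting results}(a)--(d) and Proposition~\ref{prop:basic nZ-cluster tilting results2}(b).

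The core of the argument is the self-gluing counterpart of \eqref{eq:k-th cosyzygy}: for every $M(i,j)\in\ind(\cC_{\La})$ and $1\le k\le n-1$, one has $\Omega^{k}_{\tilde{\La}}(\tilde{M}(i,j))\not\isom\tilde{M}(0,0)$ when $M(i,j)$ is not projective, and $\Omega^{-k}_{\tilde{\La}}(\tilde{M}(i,j))\not\isom\tilde{M}(0,0)$ when $M(i,j)$ is not injective (recall $\tilde{M}(0,0)=\tilde{M}(m,m)$). Granting this, since the only nontrivial fibre of the self-gluing correspondence is $\{M(0,0),M(m,m)\}\mapsto\tilde{M}(0,0)$, Proposition~\ref{prop:basic self-gluing results}(d)(e) and Remark~\ref{rem:the extra computations in self-gluing} show that the $\La$-syzygies $\Omega^{k}_{\La}$ of non-projectives and cosyzygies $\Omega^{-k}_{\La}$ of non-injectives in $\ind(\cC_{\La})$, for $0\le k\le n$, are computed from the corresponding objects over $\tilde{\La}$, so they are indecomposable by Lemma~\ref{lem:n-th syzygy and n-th cosyzygy are indecomposable} and avoid $M(0,0)$ and $M(m,m)$ for $1\le k\le n-1$. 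To prove the claim in the cosyzygy case, suppose $k$ is minimal with $\Omega^{-k}_{\tilde{\La}}(\tilde{M}(i,j))\isom\tilde{M}(0,0)$; then $W\coloneqq\Omega^{-(k-1)}_{\tilde{\La}}(\tilde{M}(i,j))$ is nonzero and, since $\tilde{M}(i,j)$ is non-injective over $\tilde{\La}$ by Proposition~\ref{prop:basic self-gluing results}(c), indecomposable by Lemma~\ref{lem:n-th syzygy and n-th cosyzygy are indecomposable}, so the injective-hull sequence $0\to W\to I(W)\to\tilde{M}(0,0)\to0$ is non-split, whence $\Ext^k_{\tilde{\La}}(\tilde{M}(0,0),\tilde{M}(i,j))\isom\Ext^1_{\tilde{\La}}(\tilde{M}(0,0),W)\ne0$, contradicting that $\cC_{\tilde{\La}}$ is $n$-cluster tilting and contains $\tilde{M}(0,0)$ and $\tilde{M}(i,j)$. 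The syzygy case is dual, using the projective-cover sequence and that $\tilde{M}(0,0)$ is non-projective over $\tilde{\La}$.

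With this in hand the remaining verifications are routine. For Proposition~\ref{prop:basic nZ-cluster tilting results}(a): by Proposition~\ref{prop:basic self-gluing results}(b) the indecomposable projective $\La$-modules are $M(0,0)$ and those $M(i,j)$ with $\tilde{M}(i,j)$ projective over $\tilde{\La}$, all of which lie in $\cC_{\La}$ because $\tilde{\La}\in\cC_{\tilde{\La}}$ and $M(0,0)\in\cC_{\La}$; hence $\La\in\cC_{\La}$, and dually $D(\La)\in\cC_{\La}$ by Proposition~\ref{prop:basic self-gluing results}(c) and $M(m,m)\in\cC_{\La}$. Proposition~\ref{prop:basic nZ-cluster tilting results}(c),(d) and Proposition~\ref{prop:basic nZ-cluster tilting results2}(b) follow from the indecomposability statement above: for the last one, $\Omega^n_{\tilde{\La}}(\tilde{M}(i,j))\in\cC_{\tilde{\La}}$ is transported along the $n$ syzygy steps to $\Omega^n_{\La}(M(i,j))$, which therefore lies in $\cC_{\La}$ by definition of $\cC_{\La}$. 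Finally, Proposition~\ref{prop:basic nZ-cluster tilting results}(b) follows once one checks that $\tilde{M}(i,j)\mapsto M(i,j)$ restricts to bijections $(\cC_{\tilde{\La}})_P\to(\cC_{\La})_P$ and $(\cC_{\tilde{\La}})_I\to(\cC_{\La})_I$, sending the identified simple $\tilde{M}(0,0)$ to its non-projective representative $M(m,m)$ in the first case and to its non-injective representative $M(0,0)$ in the second, and that these bijections intertwine $\tn$ and $\tno$ by Proposition~\ref{prop:basic self-gluing results}(d)(e) and Remark~\ref{rem:the extra computations in self-gluing}; then Proposition~\ref{prop:basic nZ-cluster tilting results}(b) for $\cC_{\La}$ is inherited from the corresponding statement for $\cC_{\tilde{\La}}$.

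I expect the main obstacle to be exactly this bookkeeping around the simple module that is identified under self-gluing: one must ensure that the relevant (co)syzygy chains never collide with $M(0,0)$ or $M(m,m)$ for $1\le k\le n-1$ — which is the content of the second paragraph — and that $M(0,0)$ and $M(m,m)$, now distinct and respectively projective and injective over $\La$, together play on the two sides of the $\tn$-bijection the single role that their common image plays over $\tilde{\La}$.
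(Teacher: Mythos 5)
Your proposal is correct and follows essentially the same route as the paper: the paper's own proof is a brief reduction to the criteria of Proposition~\ref{prop:basic nZ-cluster tilting results}(a)--(d) and Proposition~\ref{prop:basic nZ-cluster tilting results2}(b), verified via Proposition~\ref{prop:basic self-gluing results} and Remark~\ref{rem:the extra computations in self-gluing} ``similarly to'' Proposition~\ref{prop:ungluing of nZ-cluster tilting at simple, acyclic}, which is exactly what you carry out, including the key self-gluing analogue of~\eqref{eq:k-th cosyzygy} showing the intermediate (co)syzygy chains avoid $\tilde{M}(0,0)=\tilde{M}(m,m)$. Your write-up is in fact more detailed than the paper's, and the extra bookkeeping (the Ext-nonvanishing argument and the two representatives $M(0,0)$, $M(m,m)$ on the two sides of the $\tn$-bijection) is handled correctly.
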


\begin{proof}
	To show that $\cC_{\La}$ is $n\ZZ$-cluster tilting it is enough to show that the statements (a)--(d) in Proposition~\ref{prop:basic nZ-cluster tilting results} and statement (b) in Proposition~\ref{prop:basic nZ-cluster tilting results2} hold. These can be easily verified using Proposition~\ref{prop:basic self-gluing results} and Remark~\ref{rem:the extra computations in self-gluing}, similarly to the proof of Proposition~\ref{prop:ungluing of nZ-cluster tilting at simple, acyclic}. 
\end{proof}

We can now give the main result for this section.

\begin{theorem}\label{thrm:cyclic non-homogeneous case}
	Let $\tilde{\La}$ be a cyclic Nakayama algebra and assume that $\tilde{\La}$ is not homogeneous. Then $\tilde{\La}$ admits an $n\ZZ$-cluster tilting subcategory $\cC_{\tilde{\La}}$ if and only if $\tilde{\La}$ is the self-gluing of an acyclic Nakayama algebra $\La$ which admits an $n\ZZ$-cluster tilting subcategory $\cC_{\La}$. In this case, $\La$ is piecewise homogeneous, the $n\ZZ$-cluster tilting subcategory $\cC_{\tilde{\La}}$ is unique and $\cC_{\tilde{\La}} = \add\{\tilde{M}(i,j) \mid M(i,j) \in \cC_{\La} \text{ and } 0\leq i\leq m\}$.
\end{theorem}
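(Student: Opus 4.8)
The plan is to prove the two implications separately and then deduce uniqueness, in each case transporting the problem to the acyclic setting through the self-gluing dictionary of Proposition~\ref{prop:basic self-gluing results}. For the ``if'' direction, suppose $\tilde{\La}$ is the self-gluing of an acyclic Nakayama algebra $\La$ that admits an $n\ZZ$-cluster tilting subcategory $\cC_{\La}$. Then Proposition~\ref{prop:self-gluing of nZ-cluster tilting is nZ-cluster tilting} directly produces the $n\ZZ$-cluster tilting subcategory $\cC_{\tilde{\La}}=\add\{\tilde{M}(i,j)\mid M(i,j)\in\cC_{\La},\ 0\le i\le m\}$ of $\mo{\tilde{\La}}$, and Theorem~\ref{thrm:acyclic non-homogeneous case} gives that $\La$ is piecewise homogeneous; so this half amounts to a citation.

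For the ``only if'' direction, suppose $\cC_{\tilde{\La}}\subseteq\mo{\tilde{\La}}$ is $n\ZZ$-cluster tilting. As $\tilde{\La}$ is not homogeneous, Corollary~\ref{cor:not homogeneous implies ungluing simple} supplies a simple $\tilde{\La}$-module $M(i,i)$, neither projective nor injective, with $M(i-1,i+1)\notin\mo{\tilde{\La}}$ and lying in every $n\ZZ$-cluster tilting subcategory of $\mo{\tilde{\La}}$. Relabelling the vertices of $\tilde{A}_m$ cyclically (which does not change $\tilde{\La}$ up to isomorphism) we may take this simple to be $\tilde{M}(m,m)=\tilde{M}(0,0)$ and assume $M(-1,1)\notin\mo{\tilde{\La}}$; then Lemma~\ref{lem:ungluing at a simple, cyclic} realizes $\tilde{\La}$ as the self-gluing of an acyclic Nakayama algebra $\La=\K A_{[0,m]}/I$. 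Since $\tilde{M}(m,m)\in\cC_{\tilde{\La}}$, Proposition~\ref{prop:ungluing of nZ-cluster tilting at simple, cyclic} yields an $n\ZZ$-cluster tilting subcategory $\cC_{\La}=\add\{M(i,j)\mid\tilde{M}(i,j)\in\cC_{\tilde{\La}},\ 0\le i\le m\}$ of $\mo{\La}$, which is piecewise homogeneous by Theorem~\ref{thrm:acyclic non-homogeneous case}.

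For uniqueness and the explicit form, note that the acyclic algebra $\La$ is representation-directed, so by Corollary~\ref{cor:unique n-ct for representation-directed} it has a unique $n$-cluster tilting subcategory, in particular a unique $n\ZZ$-cluster tilting one, namely the $\cC_{\La}$ above. Now let $\cC'_{\tilde{\La}}$ be any $n\ZZ$-cluster tilting subcategory of $\mo{\tilde{\La}}$. By Corollary~\ref{cor:not homogeneous implies ungluing simple} it contains $\tilde{M}(m,m)$, so Proposition~\ref{prop:ungluing of nZ-cluster tilting at simple, cyclic} associates to it an $n\ZZ$-cluster tilting subcategory $\add\{M(i,j)\mid\tilde{M}(i,j)\in\cC'_{\tilde{\La}},\ 0\le i\le m\}$ of $\mo{\La}$, which must equal $\cC_{\La}$. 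Thus $M(i,j)\in\cC_{\La}$ if and only if $\tilde{M}(i,j)\in\cC'_{\tilde{\La}}$ for all $0\le i\le m$, and since every indecomposable $\tilde{\La}$-module has the form $\tilde{M}(i,j)$ with $0\le i\le m$ by Proposition~\ref{prop:basic self-gluing results}(a), this forces $\cC'_{\tilde{\La}}=\add\{\tilde{M}(i,j)\mid M(i,j)\in\cC_{\La},\ 0\le i\le m\}$, which proves both the uniqueness of $\cC_{\tilde{\La}}$ and the stated formula.

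The main obstacle, beyond the bookkeeping of the relabelling, is to make the uniqueness step airtight: one has to check that the assignment $\cC_{\tilde{\La}}\mapsto\cC_{\La}$ of Proposition~\ref{prop:ungluing of nZ-cluster tilting at simple, cyclic} and the assignment $\cC_{\La}\mapsto\cC_{\tilde{\La}}$ of Proposition~\ref{prop:self-gluing of nZ-cluster tilting is nZ-cluster tilting} are mutually inverse at the level of indecomposable objects (bearing in mind that the correspondence $M(i,j)\mapsto\tilde{M}(i,j)$ is not injective, since $\tilde{M}(0,0)=\tilde{M}(m,m)$ while $M(0,0)\neq M(m,m)$), and that the a priori non-canonical simple module provided by Corollary~\ref{cor:not homogeneous implies ungluing simple} yields the same subcategory irrespective of the choice made --- the latter following from the uniqueness of $\cC_{\La}$ but worth spelling out.
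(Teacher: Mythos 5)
Your proof is correct and follows essentially the same route as the paper: Corollary~\ref{cor:not homogeneous implies ungluing simple} plus Lemma~\ref{lem:ungluing at a simple, cyclic} for the ungluing, Propositions~\ref{prop:self-gluing of nZ-cluster tilting is nZ-cluster tilting} and~\ref{prop:ungluing of nZ-cluster tilting at simple, cyclic} for transporting $n\ZZ$-cluster tilting subcategories, and Corollary~\ref{cor:unique n-ct for representation-directed} for uniqueness. The only difference is cosmetic: where the paper simply asserts a bijection between the $n\ZZ$-cluster tilting subcategories of $\mo{\tilde{\La}}$ and $\mo{\La}$, you spell out why the two assignments are mutually inverse on indecomposables (including the $\tilde{M}(0,0)=\tilde{M}(m,m)$ identification), which is a welcome elaboration rather than a deviation.
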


\begin{proof}
	Let $\tilde{\La}=\K \tilde{A}_{m}/\tilde{I}$ for some $m\geq 1$. If $m=1$, then $\tilde{\La}$ is necessarily homogeneous. Hence we have that $m\geq 2$.
	
	Assume first that $\tilde{\La}$ is the self-gluing of an acyclic Nakayama algebra $\La$ which admits an $n\ZZ$-cluster tilting subcategory. Then $\tilde{\La}$ admits an $n\ZZ$-cluster tilting subcategory $\cC_{\tilde{\La}}$ by Proposition~\ref{prop:self-gluing of nZ-cluster tilting is nZ-cluster tilting}. 
	
	Next assume that $\tilde{\La}$ admits an $n\ZZ$-cluster tilting subcategory. By Corollary \ref{cor:not homogeneous implies ungluing simple} there exists a simple module $M(i,i)\in \mo{\tilde{\La}}$, such that $M(i-1,i+1)\not\in\mo{\La}$ and such that $M(i,i)$ belongs to any $n\ZZ$-cluster tilting subcategory of $\mo{\tilde{\La}}$. By possibly relabeling the vertices of $\tilde{A}_m$, we may assume that $i=m$. By Lemma~\ref{lem:ungluing at a simple, cyclic} it then follows that $\tilde{\La}$ is the self-gluing of an acyclic Nakayama algebra $\La$. Hence, there is a bijection between the $n\ZZ$-cluster tilting subcategories of $\mo{\tilde{\La}}$ and $\mo{\La}$ by Proposition~\ref{prop:self-gluing of nZ-cluster tilting is nZ-cluster tilting} and Proposition~\ref{prop:ungluing of nZ-cluster tilting at simple, cyclic}. Since there is a unique $n\ZZ$-cluster tilting subcategory $\cC_{\La}$ in $\mo{\La}$ by Corollary~\ref{cor:unique n-ct for representation-directed}, it follows that $\mo{\tilde{\La}}$ has a unique $n\ZZ$-cluster tilting subcategory $\cC_{\tilde{\La}}$. Finally, by Proposition~\ref{prop:self-gluing of nZ-cluster tilting is nZ-cluster tilting} we get that $\cC_{\tilde{\La}}=\add\{\tilde{M}(i,j)\mid M(i,j)\in \cC_{\La} \text{ and } 0\leq i\leq m\}$.
\end{proof}

\begin{remark}\label{rem:uniqueness of decomposition in cyclic}
	\begin{enumerate}[label=(\alph*)]
		\item Consider $\tilde{\La}$ as in in Theorem~\ref{thrm:cyclic non-homogeneous case}. Similarly to Remark~\ref{rem:uniqueness of decomposition in acyclic}(c) one may compute that the simples in $\cC_{\tilde{\La}}$ form an orbit under $\Omega^{-n}$. In particular, $\gldim \tilde{\La} = \infty$. In fact, we shall see in Corollary~\ref{cor:not Iwanaga--Gorenstein} that $\tilde{\La}$ is not Iwanaga--Gorenstein.
		\item The algebra $\La$ in Theorem~\ref{thrm:cyclic non-homogeneous case} is not necessarily unique, as can be seen in the following example.
	\end{enumerate}

\end{remark}

\begin{example}\label{ex:non-homogeneous cyclic case}
Let $\La_1$, $\La_2$ and $\La=\La_1\glue \La_2$ be as in Example~\ref{ex:non-homogeneous acyclic case}. Let $\tilde{\La}$ be the self-gluing of $\La$. Then $\La$ admits a $4\ZZ$-cluster tilting subcategory and hence $\tilde{\La}$ also admits a $4\ZZ$-cluster tilting subcategory by Theorem \ref{thrm:cyclic non-homogeneous case}. Indeed, the Auslander--Reiten quiver $\Gamma(\tilde{\La})$ of $\tilde{\La}$ is
\[\resizebox {\columnwidth} {!} {
    \begin{tikzpicture}[scale=0.9, transform shape, baseline={(current bounding box.center)}]
    
    \tikzstyle{mod}=[rectangle, minimum width=6pt, draw=none, inner sep=1.5pt, scale=0.8]
    \tikzstyle{nct}=[rectangle, minimum width=3pt, draw, inner sep=1.5pt, scale=0.8]
    
    \node[nct] (11) at (0,0) {$(1,1)$};
    \node[mod] (22) at (1.4,0) {$(2,2)$};
    \node[mod] (33) at (2.8,0) {$(3,3)$};
    \node[mod] (44) at (4.2,0) {$(4,4)$};
    \node[mod] (55) at (5.6,0) {$(5,5)$};
    \node[mod] (66) at (7,0) {$(6,6)$};
    \node[nct] (77) at (8.4,0) {$(7,7)$};
    
    \draw[loosely dotted] (11.east) -- (22);
    \draw[loosely dotted] (22.east) -- (33);
    \draw[loosely dotted] (33.east) -- (44);
    \draw[loosely dotted] (44.east) -- (55);
    \draw[loosely dotted] (55.east) -- (66);
    \draw[loosely dotted] (66.east) -- (77);
        
    \node[nct] (12) at (0.7,0.7) {$(1,2)$};    
    \node[mod] (23) at (2.1,0.7) {$(2,3)$};    
    \node[mod] (34) at (3.5,0.7) {$(3,4)$};    
    \node[mod] (45) at (4.9,0.7) {$(4,5)$};    
    \node[mod] (56) at (6.3,0.7) {$(5,6)$};    
    \node[nct] (67) at (7.7,0.7) {$(6,7)$};  

    \draw[-{Stealth[scale=0.5]}] (11) -- (12);
    \draw[-{Stealth[scale=0.5]}] (22) -- (23);
    \draw[-{Stealth[scale=0.5]}] (33) -- (34);
    \draw[-{Stealth[scale=0.5]}] (44) -- (45);
    \draw[-{Stealth[scale=0.5]}] (55) -- (56);
    \draw[-{Stealth[scale=0.5]}] (66) -- (67);
    
    \draw[-{Stealth[scale=0.5]}] (12) -- (22);
    \draw[-{Stealth[scale=0.5]}] (23) -- (33);
    \draw[-{Stealth[scale=0.5]}] (34) -- (44);
    \draw[-{Stealth[scale=0.5]}] (45) -- (55);
    \draw[-{Stealth[scale=0.5]}] (56) -- (66);
    \draw[-{Stealth[scale=0.5]}] (67) -- (77);

    \draw[loosely dotted] (12.east) -- (23);
    \draw[loosely dotted] (23.east) -- (34);
    \draw[loosely dotted] (34.east) -- (45);
    \draw[loosely dotted] (45.east) -- (56);
    \draw[loosely dotted] (56.east) -- (67);
    
    \node[nct] (13) at (1.4,1.4) {$(1,3)$};
    \node[nct] (24) at (2.8,1.4) {$(2,4)$};
    \node[nct] (35) at (4.2,1.4) {$(3,5)$};
    \node[nct] (46) at (5.6,1.4) {$(4,6)$};
    \node[nct] (57) at (7,1.4) {$(5,7)$};
    
    \draw[-{Stealth[scale=0.5]}] (12) -- (13);
    \draw[-{Stealth[scale=0.5]}] (23) -- (24);
    \draw[-{Stealth[scale=0.5]}] (34) -- (35);
    \draw[-{Stealth[scale=0.5]}] (45) -- (46);
    \draw[-{Stealth[scale=0.5]}] (56) -- (57);
    
    \draw[-{Stealth[scale=0.5]}] (13) -- (23);
    \draw[-{Stealth[scale=0.5]}] (24) -- (34);
    \draw[-{Stealth[scale=0.5]}] (35) -- (45);
    \draw[-{Stealth[scale=0.5]}] (46) -- (56);
    \draw[-{Stealth[scale=0.5]}] (57) -- (67);
    
    \node[mod] (88) at (9.8,0) {$(8,8)$};
    \node[mod] (99) at (11.2,0) {$(9,9)$};
    \node[mod] (1010) at (12.6,0) {$(10,10)$};
    \node[nct] (1111) at (14,0) {$(11,11)$};
    \node[mod] (1212) at (15.4,0) {$(12,12)$};
    \node[mod] (1313) at (16.8,0) {$(13,13)$};
    \node[mod] (1414) at (18.2,0) {$(14,14)$};
    \node[nct] (11b) at (19.6,0) {$(1,1)$\nospacepunct{,}};
    
    \draw[loosely dotted] (77.east) -- (88);
    \draw[loosely dotted] (88.east) -- (99);
    \draw[loosely dotted] (99.east) -- (1010);
    \draw[loosely dotted] (1010.east) -- (1111);
    \draw[loosely dotted] (1111.east) -- (1212);
    \draw[loosely dotted] (1212.east) -- (1313);
    \draw[loosely dotted] (1313.east) -- (1414);
    \draw[loosely dotted] (1414.east) -- (11b);
    
    \node[nct] (78) at (9.1,0.7) {$(7,8)$};
    \node[nct] (89) at (10.5,0.7) {$(8,9)$};    
    \node[nct] (910) at (11.9,0.7) {$(9,10)$};    
    \node[nct] (1011) at (13.3,0.7) {$(10,11)$};    
    \node[nct] (1112) at (14.7,0.7) {$(11,12)$};    
    \node[nct] (1213) at (16.1,0.7) {$(12,13)$};    
    \node[nct] (1314) at (17.5,0.7) {$(13,14)$};    
    \node[nct] (1415) at (18.9,0.7) {$(14,15)$};   

    \draw[-{Stealth[scale=0.5]}] (77) -- (78);
    \draw[-{Stealth[scale=0.5]}] (88) -- (89);
    \draw[-{Stealth[scale=0.5]}] (99) -- (910);
    \draw[-{Stealth[scale=0.5]}] (1010) -- (1011);
    \draw[-{Stealth[scale=0.5]}] (1111) -- (1112);
    \draw[-{Stealth[scale=0.5]}] (1212) -- (1213);
    \draw[-{Stealth[scale=0.5]}] (1313) -- (1314);
    \draw[-{Stealth[scale=0.5]}] (1414) -- (1415);
    
    \draw[-{Stealth[scale=0.5]}] (78) -- (88);
    \draw[-{Stealth[scale=0.5]}] (89) -- (99);
    \draw[-{Stealth[scale=0.5]}] (910) -- (1010);
    \draw[-{Stealth[scale=0.5]}] (1011) -- (1111);
    \draw[-{Stealth[scale=0.5]}] (1112) -- (1212);
    \draw[-{Stealth[scale=0.5]}] (1213) -- (1313);
    \draw[-{Stealth[scale=0.5]}] (1314) -- (1414);
    \draw[-{Stealth[scale=0.5]}] (1415) -- (11b);
    \end{tikzpicture}}
    \]
where $(1,1)$ is drawn twice and the rectangles indicate the $4\ZZ$-cluster tilting subcategory $\cC_{\tilde{\La}}$.    

Now let $\La_2'$ and $\La_2''$ be as in Example~\ref{ex:non-homogeneous acyclic case}. Then it is easy to see that $\tilde{\La}$ is also the self-gluing of $\La'=\La_2''\glue \La_1 \glue \La_2'$, up to relabelling the vertices, but $\La\not\isom\La'$.
\end{example}

\section{Singularity categories}\label{sec:singularity categories}
It was shown in \cite{Kva} that if a finite-dimensional algebra $\La$ admits an $n\ZZ$-cluster tilting subcategory, then the singularity category of $\La$ admits an $n\ZZ$-cluster tilting subcategory. Our goal in this section is to describe the singularity category and its $n\ZZ$-cluster tilting subcategories for Nakayama algebras admitting an $n\ZZ$-cluster tilting subcategory. Furthermore, we describe the canonical functor from the module category to the singularity category, and its restriction to the $n\ZZ$-cluster tilting subcategories. To do this we rely heavily on results from \cite{Shen}, where the author studies the singularity category for general Nakayama algebras.

\subsection{Cluster tilting in singularity categories}
We start by recalling the definition of cluster tilting in triangulated categories.
\begin{definition}
Let $\cT$ be a triangulated category. A  subcategory $\cC$ of $\cT$ is called $n$\emph{-cluster tilting} if it is functorially finite and 
\begin{align*}
\cC&=\{T\in \cT \mid \Hom_{\cT}(\cC,T[i])=0 \text{ for all $0<i<n$}\}\\
&=\{T\in\cT \mid \Hom_{\cT}(T,\cC[i])=0 \text{ for all $0<i<n$}\}.
\end{align*}
If moreover $\Hom_{\cT}(\cC,\cC[i])\neq 0$ implies that $i\in n\ZZ$, then we call $\cC$ an \emph{$n\ZZ$-cluster tilting subcategory}.
\end{definition}
Analogously to module categories, we have that an $n$-cluster tilting subcategory $\cC$ of $\cT$ is $n\ZZ$-cluster tilting if and only if $\cC[n]\subseteq \cC$.

Now fix a finite-dimensional algebra $\La$, and let $D^b(\mo \La)$ denote the bounded derived category of finitely generated $\La$-modules. Furthermore, let $\operatorname{perf}\La\subset D^b(\mo \La)$ denote the subcategory of perfect complexes, i.e. complexes which are quasi-isomorphic to bounded complexes of finitely generated projective $\La$-modules. The singularity category of $\La$, denoted $D_{\operatorname{sing}}(\La)$, is the triangulated category defined by the Verdier quotient
\[
D_{\operatorname{sing}}(\La):= D^b(\mo \La)/\operatorname{perf}\La.
\]
Note that we have a canonical functor 
\[\mo \La\to D_{\operatorname{sing}}(\La)
\] given by the composite
$\mo \La\to D^b(\mo \La)\to D_{\operatorname{sing}}(\La)$ where $\mo \La\to D^b(\mo \La)$ sends a module $M$ to the stalk complex which is equal to $M$ in degree $0$, and where $D^b(\mo \La)\to D_{\operatorname{sing}}(\La)$ is the canonical localization functor. By abuse of notation we write $M$ both for a module in $\mo \La$, and for its image in $D_{\operatorname{sing}}(\La)$.

 The following result relates $n\ZZ$-cluster tilting subcategories of $\mo \La$ and $D_{\operatorname{sing}}(\La)$, and motivates our study of the singularity category.

\begin{theorem}\label{thrm:nZ-CTSingCat}
Let $\La$ be a finite-dimensional algebra and let $\cC$ be an $n\ZZ$-cluster tilting subcategory of $\mo \La$. Then the subcategory 
\[
\{X\in D_{\operatorname{sing}}(\La)\mid X\cong  C[ni] \text{ for some }C\in \cC \text{ and }i\in \ZZ\}
\]
is an $n\ZZ$-cluster tilting subcategory of $D_{\operatorname{sing}}(\La)$.
\end{theorem}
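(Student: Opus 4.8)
The plan is to verify directly that
\[
\tilde\cC:=\{X\in D_{\operatorname{sing}}(\La)\mid X\isom C[ni]\text{ for some }C\in\cC,\ i\in\ZZ\}
\]
is an $n$-cluster tilting subcategory of the triangulated category $D_{\operatorname{sing}}(\La)$, and then to invoke the observation recalled just above the statement: an $n$-cluster tilting subcategory closed under $[n]$ is automatically $n\ZZ$-cluster tilting. Note $\tilde\cC$ is manifestly closed under $[n]$ (since $C[ni][n]=C[n(i+1)]$) and inherits closure under isomorphisms, finite direct sums and direct summands from $\cC$. I would first record the standard facts about $D_{\operatorname{sing}}(\La)$ that the argument rests on: (i) for $M\in\mo\La$, the triangle $\Omega M\to P\to M\to\Omega M[1]$ from a projective cover has $P$ perfect, so $M[-1]\isom\Omega M$ in $D_{\operatorname{sing}}(\La)$, hence $M[-k]\isom\Omega^kM$ for all $k\ge 0$; (ii) every object of $D_{\operatorname{sing}}(\La)$ is isomorphic to $M[p]$ for some $M\in\mo\La$ and $p\ge 0$ (truncate an unbounded projective resolution far to the left: the discarded stupid truncation is a bounded complex of projectives, hence perfect, and what remains has a single cohomology, a high syzygy, concentrated in one degree); (iii) for $M,N\in\mo\La$ and $j\in\ZZ$ there is a natural isomorphism $\Hom_{D_{\operatorname{sing}}(\La)}(M,N[j])\isom\varinjlim_{k}\Ext^{k+j}_\La(M,\Omega^kN)$, the colimit over $k$ with $k+j\ge 1$ and transitions induced by the syzygy extensions $0\to\Omega^{k+1}N\to P_k\to\Omega^kN\to 0$. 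Finally, since $\cC$ is $n\ZZ$-cluster tilting in $\mo\La$, Proposition~\ref{prop:basic nZ-cluster tilting results2} gives $\Omega^n(\cC)\subseteq\cC$ and $\Omega^{-n}(\cC)\subseteq\cC$.

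Next I would establish the orthogonality. For $C,C'\in\cC$ and any $i\notin n\ZZ$, evaluate (iii) along the cofinal set $k\in n\ZZ_{\ge 0}$: there $\Omega^kC'\in\cC$ and $\Ext^{k+i}_\La(C,\Omega^kC')=0$ because $k+i\notin n\ZZ$ and $\cC$ is $n\ZZ$-cluster tilting in $\mo\La$; hence the colimit vanishes, i.e.\ $\Hom_{D_{\operatorname{sing}}(\La)}(C,C'[i])=0$. Conjugating by $[n]$, which preserves $\tilde\cC$, this upgrades to $\Hom_{D_{\operatorname{sing}}(\La)}(X,Y[i])=0$ for all $X,Y\in\tilde\cC$ and all $i\notin n\ZZ$. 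This yields the inclusions $\tilde\cC\subseteq\{T\mid\Hom_{D_{\operatorname{sing}}(\La)}(\tilde\cC,T[i])=0,\ 0<i<n\}$ and $\tilde\cC\subseteq\{T\mid\Hom_{D_{\operatorname{sing}}(\La)}(T,\tilde\cC[i])=0,\ 0<i<n\}$, and, once $n$-cluster tiltingness is known, the self-orthogonality forcing $i\in n\ZZ$.

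The reverse inclusions are the crux. Let $T\in D_{\operatorname{sing}}(\La)$ with $\Hom_{D_{\operatorname{sing}}(\La)}(\tilde\cC,T[i])=0$ for $0<i<n$; by $[n]$-periodicity this is equivalent to $\Hom_{D_{\operatorname{sing}}(\La)}(\cC,T[l])=0$ for every $l\notin n\ZZ$. Write $T\isom M[j]$ with $M\in\mo\La$. I would aim to produce $N\ge 0$ with $N\equiv -j\pmod n$ and $\Omega^NM\in\cC$; then $T\isom(\Omega^NM)[j+N]\in\tilde\cC$ since $j+N\in n\ZZ$. To get $\Omega^NM\in\cC$ I would use the characterization $\cC=\{X\in\mo\La\mid\Ext^s_\La(\cC,X)=0,\ 0<s<n\}$ together with the hypothesis pushed through (iii): the delicate point is that vanishing of the \emph{colimit} of $\Ext$-groups does not at once give vanishing of a single $\Ext$-group, so one must show that for $N$ sufficiently large in the class $-j\pmod n$ the colimit system computing $\Hom_{D_{\operatorname{sing}}(\La)}(\cC,(\Omega^NM)[s])$ has already stabilized — and it is precisely $\Omega^n(\cC)\subseteq\cC$ (respectively $\Omega^{-n}(\cC)\subseteq\cC$ for the dual inclusion, phrased via cosyzygies and the injective-stable side) that forces this stabilization. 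For functorial finiteness of $\tilde\cC$ in $D_{\operatorname{sing}}(\La)$ one reduces, by shifting by a multiple of $n$ and using $M[-k]\isom\Omega^kM$, to approximating a module $M$, and then assembles a right $\tilde\cC$-approximation of $M$ from the right $\cC$-approximations in $\mo\La$ of $M$ and its syzygies — which exist since $\cC$ is functorially finite in $\mo\La$ — again invoking $\Omega^{\pm n}$-stability to see these glue to an approximation in the quotient; covariant finiteness is dual. With functorial finiteness and both pairs of inclusions in hand, $\tilde\cC$ is $n$-cluster tilting in $D_{\operatorname{sing}}(\La)$, and being closed under $[n]$ it is $n\ZZ$-cluster tilting.

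I expect the main obstacle to be exactly the reverse-inclusion step: converting vanishing of a colimit of $\Ext$-groups into membership of an honest syzygy in $\cC$, which forces one to control the transition maps of the colimit system using only the $\Omega^n$- and $\Omega^{-n}$-closure of $\cC$. This, and the closely related construction of $\tilde\cC$-approximations inside the Verdier quotient, is the technical heart of the result and is what is carried out in \cite{Kva}.
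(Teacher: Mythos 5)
Your outline gets the easy half right and correctly locates the hard half, but it does not actually prove the hard half. The preliminary facts (i)--(iii) are standard and fine, closure of $\tilde\cC$ under $[n]$ is clear, and your orthogonality computation $\Hom_{D_{\operatorname{sing}}(\La)}(C,C'[i])=0$ for $i\notin n\ZZ$ via the cofinal subsystem $k\in n\ZZ_{\geq 0}$ and $\om^n(\cC)\subseteq\cC$ is correct. What is missing is precisely what you flag: (1) the reverse inclusions, where you must convert vanishing of the colimit $\varinjlim_k\Ext_{\La}^{k+s}(C,\om^{k+N}M)$ into vanishing of a single term $\Ext_{\La}^s(C,\om^N M)$ so as to place $\om^N M$ in $\cC$. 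This is genuinely delicate because the transition maps of the colimit system need not be injective: their kernels are images of $\Ext_{\La}^{k+s}(C,P)$ with $P$ projective, and these groups do not vanish when $C$ has infinite projective dimension, so the asserted ``stabilization forced by $\om^{\pm n}$-closure'' is exactly the statement requiring proof, not a consequence one can wave at. (2) Functorial finiteness of $\tilde\cC$ in the Verdier quotient: a morphism in $D_{\operatorname{sing}}(\La)$ from an object of $\tilde\cC$ to a module $M$ is a fraction, represented only after passing to high (co)syzygies, so it is not induced by a module map to $M$, and ``gluing'' the module-level right $\cC$-approximations of $M$ and its syzygies into a right $\tilde\cC$-approximation in the quotient needs an actual construction (towers of triangles controlled by the $n\ZZ$-condition), not just the remark that $\cC$ is functorially finite and $\om^{\pm n}$-stable.

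For comparison, the paper does not prove this statement internally at all: its proof is the single citation to \cite[Theorem 1.2]{Kva}, where the two points above are the content of that paper. So if you are permitted to quote \cite{Kva} (as your closing sentence effectively does), your argument collapses to the same one-line proof as the paper's; but as a self-contained verification your proposal has a genuine gap at the reverse-inclusion and approximation steps, which are the theorem's technical heart rather than routine bookkeeping.
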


\begin{proof}
This follows from \cite[Theorem 1.2]{Kva}.
\end{proof}

\subsection{Singularity categories of Nakayama algebras}\label{subsection:SingCatNakayama}
 We recall the description of the singularity category of a Nakayama algebra obtained in \cite{Shen}. To do this we recall the definition of the resolution quiver, which was first introduced in \cite{Rin}. 

\begin{definition}
Let $\La$ be a Nakayama algebra. The \emph{resolution quiver} $R(\La)$ of $\La$ is given as follows:  
\begin{itemize}
\item The vertices of $R(\La)$ correspond to isomorphism classes of simple $\La$-modules.
\item Let $i$ and $j$ be two vertices of $R(\La)$, corresponding to the simple $\La$-modules $S_i$ and $S_j$, respectively. Then there is an arrow $i\to j$ if and only if $S_j\cong \tau \soc P_i$, where $P_i$ is the projective cover of $S_i$.
\end{itemize}
A simple $\La$-module is called \emph{cyclic} if it is contained in a cycle of $R(\La)$.
\end{definition}  
Following \cite{Shen}, we let $\mathcal{S}_c$ denote the class of simple cyclic $\La$-modules, and we let $\cX_c$ denote the class of indecomposable $\La$-modules $M$ for which $\topp M\in \mathcal{S}_c$ and $\tau \soc M\in \mathcal{S}_c$. Finally, we let $\cF=\add \cX_c$ denote the additive closure of $\cX_c$.

The following theorem is one of the main results in \cite{Shen}. Here we consider the canonical functor 
\[
\cF\to D_{\operatorname{sing}}(\La)
\]
 given by the composite $\cF\to \mo \La\to D_{\operatorname{sing}}(\La)$ where $\cF\to \mo \La$ is the inclusion functor.  

\begin{theorem}\label{thrm:shen}
Let $\La$ be a Nakayama algebra of infinite global dimension. The following hold:
\begin{enumerate}
\item $\cF$ is a wide subcategory of $\mo \La$, i.e. it is closed under extensions, kernels and cokernels.
\item $\cF$ is a Frobenius abelian category. Hence, the stable category $\underline{\cF}$ of the abelian category $\cF$ is triangulated.
\item The functor $\cF\to D_{\operatorname{sing}}(\La)$ kills the projective objects in $\cF$ and induces an equivalence 
\[
\underline{\cF}\cong D_{\operatorname{sing}}(\La)
\]
of triangulated categories.
\end{enumerate}
\end{theorem}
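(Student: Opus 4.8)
The plan is to follow the strategy of \cite{Shen}, of which this is one of the main results, so I will describe the approach rather than reprove it from scratch. Everything rests on two facts about a Nakayama algebra $\La$: every indecomposable module is a uniserial $M(i,j)$, with $\topp M(i,j)=M(j,j)$, $\soc M(i,j)=M(i,i)$, $\Omega M(i,j)=M(\lmax j,i-1)$ and $\Omega^-M(i,j)=M(j+1,\rmax i)$ by Proposition~\ref{prop:basic Nakayama results2}; and the resolution quiver $R(\La)$ has every vertex of out-degree one on a finite vertex set, so each of its connected components contains a unique cycle and the walk in $R(\La)$ starting at any vertex eventually enters a cycle. A short computation shows that the arrow $i\to j$ of $R(\La)$ records the passage $\topp\Omega^2 M(i,i)=M(j,j)$ from the top of a simple to the top of the second syzygy of that simple, so that a vertex lying on a cycle corresponds to a simple of infinite projective dimension; conversely $\gldim\La=\infty$ produces such a simple, whence $\mathcal{S}_c\neq\emptyset$ and $\cX_c,\cF$ are nontrivial. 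I will use this combinatorial engine repeatedly.

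For part (1) I would argue purely from uniseriality. A submodule of $M(i,j)$ has the form $M(i,k)$ (keeping the socle $M(i,i)$) and a quotient has the form $M(k,j)$ (keeping the top $M(j,j)$), while $\Omega M(i,j)$ and $\Omega^-M(i,j)$ are given by the formulas above. Substituting these into the definition of $\cX_c$, one checks that a short exact sequence $0\to A\to B\to C\to 0$ of indecomposables with $B\in\cX_c$ forces $A,C\in\add\cX_c$, and that an extension of objects of $\cX_c$ again lies in $\cF$; this gives closure of $\cF$ under kernels, cokernels and extensions in $\mo\La$, i.e. wideness, and in particular $\cF$ is abelian with $\Ext^1_{\cF}=\Ext^1_{\mo\La}$ on $\cF$.

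For part (2) the point is that $\Omega$ and $\Omega^-$ restrict to $\cF$: if $M\in\cX_c$ is non-projective then, by the syzygy formulas together with the rule defining $R(\La)$, the top and the $\tau$-socle of $\Omega M$ and of $\Omega^-M$ are again cyclic, so $\Omega M,\Omega^-M\in\cF$. Since on the stable category these are mutually inverse and $\cF$ is abelian, the projective and injective objects of $\cF$ coincide; concretely one identifies them with the projective $\La$-modules contained in $\cF$ (each such module has vanishing $\Ext^1_{\cF}$ since $\Ext^1_{\cF}=\Ext^1_{\mo\La}$), so $\cF$ has enough projectives, is Frobenius, and $\underline{\cF}$ is triangulated with suspension induced by $\Omega^-$.

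For part (3), that $\cF\to D_{\operatorname{sing}}(\La)$ kills projectives is immediate, so it factors through $\underline{\cF}$. I would prove density by showing that for any $\La$-module $M$ a sufficiently high syzygy $\Omega^k M$ is a direct sum of modules whose tops and $\tau$-socles have become cyclic — this is exactly the assertion that walking in $R(\La)$ from the top of $M$ (and from the top of its projective cover) eventually reaches a cycle — so $\Omega^k M\in\cF$, and $\Omega^k M\isom M[-k]$ in $D_{\operatorname{sing}}(\La)$; since every object of $D_{\operatorname{sing}}(\La)$ is a shift of a module, the functor is dense. For full faithfulness I would compute $\Hom_{D_{\operatorname{sing}}(\La)}(X,Y)$ for $X,Y\in\cF$ as the stable colimit of the groups $\Hom_{\La}(\Omega^kX,\Omega^kY)$ modulo maps factoring through projectives, using wideness of $\cF$ to carry out these syzygy computations and the maps between them entirely inside $\cF$, and then observing that once $\Omega$ has become invertible on $\cF$ the colimit stabilizes to $\underline{\Hom}_{\cF}(X,Y)$. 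The main obstacle is precisely this last step — matching Hom-spaces in the Verdier quotient with stable Homs in $\cF$ — together with the combinatorial bookkeeping, along the finitely many branches of $R(\La)$ feeding into its cycles, needed to show that syzygies of arbitrary modules eventually land in $\cX_c$ and to pin down the projective objects of $\cF$.
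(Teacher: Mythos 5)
The paper does not actually prove this theorem: its proof consists of citing \cite[Propositions 3.5 and 3.8, Theorem 3.11]{Shen}, so your proposal can only be judged as a reconstruction of Shen's argument, whose overall architecture you do identify correctly. As written, however, two of your intermediate claims are false. In part (1) you reduce wideness to the claim that a short exact sequence $0\to A\to B\to C\to 0$ with $B\in\cX_c$ has $A,C\in\cF$; that is closure under submodules and quotients, which fails: for instance, in the algebras of Section~\ref{sec:singularity categories} with $l_k\geq 3$, the module $\tilde{M}(m_k,m_k+l_k-1)\in\cX_c$ has the submodule $\tilde{M}(m_k,m_k+1)$, whose top $\tilde{M}(m_k+1,m_k+1)$ is not cyclic by Lemma~\ref{lem:cyclic simple modules}, so the submodule and the corresponding quotient are not in $\cF$. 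Wideness only requires kernels and cokernels of morphisms between objects of $\cF$, and there the computation does work: a nonzero non-invertible map $M(a,b)\to M(c,d)$ has kernel $M(a,c-1)$ and cokernel $M(b+1,d)$, whose tops and $\tau$-socles are $M(c-1,c-1),M(a-1,a-1),M(d,d),M(b,b)$, all cyclic because $M(a,b),M(c,d)\in\cX_c$. Likewise your assertion that a cyclic vertex corresponds to a simple of infinite projective dimension is false: with $l_k\geq 3$ as above, $\tilde{M}(m_k+l_k-1,m_k+l_k-1)$ is cyclic but has projective dimension $1$, since its syzygy $\tilde{M}(m_k,m_k+l_k-2)$ is projective; nonemptiness of $\mathcal{S}_c$ should instead be deduced from the fact that every vertex of $R(\La)$ has out-degree one on a finite vertex set (the algebra being cyclic when $\gldim\La=\infty$).

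The genuine gap is in part (2). That $\Omega$ preserves $\cF$ is indeed immediate: $\Omega M(i,j)\isom M(\lmax{j},i-1)$ has top $M(i-1,i-1)$, cyclic by hypothesis, and $\tau$-socle $M(\lmax{j}-1,\lmax{j}-1)$, the successor of the cyclic vertex $j$ in $R(\La)$. But for $\Omega^{-}M(i,j)\isom M(j+1,\rmax{i})$ you must know that $M(\rmax{i},\rmax{i})$ is cyclic, and this is \emph{not} a formal consequence of the rule defining $R(\La)$, since $\cX_c$ is defined using the resolution quiver only; it requires comparing $R(\La)$ with the coresolution quiver (the resolution quiver of $\La^{\rm op}$) and showing that the cyclic vertices of the two correspond --- precisely the key lemmas of \cite{Shen} (building on \cite{Rin}) that your plan never names. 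The same input is what shows that the modules $P_j$ with $M(j,j)$ cyclic, which are easily seen to be $\Ext$-projective in $\cF$, are also the injective objects of $\cF$, i.e.\ what makes $\cF$ Frobenius; your justification that ``on the stable category $\Omega$ and $\Omega^-$ are mutually inverse'' is circular, because the triangulated stable category with invertible (co)syzygy is only available once the Frobenius property is established. For part (3) your plan (projectives die, high syzygies of arbitrary modules land in $\cF$ because walks in $R(\La)$ end on cycles, and $\Hom_{D_{\operatorname{sing}}(\La)}$ is the stable colimit over syzygies) is the right one, but, as you acknowledge yourself, the identification of that colimit with $\underline{\Hom}$ in $\underline{\cF}$, hence fullness and faithfulness, is not carried out. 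So the proposal is a reasonable outline of the cited proof, but with the resolution/coresolution duality and the Hom-space comparison missing, and with the two false claims above, it does not yet constitute a proof.
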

\begin{proof}
This follows from \cite[Proposition 3.5]{Shen}, \cite[Proposition 3.8]{Shen} and \cite[Theorem 3.11]{Shen}.
\end{proof}
 Hence, to describe $D_{\operatorname{sing}}(\La)$, it suffices to determine the objects in $\cF$.
 
\subsection{Singularity categories of Nakayama algebras admitting \texorpdfstring{$n\ZZ$}{nZ}-cluster tilting categories}

Let $\tilde{\La}$ be a Nakayama algebra admitting an $n\ZZ$-cluster tilting subcategory. We want to describe $D_{\operatorname{sing}}(\tilde{\La})$ and its $n\ZZ$-cluster tilting subcategories. If $\tilde{\La}$ is acyclic, then $\tilde{\La}$ has finite global dimension, and hence $D_{\operatorname{sing}}(\tilde{\La})\cong 0$ is trivial. If $\tilde{\La}$ is a cyclic homogeneous Nakayama algebra, then $\tilde{\La}$ is self-injective by Corollary \ref{cor:SelfinjNakayama}, and hence we have an equivalence 
\[
\smo {\tilde{\La}} \cong D_{\operatorname{sing}}(\tilde{\La}).
\]
by Buchweitz theorem \cite{Buc}. It follows that the singularity category of $\tilde{\La}$ is easy to describe in this case. For example, it is well know that there is a bijection between $n\ZZ$-cluster tilting subcategories of $\mo {\tilde{\La}}$ and $\smo {\tilde{\La}}$.

 The remaining case we need to consider is when $\tilde{\La}$ is a cyclic non-homogeneous Nakayama algebra with an $n\ZZ$-cluster tilting subcategory. By Theorem \ref{thrm:acyclic non-homogeneous case}, Remark \ref{rem:uniqueness of decomposition in acyclic} (c), and Theorem~\ref{thrm:cyclic non-homogeneous case} it follows that $\tilde{\La}$ is the self-gluing of an acyclic Nakayama algebra of the form
 \[\La = \La_1 \glue \cdots \glue\La_r,\]
 where $\La_k=\K A_{[m_k,m_{k+1}]}/R^{l_k}$ for $1\leq k\leq r$ is a homogeneous acyclic Nakayama algebra which admits an $n\ZZ$-cluster tilting subcategory and satisfies $\gldim(\La_k)=n$. Fix such algebras $\La$ and $\La_k$ for the remainder of this section. For convenience we also set $l_{r+1}:=l_1$. We make the following observations:
 
 \begin{itemize}
 	\item By Remark~\ref{rem:uniqueness of decomposition in cyclic}, $\gldim\tilde{\La} = \infty$ so Theorem~\ref{thrm:shen} applies.
 	\item If $l_k\geq 3$, then $l_k\divides m_{k+1}-m_k$ and $n=2\frac{m_{k+1}-m_k}{l_k}$ by Proposition \ref{prop:homogeneous acyclic Nakayama with nZ-ct} (b). Since $\tilde{\La}$ is not homogeneous, there must exist a $k$ with $l_k\geq 3$, and therefore $n$ must be even.
 	\item We claim that $l_k\divides m_{k+1}-m_k$ and $n=2\frac{m_{k+1}-m_k}{l_k}$ also hold when $l_k=2$. Indeed, we have  \[
 	n=m_{k+1}-m_k=2\frac{m_{k+1}-m_k}{l_k}
 	\]
 	 since $\gldim(\La_k)=n$. This together with the fact that $n$ is even implies that $l_k\divides m_{k+1}-m_k$.

 \end{itemize}

We start by computing the resolution quiver of the algebras $\La_k$.

\begin{lemma}\label{lem:ResolutionQuiverAcyclicNakayama}
	Let $Q^k$ be the quiver given by the disjoint union    \[
	Q^k=\bigcup_{j=0}^{l_k-1}\tilde{Q}^{k,j}
	\] where $\tilde{Q}^{k,j}$ is a linearly oriented $A_{\frac{n}{2}}$ quiver when $j\neq 0$ and a linearly oriented $A_{\frac{n}{2}+1}$ quiver when $j=0$. Then $Q^k$ is the resolution quiver of $\La_k$, where the $i$th vertex of $\tilde{Q}^{k,j}$ corresponds to the simple $\La_k$-module $M(m_k+(i-1)l_k+j,m_k+(i-1)l_k+j)$.
\end{lemma}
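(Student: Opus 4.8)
The plan is to compute the resolution quiver of $\La_k=\K A_{[m_k,m_{k+1}]}/R^{l_k}$ directly from the definition, using the explicit description of projective covers, socles and the Auslander--Reiten translation available for homogeneous acyclic Nakayama algebras. Recall that by Proposition~\ref{prop:homogeneous Nakayama AR-quiver}(a) we have $\lmax{j}=\max\{j-l_k+1,m_k\}$, by Proposition~\ref{prop:basic Nakayama results2}(b) the projective cover of the simple $S_j=M(j,j)$ is $P_j=M(\lmax{j},j)$, and by Proposition~\ref{prop:AR quiver of Nakayama} we have $\tau M(i,j)=M(i-1,j-1)$ whenever $M(i,j)$ is non-projective. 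So $\soc P_j = M(\lmax{j},\lmax{j})$ by Proposition~\ref{prop:basic Nakayama results2}(a), and the unique arrow out of the vertex $j$ in $R(\La_k)$ goes to the vertex corresponding to $\tau\soc P_j = M(\lmax{j}-1,\lmax{j}-1)$, provided $\soc P_j$ is non-projective; if $\soc P_j = M(m_k,m_k)$ is projective (which happens precisely when $j \le m_k + l_k - 1$, i.e. in the ``$j=0$'' strand at its first vertex), then $\tau\soc P_j=0$ and one must treat that case separately according to the convention in \cite{Shen} for the arrow out of a vertex whose projective cover has projective socle.

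First I would set up coordinates: write each vertex of $\tilde{A}_{m_{k+1}-m_k}$ (equivalently each simple $\La_k$-module) as $m_k + (i-1)l_k + j$ with $0 \le j \le l_k - 1$ and $i$ in the appropriate range, noting that $m_{k+1}-m_k = \tfrac{n}{2}l_k$, so that for a fixed residue $j \ne 0$ there are exactly $\tfrac{n}{2}$ vertices and for $j=0$ there are $\tfrac{n}{2}+1$ vertices (the extra one being $m_{k+1}=m_k+\tfrac{n}{2}l_k$, which in the acyclic quiver is the sink). Then I would show that the arrow from the vertex $v = m_k+(i-1)l_k+j$ goes to $v' = v - l_k$ when $v - l_k \ge m_k$, i.e. it stays within the same residue strand and decreases $i$ by one: indeed $\lmax{v} = v - l_k + 1$ as long as $v - l_k + 1 > m_k$, hence $\tau \soc P_v = M(v-l_k, v-l_k)$. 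This gives the linear orientation on each strand $\tilde{Q}^{k,j}$ pointing towards small $i$. Finally I would handle the boundary vertices: for $j \ne 0$ the smallest vertex in the strand is $m_k + j$, whose projective cover $M(m_k,m_k+j)$ has socle $M(m_k,m_k)$, and one checks where the arrow from $m_k+j$ lands — it should go into the $j=0$ strand, which is why $\tilde{Q}^{k,0}$ has one more vertex and the others attach to it; for $j=0$ the vertex $m_k$ itself has projective cover $M(m_k,m_k)=S_{m_k}$, which is simple projective, so $\soc P_{m_k}=S_{m_k}$ and the arrow structure at $m_k$ closes up the quiver. Combining, each strand is a linearly oriented $A$-type quiver of the stated length, and the whole resolution quiver is their disjoint union $Q^k = \bigcup_{j=0}^{l_k-1}\tilde{Q}^{k,j}$.

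The main obstacle I expect is pinning down exactly the behaviour at the boundary vertices and the sink $m_{k+1}=m_k+\tfrac{n}{2}l_k$ of the acyclic quiver, and making sure the count of vertices per strand ($\tfrac{n}{2}$ versus $\tfrac{n}{2}+1$) comes out right — this is where the identity $n = 2\tfrac{m_{k+1}-m_k}{l_k}$, already established for all $k$ in the discussion preceding the lemma (including the case $l_k=2$), gets used crucially. One has to be careful that $R(\La_k)$ as defined has exactly one arrow out of every vertex (so it is genuinely a disjoint union of ``rho''-shaped components), and verify that with the global dimension $n$ assumption no cycles appear — consistent with $\La_k$ having finite global dimension, so no simple $\La_k$-module is cyclic. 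The rest is a routine bookkeeping check identifying the $i$th vertex of $\tilde{Q}^{k,j}$ with $M(m_k+(i-1)l_k+j, m_k+(i-1)l_k+j)$ and confirming the arrows $i \to i-1$ within a strand, which follows immediately from the formula $\tau\soc P_v = M(v-l_k,v-l_k)$ derived above.
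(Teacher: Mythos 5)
Your computation at the interior vertices ($i>1$) is correct and agrees with the paper's proof, but the treatment of the boundary vertices is where the argument breaks, and that is exactly the point that decides the shape of $Q^k$. With the definition of the resolution quiver used in the paper, a vertex $v$ has an outgoing arrow only if $\tau\soc P_v$ is a nonzero simple module. For \emph{every} $0\le j\le l_k-1$ the first vertex of the strand is $v=m_k+j$, its projective cover is $P_v=M(m_k,m_k+j)$, so $\soc P_v=M(m_k,m_k)$ is projective and $\tau\soc P_v=0$; hence $v$ has no outgoing arrow, i.e.\ $(1,j)$ is a sink of its strand. Your proposal instead asserts that for $j\neq 0$ the arrow out of $m_k+j$ ``goes into the $j=0$ strand'' (and that this explains why $\tilde{Q}^{k,0}$ has one more vertex), deferring the justification to an unspecified convention in \cite{Shen}. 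No such arrow exists under the definition in force here, and if it did exist your own conclusion that $Q^k$ is the disjoint union $\bigcup_{j}\tilde{Q}^{k,j}$ would be false, so the proposal is also internally inconsistent. The extra vertex of $\tilde{Q}^{k,0}$ has nothing to do with attachment; it is pure counting: the residue-$0$ vertices are $m_k,m_k+l_k,\dots,m_k+\tfrac{n}{2}l_k=m_{k+1}$, which are $\tfrac{n}{2}+1$ in number, whereas for $j\neq 0$ there are only $\tfrac{n}{2}$.

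Two smaller slips in the same vein: the condition $\soc P_v=M(m_k,m_k)$ holds precisely for $m_k\le v\le m_k+l_k-1$, i.e.\ at the first vertex of \emph{every} strand, not only ``in the $j=0$ strand at its first vertex''; and nothing ``closes up'' at $m_k$ --- it is simply a sink (indeed $Q^k$ can contain no cycle since $\gldim\La_k<\infty$). The repair is short: replace your boundary discussion by the observation that $\tau\soc P_{m_k+j}\isom\tau M(m_k,m_k)=0$ for all $0\le j\le l_k-1$, so each $(1,j)$ is a sink; combined with your (correct) computation $\tau\soc P_v=M(v-l_k,v-l_k)$ for $i>1$, this yields exactly the disjoint union of linearly oriented $A_{\frac{n}{2}}$- and $A_{\frac{n}{2}+1}$-quivers claimed in the lemma, which is how the paper's proof concludes.
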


\begin{proof}
	We write a vertex of $Q^k$ as a pair $(i,j)$ where $0\leq j \leq l_k-1$ and $i$ is a vertex of $\tilde{Q}^{k,j}$. With this notation, it is clear that the association
	\[
	(i,j)\mapsto S_{m_k+(i-1)l_k+j}=M(m_k+(i-1)l_k+j,m_k+(i-1)l_k+j)
	\] gives a bijection between the vertices of $Q^k$ and the isomorphism classes of simple $\La_k$-modules. Hence, we only need to check that the arrows of $Q^k$ coincide with the arrows of the resolution quiver under this bijection. If $i>1$ then the projective cover of $S_{m_k+(i-1)l_k+j}$ is 
	\[
	P_{m_k+(i-1)l_k+j}=M(m_k+(i-2)l_k+j+1,m_k+(i-1)l_k+j)
	\] by Proposition \ref{prop:homogeneous Nakayama AR-quiver} (a) and Proposition \ref{prop:basic Nakayama results2} (b). It follows that 
	\begin{align*}
	\tau \soc P_{m_k+(i-1)l_k+j}&= \tau M(m_k+(i-2)l_k+j+1,m_k+(i-2)l_k+j+1) \\
	& = M(m_k+(i-2)l_k+j,m_k+(i-2)l_k+j) \\
	& = S_{m_k+(i-2)l_k+j}
	\end{align*}
	where the first equality follows from Proposition \ref{prop:basic Nakayama results2} (a) and the second equality from Proposition \ref{prop:AR quiver of Nakayama}. Since there is a unique arrow $(i,j)\to (i-1,j)$ in $Q^k$, we see that the arrows of $Q^k$ coincide with the arrows of the resolution quiver when $i>1$.  Finally, if $i=1$ then the projective cover of $S_{m_k+j}$ is 
	\[
	P_{m_k+j}=M(m_k,m_k+j)
	\]
	by Proposition \ref{prop:homogeneous Nakayama AR-quiver} (a) and Proposition \ref{prop:basic Nakayama results2} (b), and so we get
	\[
	\tau \soc P_{m_k+j}= \tau M(m_k,m_k) \cong 0 
	\]
	where the last isomorphism follows from the fact that $M(m_k,m_k)$ is projective. Since $(1,j)$ is a sink in $Q^k$, we conclude that $Q^k$ is the resolution quiver of $\La_k$.
\end{proof}

We now compute the resolution quiver of $\tilde{\La}$.  
\begin{lemma}\label{lem:Resolution quiver}
 Let $Q$ denote the quiver obtained as follows:
 \begin{enumerate}
 \item First take the disjoint union 
 \[
 \bigcup_{k=1}^r\bigcup_{j=0}^{l_k-1} Q^{k,j}
 \] 
 where $Q^{k,j}$ is a linearly oriented $A_{\frac{n}{2}}$-quiver for all $k$ and $j$.
 \item Then	add an arrow from the sink of $Q^{k+1,0}$ to the source of $Q^{k,0}$ for all $1\leq k\leq r$ (where $Q^{r+1,0}:=Q^{1,0}$).
 \item Finally, add an arrow from the sink of $Q^{k+1,j}$ to the source of $Q^{k,l_{k}-1}$ for all $1\leq k\leq r$ and $0<j<l_{k+1}$ (where $Q^{r+1,j}:=Q^{1,j}$).
 \end{enumerate} 
Then $Q$ is the resolution quiver of $\tilde{\La}$, where the $i$th vertex of $Q^{k,j}$ corresponds to the simple $\tilde{\La}$-module $\tilde{M}(m_k+(i-1)l_k+j,m_k+(i-1)l_k+j)$.
\end{lemma}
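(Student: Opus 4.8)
The plan is to compute the resolution quiver $R(\tilde{\La})$ directly from the definition, building on the description of $R(\La_k)$ in Lemma~\ref{lem:ResolutionQuiverAcyclicNakayama} and the self-gluing dictionary of Proposition~\ref{prop:basic self-gluing results}. First I would record that, by Proposition~\ref{prop:basic self-gluing results}(a), the simple $\tilde{\La}$-modules are exactly the $\tilde{M}(s,s)$ with $m_1\le s\le m_r+m_{r+1}-m_r$ (identifying $\tilde{M}(m_1,m_1)=\tilde{M}(m_{r+1},m_{r+1})$ since after self-gluing these coincide). Relabelling according to which block $\La_k$ the index $s$ falls in, every simple is of the form $\tilde{M}(m_k+(i-1)l_k+j,\,m_k+(i-1)l_k+j)$ for unique $k$, $0\le j\le l_k-1$ and $1\le i\le \tfrac n2$ (using $m_{k+1}-m_k=\tfrac n2 l_k$), which establishes the claimed bijection between vertices of $Q$ and simples of $\tilde{\La}$.

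Next I would compute the arrows. Fix a vertex corresponding to $S=\tilde{M}(s,s)$ with $s=m_k+(i-1)l_k+j$. By the definition of $R(\tilde{\La})$ there is an arrow $S\to \tau\soc P_S$, where $P_S$ is the projective cover of $S$. The key point is that almost all of this computation can be transported from $\mo{\La}$: by Proposition~\ref{prop:basic self-gluing results}(b),(d), as long as the relevant modules are not $M(m_1,m_1)$, the projective cover, socle, and Auslander--Reiten translate of $\tilde{M}(s,s)$ are the self-gluings of the corresponding objects for $\La$, and moreover (since $\La=\La_1\glue\cdots\glue\La_r$) of the corresponding objects for $\La_k$, via Proposition~\ref{prop:basic gluing results}(b),(d). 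So for $s$ not the left endpoint $m_k$ of its block, the computation in Lemma~\ref{lem:ResolutionQuiverAcyclicNakayama} applies verbatim and gives the arrow $(i,j)\to(i-1,j)$ inside $Q^{k,j}$ (for $i>1$), i.e.\ $\tilde{M}(s,s)\to\tilde{M}(s-l_k,s-l_k)$. The only vertices that need separate treatment are the left endpoints $s=m_k$ (equivalently $i=1$, $j=0$) and $s=m_k+j$ with $0<j<l_k$ (i.e.\ $i=1$, $j>0$), because there $P_{S}$ in $\mo{\La_k}$ was $M(m_k,m_k+j)$ whose socle's $\tau$ vanished in $\mo{\La_k}$ only because $M(m_k,m_k)$ is $\La_k$-projective --- but $M(m_k,m_k)$ is \emph{not} $\tilde{\La}$-projective (nor $\La$-projective when $k\ge 2$; and for $k=1$ it is not even a $\tilde{\La}$-module issue, rather $M(m_1,m_1)=M(m_{r+1},m_{r+1})$ is glued). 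Here I would use Remark~\ref{rem:the extra computations in self-gluing}(b) together with $\gldim\tilde\La=\infty$: the projective cover of $\tilde{M}(m_k,m_k+j)$ pulled back around the cycle has socle $\tilde{M}(m_{k}-1+\text{(shift)},\ldots)$ landing in block $k-1$, and one computes $\tau\soc P_{\tilde{M}(m_k+j,m_k+j)}$ equals $\tilde M$ at the index $m_k+j-1$ when $j>0$ (which is the sink of $Q^{k-1,\,l_{k-1}-1}$ shifted appropriately, giving the step (3) arrows) and at the index $m_{k}-1=m_k+m_k-m_k\ldots$, i.e.\ the bottom of block $k-1$'s $j=0$ strand, giving the step (2) arrows from the sink of $Q^{k+1,0}$ to the source of $Q^{k,0}$ after reindexing.

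The main obstacle, and the place I would spend the most care, is the bookkeeping at these "wrap-around" vertices: one must correctly chase $\tau\soc$ of the projective cover across the glued simple $\tilde{M}(m_k,m_k)$, keeping track of which $l_{k+1}$ governs the step size on the incoming side versus which $l_k$ governs it on the outgoing side --- this is exactly why the statement distinguishes case (2) ($j=0$ strands, step $l_k$ on both sides matching up head-to-tail around the full cycle) from case (3) (the $j>0$ strands of block $k+1$ feeding into the \emph{last} strand $l_k-1$ of block $k$). Concretely I would verify: (i) each $Q^{k,j}$ strand is correctly a linear $A_{n/2}$-quiver (using $m_{k+1}-m_k=\tfrac n2 l_k$ so exactly $\tfrac n2$ vertices have a fixed residue $j$ mod $l_k$ in block $k$); (ii) the unique outgoing arrow from each source is as described; (iii) no vertex is a sink in $\tilde{\La}$ (consistent with $\gldim\tilde\La=\infty$, so every strand's sink must receive its outgoing arrow from step (2) or (3)). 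Once these are checked vertex-by-vertex against the definition of the resolution quiver, the identification $R(\tilde\La)=Q$ follows, and I would close by remarking that this is purely a direct computation with Proposition~\ref{prop:basic self-gluing results}, Proposition~\ref{prop:basic gluing results}, Lemma~\ref{lem:ResolutionQuiverAcyclicNakayama}, and Remark~\ref{rem:the extra computations in self-gluing}, with no additional input needed.
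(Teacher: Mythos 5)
Your overall strategy -- fix the vertex bijection, transport the computation of $\tau\soc$ of projective covers for the ``interior'' simples from Lemma~\ref{lem:ResolutionQuiverAcyclicNakayama} via Propositions~\ref{prop:basic gluing results} and~\ref{prop:basic self-gluing results}, and then handle the block-boundary simples $\tilde{M}(m_k+j,m_k+j)$, $0\le j\le l_k-1$, by a separate computation -- is essentially the same as the paper's; the paper merely packages the boundary behaviour as general rules for how resolution quivers transform under gluing and self-gluing and then applies them. (Two small side points: the appeal to $\gldim\tilde{\La}=\infty$ is not what guarantees that every vertex has an outgoing arrow -- what is needed is that no simple $\tilde{\La}$-module is projective, which holds because $\tilde{\La}$ is cyclic, so $\soc P_s$ is never projective and $\tau\soc P_s\neq 0$; and your vertex count per strand, $\tfrac n2$ vertices of a fixed residue $j$ modulo $l_k$ in $[m_k,m_{k+1})$, is correct.)

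The genuine problem is that the boundary computations you actually state -- exactly the place you identify as the crux -- are wrong, and as written they would yield a quiver different from $Q$. For $s=m_k+j$ with $0<j<l_k$, the gluing relation at $m_k$ forces $\lmax{s}=m_k$, so $P_s=\tilde{M}(m_k,m_k+j)$, $\soc P_s=\tilde{M}(m_k,m_k)$ and $\tau\soc P_s=\tilde{M}(m_k-1,m_k-1)$: the target of the arrow is at index $m_k-1$, not $m_k+j-1$ as you claim, and since $m_k-1=m_{k-1}+(\tfrac n2-1)l_{k-1}+(l_{k-1}-1)$ it is the \emph{source} of $Q^{k-1,l_{k-1}-1}$, not its sink (these are the step-(3) arrows; the sink of $Q^{k-1,l_{k-1}-1}$ sits at $m_{k-1}+l_{k-1}-1$, which is a different vertex whenever $n>2$). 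For $s=m_k$, the projective cover reaches $l_{k-1}-1$ steps into block $k-1$: $P_s=\tilde{M}(m_k-l_{k-1}+1,m_k)$, hence $\tau\soc P_s=\tilde{M}(m_k-l_{k-1},m_k-l_{k-1})$, whose index is $m_k-l_{k-1}$, not $m_k-1$ as you write; this vertex is indeed the source of $Q^{k-1,0}$, so your named target for the step-(2) arrows is right but your index is not (in effect the index you assign to the $j=0$ case is the correct answer for the $j>0$ case). Once these two computations are corrected -- and the wrap-around boundary $m_1\equiv m_{r+1}$ is checked with Remark~\ref{rem:the extra computations in self-gluing}, which works identically -- your argument goes through and coincides with the paper's proof.
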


\begin{proof}
By Lemma \ref{lem:ResolutionQuiverAcyclicNakayama} we know the resolution quiver of $\La_k$. Hence, we only need to determine the effect on resolution quivers for gluing and self-gluing of acyclic Nakayama algebras. To this end, let $\La'$ and $\La''$ be two acyclic Nakayama algebras with resolution quivers $Q'$ and $Q''$, respectively. Then, by Proposition \ref{prop:basic gluing results} the resolution quiver of $\La' \glue \La''$ is given as follows:
\begin{enumerate}
\item First take the disjoint union $Q'\cup Q''$.
\item Then identify the unique vertex of $Q'$ corresponding to a simple injective module with the unique vertex of $Q''$ corresponding to a simple projective module.
\item Finally add an arrow from each non-projective sink vertex of $Q''$ to the vertex of $Q'$ corresponding to the $\tau$-translate of the simple injective $\La'$-module.
\end{enumerate}  
Similarly, by Proposition \ref{prop:basic self-gluing results} the resolution quiver of the self-gluing of $\La'$ is obtained as follows:
\begin{enumerate}
\item First identify the unique vertex of $Q'$ corresponding to a simple injective module with the unique vertex of $Q'$ corresponding to a simple projective module.
\item Then add an arrow from each non-projective sink vertex of $Q'$ to the vertex of $Q'$ corresponding to the $\tau$-translate of the simple injective module.
\end{enumerate} 

We want to apply these constructions to obtain the resolution quiver of the self-gluing $\tilde{\La}$ of $\La_1 \glue \cdots \glue\La_r$ from the resolution quivers $Q^k$ of the algebras $\La_k$. Similar to the proof of Lemma \ref{lem:ResolutionQuiverAcyclicNakayama}, we let a vertex in $Q^k$ be denoted by a pair $(i,j)$ where $0\leq j\leq l_k-1$ and $i$ is a vertex of $\tilde{Q}^{k,j}$. With this notation we see that the unique vertex of $Q^k$ corresponding to the simple projective module $M(m_k,m_k)$ is $(1,0)$, and the unique vertex corresponding to the simple injective module $M(m_{k+1},m_{k+1})$ is $(\frac{n}{2}+1,0)$. Using these observations and the description of the resolution quiver of gluings and self-gluings given above, we get that the resolution quiver of $\tilde{\La}$ is obtained as follows:
\begin{enumerate}
\item First take the disjoint union $\bigcup_{k=1}^rQ^k$.
\item Then identify the vertex $(1,0)$ of $Q^{k+1}$ with the vertex $(\frac{n}{2}+1,0)$ of $Q^{k}$ for each $1\leq k\leq r$ (where $Q^{r+1}:=Q^1$).
\item Finally, add an arrow from the vertex $(1,j)$ of $Q^{k+1}$ to the vertex $(\frac{n}{2},l_k-1)$ of $Q^k$ for each $1\leq k\leq r$ and $0<j<l_{k+1}$. 
\end{enumerate}  
This is precisely the quiver $Q$ described in the lemma, which proves the claim.
\end{proof}

\begin{example}\label{ex:resolution quiver}
Let $\La=\La_1\glue\La_2\glue\La_3$ where $\La_1=\K A_{[1,7]}/R^3$, $\La_2=\K A_{[7,11]}/R^2$ and $\La_3=\K A_{[11,15]}/R^2$ (see also Example~\ref{ex:non-homogeneous acyclic case}). Let $\tilde{\La}$ be the self-gluing of $\La$. Then the resolution quiver $\tilde{Q}$ of $\tilde{\La}$ is the quiver
\[
\begin{tikzpicture}[scale=1, transform shape, baseline={(current bounding box.center)}]
    
    \node (S13) at (0,1) {$S_{13}$};
    \node (S11) at (1,1) {$S_{11}$};
    \node (S9) at (2,1) {$S_9$};
    \node (S7) at (3,1) {$S_7$};
    \node (S4) at (4,1) {$S_4$};
    \node (S1) at (5,1) {$S_1$};
    
    \draw[->] (S13) -- (S11);
    \draw[->] (S11) -- (S9);
    \draw[->] (S9) -- (S7);
    \draw[->] (S7) -- (S4);
    \draw[->] (S4) --(S1);
    \draw[->] (S1) to [out=160,in=20] (S13);
    
    \node (S5) at (0,0) {$S_5$};
    \node (S2) at (1,0) {$S_2$};
    \node (S14) at (2,0) {$S_{14}$};
    \node (S12) at (3,0) {$S_{12}$};
    \node (S10) at (4,0) {$S_{10}$};
    \node (S8) at (5,0) {$S_8$};
    \node (S6) at (6,0) {$S_6$};
    \node (S3) at (7,0) {$S_3$\nospacepunct{.}};
    
    \draw[->] (S5) -- (S2);
    \draw[->] (S2) -- (S14);
    \draw[->] (S14) -- (S12);
    \draw[->] (S12) -- (S10);
    \draw[->] (S10) -- (S8);
    \draw[->] (S8) -- (S6);
    \draw[->] (S6) -- (S3);
    \draw[->] (S3) to [out=160,in=20] (S14);
    \end{tikzpicture}
\]

\end{example}

We obtain the following result which describes the cyclic simple modules for a cyclic non-homogeneous Nakayama algebra with an $n\ZZ$-cluster tilting subcategory.

\begin{lemma}\label{lem:cyclic simple modules}
 Let $\tilde{M}(i,i)$ be a simple $\tilde{\La}$-module where $m_k\leq i<m_{k+1}$ for some $1\leq k\leq r$. Then $\tilde{M}(i,i)$ is cyclic if and only if 
\[
i\equiv m_k \modulo{l_k} \quad \text{or} \quad i\equiv m_k-1 \modulo{l_k}.
\]
\end{lemma}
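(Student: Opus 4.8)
The strategy is to read off the answer directly from the explicit description of the resolution quiver $Q$ of $\tilde{\La}$ obtained in Lemma~\ref{lem:Resolution quiver}. Recall that a simple $\tilde{\La}$-module is cyclic precisely when the corresponding vertex of $R(\tilde{\La}) = Q$ lies on a cycle. So the task reduces to identifying which vertices of $Q$ lie on a cycle. By construction $Q$ is a disjoint union of the linearly oriented $A_{n/2}$-quivers $Q^{k,j}$ (for $1 \le k \le r$, $0 \le j \le l_k-1$), together with extra arrows of two types: an arrow from the sink of $Q^{k+1,0}$ to the source of $Q^{k,0}$, and an arrow from the sink of $Q^{k+1,j}$ to the source of $Q^{k,l_k-1}$ for $0 < j < l_{k+1}$.

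First I would determine which vertices lie on cycles in $Q$. The arrows of type (2) link up $Q^{1,0}, Q^{2,0}, \dots, Q^{r,0}$ cyclically (sink of $Q^{k+1,0}$ to source of $Q^{k,0}$, with indices mod $r$), so the union $\bigcup_{k=1}^r Q^{k,0}$ forms a single oriented cycle; hence \emph{every} vertex of each $Q^{k,0}$ is cyclic. For the components $Q^{k,j}$ with $j \ne 0$: the arrows of type (3) only ever \emph{enter} such a component at its source and \emph{leave} it from its sink, but they leave it going into $Q^{k-1,l_{k-1}-1}$ (or into $Q^{k,0}$-type components), never returning. More precisely, the only way a walk in $Q$ can revisit a $Q^{k,j}$ with $j\neq 0$ is if it forms a closed loop using only type-(3) arrows and the internal arrows of the $Q^{k,j}$'s; but each type-(3) arrow goes from $Q^{k+1,j}$ (with $j\neq 0$) to $Q^{k,l_k-1}$, and one checks the target index $l_k - 1$ is nonzero only when $l_k \ge 2$ — and following such arrows the second coordinate is forced to stay at $l_k-1$ while the first coordinate $k$ strictly decreases mod $r$, so no cycle is closed unless it passes through a $j=0$ component, which it cannot re-enter. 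I expect this combinatorial verification — that the $j \ne 0$ components contribute no cyclic vertices — to be the main obstacle, though it is genuinely routine once the arrow structure is laid out; one just has to be careful tracking which indices type-(3) arrows connect.

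Next I would translate ``the vertex lies in some $Q^{k,0}$'' back into a congruence on $i$. By the indexing convention in Lemma~\ref{lem:Resolution quiver}, the $i'$th vertex of $Q^{k,j}$ corresponds to the simple $\tilde{\La}$-module $\tilde{M}(m_k + (i'-1)l_k + j,\, m_k + (i'-1)l_k + j)$. So a simple $\tilde{M}(i,i)$ with $m_k \le i < m_{k+1}$ corresponds to a vertex of $Q^{k,j}$ where $j \equiv i - m_k \pmod{l_k}$ is the unique representative with $0 \le j \le l_k-1$. This vertex lies in the cyclic part $\bigcup Q^{k,0}$ exactly when $j = 0$, i.e. $i \equiv m_k \pmod{l_k}$. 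Here I must double-check one boundary subtlety: because of the self-gluing, $\tilde{M}(m_{k+1}, m_{k+1})$ is identified with $\tilde{M}(m_k, m_k)$-type simples in the gluing, so the sink $(\tfrac n2+1, 0)$ of $Q^{k,0}$ gets identified with the source $(1,0)$ of $Q^{k+1,0}$; this means the simple $\tilde M(m_{k+1},m_{k+1})$ — which would naively be ``$i = m_{k+1}$'' outside our range $m_k \le i < m_{k+1}$ — actually sits as a vertex of $Q^{k+1,0}$, and is therefore cyclic. This is why the statement includes the second alternative: writing $i' = i$ relative to the \emph{next} block, the condition $i \equiv m_k - 1 \pmod{l_k}$ picks up precisely the sink vertex $(\tfrac n2 + 1, 0)$ of $Q^{k,0}$ when it is described from the $Q^{k,j}$ side with the $j = l_k - 1$-shifted... wait — more carefully, $i \equiv m_k - 1 \pmod{l_k}$ together with $m_k \le i < m_{k+1}$ means $i = m_k + tl_k - 1$ for some $t \ge 1$, so $i + 1 = m_k + tl_k \equiv m_k \pmod{l_k}$, i.e. the \emph{successor} simple $\tilde M(i+1,i+1)$ lies in $Q^{k,0}$; but $\tilde M(i,i)$ itself is then the predecessor of a $Q^{k,0}$-vertex under the arrow going \emph{into} the source of $Q^{k,0}$...

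Let me instead phrase the final step cleanly: I would show directly that $\tilde M(i,i)$ is cyclic iff $\tilde M(i,i)$ is a vertex of one of the components forming the big cycle, and that the vertices of $\bigcup_{k=1}^r Q^{k,0}$ (after the self-gluing identifications) are exactly the simples $\tilde M(i,i)$ with $i \equiv m_k \pmod{l_k}$ for $m_k \le i \le m_{k+1}$ — where the endpoint $i = m_{k+1} \equiv m_{k+1} \pmod{l_{k+1}}$ overlaps with the next block. Unwinding, within the half-open range $m_k \le i < m_{k+1}$ the cyclic simples are those with $i \equiv m_k \pmod{l_k}$, \emph{plus} the one with $i = m_{k+1}-1$ in the case... no: $i = m_{k+1}-1$ need not satisfy $i \equiv m_k - 1 \pmod{l_k}$ unless $l_k \mid m_{k+1}-m_k$, which we established holds. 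So indeed $i = m_{k+1}-1 \equiv m_k - 1 \pmod{l_k}$, and this simple $\tilde M(m_{k+1}-1, m_{k+1}-1)$ is cyclic because its image under one more arrow is $\tilde M(m_{k+1},m_{k+1}) = $ a source of $Q^{k+1,0}$, which by the self-gluing arrow structure sits on the cycle — but one must check the arrow $\tilde M(m_{k+1}-1,m_{k+1}-1) \to \tilde M(m_{k+1},m_{k+1})$ is itself part of a cycle, i.e. that $\tilde M(m_{k+1}-1,m_{k+1}-1)$ receives an arrow from within the cycle. I would verify this last point using Lemma~\ref{lem:ResolutionQuiverAcyclicNakayama}: the vertex $\tilde M(m_{k+1}-1, m_{k+1}-1)$ corresponds, in the block-$k$ picture, to the sink of $\tilde Q^{k, l_k-1}$ if $l_k \ge 2$ (so it is a \emph{sink} of a $j\ne 0$ component and hence \emph{not} on a cycle when $l_k\ge2$!) — so actually the second congruence $i \equiv m_k - 1$ should be interpreted via the \emph{next} block: $i \equiv m_k - 1 \pmod{l_k}$ with $l_k \mid m_{k+1}-m_k$ forces $i \equiv m_{k+1} - 1 \pmod{l_{k+1}}$... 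I would carefully resolve this indexing circle by computing, for a given $i$ with $m_k \le i < m_{k+1}$, the position of $\tilde M(i,i)$ simultaneously as a vertex indexed from block $k$ and (when $i$ is near $m_{k+1}$) the arrows of $R(\tilde\La)$ landing on it, and conclude that $\tilde M(i,i)$ is cyclic iff $i \equiv m_k \pmod{l_k}$ or $i \equiv m_k - 1 \pmod{l_k}$. The bulk of the work is this bookkeeping; no homological input beyond Lemmas~\ref{lem:ResolutionQuiverAcyclicNakayama} and~\ref{lem:Resolution quiver} and the divisibility $l_k \mid m_{k+1}-m_k$ is needed.
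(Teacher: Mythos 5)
Your reduction of the lemma to ``which vertices of the resolution quiver $Q$ lie on a cycle'' is the same first step as the paper's proof, and your treatment of the components $Q^{k,0}$ is correct. The genuine gap is your claim that the components $Q^{k,j}$ with $j\neq 0$ contribute no cyclic vertices. That is false for $j=l_k-1$: by construction (3) in Lemma~\ref{lem:Resolution quiver} there is, for every $k$, an arrow from the sink of $Q^{k+1,l_{k+1}-1}$ to the source of $Q^{k,l_k-1}$ (indices taken mod $r$), so the components $Q^{1,l_1-1},\dots,Q^{r,l_r-1}$ are strung together into an oriented cycle, exactly as the $Q^{k,0}$ are by the arrows of type (2). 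Your argument that ``the first coordinate strictly decreases mod $r$, so no cycle is closed'' is precisely where this goes wrong: decreasing modulo $r$ returns to the starting component after $r$ steps, so the loop closes without ever meeting a $j=0$ component. This second cycle is what accounts for the congruence class $i\equiv m_k-1\pmod{l_k}$: the $i'$-th vertex of $Q^{k,l_k-1}$ is the simple $\tilde{M}(m_k+i'l_k-1,m_k+i'l_k-1)$, and these exhaust the simples with $i\equiv m_k-1\pmod{l_k}$ in $[m_k,m_{k+1})$ --- an entire $A_{\frac{n}{2}}$-component's worth of vertices, not a single boundary vertex.

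Because of this, your attempt to recover the second congruence class through boundary effects of the self-gluing cannot succeed and indeed ends unresolved: you explicitly conclude at one point that the sink of $Q^{k,l_k-1}$, i.e.\ $\tilde{M}(m_{k+1}-1,m_{k+1}-1)$, is ``not on a cycle when $l_k\ge 2$'', which contradicts the lemma, and there is no identification of simples of the kind you invoke (the only identification coming from the self-gluing is $\tilde{M}(m_{r+1},m_{r+1})=\tilde{M}(m_1,m_1)$, already built into $Q$; also the vertex $(\frac{n}{2}+1,0)$ you refer to exists only in the intermediate quivers $Q^k$ of Lemma~\ref{lem:ResolutionQuiverAcyclicNakayama}, not in $Q$). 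The paper's argument is the corrected version of yours: every vertex of $\bigcup_k Q^{k,0}\cup\bigcup_k Q^{k,l_k-1}$ has a unique predecessor and a unique successor inside that union, so the union is a disjoint union of cycles and all its vertices are cyclic, while for $0<j<l_k-1$ every vertex of $Q^{k,j}$ is either a source of $Q$ or has its unique predecessor inside the acyclic quiver $Q^{k,j}$, hence lies on no cycle. As written, your proposal establishes neither direction of the lemma for the class $i\equiv m_k-1\pmod{l_k}$; with the correction above, your translation into congruences does go through.
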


\begin{proof}
Assume $i\equiv m_k \text{ or } i\equiv m_k-1 \modulo{l_k}$. Then $\tilde{M}(i,i)$ corresponds to a vertex in $\bigcup_{k'}Q^{k',0}\cup\bigcup_{k'}Q^{k',l_{k'}-1}$, using the notation of Lemma \ref{lem:Resolution quiver}. From the description of the resolution quiver $Q$ in Lemma \ref{lem:Resolution quiver}, we see that any vertex in $\bigcup_{k'}Q^{k',0}\cup\bigcup_{k'}Q^{k',l_{k'}-1}$ has a unique predecessor and a unique successor in $\bigcup_{k'}Q^{k',0}\cup\bigcup_{k'}Q^{k',l_{k'}-1}$. Since $\bigcup_{k'}Q^{k',0}\cup\bigcup_{k'}Q^{k',l_{k'}-1}$ has finitely many vertices, it follows that $\bigcup_{k'}Q^{k',0}\cup\bigcup_{k'}Q^{k',l_{k'}-1}$ is a union of cycles. This shows that $\tilde{M}(i,i)$ is cyclic. 

Conversely, assume $i\not\equiv m_k \text{ and } i\not\equiv m_k-1 \modulo{l_k}$. Then $\tilde{M}(i,i)$ corresponds to a vertex in $Q^{k,j}$ where $j\neq 0$ and $j\neq l_k-1$. From the description of the resolution quiver $Q$ in Lemma \ref{lem:Resolution quiver}, we see that any vertex in $Q^{k,j}$ is either a source vertex or has a unique predecessor which is in $Q^{k,j}$. Since $Q^{k,j}$ is a linearly oriented $A_{\frac{n}{2}}$-quiver, it contains no cycles. This shows that $\tilde{M}(i,i)$ is not cyclic.
\end{proof}

\begin{example}\label{ex:cyclic simple modules}
Let $\tilde{\La}$ be as in Example~\ref{ex:resolution quiver}. Notice that $l_2=l_3=2$. By Lemma~\ref{lem:cyclic simple modules} it follows that for every $m_2\leq i \leq m_4$ we have that $\tilde{M}(i,i)$ is cyclic. Since $l_1=3$, again by Lemma~\ref{lem:cyclic simple modules} we conclude that the only non-cyclic simple modules are $\tilde{M}(2,2)$ and $\tilde{M}(5,5)$. The resolution quiver $\tilde{Q}$ of $\tilde{\La}$ computed in Example~\ref{ex:resolution quiver} confirms our computation.
\end{example}

We can now determine the subcategories $\cX_c$ and $\cF=\add \cX_c$ of $\mo {\tilde{\La}}$ described in subsection \ref{subsection:SingCatNakayama}.

\begin{theorem}\label{thm:Indecomp in F}
The subcategory $\cX_c$ consists of the following types of indecomposables $\tilde{\La}$-modules, where we run through all pairs of integers $(k,i)$ where $1\leq k\leq r$ and where $m_k\leq i< m_{k+1}$ and $i\equiv m_k\modulo{l_k}$:
\begin{enumerate}
\item $\tilde{M}(i,i)$.
\item $\tilde{M}(i+1,i+l_k-1)$.
\item $\tilde{M}(i,i+l_k-1)$.
\item $\tilde{M}(i+1,i+l_k)$.
\end{enumerate}
Furthermore, a module $\tilde{M}(i,j)$ is projective in $\cF$ if and only if it is of type $(3)$ or $(4)$.	
\end{theorem}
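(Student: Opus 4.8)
The plan is to identify $\cX_c$ directly from its definition: $\tilde M(i,j)\in\cX_c$ precisely when $\topp\tilde M(i,j)$ and $\tau\soc\tilde M(i,j)$ are both simple cyclic $\tilde\La$-modules. By Proposition~\ref{prop:basic Nakayama results2}(a), $\topp\tilde M(i,j)=\tilde M(j,j)$ and $\soc\tilde M(i,j)=\tilde M(i,i)$, so $\tau\soc\tilde M(i,j)=\tilde M(i-1,i-1)$ by Proposition~\ref{prop:AR quiver of Nakayama}. Thus the membership condition becomes: both $\tilde M(j,j)$ and $\tilde M(i-1,i-1)$ are cyclic simples. Now I would feed in the characterization of cyclic simples from Lemma~\ref{lem:cyclic simple modules}: writing $m_{k'}\le t<m_{k'+1}$, the simple $\tilde M(t,t)$ is cyclic iff $t\equiv m_{k'}$ or $t\equiv m_{k'}-1\pmod{l_{k'}}$. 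So I must determine for which $\tilde M(i,j)\in\mo{\tilde\La}$ both $j$ and $i-1$ land in one of these two residue classes of their respective blocks.

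The combinatorial core is then a finite case analysis. Suppose $\tilde M(i,j)\in\mo{\tilde\La}$ lies (with its basis vectors) in the block determined by $\La_k$, i.e. $m_k\le i\le j\le m_{k+1}$ — recall $M(i,j)\in\mo{\tilde\La}$ forces $j-i\le l_k-1$ since within a homogeneous piece the relation is $R^{l_k}$, and a nonsimple indecomposable straddling the gluing vertex would violate $\alpha_{m_k}\alpha_{m_k+1}=0$. Having both $\tilde M(i-1,i-1)$ and $\tilde M(j,j)$ cyclic, combined with $0\le j-i\le l_k-1$, pins down the possibilities: one checks $i\equiv m_k\pmod{l_k}$ (so that $i-1\equiv m_k-1$, hence $\tilde M(i-1,i-1)$ cyclic) and then $j\in\{i,i+l_k-1,i+l_k\}$ are exactly the values of $j$ with $j-i\le l_k$ (I should double-check whether $j=i+l_k$ truly gives a module — it does, since then $\tilde M(i,j)$ is the projective $P_{i+l_k-1}$... here one must be careful that within $\tilde\La$ the projective cover has length $l_k$, giving $M(m_k,m_k+l_k-1)$-type modules, i.e. length exactly $l_k$, so $j=i+l_k$ is actually $j=i+l_k-1$ shifted; I will verify the endpoints against Proposition~\ref{prop:homogeneous Nakayama AR-quiver}(a) and the gluing description in Proposition~\ref{prop:basic gluing results}) yielding cases (1)--(4) as claimed. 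I would also observe that the alternative $i-1\equiv m_k$ (i.e. $i\equiv m_k+1$) gets absorbed into these by the block structure, so no further families appear. The main obstacle I anticipate is bookkeeping the boundary behaviour at the gluing vertices $m_k$: a module $\tilde M(i,j)$ with $i\equiv m_k\pmod{l_k}$ and $j=i+l_k$ might span into the next block, and one must confirm via Lemma~\ref{lem:Resolution quiver} (and the explicit $Q^{k,j}$ description) that its top $\tilde M(j,j)$ still corresponds to a cyclic vertex — which it does, since $j\equiv m_k\equiv m_{k+1}\pmod{\gcd}$... more precisely $j=i+l_k$ lands in block $k+1$ at residue $\equiv m_{k+1}$ (using $m_{k+1}-m_k\equiv 0\pmod{l_k}$ from Proposition~\ref{prop:homogeneous acyclic Nakayama with nZ-ct}), hence cyclic.

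For the projectivity statement in $\cF$: since $\cF$ is a Frobenius abelian category by Theorem~\ref{thrm:shen}(2), its projective objects coincide with its injective objects, and I would argue that a module of type (3) or (4) is already projective (resp. injective) in $\mo{\tilde\La}$ — type (3), $\tilde M(i,i+l_k-1)$, is the projective cover $P_{i+l_k-1}$ by Proposition~\ref{prop:homogeneous Nakayama AR-quiver}(a) and Proposition~\ref{prop:basic Nakayama results2}(b), and type (4), $\tilde M(i+1,i+l_k)$, is likewise projective-or-injective in the corresponding block; a projective $\tilde\La$-module lying in the wide subcategory $\cF$ remains projective there. Conversely, for types (1) and (2) I would exhibit a non-split short exact sequence inside $\cF$ having such a module as a quotient (or subobject): the modules of types (1)--(4) attached to a fixed $k$ fit into exact sequences $0\to\tilde M(i,i)\to\tilde M(i,i+l_k-1)\to\tilde M(i+1,i+l_k-1)\to 0$ (type (3) as the middle, with types (1),(2) as the ends) and similarly $0\to\tilde M(i+1,i+l_k-1)\to\tilde M(i+1,i+l_k)\to\tilde M(i+l_k,i+l_k)\to 0$, all of which lie in $\cF$ by part (1) of Theorem~\ref{thrm:shen}; non-splitness then shows (1) and (2) are neither projective nor injective in $\cF$. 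The delicate point is checking these sequences genuinely live in $\cF$ (each term must be in $\cX_c$) and are non-split, but both follow from the submodule/quotient structure of uniserial Nakayama modules recorded in Proposition~\ref{prop:basic Nakayama results2}.
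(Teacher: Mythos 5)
Your strategy for the first half is the same as the paper's: reduce membership in $\cX_c$ to cyclicity of $\topp\tilde{M}(i,j)\cong\tilde{M}(j,j)$ and $\tau\soc\tilde{M}(i,j)\cong\tilde{M}(i-1,i-1)$ and then apply Lemma~\ref{lem:cyclic simple modules}; and your second half (types (3),(4) are projective over $\tilde{\La}$, hence projective in $\cF$, while types (1),(2) admit non-split epimorphisms from type (4) resp.\ type (3) modules inside $\cF$) is essentially identical to the paper's argument and is fine. The problem is the combinatorial core of the first half: your case analysis, as written, does not produce the stated list. Cyclicity of $\tilde{M}(i-1,i-1)$ allows \emph{two} residues for the left endpoint, $i\equiv m_k$ or $i\equiv m_k+1\modulo{l_k}$. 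You keep only $i\equiv m_k$ and dismiss $i\equiv m_k+1$ as ``absorbed,'' but that second case is precisely what yields types (2) and (4), whose left endpoints are $\equiv m_k+1$; it cannot be discarded. In its place you enumerate $j\in\{i,i+l_k-1,i+l_k\}$ with left endpoint $\equiv m_k$, and the value $j=i+l_k$ gives $\tilde{M}(i,i+l_k)$, which is not a $\tilde{\La}$-module at all: it has length $l_k+1$, contradicting the homogeneous relations $R^{l_k}$ of the $k$th block (and contradicting the bound $j-i\le l_k-1$ that you yourself state earlier). Your attempted repair does not work either: $P_{i+l_k-1}=\tilde{M}(i,i+l_k-1)$ is the type (3) module, not a new one, and no indecomposable can ``span into the next block,'' since any module straddling a gluing vertex is killed by the relation $\alpha_{m_{k+1}}\alpha_{m_{k+1}+1}$, so the appeal to Lemma~\ref{lem:Resolution quiver} about tops landing in block $k+1$ is moot.

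The correct analysis (and the paper's) is the four-way combination of $i\equiv m_k$ or $m_k+1$ with $j\equiv m_k$ or $m_k-1\modulo{l_k}$, together with $0\le j-i\le l_k-1$: the pair $(m_k,m_k)$ forces $j=i$ (type (1)); $(m_k,m_k-1)$ forces $j=i+l_k-1$ (type (3)); $(m_k+1,m_k-1)$ forces $j-i=l_k-2$ (type (2)); and $(m_k+1,m_k)$ forces $j-i=l_k-1$ (type (4)). With this correction your proof goes through; without it, types (2) and (4) are never derived and a non-existent module is included, so the argument as written is incomplete.
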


\begin{proof}
Let $\tilde{M}(i,j)$ be an indecomposable $\tilde{\La}$-module with $m_k\leq i\leq j\leq m_{k+1}$. By definition, we have that $\tilde{M}(i,j)\in \cX_c$ if and only if $\topp \tilde{M}(i,j)$ and $\tau \soc \tilde{M}(i,j)$ are cyclic simple $\tilde{\La}$-modules. Since \[
\topp \tilde{M}(i,j)\cong \tilde{M}(j,j) \quad \text{and} \quad \tau \soc \tilde{M}(i,j)\cong \tilde{M}(i-1,i-1)
\]
by Proposition \ref{prop:AR quiver of Nakayama} and Proposition \ref{prop:basic Nakayama results2} (a), this is equivalent to requiring $\tilde{M}(j,j)$ and $\tilde{M}(i-1,i-1)$ to be cyclic simple $\tilde{\La}$-modules. Now by Lemma \ref{lem:cyclic simple modules} this is equivalent to 
\[
i-1\equiv m_k \text{ or }i-1\equiv m_k-1 \modulo{l_k}.
\]
and
\[
j\equiv m_k  \text{ or }  j\equiv m_k-1 \modulo{l_k}.
\]
Analyzing the different cases for $i$ and $j$, and using that $0\leq j-i\leq l_k-1$, we get the list of indecomposable $\tilde{\La}$-modules given in the theorem.

Finally, we prove that $\tilde{M}(i,j)$ is projective in $\cF$ if and only if is of type $(3)$ or $(4)$. Indeed, since $\rmax i=i+l_k-1$ when $m_k\leq i$ and $i+l_k-1\leq m_{k+1}$, it follows that any module in the set described by $(3)$ or $(4)$ is projective in $\mo \La$ by Proposition \ref{prop:basic Nakayama results2} (b). Hence any such module must also be projective in $\cF$. On the other hand, for any module $\tilde{M}(i+1,i+l_k-1)$ of type $(2)$ we have a non-split epimorphism 
\[
\tilde{M}(i,i+l_k-1)\to \tilde{M}(i+1,i+l_k-1)
\]
where $\tilde{M}(i,i+l_k-1)$ is in the set described by $(3)$. This shows that modules of type $(2)$ are not projective in $\cF$. Similarly, for any module $\tilde{M}(i,i)$ of type $(1)$ where $m_k<i\leq m_{k+1}$ we have a non-split epimorphism 
\[
\tilde{M}(i-l_k+1,i)\to \tilde{M}(i,i)
\]
where $\tilde{M}(i-l_k+1,i)$ is in the set described by $(4)$. This shows that modules of type $(1)$ are not projective in $\cF$.
\end{proof}

We can now describe the singularity category of $\tilde{\La}$, using the subcategory $\cF$ of $\mo {\tilde{\La}}$ which we computed in Theorem \ref{thm:Indecomp in F}.

\begin{corollary}\label{cor:SingCatNonHomog}
There exists an equivalence $\cF\cong \mo \Gamma$ where $\Gamma=k\tilde{A}_m/R^2$ is a homogeneous Nakayama algebra with $m=rn$. In particular, we have an equivalence 
	\[
	D_{\operatorname{sing}}(\tilde{\La})\cong \smo \Gamma
	\]
of triangulated categories.
\end{corollary}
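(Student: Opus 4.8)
The plan is to identify $\cF$ with $\mo\Gamma$ by a Morita-type argument and then read off $\Gamma$ from the list of objects in Theorem~\ref{thm:Indecomp in F}. First I would note that $\cF$ is a $\Hom$-finite $\K$-linear abelian category which is a length category: it is Krull--Schmidt as a subcategory of $\mo\tilde{\La}$, and by Theorem~\ref{thm:Indecomp in F} it has only finitely many indecomposable objects. By Theorem~\ref{thrm:shen}(2) it is Frobenius, so it has enough projectives, and its indecomposable projective objects are, by Theorem~\ref{thm:Indecomp in F}, exactly the modules of types $(3)$ and $(4)$. Letting $P$ be the direct sum of one copy of each of them, $P$ is a projective generator of $\cF$, so the standard projective-generator theorem gives an equivalence $\Hom_{\cF}(P,-)\colon\cF\isom\mo\Gamma$ with $\Gamma=\operatorname{End}_{\cF}(P)^{\mathrm{op}}$. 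It then remains to show $\Gamma\isom\K\tilde{A}_m/R^2$ with $m=rn$.

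Next I would determine the simple objects of $\cF$ and the Loewy structure of its indecomposable projectives. Since $\cF$ is wide in $\mo\tilde{\La}$ (Theorem~\ref{thrm:shen}(1)) it is closed under subobjects and quotients, and as every indecomposable $\tilde{\La}$-module is uniserial, an $\cF$-subobject of $\tilde{M}(i,j)$ has the form $\tilde{M}(i,t)$, which lies in $\cF$ precisely when $\tilde{M}(t,t)$ and $\tilde{M}(i-1,i-1)$ are cyclic; by Lemma~\ref{lem:cyclic simple modules} this is controlled by the residues of $t$ and $i-1$ modulo $l_k$. Carrying out this residue arithmetic I would check that the simple objects of $\cF$ are precisely the modules of types $(1)$ and $(2)$ --- there are $\tfrac{n}{2}$ admissible pairs $(k,i)$ for each $k$ (since $n=2\tfrac{m_{k+1}-m_k}{l_k}$), hence $rn$ such modules --- and that for each admissible pair $(k,i)$ there are short exact sequences in $\cF$
\begin{gather*}
0\to\tilde{M}(i,i)\to\tilde{M}(i,i+l_k-1)\to\tilde{M}(i+1,i+l_k-1)\to 0,\\
0\to\tilde{M}(i+1,i+l_k-1)\to\tilde{M}(i+1,i+l_k)\to\tilde{M}(i+l_k,i+l_k)\to 0,
\end{gather*}
whose outer terms are the simple objects just described and whose middle terms (the type $(3)$ and type $(4)$ projectives) have no further proper $\cF$-subobject. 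Thus each indecomposable projective object of $\cF$ has $\cF$-length $2$, so $\operatorname{rad}^2\Gamma=0$, and the displayed socles determine the Gabriel quiver of $\Gamma$. Tracing the resulting arrows through the $r$ blocks $\La_1,\dots,\La_r$ --- with the wrap-around produced by the self-gluing, using that $\tilde{M}(m_{k+1},m_{k+1})$ is the common type $(1)$ simple of blocks $k$ and $k+1$ --- one sees that this quiver is a single oriented cycle on the $rn$ simples. Hence $\Gamma$ is connected, self-injective and radical-square-zero, so $\Gamma\isom\K\tilde{A}_{rn}/R^2$ and $m=rn$.

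Finally I would pass to the stable categories: the equivalence $\cF\isom\mo\Gamma$ is exact and, being an equivalence of abelian categories with enough projectives, sends projective objects to projective objects; since both $\cF$ and (by Corollary~\ref{cor:SelfinjNakayama}) $\mo\Gamma$ are Frobenius, it induces an equivalence $\underline{\cF}\isom\smo{\Gamma}$ of the associated stable (triangulated) categories. Combining with the equivalence $\underline{\cF}\isom D_{\operatorname{sing}}(\tilde{\La})$ of Theorem~\ref{thrm:shen}(3) then yields $D_{\operatorname{sing}}(\tilde{\La})\isom\smo{\Gamma}$, as claimed.

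The part I expect to be the main obstacle is the bookkeeping in the middle step: correctly matching the four families of indecomposables across the boundaries between consecutive blocks $\La_k,\La_{k+1}$, and across the wrap-around $\La_r\rightsquigarrow\La_1$ introduced by the self-gluing, and making the residue-class computations precise enough to be certain that the $\cF$-submodule lattices of the type $(3)$ and type $(4)$ modules have length exactly $3$. The two general facts invoked (an abelian length category with a projective generator is equivalent to modules over its endomorphism algebra, and an exact equivalence of Frobenius abelian categories descends to the stable categories) are standard, but should be cited.
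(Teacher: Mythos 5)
Your proposal is correct and follows the same overall strategy as the paper: reduce, via a Morita/projectivization argument, to the category of projective objects of $\cF$ computed in Theorem~\ref{thm:Indecomp in F}, identify the resulting endomorphism algebra as $\K\tilde{A}_{rn}/R^2$, and pass to stable categories using Theorem~\ref{thrm:shen}. The one point where you genuinely diverge is how $\Gamma$ is pinned down: the paper enumerates the indecomposable projectives $P_0,\dots,P_{m-1}$ of $\cF$ and computes $\Hom_{\tilde{\La}}(P_i,P_j)$ directly (it is $\K$ for $j=i$ or $j=i+1$ and zero otherwise), concluding $\cP\simeq\add\Gamma$, whereas you determine the simple objects of $\cF$ (types $(1)$ and $(2)$, $rn$ of them) and the $\cF$-composition series of the projectives (length two, via your two short exact sequences), concluding $\operatorname{rad}^2\Gamma=0$ and reading off the Gabriel quiver as a single $rn$-cycle. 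Both computations rest on Lemma~\ref{lem:cyclic simple modules} and Theorem~\ref{thm:Indecomp in F} and carry the same information; your version makes the radical-square-zero structure conceptually visible, while the paper's avoids any analysis of subobject lattices inside $\cF$. One wording slip worth correcting: a wide subcategory is closed under kernels, cokernels and extensions, \emph{not} under arbitrary subobjects and quotients (for instance $\tilde{M}(i,i+1)\subseteq\tilde{M}(i,i+l_k-1)$ is not in $\cF$ when $l_k\geq 4$); what you actually use — that monomorphisms and epimorphisms in $\cF$ agree with those in $\mo{\tilde{\La}}$, so the $\cF$-subobjects of $\tilde{M}(i,j)$ are exactly its $\tilde{\La}$-submodules lying in $\cF$ — is exactly what closure under kernels and cokernels provides, so the argument is unaffected. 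Your final step (an exact equivalence of Frobenius abelian categories preserving projectives, with $\Gamma$ self-injective by Corollary~\ref{cor:SelfinjNakayama}, descends to a triangle equivalence of stable categories) is also how the ``in particular'' clause is obtained in the paper, there left implicit.
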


\begin{proof}
Since $\cF$ is an abelian category with enough projectives, we have that $\cF\cong \mo{\cP^{\operatorname{op}}}$ where $\cP$ is the subcategory of projective objects in $\cF$ and $\mo{\cP^{\operatorname{op}}}$ is the category of finitely presented $k$-linear functors from $\cP^{\operatorname{op}}$ to $\mo k$, see \cite[Proposition 2.3]{Kra}. By Theorem \ref{thm:Indecomp in F} we know that $\cP$ is the additive closure of modules of type $(3)$ and $(4)$. We enumerate the modules in $(3)$ and $(4)$ by letting $t_s=2\sum_{k=1}^{s-1}\frac{m_{k+1}-m_k}{l_k}$ for each $1\leq s\leq r$ and setting
\[ 
P_{t_s+2i}=\tilde{M}(m_{s}+il_{s},m_{s}+(i+1)l_{s}-1) \quad \text{and} \quad P_{t_s+2i+1}=\tilde{M}(m_{s}+il_{s}+1,m_{s}+(i+1)l_{s})
\]
where $0\leq i< \frac{m_{s+1}-m_s}{l_s}$. With this notation we have the indecomposable projective modules $P_0,P_1,\ldots, P_{m-1}$ in $\cF$, since 
\[
2\sum_{k=1}^{r}\frac{m_{k+1}-m_k}{l_k}=2\sum_{k=1}^{r}\frac{n}{2}=rn=m
\]
Now for $0\leq i\leq m-1$ we have that
\[
\Hom_{\tilde{\La}}(P_i,P_j) = \begin{cases} 
k &\mbox{if $j=i$ or $j=i+1$},\\ 
0  &\mbox{otherwise}
\end{cases}
\]
where $P_m:=P_0$. Hence $\cP$ is equivalent to the category $\add \Gamma$ of projective $\Gamma$-modules. Therefore we have that \[
\mo {\cP^{\operatorname{op}}}\cong \mo {(\add \Gamma)^{\operatorname{op}}}\cong \mo {\Gamma^{\operatorname{op}}}\cong \mo \Gamma
\]
where the last equivalence follows from the fact that $\Gamma\cong \Gamma^{\operatorname{op}}$. This proves the claim. 
\end{proof}

\begin{remark}
We note that there can be more $n\ZZ$-cluster tilting subcategories in the singularity category than in the module category. Indeed, by Theorem \ref{thrm:cyclic non-homogeneous case} we know that $\tilde{\La}$ has a unique $n\ZZ$-cluster tilting subcategory. On the other hand, by Corollary \ref{cor:SingCatNonHomog} we know that $D_{\operatorname{sing}}(\tilde{\La})\cong \smo \Gamma$, and hence the singularity category of $\tilde{\La}$ has $n$ different $n\ZZ$-cluster tilting subcategories by Remark \ref{rmk:homogeneous cyclic Nakayama with nZ-ct}. These are precisely the subcategories of $\smo \Gamma$ consisting of a simple module $S$ and its $n$-syzygies $\Omega^{k}(S)$ for $0\leq k\leq r-1$. Under the isomorphism $\underline{\cF}\cong D_{\operatorname{sing}}(\tilde{\La})$, the $n\ZZ$-cluster tilting subcategory of  $D_{\operatorname{sing}}(\tilde{\La})$ given in Theorem \ref{thrm:nZ-CTSingCat} corresponds to the subcategory of $\underline{\cF}$ consisting of the simple modules $\tilde{M}(m_k,m_k)$ for all $1\leq k\leq r$.

On the other hand the different $n\ZZ$-cluster tilting subcategories in $D_{\operatorname{sing}}(\tilde{\La})$ are all related by applying some automorphism of $D_{\operatorname{sing}}(\tilde{\La})$. Based on this observation we pose the following question, to which we have no counter example.
\end{remark}

\begin{question}
Given a finite dimensional algebra $A$ with $\cC \subseteq D_{\operatorname{sing}}(A)$ an $n\ZZ$-cluster tilting subcategory. Does there exist a finite dimensional algebra $B$ together with $\cD \subseteq \mo{B}$ an $n\ZZ$-cluster tilting subcategory and an equivalence $D_{\operatorname{sing}}(B) \to D_{\operatorname{sing}}(A)$ such that the $n\ZZ$-cluster tilting subcategory of $D_{\operatorname{sing}}(B)$ corresponding to $\cD$ is sent to $\cC$?
\end{question}

\begin{example}
Let $\tilde{\La}$ be as in Example~\ref{ex:resolution quiver}. In Example~\ref{ex:non-homogeneous cyclic case} we have seen that $\tilde{\La}$ admits a $4\ZZ$-cluster tilting subcategory $\cC_{\tilde{\La}}$. Using Theorem~\ref{thm:Indecomp in F} we can compute $\cX_{c}$. Indeed, the Auslander--Reiten quiver of $\tilde{\La}$ was computed in Example~\ref{ex:non-homogeneous cyclic case} to be
\[\resizebox {\columnwidth} {!} {
    \begin{tikzpicture}[scale=0.9, transform shape, baseline={(current bounding box.center)}]
    
    \tikzstyle{mod}=[rectangle, minimum width=6pt, draw=none, inner sep=1.5pt, scale=0.8]
    \tikzstyle{nct}=[rectangle, minimum width=3pt, draw, inner sep=1.5pt, scale=0.8]
    \tikzstyle{ncts}=[rectangle, minimum width=3pt, draw, inner sep=1.5pt, scale=0.6]
    \tikzstyle{Xc}=[circle, minimum width=3pt, draw, inner sep=1.5pt, scale=0.8]
    \tikzstyle{Xcs}=[circle, minimum width=3pt, draw, inner sep=1.5pt, scale=0.6]
    
    \node[Xc] (11) at (0,0) {$(1,1)$};
    \node[nct] (11) at (0,0) {\phantom{$(1,1)$}};
    \node[mod] (22) at (1.4,0) {$(2,2)$};
    \node[mod] (33) at (2.8,0) {$(3,3)$};
    \node[Xc] (44) at (4.2,0) {$(4,4)$};
    \node[mod] (55) at (5.6,0) {$(5,5)$};
    \node[mod] (66) at (7,0) {$(6,6)$};
    \node[Xc] (77) at (8.4,0) {$(7,7)$};
    \node[nct] (77) at (8.4,0) {\phantom{$(7,7)$}};
    
    \draw[loosely dotted] (11.east) -- (22);
    \draw[loosely dotted] (22.east) -- (33);
    \draw[loosely dotted] (33.east) -- (44);
    \draw[loosely dotted] (44.east) -- (55);
    \draw[loosely dotted] (55.east) -- (66);
    \draw[loosely dotted] (66.east) -- (77);
        
    \node[nct] (12) at (0.7,0.7) {$(1,2)$};   
    \node[Xc] (23) at (2.1,0.7) {$(2,3)$};    
    \node[mod] (34) at (3.5,0.7) {$(3,4)$};    
    \node[mod] (45) at (4.9,0.7) {$(4,5)$};    
    \node[Xc] (56) at (6.3,0.7) {$(5,6)$};    
    \node[mod] (67) at (7.7,0.7) {$(6,7)$};  
    \node[nct] (67) at (7.7,0.7) {\phantom{$(6,7)$}};

    \draw[-{Stealth[scale=0.5]}] (11) -- (12);
    \draw[-{Stealth[scale=0.5]}] (22) -- (23);
    \draw[-{Stealth[scale=0.5]}] (33) -- (34);
    \draw[-{Stealth[scale=0.5]}] (44) -- (45);
    \draw[-{Stealth[scale=0.5]}] (55) -- (56);
    \draw[-{Stealth[scale=0.5]}] (66) -- (67);
    
    \draw[-{Stealth[scale=0.5]}] (12) -- (22);
    \draw[-{Stealth[scale=0.5]}] (23) -- (33);
    \draw[-{Stealth[scale=0.5]}] (34) -- (44);
    \draw[-{Stealth[scale=0.5]}] (45) -- (55);
    \draw[-{Stealth[scale=0.5]}] (56) -- (66);
    \draw[-{Stealth[scale=0.5]}] (67) -- (77);

    \draw[loosely dotted] (12.east) -- (23);
    \draw[loosely dotted] (23.east) -- (34);
    \draw[loosely dotted] (34.east) -- (45);
    \draw[loosely dotted] (45.east) -- (56);
    \draw[loosely dotted] (56.east) -- (67);
    
    \node[Xc] (13) at (1.4,1.4) {$(1,3)$};
    \node[nct] (13) at (1.4,1.4) {\phantom{$(1,3)$}};
    \node[Xc] (24) at (2.8,1.4) {$(2,4)$};
    \node[nct] (24) at (2.8,1.4) {\phantom{$(2,4)$}};
    \node[mod] (35) at (4.2,1.4) {$(3,5)$};
    \node[nct] (35) at (4.2,1.4) {\phantom{$(3,5)$}};
    \node[Xc] (46) at (5.6,1.4) {$(4,6)$};
    \node[nct] (46) at (5.6,1.4) {\phantom{$(4,6)$}};
    \node[Xc] (57) at (7,1.4) {$(5,7)$};
    \node[nct] (57) at (7,1.4) {\phantom{$(5,7)$}};
    
    \draw[-{Stealth[scale=0.5]}] (12) -- (13);
    \draw[-{Stealth[scale=0.5]}] (23) -- (24);
    \draw[-{Stealth[scale=0.5]}] (34) -- (35);
    \draw[-{Stealth[scale=0.5]}] (45) -- (46);
    \draw[-{Stealth[scale=0.5]}] (56) -- (57);
    
    \draw[-{Stealth[scale=0.5]}] (13) -- (23);
    \draw[-{Stealth[scale=0.5]}] (24) -- (34);
    \draw[-{Stealth[scale=0.5]}] (35) -- (45);
    \draw[-{Stealth[scale=0.5]}] (46) -- (56);
    \draw[-{Stealth[scale=0.5]}] (57) -- (67);
    
    \node[Xc] (88) at (9.8,0) {$(8,8)$};
    \node[Xc] (99) at (11.2,0) {$(9,9)$};
    \node[Xcs] (1010) at (12.6,0) {$(10,10)$};
    \node[Xcs] (1111) at (14,0) {$(11,11)$};
    \node[ncts] (1111) at (14,0) {\phantom{$(11,11)$}};
    \node[Xcs] (1212) at (15.4,0) {$(12,12)$};
    \node[Xcs] (1313) at (16.8,0) {$(13,13)$};
    \node[Xcs] (1414) at (18.2,0) {$(14,14)$};
    \node[Xc] (11b) at (19.6,0) {$(1,1)$};
    \node[nct] (11b) at (19.6,0) {\phantom{$(1,1)$}};
    
    \draw[loosely dotted] (77.east) -- (88);
    \draw[loosely dotted] (88.east) -- (99);
    \draw[loosely dotted] (99.east) -- (1010);
    \draw[loosely dotted] (1010.east) -- (1111);
    \draw[loosely dotted] (1111.east) -- (1212);
    \draw[loosely dotted] (1212.east) -- (1313);
    \draw[loosely dotted] (1313.east) -- (1414);
    \draw[loosely dotted] (1414.east) -- (11b);
    
    \node[Xc] (78) at (9.1,0.7) {$(7,8)$};
    \node[nct] (78) at (9.1,0.7) {\phantom{$(7,8)$}};
    \node[Xc] (89) at (10.5,0.7) {$(8,9)$};    
    \node[nct] (89) at (10.5,0.7) {\phantom{$(8,9)$}};
    \node[Xcs] (910) at (11.9,0.7) {$(9,10)$};    
    \node[ncts] (910) at (11.9,0.7) {\phantom{$(9,10)$}};
    \node[Xcs] (1011) at (13.3,0.7) {$(10,11)$};  
    \node[ncts] (1011) at (13.3,0.7) {\phantom{$(10,11)$}}; 
    \node[Xcs] (1112) at (14.7,0.7) {$(11,12)$}; 
    \node[ncts] (1112) at (14.7,0.7) {\phantom{$(11,12)$}};
    \node[Xcs] (1213) at (16.1,0.7) {$(12,13)$};    
    \node[ncts] (1213) at (16.1,0.7) {\phantom{$(12,13)$}};
    \node[Xcs] (1314) at (17.5,0.7) {$(13,14)$};    
    \node[ncts] (1314) at (17.5,0.7) {\phantom{$(13,14)$}};
    \node[Xcs] (1415) at (18.9,0.7) {$(14,15)$};   
    \node[ncts] (1415) at (18.9,0.7) {\phantom{$(14,15)$}};

    \draw[-{Stealth[scale=0.5]}] (77) -- (78);
    \draw[-{Stealth[scale=0.5]}] (88) -- (89);
    \draw[-{Stealth[scale=0.5]}] (99) -- (910);
    \draw[-{Stealth[scale=0.5]}] (1010) -- (1011);
    \draw[-{Stealth[scale=0.5]}] (1111) -- (1112);
    \draw[-{Stealth[scale=0.5]}] (1212) -- (1213);
    \draw[-{Stealth[scale=0.5]}] (1313) -- (1314);
    \draw[-{Stealth[scale=0.5]}] (1414) -- (1415);
    
    \draw[-{Stealth[scale=0.5]}] (78) -- (88);
    \draw[-{Stealth[scale=0.5]}] (89) -- (99);
    \draw[-{Stealth[scale=0.5]}] (910) -- (1010);
    \draw[-{Stealth[scale=0.5]}] (1011) -- (1111);
    \draw[-{Stealth[scale=0.5]}] (1112) -- (1212);
    \draw[-{Stealth[scale=0.5]}] (1213) -- (1313);
    \draw[-{Stealth[scale=0.5]}] (1314) -- (1414);
    \draw[-{Stealth[scale=0.5]}] (1415) -- (11b);
    \end{tikzpicture}}
    \]
where $\cX_{c}$ consists of the encircled modules and $\cC_{\tilde{\La}}$ is the additive closure of all modules inside a rectangle. By Corollary~\ref{cor:SingCatNonHomog} we have that $\cF\cong \mo{\Gamma}$ where $\Gamma=\K \tilde{A}_{12}/R^2$. The $4\ZZ$-cluster tilting subcategory $\cC_{\tilde{\La}}$ of $\mo{\tilde{\La}}$ gives rise to the $4\ZZ$-cluster tilting subcategory $\cC_{\cF}=\cC_{\tilde{\La}}\cap \cF$ of $\cF\cong\mo{\Gamma}$. However, there are $3$ different $4\ZZ$-cluster tilting subcategories inside $\mo{\Gamma}$ which give rise to $3$ different $4\ZZ$-cluster tilting subcategories inside $D_{\operatorname{sing}}(\tilde{\La})\cong \smo \Gamma$.
\end{example}

Finally we describe the functor $\mo {\tilde{\La}}\to D_{\operatorname{sing}}(\tilde{\La})$, using the subcategory $\cF$ of $\mo {\tilde{\La}}$.

\begin{proposition}\label{prop:MapToSing}
 Let $\tilde{M}(i,j)$ be an indecomposable $\tilde{\La}$-module where $m_k\leq i\leq j\leq m_{k+1}$. The following hold:
\begin{enumerate}
\item $\tilde{M}(i,j)$ is nonzero in $D_{\operatorname{sing}}(\tilde{\La})$ if and only if $j-i<l_{k}-1$ and either 
\[
i\equiv m_k+1\modulo{l_k}  \quad \text{or} \quad j\equiv m_{k}\modulo{l_k}
\] 
hold.
\item If $j-i<l_{k}-1$ and $i\equiv m_k+1\modulo{l_k}$, then the inclusion $ \tilde{M}(i,j)\to \tilde{M}(i,i+l_k-2)$ becomes an isomorphism in $D_{\operatorname{sing}}(\tilde{\La})$.
\item If $j-i<l_{k}-1$ and $j\equiv m_{k}\modulo{l_k}$, then the projection $\tilde{M}(i,j)\to \tilde{M}(j,j)$ becomes an isomorphism in $D_{\operatorname{sing}}(\tilde{\La})$.
\end{enumerate}	
\end{proposition}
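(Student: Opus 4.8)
The plan is to combine the standard fact that a $\tilde{\La}$-module is zero in $D_{\operatorname{sing}}(\tilde{\La})$ if and only if it has finite projective dimension with the equivalence $\underline{\cF}\cong D_{\operatorname{sing}}(\tilde{\La})$ of Theorem~\ref{thrm:shen} and the list in Theorem~\ref{thm:Indecomp in F}; in particular the type $(1)$ and type $(2)$ modules are precisely the non-projective indecomposable objects of $\cF$, hence are nonzero in $D_{\operatorname{sing}}(\tilde{\La})$. As a preliminary I would determine the indecomposable projective $\tilde{\La}$-modules by chaining Proposition~\ref{prop:basic Nakayama results2}(b) with Proposition~\ref{prop:homogeneous Nakayama AR-quiver}(a), Proposition~\ref{prop:basic gluing results}(b),(d) and Proposition~\ref{prop:basic self-gluing results}(b),(d): for a vertex $v$ with $m_k<v\le m_{k+1}$ one has $\lmax{v}=v-l_k+1$ if $v\ge m_k+l_k-1$ and $\lmax{v}=m_k$ if $v<m_k+l_k-1$, so the projectives are the length-$l_k$ modules $\tilde{M}(v-l_k+1,v)$ and the ``short'' modules $\tilde{M}(m_k,v)$ respectively (the projectives at the gluing vertices $m_k$ will not be needed below). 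In particular, if $j-i=l_k-1$ then $\tilde{M}(i,j)=\tilde{M}(j-l_k+1,j)$ is projective, which already settles half of part $(1)$.

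The core of the proof is the following claim, proved by induction on $j$: \emph{if $m_k\le i\le j\le m_{k+1}$, $j-i<l_k-1$, $i\not\equiv m_k+1$ and $j\not\equiv m_k\pmod{l_k}$, then $\pdim_{\tilde{\La}}\tilde{M}(i,j)<\infty$.} If such a module is not projective then $\Omega\tilde{M}(i,j)\cong\tilde{M}(\lmax{j},i-1)$. When $j\ge m_k+l_k-1$, one checks that $\tilde{M}(j-l_k+1,i-1)$ is again supported in $\{m_k,\dots,m_{k+1}\}$, has length $<l_k$, and satisfies the two residue conditions ($j\not\equiv m_k$ gives $j-l_k+1\not\equiv m_k+1$, and $i\not\equiv m_k+1$ gives $i-1\not\equiv m_k$); moreover its top index $i-1$ is strictly smaller than $j$ and satisfies $i-1>m_k$, so the syzygy is never the simple $\tilde{M}(m_k,m_k)$. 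When $m_k<j<m_k+l_k-1$, instead $\lmax{j}=m_k$ and $\Omega\tilde{M}(i,j)\cong\tilde{M}(m_k,i-1)$ is a short projective (again using $i\not\equiv m_k+1$ to get $i-1>m_k$), so the syzygy chain terminates. Hence every such $\tilde{M}(i,j)$ has finite projective dimension and is zero in $D_{\operatorname{sing}}(\tilde{\La})$. I expect the main obstacle to be exactly this bookkeeping: one has to verify that the two residue conditions are preserved by $\Omega$, that the syzygy chain never crosses into a neighbouring block --- which could only happen via a module with socle $m_k$, i.e.\ via $\tilde{M}(m_k,m_k)$, excluded by hypothesis --- and that it therefore terminates at a projective rather than falling into the $2$-periodic $\Omega$-orbit formed by the type $(1)$ and type $(2)$ modules.

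With the claim available, parts $(2)$ and $(3)$ follow at once, and then part $(1)$. For $(2)$: the inclusion $\tilde{M}(i,j)\hookrightarrow\tilde{M}(i,i+l_k-2)$ has cokernel $\tilde{M}(j+1,i+l_k-2)$, and under the hypotheses $j-i<l_k-1$ and $i\equiv m_k+1$ this cokernel has top index $\equiv m_k-1$ and socle index congruent to an element of $\{m_k+2,\dots,m_k+l_k-1\}$, so it satisfies the hypotheses of the claim and is zero in $D_{\operatorname{sing}}(\tilde{\La})$; the associated triangle then shows the inclusion is an isomorphism there. For $(3)$: when $j\equiv m_k$ and $j-i<l_k-1$ one automatically has $i\not\equiv m_k+1$, and the projection $\tilde{M}(i,j)\twoheadrightarrow\tilde{M}(j,j)$ has kernel $\tilde{M}(i,j-1)$, which has top index $\equiv m_k-1$ and socle index $\not\equiv m_k+1$, so again satisfies the claim and is zero in $D_{\operatorname{sing}}(\tilde{\La})$, making the projection an isomorphism. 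Finally, for the outstanding direction of $(1)$: when $j-i<l_k-1$ and $i\equiv m_k+1$ (respectively $j\equiv m_k$), part $(2)$ (respectively part $(3)$) identifies $\tilde{M}(i,j)$ in $D_{\operatorname{sing}}(\tilde{\La})$ with the type $(2)$ module $\tilde{M}(i,i+l_k-2)$ (respectively the type $(1)$ module $\tilde{M}(j,j)$), which is nonzero; and the conditions $i\equiv m_k+1$ and $j\equiv m_k$ cannot hold simultaneously when $j-i<l_k-1$, so the statement is a genuine dichotomy.
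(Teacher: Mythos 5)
Your proof is correct, and its overall strategy differs from the paper's in a meaningful way for parts $(2)$ and $(3)$. For part $(1)$ the two arguments are essentially the same syzygy computation packaged differently: you show by induction, using $\om(\tilde{M}(i,j))\cong \tilde{M}(\lmax{j},i-1)$ and the dichotomy $\lmax{j}=j-l_k+1$ versus $\lmax{j}=m_k$, that a module failing both congruences has finite projective dimension (your residue bookkeeping is right, including that non-projectivity together with $i\not\equiv m_k+1$ forces $i-1>m_k$, so the chain never leaves the block), while the paper shows the two classes $\cD_1,\cD_2$ are closed under syzygies and that every module has a syzygy of the form $\tilde{M}(m_k,t)$, which for $\cD_2$ is projective. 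Where you genuinely diverge is in $(2)$ and $(3)$: the paper tracks the morphisms themselves, showing its set $\cD_3$ of inclusions and projections is closed under $\Omega$, reducing after finitely many syzygies to the identity on $\tilde{M}(m_k,m_k)$, and invoking that $\Omega$ induces an autoequivalence of $D_{\operatorname{sing}}(\tilde{\La})$; you instead compute the cone directly, observing that the cokernel $\tilde{M}(j+1,i+l_k-2)$ in $(2)$ and the kernel $\tilde{M}(i,j-1)$ in $(3)$ both fail the two congruences (correctly using that $j\equiv m_k$ and $j-i<l_k-1$ force $i\not\equiv m_k+1$, and symmetrically), hence vanish in $D_{\operatorname{sing}}(\tilde{\La})$, so the triangle associated to the short exact sequence yields the isomorphism. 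Your route is more elementary in that it needs neither the autoequivalence of $\Omega$ on the singularity category nor the closure of a class of morphisms under syzygies, at the cost of the extra residue arithmetic for the cones; the paper's morphism-tracking buys a single uniform closure-under-$\Omega$ argument for all three parts. The ``if'' direction of $(1)$ is identical in both: the codomains in $(2)$ and $(3)$ are the non-projective indecomposables of $\cF$ (types $(1)$ and $(2)$ of Theorem~\ref{thm:Indecomp in F}), hence nonzero in $D_{\operatorname{sing}}(\tilde{\La})\cong\underline{\cF}$.
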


\begin{remark}
Note that the codomains of the maps in part $(2)$ and $(3)$ of Proposition \ref{prop:MapToSing}  are non-projective objects in $\cF$ by Theorem \ref{thm:Indecomp in F}. Also, any morphism $\tilde{M}(i_1,j_1)\to \tilde{M}(i_2,j_2)$ which is not of the form given in Proposition \ref{prop:MapToSing} $(2)$ or Proposition \ref{prop:MapToSing} $(3)$ must be zero in $D_{\operatorname{sing}}(\tilde{\La})$, since it factors through a module $\tilde{M}(i,j)$ which is not of the form given in Proposition \ref{prop:MapToSing} $(1)$. This implies that Proposition \ref{prop:MapToSing} contains all the information about the behavior of the functor $\mo{\tilde{\La}}\to D_{\operatorname{sing}}(\tilde{\La})\xrightarrow{\cong} \underline{\cF}$ on the objects and morphisms of $\mo{\tilde{\La}}$.
\end{remark}

\begin{proof}[Proof of Proposition \ref{prop:MapToSing}]
Let $\cD_1$ be the set of indecomposable $\tilde{\La}$-modules $\tilde{M}(i,j)$ where 
\[
m_k\leq i\leq j\leq m_{k+1}\quad \text{and} \quad j-i<l_k
\]
and where
\[
i\equiv m_k+1\modulo{l_k}  \quad \text{or} \quad j\equiv m_{k}\modulo{l_k}
\] 
for some $1\leq k\leq r$. Let $\cD_2$ be the set of indecomposable $\tilde{\La}$-modules $\tilde{M}(i,j)$ which are not contained in $\cD_1$. We want to show that the modules in $\cD_1$ (resp $\cD_2$) become non-zero (resp zero) in the singularity category. We first show that up to isomorphism $\cD_1$ and $\cD_2$ are closed under syzygies. To this end, let $\tilde{M}(i,j)$ be a $\tilde{\La}$-module where $m_k\leq i\leq j\leq m_{k+1}$ and where $(i,j)\neq (m_k,m_k)$. If $j\leq m_k+l_k-1$, then by Proposition \ref{prop:basic Nakayama results2} (b) we have that 
\begin{equation}\label{syzygy1}
\Omega \tilde{M}(i,j)\cong \tilde{M}(m_k,i-1)
\end{equation}
Note that $j\equiv m_k\modulo{l_k}$ cannot hold in this case, and $i\equiv m_k+1\modulo{l_k}$ implies that $i=m_k+1$. Hence $\tilde{M}(i,j)$ is contained in $\cD_1$ if and only if $i=m_k+1$, which is equivalent to $\tilde{M}(m_k,i-1)$ being contained in $\cD_1$. This shows that $\cD_1$ and $\cD_2$ are closed under syzygies when $j\leq m_k+l_k-1$. Now assume $j>m_k+l_k-1$. Then by Proposition \ref{prop:basic Nakayama results2} (b) we have an isomorphism 
\begin{equation}\label{syzygy2}
\Omega \tilde{M}(i,j)\cong \tilde{M}(j-l_k+1,i-1)
\end{equation}
Since $i\equiv m_k+1\modulo{l_k}$ if and only if $i-1\equiv m_k\modulo{l_k}$, and $j\equiv m_{k}\modulo{l_k}$ if and only if $j-l_k+1\equiv m_k+1\modulo{l_k}$, we see that $\tilde{M}(i,j)$ is in $\cD_1$ if and only if $\tilde{M}(j-l_k+1,i-1)$ is in $\cD_1$. This shows that $\cD_1$ and $\cD_2$ are closed under syzygies. 

Now let $\tilde{M}(i,j)$ be a $\tilde{\La}$-module where $m_k\leq i \leq j\leq m_{k+1}$. We claim that there exists a syzygy of $\tilde{M}(i,j)$ which is isomorphic to $\tilde{M}(m_k,t)$ for some $t\geq m_k$. Indeed, if $i=m_k$, then we are done, so assume $i>m_k$.  Repeatedly using formula \eqref{syzygy2}, we see that there exists an integer $s\geq 0$ for which  $\Omega^s \tilde{M}(i,j)\cong \tilde{M}(i',j')$ where $m_k\leq i'\leq j'\leq m_k+l_k-1$. Hence, by \eqref{syzygy1} it follows that 
\[
\Omega^{s+1} \tilde{M}(i,j)\cong \tilde{M}(m_k,i'-1).
\]
Setting $t=i'-1$, this proves the claim. 

Now assume $\tilde{M}(i,j)$ is in $\cD_2$, and let $t$ be an integer such that $\tilde{M}(m_k,t)$ is isomorphic to a syzygy of $\tilde{M}(i,j)$. Since $\cD_2$ is closed under syzygies, $\tilde{M}(m_k,t)$ must be in $\cD_2$. Therefore $t>m_k$, which implies that $\tilde{M}(m_k,t)$ is a projective $\tilde{\La}$-module by Proposition \ref{prop:basic gluing results} (b) and Proposition \ref{prop:basic self-gluing results} (b). Hence, $\tilde{M}(m_k,t)$ is zero in $D_{\operatorname{sing}}(\tilde{\La})$, so $\tilde{M}(i,j)$ must also be zero in $D_{\operatorname{sing}}(\tilde{\La})$.  This shows the "only if" direction of part $(1)$ of the proposition.

To prove the remaining part of the proposition, we need to consider the set $\cD_3$ consisting of the identity morphisms between objects in $\cD_1$, the morphisms in part $(2)$ of the proposition, and the morphisms and in part $(3)$ of the proposition. We claim that up to isomorphism $\cD_3$ is closed under syzygies. Clearly this is true for the identity morphisms, since $\cD_1$ is closed under syzygies. Now let $\tilde{M}(i,j)\to \tilde{M}(i,i+l_k-2)$ be a morphism as in $(2)$. We consider two cases separately:
\begin{itemize}
\item If $j\leq m_k+l_k-1$, then taking the syzygy and using \eqref{syzygy1}, we get the identity map 
\[
\tilde{M}(m_k,i-1)\xrightarrow{=}\tilde{M}(m_k,i-1)
\] 
which is in $\cD_3$;
\item If $j> m_k+l_k-1$, then  taking the syzygy and using \eqref{syzygy2}, we get the projection map 
\[
\tilde{M}(j-l_k+1,i-1)\to \tilde{M}(i-1,i-1).
\]
This is of type $(3)$ since $i-1\equiv m_k\modulo{l_k}$, and hence it must be in $\cD_3$.
\end{itemize}
Finally, let $\tilde{M}(i,j)\to \tilde{M}(j,j)$ be a morphism as in $(3)$. We can assume that $(i,j)\neq (m_k,m_k)$ since otherwise we just have the identity map, in which case we already know the claim is true. Therefore $j>m_k$, and since $j\equiv m_k\modulo{l_k}$, we must have that $j>m_k+l_k-1$. Hence, taking the syzygy and using the formula \eqref{syzygy2}, we get the inclusion map 
\[
\tilde{M}(j-l_k+1,i-1)\to \tilde{M}(j-l_k+1,j-1).
\] 
This is of type $(2)$ since $j-l_k+1\equiv m_k+1\modulo{l_k}$, and hence it must be in $\cD_3$. This shows that $\cD_3$ is closed under syzygies. 

Now let $\tilde{M}(i_1,j_j)\to \tilde{M}(i_2,j_2)$ be a map in $\cD_3$ with $m_k\leq i_1 \leq j_1\leq m_{k+1}$ and $m_k\leq i_2 \leq j_2\leq m_{k+1}$. As shown above, we can find an integer $s\geq 0$ such that $\Omega^s\tilde{M}(i_1,j_j)\cong \tilde{M}(m_k,t)$. Now since $\cD_1$ is closed under syzygies, $\tilde{M}(m_k,t)$ must be in $\cD_1$, so $t=m_k$. Furthermore, since $\cD_3$ is closed under syzygies, the map 
\[
\Omega^s\tilde{M}(i_1,j_j)\to \Omega^s\tilde{M}(i_2,j_2)
\]
must be isomorphic to the identity map 
\[
\tilde{M}(m_k,m_k)\xrightarrow{=}\tilde{M}(m_k,m_k)
\]
since this is  the only map in $\cD_3$ with domain isomorphic to $\tilde{M}(m_k,m_k)$. Using that the syzygy functor extends to an autoequivalence on the singularity category, we get that the map $\tilde{M}(i_1,j_1)\to \tilde{M}(i_2,j_2)$ must also be an isomorphism in $D_{\operatorname{sing}}(\tilde{\La})$. This proves part $(2)$ and $(3)$ of the proposition. Finally, the "if" direction of part $(1)$ follows from the fact that the codomain of the maps in $(2)$ and $(3)$ are non-projective objects in $\cF$ by Theorem \ref{thm:Indecomp in F}. 
\end{proof}

\begin{remark}
Let $\cC_{\tilde{\La}}$ denote the $n\ZZ$-cluster tilting subcategory of $\tilde{\La}$. By Theorem  \ref{thrm:acyclic non-homogeneous case}, Remark \ref{rem:uniqueness of decomposition in acyclic} (a) and Theorem \ref{thrm:cyclic non-homogeneous case}, we know that the category $\cC_{\tilde{\La}}$ consists of the projective $\tilde{\La}$-modules, the injective $\tilde{\La}$-modules, and the $\tilde{\La}$-modules of the form $\tilde{M}(m_k,m_k)$ where $1\leq k\leq r$. Note that the modules $\tilde{M}(m_k,m_k)$ are in $\cF$ by Theorem \ref{thm:Indecomp in F}. Also, any injective and non-projective indecomposable module is of the form $\tilde{M}(i,m_{k+1})$ for $m_{k+1}-l_k+1<i<m_{k+1}$. By Proposition \ref{prop:MapToSing} (c) such a module becomes isomorphic in $D_{\operatorname{sing}}(\tilde{\La})$ to its top via the map 
	\[
	\tilde{M}(i,m_{k+1})\to \tilde{M}(m_{k+1},m_{k+1}).
	\]
Since the projective modules vanish in $D_{\operatorname{sing}}(\tilde{\La})$, we get a complete description of the functor \[
\cC_{\tilde{\La}}\to D_{\operatorname{sing}}(\tilde{\La})\cong \underline{\cF}.
\]
\end{remark}

The following corollary shows that $\tilde{\La}$ is not Iwanaga--Gorenstein. As a consequence, $D_{\operatorname{sing}}(\tilde{\La})$ cannot be computed by considering the stable category of Gorenstein projective modules as in Buchweitz theorem, see \cite[Theorem 3.6]{BOJ}.

\begin{corollary}\label{cor:not Iwanaga--Gorenstein}
A non-homogeneous cyclic Nakayama algebra with an $n\ZZ$-cluster tilting subcategory is not Iwanaga--Gorenstein.
\end{corollary}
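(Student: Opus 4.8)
The plan is to prove the contrapositive‐free statement directly: show that $\tilde\La$ has \emph{infinite} left injective dimension as a module over itself, which by definition forbids Iwanaga--Gorensteinness. Since $_{\tilde\La}\tilde\La$ is the sum of the indecomposable projective $\tilde\La$-modules and injective dimension of a direct sum is the supremum of the summands' injective dimensions, it is enough to produce a single indecomposable projective $\tilde P$ with $\operatorname{id}(\tilde P)=\infty$. We work in the standing setup of Section~\ref{subsection:SingCatNakayama}: $\tilde\La$ is the self-gluing of $\La=\La_1\glue\cdots\glue\La_r$ with $\La_k=\K A_{[m_k,m_{k+1}]}/R^{l_k}$ and $\gldim\La_k=n$. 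If every $l_k$ were $2$, then each defining relation of $\tilde\La$ --- the block relations $R^{l_k}$ and the $r$ gluing relations $\alpha_{m_k}\alpha_{m_k+1}$ --- would be a path of length $2$, so $\tilde\La=\K\tilde A_{rn}/R^2$ would be homogeneous, contrary to hypothesis. Hence some $l_k\ge 3$, and after cyclically relabelling the blocks we may assume $l_1\ge 3$.

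First I would single out the projective $\tilde P:=\tilde M(m_1,m_1+1)$. Because the self-gluing relation $\alpha_{m_1}\alpha_{m_1+1}$ vanishes in $\tilde\La$, this is precisely the projective cover of the simple $\tilde M(m_1+1,m_1+1)$, a length-two module; and since $\rmax{m_1}=m_1+l_1-1\neq m_1+1$ as $l_1\ge 3$, Proposition~\ref{prop:basic Nakayama results2}(c) shows $\tilde P$ is not injective. The core of the argument is then to iterate $\Omega^{-}$ on $\tilde P$, using Proposition~\ref{prop:basic Nakayama results2}(c) together with the block description of $\mo\tilde\La$ (an indecomposable $\tilde M(i,j)$ is nonzero exactly when $[i,j]$ lies in one block $[m_k,m_{k+1}]$ with $j-i<l_k$, and one computes $\rmax{}$ block by block via Proposition~\ref{prop:homogeneous Nakayama AR-quiver}(a) and Proposition~\ref{prop:basic self-gluing results}). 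One finds that after finitely many steps inside block $\La_1$ the cosyzygy chain reaches the simple $\tilde M(m_2,m_2)$, after which it runs along the $\Omega^{-}$-orbit of the glue-point simples: this orbit is a single cycle of length $rn$ consisting of the simples $\tilde M(m_k+jl_k,m_k+jl_k)$ ($1\le k\le r$, $0\le j<n/2$) together with the intermediate modules $\tilde M(m_k+jl_k+1,m_k+(j+1)l_k-1)$. None of the modules in this cycle is injective: simple modules are never injective over a cyclic Nakayama algebra since $\rmax i\ge i+1$ (Proposition~\ref{prop:basic Nakayama results1}(d)), and for the intermediate modules one checks $\rmax{m_k+jl_k+1}\neq m_k+(j+1)l_k-1$ directly. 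Therefore $\Omega^{-t}\tilde P\neq 0$ for all $t\ge 0$, whence $\operatorname{id}(\tilde P)=\infty$ and $\tilde\La$ is not Iwanaga--Gorenstein.

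I expect the only real obstacle to be the bookkeeping in this cosyzygy computation: one must track carefully how $\Omega^{-}$ transports a module of length $l_k-1$ at the top of block $\La_k$ to the simple $\tilde M(m_{k+1},m_{k+1})$ at the bottom of block $\La_{k+1}$, and handle the transition at $\La_1$ where $\tilde P$ (of length $2$ because of the self-gluing relation) is fed into this cycle, confirming no injective is ever encountered. It is worth observing that this $\Omega^{-}$-cycle is exactly the image under the duality $\tilde\La\mapsto\tilde\La^{\mathrm{op}}$ --- which again is a self-gluing of homogeneous acyclic blocks of global dimension $n$ --- of the $\Omega$-orbit of the simples $\tilde M(m_k,m_k)$ already recorded in Remark~\ref{rem:uniqueness of decomposition in cyclic}(a) and Remark~\ref{rem:uniqueness of decomposition in acyclic}(c); so the periodicity needed here is the mirror image of one established earlier, and with it closure of the cycle and $\operatorname{id}(\tilde P)=\infty$ are immediate.
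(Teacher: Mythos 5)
Your argument is correct, but it takes a genuinely different route from the paper's. You work entirely inside $\mo{\tilde{\La}}$: after arranging $l_1\ge 3$ you take the indecomposable projective $\tilde{P}=\tilde{M}(m_1,m_1+1)$ and show $\Omega^{-t}\tilde{P}\neq 0$ for all $t\ge 0$; I checked the computation and it works as you claim: the cosyzygy chain stays in the first block for $n$ steps, alternating between modules $\tilde{M}(m_1+sl_1,m_1+sl_1+1)$ and $\tilde{M}(m_1+sl_1+2,m_1+(s+1)l_1-1)$ (nonzero precisely because $l_1\ge 3$, and never injective since the relevant $\rmax{i}$ always strictly exceeds the right endpoint), then lands on $\tilde{M}(m_2,m_2)$, whose $\Omega^{-}$-orbit is indeed the injective-free cycle of length $rn$ through the simples $\tilde{M}(m_k+jl_k,m_k+jl_k)$ and the modules $\tilde{M}(m_k+jl_k+1,m_k+(j+1)l_k-1)$. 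This produces a projective of infinite injective dimension, hence infinite left self-injective dimension, which is exactly what non-Iwanaga--Gorenstein requires. (One small remark: your closing appeal to $\tilde{\La}^{\mathrm{op}}$ is unnecessary, since Remark~\ref{rem:uniqueness of decomposition in cyclic}(a) already records the periodicity of the glue-point simples under $\Omega^{-n}$, i.e.\ for cosyzygies; your cycle simply refines that statement.) The paper argues dually and far more briefly, but only because the singularity-category machinery is already in place: for some $k$ one has $l_k\ge 3$, so $\tilde{M}(m_{k+1}-1,m_{k+1})$ is injective and non-projective, and by Proposition~\ref{prop:MapToSing}(3) it becomes isomorphic in $D_{\operatorname{sing}}(\tilde{\La})$ to $\tilde{M}(m_{k+1},m_{k+1})$, which is non-zero there by Theorem~\ref{thm:Indecomp in F}; thus an injective module has infinite projective dimension. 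Your approach buys an elementary, self-contained proof independent of Section~\ref{sec:singularity categories} (it could be placed right after Theorem~\ref{thrm:cyclic non-homogeneous case}), at the cost of the explicit cosyzygy bookkeeping; the paper's approach buys a two-line proof and fits naturally where the failure of Buchweitz's theorem is the point being made.
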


\begin{proof}
We show this for $\tilde{\La}$. It suffices to prove that there exists an injective $\tilde{\La}$-module which is non-zero in $D_{\operatorname{sing}}(\tilde{\La})$. Since $\tilde{\La}$ is non-homogeneous, there exists a $k$ where $l_k\geq 3$. Hence,  $\tilde{M}(m_{k+1}-1,m_{k+1})$ is a $\tilde{\La}$-module which is injective and not projective. By Proposition \ref{prop:MapToSing} $(3)$ this module is isomorphic to $\tilde{M}(m_{k+1},m_{k+1})$ in $D_{\operatorname{sing}}(\tilde{\La})$. Since $\tilde{M}(m_{k+1},m_{k+1})$ is non-zero in $D_{\operatorname{sing}}(\tilde{\La})$ by Theorem \ref{thm:Indecomp in F}, this proves the claim.
\end{proof}


\begin{thebibliography}{HIMO20}

\bibitem[ASS06]{ASS}
Ibrahim Assem, Daniel Simson, and Andrzej Skowro{\'n}ski.
\newblock {\em {Elements of the Representation Theory of Associative Algebras:
  Volume 1: Techniques of Representation Theory}}.
\newblock {Elements of the Representation Theory of Associative Algebras}.
  Cambridge University Press, 2006.

\bibitem[BOJ15]{BOJ}
Petter~Andreas Bergh, Steffen Oppermann, and David~A. Jorgensen.
\newblock {The Gorenstein defect category}.
\newblock {\em The Quarterly Journal of Mathematics}, 66(2):459--471, 02 2015.

\bibitem[Buc21]{Buc}
Ragnar-Olaf Buchweitz.
\newblock {\em Maximal {C}ohen-{M}acaulay modules and {T}ate cohomology},
  volume 262 of {\em Mathematical Surveys and Monographs}.
\newblock American Mathematical Society, 2021.
\newblock With appendices and an introduction by Luchezar L. Avramov, Benjamin
  Briggs, Srikanth B. Iyengar and Janina C. Letz.

\bibitem[DI20]{DI}
Erik Darp{\"o} and Osamu Iyama.
\newblock {$d$-Representation-finite self-injective algebras}.
\newblock {\em Advances in Mathematics}, 362:106932, 2020.

\bibitem[DJL21]{DJL}
Tobias {Dyckerhoff}, Gustavo {Jasso}, and Yanki {Lekili}.
\newblock {The symplectic geometry of higher Auslander algebras: Symmetric
  products of disks}.
\newblock {\em Forum of Mathematics, Sigma}, 9:e10, 2021.

\bibitem[DJW19]{DJW}
Tobias Dyckerhoff, Gustavo Jasso, and Tashi Walde.
\newblock Simplicial structures in higher {A}uslander-{R}eiten theory.
\newblock {\em Adv. Math.}, 355:106762, 73, 2019.

\bibitem[DW16]{DW}
Will Donovan and Michael Wemyss.
\newblock Noncommutative deformations and flops.
\newblock {\em Duke Math. J.}, 165(8):1397--1474, 2016.

\bibitem[EH08]{EH}
Karin Erdmann and Thorsten Holm.
\newblock Maximal n-orthogonal modules for selfinjective algebras.
\newblock {\em Proceedings of the American Mathematical Society},
  136(9):3069--3078, 2008.

\bibitem[GKO13]{GKO}
Christof Geiss, Bernhard Keller, and Steffen Oppermann.
\newblock {$n$}-angulated categories.
\newblock {\em J. Reine Angew. Math.}, 675:101--120, 2013.

\bibitem[HI11b]{HI1}
Martin Herschend and Osamu Iyama.
\newblock Selfinjective quivers with potential and 2-representation-finite
  algebras.
\newblock {\em Compositio Mathematica}, 147(6):1885--1920, 2011.

\bibitem[HI11a]{HI2}
Martin Herschend and Osamu Iyama.
\newblock {n-representation-finite algebras and twisted fractionally
  Calabi--Yau algebras}.
\newblock {\em Bulletin of the London Mathematical Society}, 43(3):449--466,
  2011.

\bibitem[HIMO20]{HIMO}
Martin Herschend, Osamu Iyama, Hiroyuki Minamoto, and Steffen Oppermann.
\newblock {Representation theory of Geigle--Lenzing complete intersections}.
\newblock {\em arXiv e-prints}, 2020, 1409.0668.

\bibitem[HJr21]{HJ}
Martin Herschend and Peter J\o rgensen.
\newblock Classification of higher wide subcategories for higher {A}uslander
  algebras of type {$A$}.
\newblock {\em J. Pure Appl. Algebra}, 225(5):Paper No. 106583, 22, 2021.

\bibitem[HJS22]{HJS}
Johanne Haugland, Karin~M. Jacobsen, and Sibylle Schroll.
\newblock The role of gentle algebras in higher homological algebra.
\newblock {\em Forum Mathematicum}, 2022.

\bibitem[HLN22]{HLN}
Martin Herschend, Yu~Liu, and Hiroyuki Nakaoka.
\newblock {$n$}-exangulated categories ({II}): {C}onstructions from
  {$n$}-cluster tilting subcategories.
\newblock {\em J. Algebra}, 594:636--684, 2022.

\bibitem[IJ17]{IJ}
Osamu Iyama and Gustavo Jasso.
\newblock {Higher Auslander correspondence for dualizing R-varieties}.
\newblock {\em Algebras and Representation Theory}, 20(2):335--354, Apr 2017.

\bibitem[IO11]{IO1}
Osamu Iyama and Steffen Oppermann.
\newblock {n-representation-finite algebras and n-APR tilting}.
\newblock {\em Transactions of the American Mathematical Society},
  363(12):6575--6614, 2011.

\bibitem[IO13]{IO2}
Osamu Iyama and Steffen Oppermann.
\newblock {Stable categories of higher preprojective algebras}.
\newblock {\em Advances in Mathematics}, 244:23--68, 2013.

\bibitem[IW11]{IW}
Osamu Iyama and Michael Wemyss.
\newblock A new triangulated category for rational surface singularities.
\newblock {\em Illinois J. Math.}, 55(1):325--341, 2011.

\bibitem[IW13]{IW2}
Osamu Iyama and Michael Wemyss.
\newblock {On the noncommutative Bondal--Orlov conjecture}.
\newblock {\em {Journal für die reine und angewandte Mathematik}},
  2013(683):119 -- 128, 01 Oct. 2013.

\bibitem[IW14]{IW3}
Osamu Iyama and Michael Wemyss.
\newblock {Maximal modifications and Auslander--Reiten duality for non-isolated
  singularities}.
\newblock {\em Inventiones mathematicae}, 197(3):521--586, 2014.

\bibitem[Iya07]{Iya1}
Osamu Iyama.
\newblock {Higher-dimensional Auslander--Reiten theory on maximal orthogonal
  subcategories}.
\newblock {\em Advances in Mathematics}, 210(1):22--50, 2007.

\bibitem[Iya11]{Iya2}
Osamu Iyama.
\newblock {Cluster tilting for higher Auslander algebras}.
\newblock {\em Advances in Mathematics}, 226(1):1--61, 2011.

\bibitem[Jas16]{Jas}
Gustavo Jasso.
\newblock {n-Abelian and n-exact categories}.
\newblock {\em Mathematische Zeitschrift}, 283(3):703--759, 2016.

\bibitem[JKPK19]{JK}
Gustavo Jasso, Julian K{\"u}lshammer, Chrysostomos Psaroudakis, and Sondre
  Kvamme.
\newblock {Higher Nakayama algebras I: Construction}.
\newblock {\em Advances in Mathematics}, 351:1139--1200, 2019.

\bibitem[JMK]{JM}
Gustavo Jasso and Fernando Muro.
\newblock Triangulated {A}uslander--{I}yama correspondence.
\newblock {\em To appear.}

\bibitem[Kra99]{Kra}
Henning Krause.
\newblock Functors on locally finitely presented additive categories.
\newblock {\em Colloq. Math.}, 75, 05 1999.

\bibitem[Kup58]{Kup}
Herbert Kupisch.
\newblock {\em {Beiträge zur Theorie nichthalbeinfacher Ringe mit
  Minimalbedingung}}.
\newblock PhD thesis, NA Heidelberg, 1958.

\bibitem[Kva21]{Kva}
Sondre Kvamme.
\newblock {$d\mathbb Z$}-cluster tilting subcategories of singularity categories.
\newblock {\em Math. Z.}, 297(1-2):803--825, 2021.

\bibitem[Miz13]{Miz}
Yuya Mizuno.
\newblock {A Gabriel-type theorem for cluster tilting}.
\newblock {\em Proceedings of the London Mathematical Society},
  108(4):836--868, 09 2013.

\bibitem[OT12]{OT}
Steffen Oppermann and Hugh Thomas.
\newblock Higher-dimensional cluster combinatorics and representation theory.
\newblock {\em Journal of the European Mathematical Society}, 14(6):1679--1737,
  2012.

\bibitem[Rin13]{Rin}
Claus~Michael Ringel.
\newblock The {G}orenstein projective modules for the {N}akayama algebras. {I}.
\newblock {\em J. Algebra}, 385:241--261, 2013.

\bibitem[She15]{Shen}
Dawei Shen.
\newblock The singularity category of a {N}akayama algebra.
\newblock {\em J. Algebra}, 429:1--18, 2015.

\bibitem[Vas19]{Vas1}
Laertis Vaso.
\newblock {$n$-Cluster tilting subcategories of representation-directed
  algebras}.
\newblock {\em Journal of Pure and Applied Algebra}, 223(5):2101--2122, 2019.

\bibitem[{Vas}20]{Vas2}
Laertis {Vaso}.
\newblock {$n$-cluster tilting subcategories from gluing systems of
  representation-directed algebras}.
\newblock {\em arXiv e-prints}, 2020, 2004.02269.

\bibitem[Vas21]{Vas3}
Laertis Vaso.
\newblock Gluing of {$n$}-cluster tilting subcategories for
  representation-directed algebras.
\newblock {\em Algebr. Represent. Theory}, 24(3):715--781, 2021.

\bibitem[Vas23]{Vas4}
Laertis Vaso.
\newblock n-cluster tilting subcategories for radical square zero algebras.
\newblock {\em Journal of Pure and Applied Algebra}, 227(1):107157, 2023.

\bibitem[Wil22]{Wil}
Nicholas~J. Williams.
\newblock New interpretations of the higher {S}tasheff-{T}amari orders.
\newblock {\em Adv. Math.}, 407:Paper No. 108552, 2022.

\end{thebibliography}
\end{document}